\theoremstyle{plain}
\newtheorem{theorem}{Theorem}[section]
\newtheorem{lemma}[theorem]{Lemma}
\newtheorem{proposition}[theorem]{Proposition}
\newtheorem{corollary}[theorem]{Corollary}
\newtheorem*{conjecture}{Conjecture}
\newtheorem*{definition}{Definition}
\newtheorem*{question}{Question}
\newtheorem*{problem}{Problem}
\newtheorem*{remark}{Remark}
\newtheorem*{condthm}{Conditional Theorem}
\newtheorem*{thm:Dg1FPImpCP}{Theorem \ref{thm:Dg1FPImpCP}}
\title{Teichm\"uller Discs with Completely Degenerate Kontsevich-Zorich Spectrum}
\author{David Aulicino\thanks{Partially supported by National Science Foundation grant DMS - 0800673.}}
\date{}
\begin{document}

\newcommand{\splin}{$\text{SL}_2(\mathbb{R})$}
\newcommand{\spolin}{$\text{SO}_2(\mathbb{R})$}
\newcommand{\TeichDisk}{$\Phi(\text{SL}_2(\mathbb{R})(X,\omega))$}
\newcommand{\RankOne}{$\mathcal{D}_g (1)$}

\maketitle

\tableofcontents

\newpage

\begin{abstract}
We reduce a question of Eskin-Kontsevich-Zorich and Forni-Matheus-Zorich, which asks for a classification of all \splin -invariant ergodic probability measures with completely degenerate Kontsevich-Zorich spectrum, to a conjecture of M\"oller's.  Let \RankOne ~be the subset of the moduli space of Abelian differentials $\mathcal{M}_g$ whose elements have period matrix derivative of rank one.  There is an \splin -invariant ergodic probability measure $\nu$ with completely degenerate Kontsevich-Zorich spectrum, i.e. $\lambda_1 = 1 > \lambda_2 = \cdots = \lambda_g = 0$, if and only if $\nu$ has support contained in \RankOne .  We approach this problem by studying Teichm\"uller discs contained in \RankOne .  We show that if $(X,\omega)$ generates a Teichm\"uller disc in \RankOne , then $(X,\omega)$ is completely periodic.  Furthermore, we show that there are no Teichm\"uller discs in \RankOne , for $g = 2$ and $g \geq 13$, and the two known examples of Teichm\"uller discs in \RankOne , for $g = 3, 4$, are the only two such discs in those genera.  Finally, we prove that if there are no genus five Veech surfaces generating Teichm\"uller discs in $\mathcal{D}_5(1)$, then there are no Teichm\"uller discs in \RankOne , for $g \geq 5$.
\end{abstract}

\section{Introduction}

In \cite{KZHodge}, Kontsevich and Zorich introduced the Kontsevich-Zorich cocycle as a cocycle on the Hodge bundle over the moduli space of Riemann surfaces, denoted $G_t^{KZ}$, which is a continuous time version of the Rauzy-Veech-Zorich cocycle.  They showed that this cocycle has a spectrum of $2g$ Lyapunov exponents with the property
$$1 = \lambda_1 \geq \lambda_2 \geq \cdots \geq \lambda_g \geq -\lambda_g \geq \cdots \geq -\lambda_2 \geq -\lambda_1 = -1.$$
These exponents have strong implications about the dynamics of flows on Riemann surfaces, interval exchange transformations, rational billiards, and related systems.  They also describe how generic trajectories of an Abelian differential distribute over a surface \cite{Zorich1Form}.  Furthermore, Zorich \cite{Zorich1Form} proved that they fully describe the non-trivial exponents of the Teichm\"uller geodesic flow, denoted $G_t$.  Veech \cite{VeechGt} proved $\lambda_2 < 1$, which implies that $G_t$ is non-uniformly hyperbolic.  Since then, the study of the Lyapunov spectrum of the Kontsevich-Zorich cocycle has become of widespread interest.  Forni \cite{ForniDev} proved the first part of the Kontsevich-Zorich conjecture \cite{KZHodge}: $\lambda_g > 0$ for the canonical \splin -invariant ergodic measure in the moduli space of holomorphic quadratic differentials.  His result implies $G_t^{KZ}$ is also non-uniformly hyperbolic.  Avila and Viana \cite{AvilaVianaSimp} then used independent techniques to show that the spectrum is simple for the canonical measures on the strata of Abelian differentials, i.e. $\lambda_k > \lambda_{k+1}$, for all $k$.

Throughout this paper, the spectrum of Lyapunov exponents of the Kontsevich - Zorich cocycle will be referred to as the Kontsevich-Zorich spectrum (KZ-spectrum).  Veech asked to what extent the KZ-spectrum could be degenerate.  Forni \cite{ForniHand} found an example of an \splin -invariant measure supported on the Teichm\"uller disc of a genus three surface with completely degenerate KZ-spectrum, i.e. $\lambda_1 = 1 > \lambda_2 = \lambda_3 = 0$.  In the literature, the genus three surface generating Forni's example, denoted here by $(M_3, \omega_{M_3})$, is known as the Eierlegende Wollmilchsau for its numerous remarkable properties \cite{HerrlichSchmithusenEier}.  Forni and Matheus \cite{ForniMatheus} then found an example generated by a genus four surface, denoted here by $(M_4, \omega_{M_4})$, with $\lambda_1 = 1 > \lambda_2 = \lambda_3 = \lambda_4 = 0$.  Both surfaces are Veech surfaces and in particular, square tiled cyclic covers.  They will be defined and depicted in Section \ref{PctdVeechSurfSect}.  By relating Teichm\"uller and Shimura curves, M\"oller \cite{MollerShimuraTeich} proved that these two examples are the only examples of Veech surfaces generating Teichm\"uller discs supporting a measure with completely degenerate KZ-spectrum except for possible examples in certain strata of Abelian differentials in genus five.  In a paper of Forni, Matheus and Zorich \cite{ForniMatheusZorichSqTiled}, they proved that the two examples are the only square-tiled cyclic cover surfaces generating Teichm\"uller discs supporting a measure with completely degenerate KZ-spectrum.  In the recent work of \cite{EskinKontsevichZorich2}, it was shown that there are no regular \splin -invariant suborbifolds with completely degenerate Kontsevich-Zorich spectrum for $g \geq 7$.  It was recently announced by Eskin and Mirzakhani \cite{EskinMirzakhaniInvariantMeas}, that the closure of every Teichm\"uller disc is an \splin -invariant suborbifold.  However, it is an open conjecture that every \splin -invariant suborbifold is regular \cite{EskinKontsevichZorich2}[Section 1.5].  Hence, the result of \cite{EskinKontsevichZorich2}[Corollary 5] does not yet imply some of the results of this paper.

Both \cite{EskinKontsevichZorich2} and \cite{ForniMatheusZorichSqTiled} asked if the two known examples generate the only Teichm\"uller discs whose closures support an \splin -invariant ergodic probability measure with completely degenerate Kontsevich-Zorich spectrum.  In this paper we give a nearly complete answer to this question by reducing the entire problem to a conjecture of M\"oller that claims there are no Veech surfaces in genus five that generate a Teichm\"uller disc with this property.  Let \RankOne ~denote the subset of the moduli space of Abelian differentials, where the derivative of the period matrix has rank one.  We address a potentially stronger problem and ask for a classification of all Teichm\"uller discs in \RankOne .

\begin{theorem}
\label{RankOnePartialClass}
There are no Teichm\"uller discs in \RankOne , for $g = 2$ and $g \geq 13$.  The surface $(M_3, \omega_{M_3})$ generates the only Teichm\"uller disc in $\mathcal{D}_3(1)$ and $(M_4, \omega_{M_4})$ generates the only Teichm\"uller disc in $\mathcal{D}_4(1)$.  Furthermore, if there are no Teichm\"uller curves in $\mathcal{D}_5(1)$, then there are no Teichm\"uller discs in \RankOne , for $g \geq 5$.
\end{theorem}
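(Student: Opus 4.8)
The plan is to work throughout with completely periodic surfaces, which is legitimate because every $(X,\omega)$ generating a Teichm\"uller disc in $\mathcal{D}_g(1)$ is completely periodic by the result quoted above. For such a surface I would fix a periodic direction, say the horizontal one, and record its cylinder decomposition $X = C_1 \cup \cdots \cup C_n$, retaining the core curves $\gamma_1,\dots,\gamma_n \in H_1(X;\mathbb{Z})$ together with the heights $h_i$, circumferences $w_i$, and moduli $m_i = h_i/w_i$. The first task is to translate the rank-one hypothesis on the derivative of the period matrix into a statement about this combinatorial data: I would compute the derivative of the period map (equivalently Forni's second fundamental form) in a homology basis adapted to the cylinders and determine exactly which configurations are compatible with rank one. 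I expect this condition to be extremely rigid, forcing in every periodic direction a strong relation among the cylinders --- such as a transitive symmetry permuting them, or equality of all their moduli --- so that the analytic degeneracy is converted into a finite combinatorial problem.

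I would then extract the genus bounds from this combinatorial problem. Since the core curves of a single cylinder decomposition are pairwise disjoint, they span an isotropic subspace of $H_1(X;\mathbb{R})$, and the rigid relation forced by rank one, together with the Euler--Poincar\'e count (the orders of the zeros of $\omega$ sum to $2g-2$), simultaneously constrains the number of cylinders and the singularity data. For $g=2$ the handful of admissible stratum-and-cylinder configurations should be excluded directly, and a uniform counting inequality should show that no admissible configuration survives once the genus is large; carrying this inequality out with care is what I expect to yield the unconditional threshold $g \geq 13$. The intermediate range $5 \leq g \leq 12$ may still admit configurations combinatorially, and these I would dispose of by the reduction described below, which is why the statement there is only conditional.

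For $g = 3$ and $g = 4$ the plan is to show that the admissible configuration is unique up to the affine group, to deduce that the generating surface has a lattice Veech group so that the disc is in fact a Teichm\"uller curve, and then to invoke M\"oller's classification to identify the surfaces as $(M_3,\omega_{M_3})$ and $(M_4,\omega_{M_4})$; since M\"oller's theorem is unconditional in these genera, uniqueness of the disc follows. For the remaining range $g \geq 5$ the plan is a reduction to genus five: given a disc in $\mathcal{D}_g(1)$ with $g \geq 5$, I would use the rigidity of the cylinder data in every periodic direction to perform a controlled degeneration or cylinder surgery that lowers the genus while remaining in the rank-one locus and preserving the maximally symmetric, completely periodic structure, with the goal of producing a genus-five surface whose Veech group is a lattice, i.e.\ a Teichm\"uller curve in $\mathcal{D}_5(1)$. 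Granting the hypothesis that no such curve exists, no such disc can exist, and together with the unconditional bound this gives the full conditional statement for $g \geq 5$.

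The hard part will be this last reduction. The delicate points are (i) showing that the surgery stays inside the rank-one locus rather than leaving it --- a priori the rank of the second fundamental form can jump under degeneration --- and (ii) ensuring the symmetry responsible for the degeneracy descends to the quotient, so that one lands on a genuine Veech surface in genus five rather than on a generic point of a lower-genus disc. By comparison, the combinatorial bound ruling out $g \geq 13$ should be comparatively mechanical once the counting inequality is set up, though pinning down the exact threshold will require a careful case analysis of the low-complexity strata.
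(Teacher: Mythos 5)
The central gap is your derivation of the unconditional threshold $g \geq 13$. No cylinder-counting inequality can produce it: the rank-one rigidity you correctly anticipate (in every periodic direction all cylinders have equal circumference and are glued in a single cycle; this is Corollary \ref{RankOneCylConfig}) holds in \emph{every} genus and carries no genus bound whatsoever. In the paper the number $13$ is $5+7+1$: M\"oller's classification via Shimura curves (Theorem \ref{MollerClass}) says Veech surfaces generating Teichm\"uller discs in $\mathcal{D}_g(1)$ exist only in genera $3$, $4$, and possibly $5$, with at most seven distinct zeros in genus five; Theorem \ref{CPImpVeech} says the closure of every disc in $\mathcal{D}_g(1)$ contains a (possibly degenerate, punctured) Veech surface; and Theorem \ref{NoPctsAtCCCPts} says that in each pair of punctures on that limit at least one puncture must sit at a zero. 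Hence at most seven pairs of punctures on a genus-five Veech surface, so $g \leq 5+7 = 12$ (Corollary \ref{NoTeichDiskSuffHighGenus}). Your plan contains no analogue of the convergence-to-a-Veech-surface step, which is where the analytic work of the paper lives (the cylinder surgery, the Smillie--Weiss characterization of Veech surfaces as uniformly completely periodic \cite{SmillieWeissCharLattice}, and the estimates for the derivative of the period matrix near the boundary), and without that step neither the bound $13$ nor the classifications in genus $3$ and $4$ can be reached.

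Two further steps of your proposal assert precisely what is hard. For $g = 3,4$ you propose to show the admissible configuration is unique up to the affine group and to ``deduce that the generating surface has a lattice Veech group''; but that inference is exactly the gap between complete periodicity (which rank one gives, Theorem \ref{RankOneImpCP}) and uniform complete periodicity (which is what is equivalent to being a Veech surface). The paper never proves directly that a disc in $\mathcal{D}_3(1)$ or $\mathcal{D}_4(1)$ is a Teichm\"uller curve; it argues by contradiction, degenerating a putative non-Veech generator to a \emph{punctured} Veech surface and then excluding every possible position of the punctures --- both punctures at regular points is ruled out by the period-matrix estimates after bubbling off a sphere (Theorem \ref{NoPctsAtCCCPts}), and punctures at zeros of $(M_3,\omega_{M_3})$ or $(M_4,\omega_{M_4})$ are ruled out by pinching in explicit directions (Lemmas \ref{Gen3NoPctsAtZeros} and \ref{Gen4NoPctsAtZeros}) --- with genus $4$ additionally requiring a stratum-by-stratum analysis (Theorem \ref{Gen4Class}). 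Likewise, your genus-lowering surgery for the $g \geq 5$ reduction is not the paper's mechanism, and your own worry (i) is fatal to it as stated: the paper does not keep a surgery inside $\mathcal{D}_g(1)$, but instead works in the closure $\overline{\mathcal{D}_g(1)}$ and proves convergence lemmas for the derivative of the period matrix along the specific degenerating sequences it constructs (Section \ref{DerPerMatSubSect}), concluding via Lemma \ref{NoGen5Deg} that the limiting Veech surface must have genus exactly five. As written, your proposal is a program whose key steps are either unjustified or would fail, not a proof.
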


The main techniques used in this paper include degenerating surfaces under the Deligne-Mumford compactification of the moduli space of Riemann surfaces, and an analysis of the derivative of the period matrix under such deformations.  This concept has already been used successfully in \cite{ForniDev}.  Several other authors have also used this concept in other guises such as the second fundamental form of the Hodge bundle \cite{ForniMatheusZorichLyapSpectHodge} and the Kodaira-Spencer map in the work of M\"oller and his coauthors \cite{MollerShimuraTeich, BainbridgeMoller, ChenMollerNonVarySums}.

To prove this theorem we show first that any surface generating a Teichm\"uller disc in \RankOne ~is completely periodic, cf. Theorem \ref{RankOneImpCP}.  Then we show that degenerating surfaces in the closure of a Teichm\"uller disc in \RankOne ~must have a very specific configuration, cf. Lemma \ref{TDDerPerRank1Lem}.  Proving the results requires some technical lemmas demonstrating convergence of the derivative of the period matrix, cf. Section \ref{DerPerMatSubSect}, and a variation of a theorem of Masur \cite{MasurClosedTraj}[Theorem 2] to a more general setting, cf. Lemma \ref{MasurThmwithParSlits}.  These results quickly yield some applications, cf. Proposition \ref{Genus2Cor}.

Next we show that the closure of every Teichm\"uller disc in \RankOne ~must contain a (possibly degenerate) surface that is a Veech surface, cf. Theorem \ref{CPImpVeech}.  This leads to an analysis of punctures on a Veech surface with the goal of excluding more and more configurations of the punctures until the remainder of the results follow.  Theorem \ref{RankOnePartialClass} summarizes Proposition \ref{Genus2Cor}, Theorem \ref{Genus3Classification}, Corollary \ref{NoTeichDiskSuffHighGenus}, Theorem \ref{Gen4Class}, and the Conditional Theorem in Section \ref{RegPtsMainThm}.

\

\noindent \textbf{Acknowledgments:}  This work was done in partial fulfillment of the requirements for the Ph.D. at the University of Maryland -- College Park.  The author would like to express his sincere gratitude to his advisor Giovanni Forni, for his patience and the generosity of his time throughout the entirety of the research process.  The author would also like to thank Scott Wolpert for his insight into the geometry of the moduli space, especially concerning the content of Section \ref{PctsAtRegPtsSect}.  The author is grateful to Martin M\"oller for expressing interest in his work at an early stage.  Finally, the author would like to thank Matt Bainbridge and Kasra Rafi for their helpful discussions.

\section{Preliminaries}

\subsection{The Moduli Space of Riemann Surfaces}

Let $X$ be a Riemann surface of genus $g$ with $n$ punctures (i.e. marked points).  Let $R(X)$ denote the \emph{Teichm\"uller space of $X$} or simply $R_{g,n}$ when $X$ is understood.  The surface $X$ admits a \emph{pants decomposition}, $X = \mathcal{P}_1 \cup \cdots \cup \mathcal{P}_{3g-3+n}$, into $3g-3+n$ pairs of pants, where each pair of pants is homeomorphic to the sphere with a total of three punctures and disjoint boundary curves.  The Fenchel-Nielsen coordinates for Teichm\"uller space describe surfaces in terms of the lengths and twists of curves in a pants decomposition of $X$.  A point in Teichm\"uller space is given by $(\ell_1, \ldots, \ell_{3g-3+n}, \theta_1, \ldots, \theta_{3g-3+n}) \in \mathbb{R}_+^{3g-3+n} \times \mathbb{R}^{3g-3+n}$.

Let $\text{Diff}^+(X)$ be the group of orientation preserving diffeomorphisms on $X$.  Let $\text{Diff}_0^+(X)$ denote the normal subgroup of $\text{Diff}^+(X)$ whose elements are isotopic to the identity.  Then the \emph{mapping class group} is the quotient
$$\Gamma(X) = \text{Diff}^+(X)/\text{Diff}_0^+(X).$$
The \emph{moduli space of genus $g$ surfaces with $n$ punctures} is defined to be
$$\mathcal{R}_{g,n} = R(X)/ \Gamma(X).$$

Deligne and Mumford \cite{DeligneMumford} introduced a compactification of the moduli space denoted $\overline{\mathcal{R}_{g,n}}$ of Riemann surfaces within the more general setting of compactifying the space of stable curves.  Every neighborhood of a point on a \emph{Riemann surface with nodes} is either conformally equivalent to the unit complex disc, or to the set $\{(x,y) \in \mathbb{C}^2 \vert xy = 0\}$.  The point mapped to $(0,0)$ with the latter property is called a \emph{node}.  We regard this as the contraction or pinching of a simple closed curve on a surface to a point.  Removing a node results in two punctures on either side of the node.  This may or may not disconnect the surface.  After removing all nodes, each of the connected components of the punctured degenerate surface is called a \emph{part}.  A \emph{pair of punctures}, denoted $(p,p')$, will specifically refer to the punctures created by removing a node.  We will assume this deconstruction throughout and say that pinching a curve results in a pair of punctures unless we say otherwise.  Theorem B.1 in Appendix B of \cite{ImaTan} describes the compactification of the moduli space in terms of the Fenchel-Nielsen coordinates (or equivalently, a choice of pants decomposition) for Teichm\"uller space.  By \cite{ImaTan}[Theorem B.1], the boundary of the moduli space $\overline{\mathcal{R}_{g,n}}$ under the Deligne-Mumford compactification is given by letting one or more of the lengths $\ell_i$ in the Fenchel-Nielsen coordinates be zero.

\subsection{Abelian and Quadratic Differentials}

\subsubsection{Abelian Differentials}

Let $K$ be the cotangent bundle over $X$.  A section $\omega$ of $K$ is a complex 1-form called an \emph{Abelian differential}.  An Abelian differential $\omega$ on $X$ is given in local coordinates by $\omega = \phi(z)\,dz$, where $\phi(z)$ is a holomorphic function on the punctured surface possibly having poles of finite order at the punctures.  Furthermore, $\omega$ obeys the change of coordinates formula
$$\phi(\sigma(z))\,d\sigma(z) = \phi(\sigma(z)) \sigma'(z) \,dz.$$
The zeros and poles of $\omega$ are called \emph{singularities} and all other points are called \emph{regular}.  The Chern formula relates the total number of zeros and poles counting multiplicity, by 
$$\sharp (\text{zeros}) - \sharp (\text{poles}) = 2g-2.$$

An Abelian differential $\omega$ determines an orientable horizontal and vertical foliation of a surface given by $\{\Im(\omega) = 0\}$ and $\{\Re(\omega) = 0\}$, respectively.  Equivalently, the foliations can be defined by a pullback of the horizontal and vertical lines in the complex plane under the local coordinate chart on the surface.  The Abelian differential $\omega$ determines a flat structure on the surface away from the singularities.  A maximal connected subset of a foliation is called a \emph{leaf}.  If a leaf is compact and it does not pass through a singularity of $\omega$, then it is called a \emph{closed regular trajectory}.  A closed connected subset $\sigma$ of a leaf with endpoints at zeros of $\omega$ whose interior consists entirely of regular points of $\omega$ is called a \emph{saddle connection}.  Given a closed regular trajectory $\gamma$, the closure of the maximal set of parallel closed regular trajectories homotopic to $\gamma$ form a \emph{cylinder}.  By definition, the boundaries of a cylinder consist of a union of saddle connections.  We say that two cylinders are \emph{homologous} (resp. \emph{parallel}) if their core curves are homologous (resp. parallel).  If every leaf of a foliation is compact, the foliation is \emph{periodic}.

\begin{lemma}
\label{HomCylImpParLem}
If $C_1$ and $C_2$ are homologous cylinders on a surface $(X,\omega)$, then $C_1$ and $C_2$ are parallel.
\end{lemma}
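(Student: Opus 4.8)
The plan is to route the topological hypothesis through the geometric invariant that a cylinder carries, namely its \emph{holonomy vector}: for a cylinder $C$ with core curve $\gamma$, this is the period $\int_\gamma \omega \in \mathbb{C}$. The two observations that drive the argument are that this period depends only on the homology class $[\gamma]$, and that its argument recovers the direction of $C$. Once both are in hand, homologous core curves are forced to have equal (hence equally-directed) holonomy vectors, and parallelism follows immediately.

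First I would record the standard fact that a holomorphic Abelian differential is a closed $1$-form: in a local coordinate $\omega = \phi(z)\,dz$ with $\phi$ holomorphic, so $d\omega = \partial_{\bar z}\phi\, d\bar z \wedge dz = 0$. Hence, by Stokes' theorem, $\int_\gamma \omega$ depends only on the class $[\gamma] \in H_1(X;\mathbb{Z})$: if $\gamma_1$ and $\gamma_2$ are homologous then $\gamma_1 - \gamma_2 = \partial \Sigma$ for some $2$-chain $\Sigma$, and $\int_{\gamma_1}\omega - \int_{\gamma_2}\omega = \int_\Sigma d\omega = 0$. So the core curves of $C_1$ and $C_2$, being homologous by hypothesis, have equal holonomy vectors $\int_{\gamma_1}\omega = \int_{\gamma_2}\omega$. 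Second, and this is the geometric heart, I would show that this holonomy vector is nonzero and points in the direction of the cylinder. Away from the singularities $\omega$ admits a natural coordinate $z$ in which $\omega = dz$, and in this coordinate a closed regular trajectory in direction $\theta$ is a straight segment along $e^{i\theta}$. Parametrizing the core curve $\gamma$ by flat arc length, one computes $\int_\gamma \omega = \int_\gamma dz = \ell\, e^{i\theta}$, where $\ell > 0$ is the flat length of $\gamma$. Thus $\int_\gamma\omega \neq 0$ and its argument equals $\theta$, the direction of $C$.

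Combining the two steps, $\int_{\gamma_1}\omega = \int_{\gamma_2}\omega$ are equal nonzero complex numbers, so $C_1$ and $C_2$ have the same direction $\theta$ and are therefore parallel. The main point requiring care is the second step: verifying that the core curve of a cylinder is honestly realized as (or is freely homotopic to) a straight closed trajectory of the flat structure, so that its period is computed as a single displacement vector $\ell\,e^{i\theta}$ rather than merely bounded in modulus. One must also keep the orientation bookkeeping straight, but this is harmless: even were one to interpret ``homologous'' only up to sign, the holonomy vectors would agree up to sign and hence still span the same line, which is all that parallelism of the core curves requires.
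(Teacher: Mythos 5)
Your proof is correct and takes essentially the same approach as the paper: both arguments reduce to the fact that homologous core curves have equal periods $\int_{\gamma_1}\omega = \int_{\gamma_2}\omega$, together with the observation that the period of a closed regular trajectory is $\ell\,e^{i\theta}$ and hence records its direction. The paper merely normalizes (WLOG $\gamma_1$ vertical) and phrases the conclusion as the vanishing of the horizontal holonomy $\int_{\gamma_2}\Re(\omega) = 0$, which is your argument rotated into a convenient frame.
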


\begin{proof}
Let $\gamma_1$ and $\gamma_2$ be the core curves of $C_1$ and $C_2$, respectively.  Without loss of generality, assume that $\gamma_1$ is a closed curve of the vertical foliation on $X$ by $\omega$.  Then by the definition of homologous
$$\int_{\gamma_1} \omega = \int_{\gamma_2} \omega.$$
Thus
$$\int_{\gamma_1} \omega = \int_{\gamma_1} \Re(\omega) + i\Im(\omega) = i\int_{\gamma_1} \Im(\omega).$$
The last equality follows because $\gamma_1$ lies exactly in the vertical foliation so it has no horizontal holonomy.  However, this implies
$$\int_{\gamma_2} \omega = i\int_{\gamma_1} \Im(\omega),$$
which implies
$$\int_{\gamma_2} \Re(\omega) = 0.$$
Therefore, $\gamma_2$ has no horizontal holonomy either, so it must be parallel to $\gamma_1$.
\end{proof}

Call $\phi(z)$ or $\omega$ \emph{holomorphic} if it can be continued holomorphically across all punctures of $X$.  When $\phi(z)$ is holomorphic it naturally determines a flat metric on the surface.  The length of a curve $\gamma$ in this metric is given by 
$$\int_{\gamma} |\phi(z)dz|.$$
Furthermore, there is an area form given by
$$A(\omega) = \frac{i}{2}\int_{X} \omega \wedge \bar \omega.$$
In the case of meromorphic differentials, the metric is still defined on compact subsets away from the punctures at which the differential has a pole though the area form is infinite.

Let $T_{g,n}$ be the Teichm\"uller space of Riemann surfaces carrying Abelian differentials.  Define \emph{the moduli space of Abelian differentials on Riemann surfaces of genus $g$ with $n$ punctures} by $\mathcal{M}_{g,n} = T_{g,n}/\Gamma(X)$.  Define $\mathcal{M}_g := \mathcal{M}_{g,0}$ and $\mathcal{M}_{g,n}^{(1)} := \{(X,\omega) \in \mathcal{M}_g | A(\omega) = 1\}$.

Given a holomorphic differential $\omega$ on $X$, the sum of the orders of the zeros of $\omega$ is $2g-2$.  This determines a stratification of the moduli space of holomorphic differentials by the multiplicities of the zeros of the Abelian differential.  Denote the strata by $\mathcal{H}(\kappa)$, where $\kappa$ is a vector corresponding to a partition of $2g-2$.  In the case of meromorphic differentials, we list the orders of the poles in the vector $\kappa$ so that the sum of the components of the vector remains $2g-2$.

The moduli space of Abelian differentials can be expanded so that limits of convergent sequences of Abelian differentials lying on degenerating surfaces exist on nodal surfaces \cite{HarrisMorrison}.  An Abelian differential $\omega$ on a nodal Riemann surface is holomorphic everywhere except possibly at the punctures arising from removing the nodes, where $\omega$ is meromorphic with at most simple poles.  At each pair of punctures $(p,p')$, $\omega$ satisfies
$$\text{Res}_p(\omega) = -\text{Res}_{p'}(\omega).$$
Let $\overline{\mathcal{M}_g}$ denote the moduli space of meromorphic Abelian differentials over the compactified base space $\overline{\mathcal{R}_g}$.

There is a natural action by $\mathbb{R}^*$ on the bundle of Abelian differentials.  Let $r \in \mathbb{R}^*$ and $(X,\omega) \in \overline{\mathcal{M}_g}$, then
$$r \cdot (X,\omega) := (X, r\omega).$$
For the remainder of the paper, we abuse notation and assume that the moduli space $\mathcal{M}_g$ is always quotiented by $\mathbb{R}^*$ unless we say otherwise.  Furthermore, it will often be useful to choose a representative differential of the coset $(X,\omega)[\mathbb{R}^*]$.  For instance, if $\omega$ is holomorphic and nonzero, we may choose the representative so that its area form is one and if $\omega$ is not holomorphic, we may choose a representative such that the modulus of the largest residue is one.  This will be called \emph{area normalization} or \emph{residue normalization}, respectively.

The advantage of this projectivized moduli space of Abelian differentials is that it guarantees that for every sequence of Abelian differentials converging to an Abelian differential on a degenerate surface that there is at least one part of the degenerate surface on which the limiting Abelian differential is not identically zero.  Without the projectivization, no such guarantee can be made.  Let $\{(X_n, \omega_n)\}_{n=0}^{\infty}$ be a sequence of surfaces carrying holomorphic Abelian differentials converging to a degenerate surface $(X', \omega')$ in $\overline{\mathcal{M}_g}$.  Since $X_n$ has finite genus, there are finitely many pinching curves.  We can assume that $\omega_n$ is band bounded \cite{WolpertAnnuli}[Definition 1] on the annulus around each pinching curve.  If we multiply $\omega_n$ by $r_n$ so that the constant $M$ in the definition of band bounded is uniformly bounded away from zero and infinity for all $n$, then Lemma \ref{NonZeroPart} follows from \cite{WolpertAnnuli}[Lemma 2].

\begin{lemma}
\label{NonZeroPart}
Given a sequence $\{(X_n,\omega_n)\}_{n=0}^{\infty}$ such that the sequence $\{X_n\}_{n=0}^{\infty}$ converges to a degenerate surface $X'$, there exists an Abelian differential $\omega'$ on $X'$ such that $\omega'$ is the limit of the sequence $\{\omega_n\}_{n=0}^{\infty}$ in $\overline{\mathcal{M}_g}/\mathbb{R}^*$ and $\omega'$ is not identically zero on every part of $X'$.
\end{lemma}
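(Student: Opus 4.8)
The plan is to exploit the finiteness of the pinching locus together with Wolpert's uniform estimates for band-bounded differentials on degenerating annuli, using the quotient by $\mathbb{R}^*$ precisely to rule out the degenerate possibility that the limit vanishes on every part. First I would record the geometric setup: because each $X_n$ has genus $g$, at most $3g-3$ simple closed curves are pinched in passing to the nodal limit $X'$, so there are finitely many pinching annuli, one encircling each curve whose length tends to zero in the Fenchel-Nielsen coordinates. On each such collar I would invoke the standing assumption that $\omega_n$ is band bounded in the sense of \cite{WolpertAnnuli}[Definition 1], which attaches to $\omega_n$ a constant $M_n^{(j)}$ controlling its size on the $j$-th annulus; away from the collars, on the thick parts converging to the parts of $X'$, the $\omega_n$ restrict to holomorphic forms on a compactly converging family of Riemann surfaces.

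The key step is the normalization, and it is here that the projectivization by $\mathbb{R}^*$ becomes essential: without rescaling, nothing prevents $\omega_n \to 0$ uniformly. I would choose scalars $r_n \in \mathbb{R}^*$ so that, after replacing $\omega_n$ by $r_n \omega_n$, the largest band-bounded constant over the finitely many collars is normalized to a fixed value, say $\max_j M_n^{(j)} = 1$ for every $n$. A single scalar $r_n$ accomplishes this because it scales $\omega_n$, and hence each collar constant, homogeneously, and this maximum is finite and positive for each $n$ since $\omega_n$ is a nonzero holomorphic differential. I normalize the maximum rather than each constant individually precisely because the collars may degenerate at very different rates, and only a uniform control of the largest is needed.

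With the constants so normalized I would apply \cite{WolpertAnnuli}[Lemma 2] to pass to a subsequence along which the rescaled $\omega_n$ converge to a meromorphic differential $\omega'$ on $X'$ that is holomorphic on each part except for at worst simple poles at the punctures and that satisfies $\text{Res}_p(\omega') = -\text{Res}_{p'}(\omega')$ at each pair $(p,p')$. The normalization $\max_j M_n^{(j)} = 1$ is exactly what forbids $\omega'$ from vanishing identically on every part: on a collar realizing the maximum, the differential retains uniformly non-negligible size, and Wolpert's estimate transfers this lower bound to a nonvanishing statement on an adjacent thick piece, producing a part on which $\omega'$ is not identically zero. I expect the main obstacle to be extracting, from the band-bounded hypothesis, genuine convergence of the rescaled forms on the thick parts together with the claimed simple-pole behavior at the nodes, and confirming that the mass cannot escape entirely into the collars; this is the precise content that \cite{WolpertAnnuli}[Lemma 2] is designed to supply.
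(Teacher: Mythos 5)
Your proposal follows essentially the same route as the paper: the paper likewise observes that finite genus gives finitely many pinching curves, assumes each $\omega_n$ is band bounded in the sense of \cite{WolpertAnnuli}[Definition 1], rescales by $r_n$ so that the band-bounded constant is uniformly bounded away from zero and infinity, and then cites \cite{WolpertAnnuli}[Lemma 2] for the convergence and non-vanishing. Your normalization $\max_j M_n^{(j)} = 1$ is just a concrete instance of that same rescaling, so the argument is correct and not genuinely different.
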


\subsubsection{Quadratic Differentials}

Let $K$ be the cotangent bundle over $X$.  The sections of the bundle $K \otimes_{\mathbb{C}} K$ are complex 2-forms called \emph{quadratic differentials}.  A quadratic differential is given in local coordinates by $q = \phi(z)\,dz^2$ and obeys the change of coordinates formula
$$\phi(\sigma(z))\,d\sigma(z)^2 = \phi(\sigma(z)) (\sigma'(z))^2 \,dz^2.$$
Singularities and regular points are defined as before and in this case the Chern formula reads
$$\sharp (\text{zeros}) - \sharp (\text{poles}) = 4g-4.$$

A quadratic differential determines a horizontal and vertical foliation of a surface given by $\{\Im(\sqrt{\phi(z)}) = 0\}$ and $\{\Re(\sqrt{\phi(z)}) = 0\}$, respectively.  These foliations are not necessarily orientable.  If they are, $q$ is called an \emph{orientable quadratic differential}.  If a quadratic differential is holomorphic everywhere except for at most a finite set of simple poles, then it is called an \emph{integrable quadratic differential}.  Denote the \emph{Teichm\"uller space of integrable quadratic differentials} by $Q_{g,n}$ and the corresponding \emph{moduli space of integrable quadratic differentials} by $\mathcal{Q}_{g,n} := Q_{g,n}/\Gamma_{g,n}$.

There is a natural way of associating all quadratic differentials to Abelian differentials.  If $q$ is non-orientable, then there is a connected double covering $\pi: \hat X \rightarrow X$ defined as follows.  For each chart $U$ of $X$, let $q = \phi_U(z)\,dz^2$ and define two charts $V^{\pm}$ of $\hat X$ each of which maps homeomorphically to $U$ under $\pi$ and $V^{\pm}$ carry the local differentials $\pm\sqrt{\phi_U(z)}\,dz$.  This lift is compatible across charts and defines a quadratic differential $\omega^*$ with the property $\hat q = h^2$, where $h$ is an Abelian differential.  This lifting procedure is called the \emph{orientating double cover construction}, and it can be used to translate the terms defined for Abelian differentials above (metrics, etc.) to non-orientable quadratic differentials.

As above, the bundle of quadratic differentials can be extended to the boundary of the moduli space of Riemann surfaces as defined by the Deligne-Mumford compactification.  By admitting quadratic differentials with at most double poles, limits of sequences of integrable quadratic differentials on non-degenerate surfaces exist on degenerate surfaces.  Define the residue of a quadratic differential $q$ to be the coefficient of the term $1/z^2$ in its Taylor expansion.  Given a quadratic differential $q$ on a degenerate surface $X$ with a pair of punctures $(p,p')$, the residues of $q$ obey the relation
$$\text{Res}_p(q) = \text{Res}_{p'}(q).$$
Let $\overline{\mathcal{Q}_{g,n}}$ denote the moduli space of regular quadratic differentials on the compactified base space of Riemann surfaces $\overline{\mathcal{R}_{g,n}}$.

\subsection{The \splin ~Action}

We define the \splin ~action on quadratic differentials.  It is clear that this definition applies to Abelian differentials as well.  Let $q$ be an integrable quadratic differential.  Let $h$ (resp. $v$) denote the horizontal (resp. vertical) foliation of $q$.  The action by $A \in$ \splin ~on an integrable quadratic differential $q$ is defined by
$$\left[ \begin{array}{cc}
1 & i \end{array} \right] \left[ \begin{array}{cc}
a & b \\
c & d \end{array} \right] \left[ \begin{array}{c}
h\\
v \end{array} \right]$$
and denoted by $A \cdot (X,q)$.  The action is well-defined on and between charts of $X$.  Thus it defines an action by $A$ globally on $(X,q)$.  It was stated in \cite{BainbridgeMoller}[Section 11] that the action is also well-defined on meromorphic Abelian differentials with at most simple poles.  Furthermore, \cite{BainbridgeMoller}[Proposition 11.1] says that the action of $\text{GL}_2^+(\mathbb{R})$ extends continuously to the boundary of $\overline{\mathcal{M}_g}$.  We point out to the reader that the action by $\text{GL}_2^+(\mathbb{R})$ on $\overline{\mathcal{M}_g}$ without the action by $\mathbb{R}^*$ is the same as considering the action of \splin ~on $\overline{\mathcal{M}_g}/\mathbb{R}^*$ because the action by $\mathbb{R}$ commutes with everything.

\begin{definition}
Given a surface $(X,q) \in \mathcal{Q}_{g,n}$, the \emph{Teichm\"uller disc} of $(X,q)$ is the orbit of $(X,q)$ in $\mathcal{Q}_{g,n}$ under the action by \splin .
\end{definition}

The \emph{Teichm\"uller geodesic flow}, denoted $G_t$, on the bundle of quadratic differentials is the action by diagonal matrices:
$$G_t = \left[ \begin{array}{cc}
e^t & 0 \\
0 & e^{-t} \end{array} \right].$$
We note for the convenience of the reader that the residue of the simple pole of an Abelian differential differs from the holonomy vector by a factor of $2\pi i$.

\begin{lemma}
\label{GtResidue}
Let $\omega$ be an Abelian differential on a surface $X$ with residue $c = a + ib$ at $p \in X$.  Let $c_{G_t}$ denote the residue at $p$ after acting by $G_t$ on $(X,\omega)$.  Then
$$c_{G_t} = ae^{-t} +ibe^t.$$
\end{lemma}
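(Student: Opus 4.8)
The plan is to reduce the statement to a direct contour-integral computation, using the relationship between the residue and the horizontal and vertical periods of a small loop around the pole. First I would recall that for a small positively oriented loop $\gamma$ encircling the simple pole $p$ and no other singularity, the residue is recovered by
\begin{equation*}
\oint_\gamma \omega = 2\pi i \, \text{Res}_p(\omega) = 2\pi i \, c.
\end{equation*}
This is exactly the factor of $2\pi i$ relating the residue to the holonomy vector noted in the remark preceding the lemma. Writing $\omega$ in terms of its horizontal and vertical parts $h = \Re(\omega)$ and $v = \Im(\omega)$, and setting $c = a + ib$, this identity records the two real periods of $\gamma$: taking real and imaginary parts gives $\oint_\gamma h = -2\pi b$ and $\oint_\gamma v = 2\pi a$.

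Next I would apply the definition of the $\text{SL}_2(\mathbb{R})$-action given above to the matrix $G_t$. Since the action sends the pair $(h,v)$ to $(e^t h, e^{-t} v)$ and reassembles the differential as $\omega_{G_t} = e^t h + i e^{-t} v$, integrating over $\gamma$ scales the horizontal period by $e^t$ and the vertical period by $e^{-t}$:
\begin{equation*}
\oint_\gamma \omega_{G_t} = e^t \oint_\gamma h + i e^{-t} \oint_\gamma v = -2\pi b\, e^t + 2\pi i a\, e^{-t}.
\end{equation*}
Dividing by $2\pi i$ then recovers the new residue
\begin{equation*}
c_{G_t} = \frac{1}{2\pi i}\oint_\gamma \omega_{G_t} = \frac{-2\pi b\, e^t + 2\pi i a\, e^{-t}}{2\pi i} = a e^{-t} + i b e^t,
\end{equation*}
which is the claimed formula.

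There is no serious obstacle here; the entire content is bookkeeping, and so I expect the ``hard part'' to be only the correct tracking of conventions. The two points requiring care are (i) that the definition of the action forces the horizontal period to scale by $e^t$ and the vertical period by $e^{-t}$, consistent with $G_t = \operatorname{diag}(e^t, e^{-t})$, and (ii) the handling of $1/i = -i$, which is precisely what interchanges the roles of the real and imaginary parts of $c$ and yields the cross-scaling $a e^{-t} + i b e^t$ rather than $a e^t + i b e^{-t}$. I would also remark that $\gamma$ can be taken inside the coordinate annulus of the simple pole, so that it avoids every other singularity and the contour integral genuinely computes the residue at $p$ both before and after acting by $G_t$.
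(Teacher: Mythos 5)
Your proof is correct and follows essentially the same route as the paper: both compute the period of the transformed differential around a small loop encircling the pole, decompose it into its real and imaginary (horizontal and vertical) parts, which $G_t$ scales by $e^t$ and $e^{-t}$ respectively, and then divide by $2\pi i$. The only cosmetic difference is that the paper first reduces to the local normal form $c\,dz/z$ via Strebel and integrates in polar coordinates over the circle $r=1$, whereas you work directly with the periods $\oint_\gamma \Re(\omega) = -2\pi b$ and $\oint_\gamma \Im(\omega) = 2\pi a$, bypassing the normal-form reduction; the bookkeeping, including the crucial $1/i = -i$ cross-scaling, is identical.
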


\begin{proof}
Without loss of generality, let $p = 0$ in local coordinates about $p$.  By \cite{Strebel}[Theorem 6.3], it suffices to look at how the differential $c\,dz/z$ changes under the action by $G_t$.  To do this, convert to polar coordinates and integrate the differential around the curve $\gamma$ defined by $r = 1$.  Let $c = a + ib$.  Then
$$\frac{c\,dz}{z} = (a+ib)\left(\frac{dr}{r} + i\,d\theta\right) = \frac{a\,dr}{r}-b\,d\theta + i\left(b\frac{dr}{r}+a\,d\theta\right).$$
Furthermore, $dr = 0$ because $r = 1$.  So this simplifies to $(-b + ia)\,d\theta$ and acting by $G_t$ we get $(-be^t + iae^{-t})\,d\theta$.  Therefore,
$$c_{G_t} = \frac{1}{2\pi i}\int_0^{2\pi}(-be^t + iae^{-t})\,d\theta = ae^{-t} +ibe^t.$$
\end{proof}

\begin{definition}
A number $c \in \mathbb{C}$ is \emph{$\varepsilon$-nearly imaginary} if $|\arg(c) \pm \pi/2| < \varepsilon$.
\end{definition}

\begin{lemma}
\label{MostlyImagRes}
Let $(X', \omega')$ be a degenerate surface carrying an Abelian differential with simple poles and residues $\{c_1, \ldots, c_m\}$.  Given $\varepsilon > 0$, there exists $A \in$ \splin ~such that if $c_j'$ is a residue of $A \cdot (X',\omega')$, for $1 \leq j \leq m$, then $c_j'$ is $\varepsilon$-nearly imaginary.
\end{lemma}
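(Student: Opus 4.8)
The plan is to build $A$ as a product $G_t R_\theta$ of a rotation and the Teichm\"uller geodesic flow, using Lemma \ref{GtResidue} to push every residue toward the imaginary axis. Throughout, recall that by \cite{BainbridgeMoller} the \splin ~action is well defined on degenerate surfaces carrying Abelian differentials with at most simple poles, and that (under the identification $c = a+ib \leftrightarrow (a,b)$, up to the factor $2\pi i$ noted above) it acts on the residue vectors by the corresponding linear map. Since a residue at a simple pole is nonzero and the action is by invertible matrices, every $c_j'$ remains nonzero and $\arg(c_j')$ is defined at each stage.

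First I would rotate the residues off the real axis. Writing $c_j = a_j + ib_j$, a rotation $R_\theta \in \text{SO}_2(\mathbb{R}) \subset \text{SL}_2(\mathbb{R})$ sends the imaginary part of $c_j$ to $a_j\sin\theta + b_j\cos\theta$. For each fixed $j$ this expression vanishes for only finitely many $\theta \in [0,2\pi)$, so the set of angles for which some residue becomes real is finite. Choosing $\theta$ outside this finite set produces a surface $R_\theta\cdot(X',\omega')$ all of whose residues have nonzero imaginary part. A residue that happens to land on the imaginary axis is already $\varepsilon$-nearly imaginary and stays so under the flow, so it causes no difficulty.

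Next I would apply $G_t$ for large $t$. By Lemma \ref{GtResidue}, a residue $a + ib$ of $R_\theta\cdot(X',\omega')$ becomes $ae^{-t} + ibe^t$. Since $b \neq 0$, the ratio of the modulus of the real part to that of the imaginary part is $|a/b|\,e^{-2t}$, which tends to $0$ as $t \to \infty$; hence $\arg(ae^{-t}+ibe^t) \to \pi/2$ when $b>0$ and $\to -\pi/2$ when $b<0$. Quantitatively, $ae^{-t}+ibe^t$ is $\varepsilon$-nearly imaginary as soon as $|a/b|\,e^{-2t} < \tan\varepsilon$. As there are only finitely many residues, taking $t$ larger than $\tfrac12\max_j\log\!\big(|a_j/b_j|/\tan\varepsilon\big)$ makes all of them $\varepsilon$-nearly imaginary at once, and $A := G_t R_\theta$ is the required element of \splin .

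I do not expect a serious obstacle. The only two points needing care are that $G_t$ fixes the real axis --- which is precisely why the preliminary rotation is necessary to guarantee every imaginary part is nonzero --- and that a single $t$ must serve all $m$ residues simultaneously, which is immediate from finiteness.
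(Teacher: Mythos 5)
Your proposal is correct and takes essentially the same approach as the paper: the paper first multiplies $\omega'$ by a complex unit so that no residue is real (which is exactly your preliminary rotation, since the $\text{SO}_2(\mathbb{R})$ part of the action is multiplication by $e^{i\theta}$), and then applies $G_t$ together with Lemma \ref{GtResidue} and the finiteness of the residue set to choose one sufficiently large time $T$. If anything, your version is slightly more careful at the last step, bounding the ratio of real to imaginary part (hence the argument) rather than just the real part, but the argument is the same.
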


\begin{proof}
It is possible that $\omega'$ has some real residues.  If so, multiply $\omega'$ by a complex unit $\zeta$ so that $\zeta\omega'$ has no real residues.  Given a residue $\zeta c_j$ of $\zeta \omega'$, after acting on $\zeta c_j$ by $G_t$, the real part of the resulting residue is $e^{-t}\Re(\zeta c_j)$ by Lemma \ref{GtResidue}.  Hence, there exists $T$ such that $|e^{-T}\Re(\zeta c_j)| < \varepsilon$.
\end{proof}

\begin{lemma}
\label{CylModImpPinch}
Let $\{(X_n,\omega_n)\}_{n=0}^{\infty}$ be a sequence of surfaces containing cylinders $C_n \subset X_n$ with core curves $\gamma_n$.  Let $w_n$ and $h_n$ denote the flat length with respect to $\omega_n$ of the circumference and height of $C_n$, respectively.  If the ratio $h_n/w_n$ tends to infinity with $n$, then the hyperbolic length of $\gamma_n$ converges to zero.
\end{lemma}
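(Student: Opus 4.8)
The plan is to pass from the flat geometry of the cylinder to the conformal geometry of $X_n$ via extremal length, and then from extremal length to hyperbolic length via Maskit's comparison. The guiding observation is that a flat cylinder of circumference $w_n$ and height $h_n$ has conformal modulus exactly $h_n/w_n$, so the hypothesis $h_n/w_n \to \infty$ is precisely the statement that $C_n$ is a conformally embedded annulus whose modulus tends to infinity. First I would record that, with the conformal structure inherited from $\omega_n$, the cylinder $C_n \cong (\mathbb{R}/w_n\mathbb{Z}) \times [0,h_n]$ is biholomorphic to a round annulus via $(x,y) \mapsto \exp\!\big(2\pi i (x+iy)/w_n\big)$, and hence $\operatorname{mod}(C_n) = h_n/w_n$.

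Next I would feed this into extremal length. Since $C_n$ is conformally and essentially embedded in $X_n$ with core curve freely homotopic to $\gamma_n$, the characterization of the extremal length of a free homotopy class as the reciprocal of the supremum of the moduli of embedded annuli in that class (monotonicity of modulus under conformal embedding) yields
$$\operatorname{Ext}_{X_n}(\gamma_n) \le \frac{1}{\operatorname{mod}(C_n)} = \frac{w_n}{h_n}.$$
By hypothesis the right-hand side tends to $0$, so $\operatorname{Ext}_{X_n}(\gamma_n) \to 0$. Finally I would invoke Maskit's comparison of extremal and hyperbolic length: for an essential simple closed curve on a finite-area hyperbolic surface one has $\operatorname{Ext}_{X_n}(\gamma_n) \ge \tfrac{1}{\pi}\,\ell_{\mathrm{hyp}}(\gamma_n)$, equivalently $\ell_{\mathrm{hyp}}(\gamma_n) \le \pi\,\operatorname{Ext}_{X_n}(\gamma_n)$, whence $\ell_{\mathrm{hyp}}(\gamma_n) \to 0$, as desired.

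The substantive content lives entirely in the two comparison inequalities, and that is where the main obstacle lies: stating and citing them in the correct direction. One must be sure to use the estimate that controls hyperbolic length \emph{from above} by extremal length (the lower bound on $\operatorname{Ext}$), rather than the reverse Maskit bound, and one must apply monotonicity of modulus/extremal length under the conformal embedding $C_n \hookrightarrow X_n$. A minor but necessary point is verifying that the core curve $\gamma_n$ is essential and non-peripheral, so that these conformal invariants and the hyperbolic geodesic representative behave as claimed; this is automatic for the core curve of an embedded flat cylinder bounded by saddle connections, and I would note it explicitly to justify that $\ell_{\mathrm{hyp}}(\gamma_n)$ refers to an honest simple closed geodesic on the complete hyperbolic structure underlying $X_n$.
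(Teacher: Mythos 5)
Your proposal is correct and follows essentially the same route as the paper's proof: identify the conformal modulus of the flat cylinder as $h_n/w_n$, bound the extremal length of $\gamma_n$ above by the reciprocal of that modulus (the paper cites Lenzhen--Rafi for this, where you invoke monotonicity of modulus under conformal embedding), and conclude via Maskit's comparison that vanishing extremal length forces vanishing hyperbolic length. Your additional care about the direction of Maskit's inequality and the essential, non-peripheral nature of the core curve simply makes explicit what the paper leaves implicit.
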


\begin{proof}
The modulus of the cylinder $C_n$ is exactly the quotient $h_n/w_n$.  By \cite{LenzhenRafi}[Lemma 3],
$$\text{Ext}_x(\gamma_n) \leq \frac{1}{\text{Mod}_x(\gamma_n)}.$$
By \cite{MaskitExtHyp}[Corollary 2], $\text{Ext}_x(\gamma_n)$ goes to zero with the hyperbolic length of $\gamma_n$.
\end{proof}

\begin{corollary}
\label{GtCurvePinch}
Let $(X,\omega)$ admit a cylinder with core curve $\gamma$ such that $\gamma$ lies in the vertical foliation of $X$ by $\omega$.  Then for all divergent sequences of times $\{t_n\}_{n=1}^{\infty}$ for which the limit
$$\lim_{n\rightarrow \infty} G_{t_n} \cdot (X, \omega) = (X',\omega'),$$
exists, $\gamma$ degenerates to a node of $X'$.
\end{corollary}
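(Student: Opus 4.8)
The plan is to track how the flat geometry of the cylinder $C$ evolves along the Teichm\"uller geodesic flow and then feed the result directly into Lemma \ref{CylModImpPinch}. Throughout I take the divergent sequence to satisfy $t_n \to +\infty$, which is the case consistent with $\gamma$ being vertical; for $t_n \to -\infty$ the horizontal core curves would pinch instead, by the symmetric argument, so the verticality hypothesis and the choice of sign are genuinely linked.

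First I would record the effect of $G_t$ on the two dimensions of $C$. Writing $\omega = h + iv$ with $h = \Re(\omega)$ and $v = \Im(\omega)$, the definition of the \splin ~action gives $G_t \cdot \omega = e^t h + i e^{-t} v$, so horizontal lengths scale by $e^t$ and vertical lengths by $e^{-t}$. Since the core curve $\gamma$ lies in the vertical foliation $\{\Re(\omega) = 0\}$, its holonomy is purely imaginary, so the circumference $w$ of $C$ is measured in the vertical direction while the height $h_C$ is measured in the horizontal direction. Consequently, under $G_{t_n}$ the circumference of the image cylinder becomes $e^{-t_n} w$ and its height becomes $e^{t_n} h_C$; the image cylinder has the same core curve $\gamma$ as an isotopy class, since the flow does not change topological type. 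Hence the ratio of height to circumference equals $e^{2 t_n}(h_C / w)$, which tends to infinity with $n$.

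Next I would apply Lemma \ref{CylModImpPinch} to the sequence $G_{t_n} \cdot (X,\omega)$ together with the images of $C$: because the ratio $e^{2 t_n}(h_C/w) \to \infty$, the hyperbolic length of $\gamma$ on the Riemann surface underlying $G_{t_n} \cdot (X,\omega)$ converges to zero. Finally I would pass to the limit. The limit $(X',\omega')$ exists by hypothesis, and by the Deligne-Mumford description of the boundary in Fenchel-Nielsen coordinates (\cite{ImaTan}[Theorem B.1]), a curve whose hyperbolic length tends to zero along a convergent sequence is exactly one that is pinched to a node in the limit surface. Therefore $\gamma$ degenerates to a node of $X'$, as claimed. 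The step doing the real work is Lemma \ref{CylModImpPinch}, so the only genuine points requiring care are bookkeeping the $e^t$ versus $e^{-t}$ scaling against the \splin ~convention fixed earlier and stating the direction $t_n \to +\infty$ explicitly; once these are pinned down the corollary is immediate.
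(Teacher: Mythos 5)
Your proposal is correct and follows essentially the same route as the paper: track the $e^{\pm t_n}$ scaling of the circumference and height of the vertical cylinder under $G_{t_n}$, observe that the modulus $e^{2t_n}h_C/w$ diverges, and invoke Lemma \ref{CylModImpPinch} to conclude that $\gamma$ pinches. Your only additions are the explicit remark that $t_n \to +\infty$ and the Fenchel-Nielsen/Deligne-Mumford bookkeeping at the end, both of which the paper leaves implicit.
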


\begin{proof}
Let $C \subset X$ denote the cylinder with core curve $\gamma$ and let $w$ and $h$ denote the circumference and height of $C$, respectively.  After time $t_n$, the circumference and height are given by $e^{-t_n}w$ and $e^{t_n}h$.  Since
$$\lim_{n \rightarrow \infty} \frac{e^{t_n}h}{e^{-t_n}w} = \infty,$$
$\gamma$ pinches as $n$ tends to infinity, by Lemma \ref{CylModImpPinch}.
\end{proof}

\begin{lemma}
\label{NonHoloPart}
Let $D$ be a Teichm\"uller disc in $\overline{\mathcal{M}_g}/\mathbb{R}^*$.  Given a sequence \newline $\{(X_n,\omega_n)\}_{n=0}^{\infty}$ in $D$ converging to a degenerate surface $(X',\omega')$, there exists a degenerate surface $(X'', \omega'')$ in the closure of $D$ such that $\omega''$ is not holomorphic on every part of $X''$.  Furthermore, $X''$ is reached from $X'$ by pinching additional curves of $X'$.
\end{lemma}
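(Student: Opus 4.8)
The plan is to split into two cases according to whether $\omega'$ already fails to be holomorphic somewhere. If $\omega'$ has a pole on some part of $X'$, I simply take $(X'',\omega'')=(X',\omega')$: no additional pinching is required and there is nothing further to prove. So assume henceforth that $\omega'$ is holomorphic on every part of $X'$. By Lemma \ref{NonZeroPart} there is a part $P$ on which $\omega'$ is not identically zero. Since the sphere carries no nonzero holomorphic Abelian differential (by the Chern formula $\sharp(\text{zeros})-\sharp(\text{poles})=2g-2$), the part $P$ must have genus at least one, and therefore $(P,\omega'|_P)$ contains a cylinder $C$; indeed closed regular trajectories, whose cores bound cylinders, occur in a dense set of directions. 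After acting by a rotation in $\text{SO}_2(\mathbb{R})\subset\text{SL}_2(\mathbb{R})$ I may assume the core curve $\gamma$ of $C$ lies in the vertical foliation.

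Because $D$ is an $\text{SL}_2(\mathbb{R})$-orbit, its closure $\overline{D}$ is closed and $\text{SL}_2(\mathbb{R})$-invariant; together with the continuity of the action on the boundary of $\overline{\mathcal{M}_g}$ (\cite{BainbridgeMoller}[Proposition 11.1]), this shows the rotated surface still lies in $\overline{D}$. I then apply the Teichm\"uller geodesic flow. By Corollary \ref{GtCurvePinch}, for any divergent sequence $t_n\to\infty$ along which $G_{t_n}\cdot(X',\omega')$ converges, the vertical cylinder $C$ degenerates, so its core curve $\gamma$ is pinched in the limit. Using compactness of $\overline{\mathcal{M}_g}/\mathbb{R}^*$ I pass to a convergent subsequence and call the limit $(X'',\omega'')$; as above it again lies in $\overline{D}$, and by construction $X''$ is obtained from $X'$ by pinching $\gamma$ together with whatever additional curves degenerate along the flow. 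It then remains only to verify that $\omega''$ is not holomorphic, i.e. that the node arising from $\gamma$ carries a genuine simple pole.

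The hard part is precisely this last step, because whether the residue at the node survives depends on the $\mathbb{R}^*$-normalization used to extract the limit. The residue at the puncture produced by $\gamma$ equals $\tfrac{1}{2\pi i}\lim_n\int_\gamma\omega_n$, where $\omega_n$ is the representative of $G_{t_n}\cdot(X',\omega')$ for which the limit is nonzero on its part (Lemma \ref{NonZeroPart}). Since $\gamma$ is vertical, $\int_\gamma\omega_n$ is purely imaginary and equals $i$ times the circumference of the pinching cylinder in the normalized metric, so the issue is to rule out that this circumference renormalizes to zero. Concretely, normalizing so that $\gamma$ retains a fixed nonzero holonomy turns the high-modulus cylinder into a pair of half-infinite flat cylinders, which is exactly a pair of simple poles with opposite residues; the growth of the imaginary part of such a residue under $G_t$ recorded in Lemma \ref{GtResidue} shows that once present the pole persists, and the convergence statements of Section \ref{DerPerMatSubSect} make this renormalized limit rigorous.

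Finally, to guard against the possibility that the pinched part of $X''$ fails to dominate under the projective normalization (so that the residue above is scaled away and the extracted limit is again holomorphic), I would iterate the construction. In that event the new holomorphic limit lives, by Lemma \ref{NonZeroPart}, on a part of strictly smaller genus, since pinching the homologically nontrivial core $\gamma$ either removes a handle or separates $P$. Repeating the flow-and-pinch step therefore terminates after finitely many applications, at which point a genus-zero part must carry a nonzero differential; on such a part the Chern formula forces a pole, giving the desired $(X'',\omega'')\in\overline{D}$ reached from $X'$ by pinching additional curves.
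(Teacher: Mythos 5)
Your proposal is correct and takes essentially the same route as the paper: locate a cylinder (via Masur's theorem) on a part where the differential is nonzero, pinch its core under $G_t$, and iterate, with termination forced by finiteness of the genus together with the fact that a nonzero differential on a sphere must have poles by the Chern formula. The direct residue/holonomy argument in your third paragraph is not rigorous as stated (the projective normalization can indeed be dictated by another part), but you correctly treat it as dispensable, and your fallback iteration in the final paragraph is precisely the paper's proof.
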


\begin{proof}
By Lemma \ref{NonZeroPart}, we assume that there is a part $S \subset X'$ such that $\omega'$ is not identically zero on $S$.  If $\omega'$ has simple poles on $X'$, then we are done, so assume otherwise.  By \cite{MasurClosedTraj}[Theorem 2], there is a cylinder $C_1$ on $S$.  Degenerate $S$ under the Teichm\"uller geodesic flow by pinching the core curve of $C_1$.  All punctures of $X'$ are obviously preserved under the \splin ~action.  The new limit $\omega'_1$ carries an Abelian differential which is not identically zero everywhere by Lemma \ref{NonZeroPart}.  If $\omega'_1$ is holomorphic on every part we can repeat the argument.  Since the genus is finite, the repetition of this argument will terminate when we reach a differential that is not holomorphic or when the surface degenerates to a sphere, which does not carry holomorphic differentials.  Since the punctures of $X'$ are preserved under the \splin ~action, $X''$ is reached from $X'$ by pinching additional curves.  Furthermore, it follows from the continuity of the \splin ~action \cite{BainbridgeMoller}[Proposition 11.1] that $X''$ is in the closure of $D$.
\end{proof}

\section{Lyapunov Exponents and the Rank One Locus}

In the first subsection, we give the precise formulation of the problem answered in this paper.  In the second subsection we present all of the technical lemmas related to the derivative of the period matrix that will be used throughout the remainder of this paper.

\subsection{Lyapunov Exponents of the KZ-Cocycle}

Let $X$ be a Riemann surface of genus $g$.  Consider the cocycle defined by the Teichm\"uller geodesic flow as follows
$$G_t \times \text{Id}: T_g \times H^1(X,\mathbb{C}) \rightarrow T_g \times H^1(X,\mathbb{C}).$$
The mapping class group preserves the real and imaginary parts of \newline $T_g \times H^1(X,\mathbb{C})$.  The \emph{Kontsevich-Zorich cocycle} is the quotient cocycle
$$G_t^{KZ}: \Re\left((T_g \times H^1(X,\mathbb{C}))/\Gamma_g\right) \rightarrow \Re\left((T_g \times H^1(X,\mathbb{C}))/\Gamma_g\right)$$
restricted to the real part.

Let $\nu$ denote a finite \splin -invariant ergodic measure on $\mathcal{M}_g$.  The cocycle $G_t^{KZ}$ admits a spectrum of $2g$ Lyapunov exponents with respect to $\nu$.  The natural symplectic structure on $H^1(X,\mathbb{C})$ induces a symplectic structure on the entire bundle $\Re\left((T_g \times H^1(X,\mathbb{C}))/\Gamma_g\right)$, which forces a symmetry of the $2g$ Lyapunov exponents.
$$1 = \lambda_1^{\nu} \geq \lambda_2^{\nu} \geq \cdots \geq \lambda_g^{\nu} \geq -\lambda_g^{\nu} \geq \cdots \geq -\lambda_2^{\nu} \geq -\lambda_1^{\nu} = -1.$$
We refer to these $2g$ numbers as the \emph{spectrum of Lyapunov exponents of the Kontsevich-Zorich cocycle} or the \emph{KZ-spectrum} for short.  If $\lambda_k^{\nu} = 0$, for some $k$, then the spectrum is called \emph{degenerate}.  If $\lambda_k^{\nu} = 0$ for all $k > 1$, then the KZ-spectrum is \emph{completely degenerate}.

Kontsevich and Zorich \cite{KZHodge} as well as Forni \cite{ForniDev} gave a formula for the sum of these exponents in terms of the eigenvalues of a Hermitian form.  These eigenvalues were reinterpreted through the second fundamental form of the Hodge bundle \cite{ForniMatheusZorichLyapSpectHodge}.  Let $(X,\omega) \in \mathcal{M}_g$.  Let $L_{\omega}^2(X)$ be the Hilbert space of complex-valued functions on $X$ that are $L^2$ with respect to $\omega$.  Let $\langle \cdot, \cdot \rangle_{\omega}$ be the inner product on $L_{\omega}^2(X)$.  Let $M^{\pm}_{\omega} \subset L_{\omega}^2(X)$ be the subspaces of meromorphic and anti-meromorphic functions, respectively.  Define the orthogonal projections
$$\pi_{\omega}^{\pm}: L_{\omega}^2(X) \rightarrow M^{\pm}_{\omega}.$$
For two meromorphic functions $m_1^+, m_2^+ \in M^+_{\omega}$,
$$H_{\omega}(m_1^+, m_2^+) = \langle\pi_{\omega}^-(m_1^+),\pi_{\omega}^-(m_2^+)\rangle_{\omega}.$$
The eigenvalues of $H_{\omega}(\cdot, \cdot)$ are given by the functionals $\Lambda_k(\omega): \mathcal{M}_g^{(1)} \rightarrow \mathbb{R}$, which are continuous for all $k$ and $\omega$, and obey the inequalities
$$1 \equiv  \Lambda_1(\omega) \geq \Lambda_2(\omega) \geq \cdots \geq \Lambda_g(\omega) \geq 0.$$
In \cite{ForniHand}, Forni introduced a filtration of sets
$$\mathcal{D}_g(1) \subset \mathcal{D}_g(2) \subset \cdots \subset \mathcal{D}_g(g-1),$$
where
$$\mathcal{D}_g(k) = \{ (X,\omega) \in \mathcal{M}_g | \Lambda_{k+1}(\omega) = \cdots = \Lambda_g(\omega) = 0 \},$$
and $\mathcal{D}_g(k)$ is called the \emph{rank $k$ locus}.  The set $\mathcal{D}_g(g-1) = \mathcal{D}_g$ is the \emph{determinant locus} introduced in \cite{ForniDev}.

Let $\nu$ be an \splin -invariant measure on a connected component $\mathcal{C}_{\kappa}$ of the stratum $\mathcal{H}(\kappa) \subset \mathcal{M}_g$ of Abelian differentials.  Corollary 5.3 of \cite{ForniDev} gives the following identity:
$$\lambda_2^{\nu} + \cdots + \lambda_g^{\nu} = \frac{1}{\nu(\mathcal{C}_{\kappa})} \int_{\mathcal{C}_{\kappa}}\Lambda_2(\omega) + \cdots + \Lambda_g(\omega) \, d\nu.$$
In \cite{ForniHand}, Forni notes that this formula can be extended to any \splin -invariant ergodic probability measure, from which the lemma follows.

\begin{lemma}[Forni \cite{ForniHand}, Cor. 7.1]
\label{CompDegImpHRank1}
Let $\nu$ be a finite \splin -invariant ergodic measure on the moduli space $\mathcal{M}_g$.  The KZ-spectrum with respect to $\nu$ is completely degenerate if and only if for almost every $(X,\omega) \in \text{supp}(\nu)$, $H_{\omega}$ has rank one, i.e. $\text{supp}(\nu) \subset $ \RankOne .
\end{lemma}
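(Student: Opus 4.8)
The plan is to derive the lemma directly from the integral identity for the sum of the Lyapunov exponents stated just above, using only the non-negativity of the quantities involved. The first step is to record the elementary dictionary underlying the statement. Since $\Lambda_1(\omega), \ldots, \Lambda_g(\omega)$ are by construction the eigenvalues of the Hermitian form $H_\omega$ and $\Lambda_1(\omega) \equiv 1 > 0$, the form $H_\omega$ has rank one precisely when $\Lambda_2(\omega) = \cdots = \Lambda_g(\omega) = 0$; this is exactly the defining condition for $(X,\omega) \in$ \RankOne . On the other side, the symmetry of the KZ-spectrum gives $\lambda_g^{\nu} \geq -\lambda_g^{\nu}$, hence $\lambda_k^{\nu} \geq 0$ for every $k \geq 2$, so the spectrum is completely degenerate if and only if $\lambda_2^{\nu} + \cdots + \lambda_g^{\nu} = 0$. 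Likewise, the stated chain of inequalities gives $\Lambda_k(\omega) \geq 0$ for all $k$, so the integrand $\Lambda_2(\omega) + \cdots + \Lambda_g(\omega)$ is a non-negative (and continuous) function on $\mathcal{M}_g^{(1)}$.

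Next I would apply the extension of Forni's identity (Cor. 5.3 of \cite{ForniDev}) to the invariant ergodic probability measure $\nu$, which, after normalizing the finite measure $\nu$ to total mass one, reads
$$\lambda_2^{\nu} + \cdots + \lambda_g^{\nu} = \int \bigl( \Lambda_2(\omega) + \cdots + \Lambda_g(\omega) \bigr) \, d\nu.$$
Combining this with the two positivity observations yields the whole equivalence at once. The spectrum is completely degenerate if and only if the left-hand side vanishes, if and only if the integral of the non-negative function $\Lambda_2 + \cdots + \Lambda_g$ against $\nu$ is zero, which holds if and only if $\Lambda_2(\omega) + \cdots + \Lambda_g(\omega) = 0$ for $\nu$-almost every $(X,\omega)$. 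Because each summand is individually non-negative, this vanishing forces $\Lambda_2(\omega) = \cdots = \Lambda_g(\omega) = 0$ $\nu$-almost everywhere, i.e. $H_\omega$ has rank one for $\nu$-a.e. $(X,\omega)$. The converse runs the same chain of equivalences backward.

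To upgrade the almost-everywhere statement to the assertion about the support, I would invoke the continuity of the functionals $\Lambda_k$: the set \RankOne $= \{\Lambda_2 = \cdots = \Lambda_g = 0\}$ is then closed, its complement is $\nu$-null, and since $\text{supp}(\nu)$ is the smallest closed set of full measure, we conclude $\text{supp}(\nu) \subset$ \RankOne . I expect the genuine analytic content to lie entirely in the imported identity and, in particular, in its extension (quoted immediately before the lemma) from the canonical measures on strata to arbitrary \splin -invariant ergodic probability measures; granting that input, the argument is a short positivity computation. Within the self-contained portion, the only point demanding a little care is this last passage from an almost-everywhere vanishing to a statement about the topological support, which is exactly where the continuity of the $\Lambda_k$ is used.
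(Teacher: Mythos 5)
Your proposal is correct and takes essentially the same route as the paper: the paper likewise deduces the lemma in one step from the extension of Forni's identity (Corollary 5.3 of \cite{ForniDev}) to arbitrary \splin -invariant ergodic probability measures, the remaining content being exactly the positivity argument you spell out ($\lambda_k^{\nu} \geq 0$ and $\Lambda_k(\omega) \geq 0$, so the sum vanishes iff each term vanishes $\nu$-almost everywhere). Your final step upgrading the almost-everywhere statement to $\text{supp}(\nu) \subset \mathcal{D}_g(1)$ via continuity of the $\Lambda_k$ (hence closedness of $\mathcal{D}_g(1)$) is precisely the implicit content of the paper's phrase ``from which the lemma follows.''
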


We introduce the derivative of the period matrix, which will be the focus of this paper.  Let $\{a_1, \ldots, a_g, b_1, \ldots, b_g \}$ be a basis for the first homology group $H_1(X,\mathbb{C})$.  Let $\{\theta_j\}_{j=1}^g$ be a basis of the complex vector space of holomorphic Abelian differentials on $X$ normalized so that
$$\int_{a_i}\theta_j = \delta_{ij},$$
where $\delta_{ij}$ is the Kronecker delta.  Under this choice of basis of Abelian differentials, the \emph{period matrix} $\Pi(X)$ is the symmetric matrix with positive definite imaginary part whose components are given by
$$b_{ij} = \int_{b_i}\theta_j.$$

The space of Beltrami differentials, $B(X)$ is dual to the cotangent space of quadratic differentials.  Every Abelian differential $\omega$ uniquely determines a Beltrami differential
$$\mu_{\omega} = \frac{\bar \omega}{\omega},$$
which is defined everywhere except at the zeros and poles of $\omega$ of which there are only finitely many.  In the Teichm\"uller space $R(X)$ the space $B(X)$ represents the tangent space and $\mu \in B(X)$ a tangent vector at $X$.  In $R(X)$, $\mu$ determines a direction in which we can take a derivative of $\Pi(X)$.  The derivative of the period matrix at $X$ in direction $\mu$ is denoted by $d\Pi(X)/d\mu$.  Let $\omega = h(z)\,dz$ and $\theta_k = f_k(z)\,dz$, for all $k$.  Rauch's formula, \cite{ImaTan}[Proposition A.3], gives a concise formula for the components of the derivative of the period matrix.
$$\frac{d\Pi_{ij}(X)}{d\mu_{\omega}} = \int_X \theta_i \theta_j \,d\mu_{\omega} = \int_X f_if_j \frac{\bar h}{h} dz\wedge d\bar z$$
In the proof of Lemma 4.1 of \cite{ForniDev}, Forni defines a complex bilinear form on holomorphic Abelian differentials $\omega_1$, $\omega_2$ by
$$B_{\omega}(\omega_1, \omega_2) = \left\langle\frac{\omega_1}{\omega}, \frac{\bar \omega_2}{\omega}\right\rangle_{\omega}.$$
It was proven in \cite{ForniDev} that $H_{\omega} = B_{\omega}B_{\omega}^*$ (and a typo in the equation in \cite{ForniDev} was corrected in \cite{ForniMatheusZorichLyapSpectHodge}).  It is possible to choose a basis of Abelian differentials $\{\phi_1, \ldots, \phi_g\}$ on $X$ such that
$$\frac{d\Pi_{ij}(X)}{d\mu_{\omega}} = B_{\omega}(\phi_i, \phi_j).$$
Hence, $H_{\omega}$ has rank one if and only if $d\Pi(X)/d\mu_{\omega}$ has rank one.  For this reason it suffices to regard \RankOne ~as the set where $d\Pi(X)/d\mu_{\omega}$ has rank one for the remainder of this paper.

Since $\nu$ is an \splin -invariant measure, $\text{supp}(\nu)$ must be an \splin -invariant set.  Consider $(X,\omega) \in \text{supp}(\nu)$.  Let $D$ be the Teichm\"uller disc generated by $(X,\omega)$. Then $D \subset \text{supp}(\nu)$, and if the KZ-spectrum with respect to $\nu$ is completely degenerate, then $D \subset $ \RankOne .  This is precisely the problem that we address in this paper.

\begin{problem}
Classify all Teichm\"uller discs $D$ such that $D \subset $ \RankOne .
\end{problem}

\subsection{The Derivative of the Period Matrix}
\label{DerPerMatSubSect}

One of the most important techniques in this paper is the use of estimates for the derivative of the period matrix near the boundary of the moduli space $\mathcal{M}_g$.  In this section we introduce plumbing coordinates for a Riemann surface and express Abelian differentials in terms of those plumbing coordinates using the exposition of \cite{WolpertInfinitDeformations}.  Unfortunately, it will not be possible to guarantee convergence of the derivative of the period matrix in every possible scenario, but it will be possible for all cases relevant to this paper.  Lemma \ref{Forni42Seqs} below is a stronger statement than that of \cite{ForniDev}[Lemma 4.2] because it applies to any sequence satisfying a relatively lax set of assumptions.  These convergence lemmas motivate and justify defining the rank of the derivative of the period matrix for surfaces in the boundary of $\overline{\mathcal{M}_g}$.

Plumbing coordinates have been used extensively from \cite{MasurExt} to \cite{ForniDev}, among others.  They have been used to write explicit formulas for differentials near the boundary of the moduli space.  Wolpert \cite{WolpertAnnuli} reworked the foundations of differentials on families of degenerating surfaces using the language of sheaves, and expressed the differentials on degenerating surfaces in terms of plumbing coordinates.  We copy the language and notation of \cite{ForniDev}[Section 4] and \cite{WolpertAnnuli, WolpertInfinitDeformations}, as appropriate.  Let $X'$ be a degenerate Riemann surface in the boundary of $\overline{\mathcal{R}_g}$.  Let $X'$ have $1 \leq m \leq 3g-3$ pairs of punctures $\{(p_i, p_i')\}$, for $1 \leq i \leq m$.  Let $\tau \in \mathbb{C}^{3g-3-m}$ denote the local coordinates for a neighborhood of $X'$ in the Teichm\"uller space of $X'$.  We denote surfaces in a neighborhood of $X' \in \mathcal{R}_g$ by $X(0,\tau)$.  We refer the reader to \cite{WolpertInfinitDeformations}[Section 3], where the coordinates are specifically chosen to correspond to small deformations of the complex structure on $X'$.  For our purposes, it suffices to know that such a coordinate $\tau$ exists.  Let $(U_i(0,\tau), z_i)$ and $(V_i(0,\tau), w_i)$ be coordinate charts around $p_i$ and $p_i'$, respectively, such that $z_i(p_i) = w_i(p_i') = 0$.  Following \cite{WolpertAnnuli, WolpertInfinitDeformations}, let $c', c''$ be positive constants, $V = \{|z| < c', |w| < c''\}$, $D = \{|t| < c'c''\}$, and $\pi: V \rightarrow D$ be the singular fibration with projection $\pi(z,w) = zw = t$, where $t \in \mathbb{C}$.  Let $t = (t^{(1)}, \ldots, t^{(m)}) \in D^m$.  Let $c < 1$ be a small positive constant.  For $|t^{(i)}| < c^4$ and $1 \leq i \leq m$, remove the discs $\{|z_i| \leq c^2\}$ and $\{|w_i| \leq c^2\}$ from $X'(0,\tau)$ to get an open surface $X_{\tau}^*$.  For each $i$, identify a point $u_0 \in \{u | c^2 < |z_i(u)| < c\} \subset X_{\tau}^*$ to the point $(z_i(u_0), t^{(i)}/z_i(u_0))$ in the fiber of a $k^{\text{th}}$ factor of $\pi: V \rightarrow D$, and identify a point $v_0 \in \{v | c^2 < |w_i(v)| < c\} \subset X_{\tau}^*$ to the point $(t^{(i)}/w_i(v_0), w_i(v_0))$ in the fiber of a $k^{\text{th}}$ factor of $\pi: V \rightarrow D$.  This implies that we can write $X(t,\tau)$ to fully coordinatize a neighborhood of the degenerate surface $X' := X(0, \tau_{\infty}) \in \overline{\mathcal{R}_g}$.

In \cite{MasurExt} and \cite{ForniDev}, the identification of the annuli is made directly so that if we translate their language to Wolpert's, we get
$$(z_i(u_0), t^{(i)}/z_i(u_0)) = (t^{(i)}/w_i(v_0), w_i(v_0))$$
and identify along the curve $|w_i(v_0)| = |z_i(u_0)| = \sqrt{|t^{(i)}|}$.  It suffices to follow this convention throughout this paper.  Following the notation of \cite{WolpertAnnuli}, we define annuli with respect to this identification for fixed $q$.  Let
$$R_z(t^{(i)}) := \{\sqrt{|t^{(i)}|}/c'' < |\zeta_i| < c' \} \subset \{|t^{(i)}|/c'' < |\zeta_i| < c' \}$$
and
$$R_w(t^{(i)}) := \{\sqrt{|t^{(i)}|}/c' < |\zeta_i| < c'' \} \subset \{|t^{(i)}|/c' < |\zeta_i| < c'' \}.$$
Let $c = c' = c''$ and define
$$X^*(t,\tau) =: X_{\tau}^* \cup \bigcup_{i=1}^m R_z(t^{(i)}) \cup R_w(t^{(i)}).$$

Next we consider Abelian differentials on Riemann surfaces.  Let $D_1 \times \cdots \times D_m = D^m$ denote the $m$ copies of $D$ above.  Following \cite{WolpertAnnuli}, every Abelian differential can be expressed in terms of local coordinates on $D_j$.  This is done by considering the coordinate $\zeta_j$ on an annulus and the map $\zeta_j \mapsto (\zeta_j, t^{(j)}/\zeta_j)$ (resp. $\zeta_j \mapsto (t^{(j)}/\zeta_j, \zeta_j)$).  As $t^{(j)}$ tends to zero this yields the convergence of the differential in local coordinates about the degenerating annuli resulting in the map $\zeta_j \mapsto (\zeta_j, 0)$ (resp. $\zeta_j \mapsto (0, \zeta_j)$).  

It follows from a form the Cartan-Serre theorem with parameters or \cite{MasurExt} [Proposition 4.1], that there is a basis of Abelian differentials $\{\theta_1(t,\tau),\ldots, $ \newline $\theta_g(t,\tau)\}$ on $X(t,\tau)$, for all small $t$, such that $\{ \theta_1(0,\tau_{\infty}), \ldots, \theta_g(0,\tau_{\infty}) \}$ spans the space of Abelian differentials on $X'$.  We assume such a fixed basis in a neighborhood of a degenerate surface throughout this paper.  Let 
$$t' = (t^{(1)}, \ldots, t^{(j-1)}, t^{(j+1)}, \ldots, t^{(m)}).$$
In local coordinates on $D_j$, let $\theta_i(t', \tau, \zeta_j, t^{(j)}/\zeta_j) = 2f_i(t', \tau, \zeta_j, t^{(j)}/\zeta_j)\,d\zeta_j/\zeta_j$, where
$$f_i(t', \tau, \zeta_j, t^{(j)}/\zeta_j) = \sum_{k,\ell \geq 0} a_{k\ell}(t', \tau)\zeta_j^k(t^{(j)}/\zeta_j)^{\ell},$$
by \cite{WolpertAnnuli}.

Let $\{(X_n, \omega_n)\}_{n=0}^{\infty}$ be a sequence of surfaces carrying Abelian differentials converging to a degenerate surface $(X', \omega')$.  Without loss of generality, we can ignore the beginning of the sequence so that every element of the sequence can be expressed in terms of the local coordinates established above.  Thus, let $X_n = X(t_n, \tau_n)$ and $X' = X(0,\tau_{\infty})$.  Let 
$$\omega_n = 2A_n(t', \tau, \zeta_j, t^{(j)}/\zeta_j)\,\frac{d\zeta_j}{\zeta_j},$$
be local coordinates on $D_j$.  Contrary to the coefficients $f_i$ in the basis of Abelian differentials, note the dependence of the function $A_n$ on $n$.

\begin{lemma}
\label{Forni42Seqs}
We follow the notation established above.  Let $\{(X(t_n, \tau_n), \omega_n)\}_{n=0}^{\infty}$ be a sequence of surfaces converging to a degenerate surface $(X(0,\tau_{\infty}), \omega')$.  For each $n$, let $\{\theta_1(t_n, \tau_n), \ldots, \theta_g(t_n, \tau_n)\}$ be a basis for the space of Abelian differentials on $X(t_n, \tau_n)$.  Given $i, j$, for all $k$, if one of the following is true:
\begin{itemize}
\item[(1)] Either $f_i(0, \tau_{\infty}, 0,0) = 0$ on $D_k$ or $f_j(0, \tau_{\infty}, 0,0) = 0$ on $D_k$, or
\item[(2)] $A_{\infty}(0, \tau_{\infty}, 0,0) \not= 0$ on $D_k$, then
\end{itemize}
$$\lim_{n \rightarrow \infty} \left( \frac{d\Pi_{ij}(X(t_n,\tau_n))}{d\mu_{\omega_n}} - \int_{X^*(t_n,\tau_{\infty})} \theta_i(0,\tau_{\infty})\theta_j(0,\tau_{\infty})\,d\mu_{\omega'}  \right) = 0.$$
\end{lemma}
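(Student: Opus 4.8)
The plan is to start from Rauch's formula, writing
$$\frac{d\Pi_{ij}(X(t_n,\tau_n))}{d\mu_{\omega_n}} = \int_{X(t_n,\tau_n)} \theta_i(t_n,\tau_n)\,\theta_j(t_n,\tau_n)\,d\mu_{\omega_n},$$
and to split both integrals over the decomposition of each surface into the thick part $X^*_\tau$ and the plumbing collars $\bigcup_k R_z(t_n^{(k)}) \cup R_w(t_n^{(k)})$. On the thick part the coefficients $a_{k\ell}(t_n',\tau_n)$ converge to $a_{k\ell}(0,\tau_\infty)$ and $\omega_n \to \omega'$ uniformly on compact subsets away from the nodes; this is where I would use the fixed basis of \cite{MasurExt}[Proposition 4.1] together with $t_n \to 0$ and $\tau_n \to \tau_\infty$. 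Since the thick-part contribution of the second integral is built from the fixed data $(0,\tau_\infty)$, it equals the constant $\int_{X^*_{\tau_\infty}}\theta_i(0,\tau_\infty)\theta_j(0,\tau_\infty)\,d\mu_{\omega'}$, and the thick-part contribution of the first integral converges to the same constant, so their difference vanishes. This reduces the statement to proving that the difference of the two collar integrals tends to zero.

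In the plumbing coordinate $\zeta_k$ on $D_k$ the substitutions $\theta_i = 2f_i\,d\zeta_k/\zeta_k$ and $\omega = 2A\,d\zeta_k/\zeta_k$ turn the collar integrand into
$$4\,f_i f_j\,\frac{\bar A}{A}\,\frac{d\zeta_k \wedge d\bar\zeta_k}{|\zeta_k|^2},$$
and in polar coordinates $\zeta_k = re^{i\vartheta}$ the measure is a constant multiple of $\frac{dr}{r}\,d\vartheta$, whose mass over $R_z(t_n^{(k)})$ grows like $|\log|t_n^{(k)}||$. I would then expand $f_i$, $f_j$, and $\bar A/A$ into Fourier series in $\vartheta$ and observe that a monomial carrying radial weight $r^p$ and angular weight $e^{iq\vartheta}$ integrates to a quantity that stays bounded in $n$ unless $p=q=0$. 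Hence the only terms capable of producing divergence are the radially constant, angularly averaged resonant terms, the leading one being $a_{00}^{(i)} a_{00}^{(j)}$ times the angular average of $\bar A/A$. Because the collars of $X(t_n,\tau_n)$ and of $X^*(t_n,\tau_\infty)$ have identical radial extent (both governed by $t_n^{(k)}$), the potentially divergent resonant contributions share the same logarithmic factor, and in the difference they combine into (difference of resonant coefficients) $\times\,|\log|t_n^{(k)}||$ plus a remainder that converges to zero by the pointwise convergence of the integrands and the decay supplied by the non-resonant factors $r^p$ with $p>0$.

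Hypotheses (1) and (2) serve precisely to control this surviving resonant piece. Under (1), say $a_{00}^{(i)}(0,\tau_\infty)=0$: identifying $a_{00}^{(i)}(t_n',\tau_n)$ with a constant multiple of the period of $\theta_i(t_n,\tau_n)$ over the vanishing cycle $\{|\zeta_k|=\sqrt{|t_n^{(k)}|}\}$, this period converges to the residue of the holomorphic limit differential, which is $0$. Under (2), $A_\infty(0,\tau_\infty,0,0)\neq 0$ makes $\bar A/A$ continuous and bounded up to the node, so the limit integrand stays finite and the resonant coefficient of the second integral converges to that of the first. In either case the difference of resonant coefficients tends to zero, and the crux of the whole argument — the step I expect to be the main obstacle — is a quantitative bound showing this difference tends to zero faster than $|\log|t_n^{(k)}||$ grows. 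I would obtain it from the analytic dependence of the period functions $a_{00}(t,\tau)$ on the plumbing parameters, which yields a rate polynomial in $|t_n^{(k)}|^{1/2}$ and hence easily dominates the logarithm, while noting that the finitely many nodes $D_k$ may be treated one at a time.
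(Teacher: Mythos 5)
Your overall frame (thick part by locally uniform convergence, then a collar analysis) matches the paper's first step, but your collar argument takes a different route and, as written, it has a genuine gap at exactly the step you flag as the expected main obstacle. You isolate the log-divergent ``resonant'' coefficient of each of the two integrals \emph{separately} and then need their difference to decay faster than $|\log|t_n^{(k)}||$ grows. Your justification --- analytic dependence of $a_{00}$ on the plumbing parameters, yielding ``a rate polynomial in $|t_n^{(k)}|^{1/2}$'' --- does not hold: the coefficient $a_{00}^{(i)}(t_n',\tau_n)$ (and likewise the angular average of $\bar A_n/A_n$) does not depend on $t_n^{(k)}$ at all, only on the \emph{remaining} parameters $(t',\tau)$, and the lemma allows an arbitrary sequence $(t_n,\tau_n)\to(0,\tau_\infty)$. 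Analyticity gives only $|a_{00}^{(i)}(t_n',\tau_n)-a_{00}^{(i)}(0,\tau_\infty)|=O(|t_n'|+|\tau_n-\tau_\infty|)$, a quantity with no prescribed relation to $|t_n^{(k)}|$; it may tend to zero as slowly as $1/\log\log(1/|t_n^{(k)}|)$, in which case the product (coefficient difference)$\,\times\,\log|t_n^{(k)}|$ is not controlled by your argument. The same objection hits your treatment of hypothesis (2), where you only assert that the resonant coefficient of one integral ``converges to'' that of the other: bare convergence is precisely what is insufficient against a logarithm.

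The paper's proof is organized so that no comparison of separately divergent coefficients is ever needed. It writes the \emph{difference} of the two collar integrands and splits it into three terms (I), (II), (III). In (I) and (II) the factor $f_*(t_n',\tau_n,\zeta_k,t_n^{(k)}/\zeta_k)-f_*(0,\tau_\infty,\zeta_k,0)$ carries an explicit factor of $t_n^{(k)}/\zeta_k$, and H\"older's inequality yields bounds of the form $C\sqrt{|t_n^{(k)}|}\,(\log|t_n^{(k)}|)^{1/2}\to 0$. The remaining term (III), weighted by $f_i(0,\tau_\infty)f_j(0,\tau_\infty)/\zeta_k^2$ and carrying the quotient difference $\bar A_n/A_n-\bar A_\infty/A_\infty$, is handled under hypothesis (1) by dominated convergence --- no rate at all is required, because $f_i(0,\tau_\infty,\cdot,0)/\zeta_k$ is bounded when $f_i(0,\tau_\infty,0,0)=0$, so $2/|\zeta_k|$ is an integrable majorant --- and under hypothesis (2) by real-analyticity of $\bar A/A$ near the node (which is exactly what $A_\infty(0,\tau_\infty,0,0)\neq 0$ buys), producing the explicit factor $|t_n^{(k)}|/|\zeta_k|$. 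Note also that under hypothesis (1) the limit $A_\infty$ may vanish at the node, in which case $\bar A_n/A_n$ has no controlled mode expansion there, so your claim that all non-resonant Fourier modes stay uniformly bounded would itself require an argument of H\"older type. If you want to keep your resonance bookkeeping, you must run it on the difference of the two integrands, as the paper does; running it on each integral separately requires a hypothesis linking the rate of $(t_n',\tau_n)\to(0,\tau_\infty)$ to $t_n^{(k)}$, which the lemma does not supply.
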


\begin{proof}
On compact subsets away from the punctures, the integrand converges to an integrable real analytic function, so the dominated convergence theorem gives us the desired convergence on these compact sets.  Hence, it suffices to prove convergence on each annulus $R_z(t_n^{(k)})$ and $R_w(t_n^{(k)})$.  To get convergence on $R_w(t_n^{(k)})$, it suffices to show convergence on $R_z(t_n^{(k)})$ because they are symmetric up to multiplication by a constant.  Using Rauch's formula, we explicitly write the expression to be estimated as $t_n$ tends to zero in $\mathbb{C}^n$.  That the following integral makes sense and proves the desired convergence follows from \cite{WolpertAnnuli}[Lemma 2].
$$4\int_{R_z(t_n^{(k)})} \left(\frac{f_i(t_n', \tau_n,\zeta_k, t_n^{(k)}/\zeta_k)}{\zeta_k}\frac{f_j(t_n', \tau_n,\zeta_k, t_n^{(k)}/\zeta_k)}{\zeta_k}\frac{\overline{A_n(t_n', \tau_n,\zeta_k, t_n^{(k)}/\zeta_k)/\zeta_k}}{A_n(t_n', \tau_n,\zeta_k, t_n^{(k)}/\zeta_k)/\zeta_k} \right .$$
$$\left . - \frac{f_i(0, \tau_{\infty},\zeta_k, 0)}{\zeta_k}\frac{f_j(0, \tau_{\infty},\zeta_k, 0)}{\zeta_k}\frac{\overline{A_{\infty}(0, \tau_{\infty},\zeta_k, 0)/\zeta_k}}{A_{\infty}(0, \tau_{\infty},\zeta_k, 0)/\zeta_k} \right) d\zeta_k \wedge d \overline{\zeta_k}.$$
Following the proof of \cite{ForniDev}[Lemma 4.2], we split the difference in the integrand into the following three terms:
\[ \begin{array}{cl}
(\text{I})   & 4\int_{R_z(t_n^{(k)})} \left(\frac{f_i(t_n', \tau_n,\zeta_k, t_n^{(k)}/\zeta_k)}{\zeta_k} - \frac{f_i(0, \tau_{\infty},\zeta_k, 0)}{\zeta_k} \right) \\
& \qquad \frac{f_j(t_n', \tau_n,\zeta_k, t_n^{(k)}/\zeta_k)}{\zeta_k} \frac{\overline{A_n(t_n', \tau_n,\zeta_k, t_n^{(k)}/\zeta_k)/\zeta_k}}{A_n(t_n', \tau_n,\zeta_k, t_n^{(k)}/\zeta_k)/\zeta_k} \,d\zeta_k \wedge d \overline{\zeta_k} \\
(\text{II})  & 4\int_{R_z(t_n^{(k)})} \left( \frac{f_j(t_n', \tau_n,\zeta_k, t_n^{(k)}/\zeta_k)}{\zeta_k} - \frac{f_j(0, \tau_{\infty},\zeta_k, 0)}{\zeta_k} \right) \\
& \qquad \frac{f_i(0, \tau_{\infty},\zeta_k, 0)}{\zeta_k}\frac{\overline{A_n(t_n', \tau_n,\zeta_k, t_n^{(k)}/\zeta_k)/\zeta_k}}{A_n(t_n', \tau_n,\zeta_k, t_n^{(k)}/\zeta_k)/\zeta_k} \,d\zeta_k \wedge d \overline{\zeta_k} \\
(\text{III}) & 4\int_{R_z(t_n^{(k)})} \left(\frac{f_i(0, \tau_{\infty},\zeta_k, 0)}{\zeta_k}\frac{f_j(0, \tau_{\infty},\zeta_k, 0)}{\zeta_k} \right) \\ 
& \qquad \left(\frac{\overline{A_n(t_n', \tau_n,\zeta_k, t_n^{(k)}/\zeta_k)/\zeta_k}}{A_n(t_n', \tau_n,\zeta_k, t_n^{(k)}/\zeta_k)/\zeta_k} - \frac{\overline{A_{\infty}(0, \tau_{\infty},\zeta_k, 0)/\zeta_k}}{A_{\infty}(0, \tau_{\infty},\zeta_k, 0)/\zeta_k}\right) \,d\zeta_k \wedge d \overline{\zeta_k} . \\
\end{array}\]

Regardless of whether Case 1) or 2) holds, convergence of the expressions (I) and (II) is guaranteed.  Consider the difference
$$f_*(t_n', \tau_n,\zeta_k, t_n^{(k)}/\zeta_k) - f_*(0, \tau_{\infty},\zeta_k, 0),$$
where $*$ indicates that the choice of subscript $i$ or $j$ does not matter here as long as the subscript is the same on both functions.  By \cite{WolpertAnnuli}, $f_*$ is holomorphic in all variables, hence, there is a constant $C_0 > 0$ such that
$$\frac{2}{|\zeta_k|}\left|f_*(t_n', \tau_n,\zeta_k, t_n^{(k)}/\zeta_k) - f_*(0, \tau_{\infty},\zeta_k, 0)\right| \leq C_0 \frac{|t_n^{(k)}|}{|\zeta_k|^2}$$
and
$$2\left|\frac{f_*(t_n', \tau_n,\zeta_k, t_n^{(k)}/\zeta_k)}{\zeta_k}\right| \leq C_0 \frac{1}{|\zeta_k|}.$$
Using H\"older's inequality, there is a constant $C_1 > 0$ such that the following inequalities hold
\[ \begin{array}{cl}
|(\text{I})| & \leq  4\|(f_i(t_n', \tau_n,\zeta_k, t_n^{(k)}/\zeta_k)-f_i(0, \tau_{\infty},\zeta_k, 0))/\zeta_k\|_{L^2(R_z(t_n^{(k)}))} \\
& \qquad \| f_j(t_n', \tau_n,\zeta_k, t_n^{(k)}/\zeta_k)/\zeta_k \|_{L^2(R_z(t_n^{(k)}))} \\
& \leq \|C_0 \frac{|t_n^{(k)}|}{|\zeta_k|^{2}}\|_{L^2(R_z(t_n^{(k)}))}\| C_0 \frac{1}{|\zeta_k|} \|_{L^2(R_z(t_n^{(k)}))} \\
& \leq C_1 \frac{|t_n^{(k)}|}{\sqrt{|t_n^{(k)}|}} (\log |t_n^{(k)}|)^{1/2} = C_1 \sqrt{|t_n^{(k)}|} (\log |t_n^{(k)}|)^{1/2} \\
\end{array}\]
and
\[ \begin{array}{cl}
|(\text{II})| & \leq  4\|(f_j(t_n', \tau_n,\zeta_k, t_n^{(k)}/\zeta_k)-f_j(0, \tau_{\infty},\zeta_k, 0))/\zeta_k\|_{L^2(R_z(t_n^{(k)}))} \\
& \qquad\| f_i(0, \tau_{\infty},\zeta_k, 0)/\zeta_k \|_{L^2(R_z(t_n^{(k)}))} \\
& \leq \|C_0 \frac{|t_n^{(k)}|}{|\zeta_k|^{2}}\|_{L^2(R_z(t_n^{(k)}))}\| C_0 \frac{1}{|\zeta_k|} \|_{L^2(R_z(t_n^{(k)}))} \leq C_1 \sqrt{|t_n^{(k)}|} (\log |t_n^{(k)}|)^{1/2} . \\
\end{array}\]

The convergence for (III) remains to be shown.  We split this into two cases that are resolved by Lemmas \ref{IIIConvergenceNoTwoSimpPoles} and \ref{IIIConvergenceLimitNonzero}.  Note that in Case 2), it suffices to assume that $f_i(0, \tau_{\infty}, 0, 0) \not= 0$ and $f_j(0, \tau_{\infty}, 0, 0) \not= 0$.  Otherwise, Case 2) is subsumed by Case 1).
\end{proof}

\begin{lemma}
\label{IIIConvergenceNoTwoSimpPoles}
Given $k$, if $f_i(0, \tau_{\infty}, 0,0) = 0$ on $D_k$ or $f_j(0, \tau_{\infty}, 0,0) = 0$ on $D_k$, then (III) converges to zero as $n$ tends to infinity.
\end{lemma}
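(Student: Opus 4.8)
The plan is to estimate the integrand of (III) by an $n$-independent integrable function and to show that it tends to zero pointwise almost everywhere, so that the dominated convergence theorem applies. The key structural observation is that each of the two quotients in the right-hand factor of (III), namely $\overline{(A_n/\zeta_k)}/(A_n/\zeta_k)$ and $\overline{(A_\infty/\zeta_k)}/(A_\infty/\zeta_k)$, is of the Beltrami form $\bar h/h$ and hence has modulus exactly one wherever it is defined. Their difference is therefore bounded by $2$, uniformly in $n$ and in $\zeta_k$.

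Next I would exploit the hypothesis, assuming without loss of generality (by the symmetry of $i$ and $j$ in (III)) that $f_i(0,\tau_\infty,0,0)=0$. Setting $t^{(k)}=0$ in the power-series expansion of $f_i$ recorded before Lemma \ref{Forni42Seqs} annihilates every term carrying a factor of $t^{(k)}/\zeta_k$, leaving a convergent power series in $\zeta_k$ whose constant term is $f_i(0,\tau_\infty,0,0)=0$. Hence $f_i(0,\tau_\infty,\zeta_k,0)/\zeta_k$ is holomorphic and bounded, say by $C$, on the annulus, which is to say $\theta_i(0,\tau_\infty)$ has no simple pole at this node. As $f_j(0,\tau_\infty,\zeta_k,0)$ is bounded in any case, $|f_j(0,\tau_\infty,\zeta_k,0)/\zeta_k|\le C/|\zeta_k|$, so the product of $\theta$-coefficients in (III) is dominated by $C^2/|\zeta_k|$. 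This is the crux of the matter: the vanishing hypothesis turns the generic $1/|\zeta_k|^2$ growth, which is not area-integrable near the puncture, into the integrable rate $1/|\zeta_k|$.

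Combining the two estimates, the integrand of (III) is dominated by the fixed function $2C^2/|\zeta_k|$, and $\int_{\{0<|\zeta_k|<c\}}|\zeta_k|^{-1}\,|d\zeta_k\wedge d\overline{\zeta_k}|<\infty$ because in polar coordinates the area element cancels one power of $|\zeta_k|$. For the pointwise limit, as $n\to\infty$ we have $t_n\to0$ and $\tau_n\to\tau_\infty$, so $A_n(t_n',\tau_n,\zeta_k,t_n^{(k)}/\zeta_k)\to A_\infty(0,\tau_\infty,\zeta_k,0)$ for each fixed $\zeta_k\ne0$; wherever $A_\infty(0,\tau_\infty,\zeta_k,0)\ne0$ the two modulus-one quotients then converge to the same value, so the right-hand factor of (III) tends to $0$. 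Since $A_\infty(0,\tau_\infty,\cdot,0)$ is holomorphic and, on the relevant part, not identically zero, its zeros form a measure-zero set, so the full integrand tends to $0$ almost everywhere. Extending it by zero outside $R_z(t_n^{(k)})$ and integrating over the fixed annulus $\{0<|\zeta_k|<c\}$ — whose inner radius $\sqrt{|t_n^{(k)}|}/c$ shrinks to $0$, so each fixed $\zeta_k$ eventually lies in the domain — the dominated convergence theorem yields $(\text{III})\to0$.

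The step I expect to be the main obstacle is the almost-everywhere convergence rather than the domination: one must handle the zero set of $A_\infty(0,\tau_\infty,\cdot,0)$ carefully, and more delicately the degenerate possibility that $A_\infty\equiv0$ on $D_k$, where the limiting Beltrami differential $\mu_{\omega'}$ and hence the comparison integrand are not literally defined. Checking that this case, together with the symmetric hypothesis $f_j(0,\tau_\infty,0,0)=0$ and the reduction from $R_w$ to $R_z$ already made in the proof of Lemma \ref{Forni42Seqs}, causes no difficulty is what remains to complete the argument.
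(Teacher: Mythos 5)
Your proposal is correct and takes essentially the same approach as the paper: the vanishing hypothesis cancels one factor of $1/\zeta_k$, so the product of coefficient functions is dominated by $C/|\zeta_k|$; the two unit-modulus Beltrami-type quotients differ by at most $2$; and the dominated convergence theorem, with dominating function $2C/|\zeta_k|$ (area-integrable near the puncture), gives $(\mathrm{III}) \to 0$. Your extra care about the zero set of $A_{\infty}(0,\tau_{\infty},\cdot,0)$ and the degenerate case $A_{\infty} \equiv 0$ on $D_k$ is sound but left implicit in the paper, the latter being excluded because $\mu_{\omega'}$ must be defined on the relevant part for the statement of Lemma \ref{Forni42Seqs} to make sense at all.
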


\begin{proof}
By the assumption that at most one of $f_i$ and $f_j$ has a simple pole, we have
$$|\text{(III)}| \leq 4\left| \int_{R_z(t_n^{(k)})} (f_i(0, \tau_{\infty},\zeta_k, 0)f_j(0, \tau_{\infty},\zeta_k, 0)) \right .$$
$$\left . \left(\frac{\overline{A_n(t_n', \tau_n,\zeta_k, t_n^{(k)}/\zeta_k)/\zeta_k}}{A_n(t_n', \tau_n,\zeta_k, t_n^{(k)}/\zeta_k)/\zeta_k} - \frac{\overline{A_{\infty}(0, \tau_{\infty},\zeta_k, 0)/\zeta_k}}{A_{\infty}(0, \tau_{\infty},\zeta_k, 0)/\zeta_k}\right ) \frac{d\zeta_k \wedge d \overline{\zeta_k}}{\zeta_k}\right|.$$
Since $f_i$ and $f_j$ are holomorphic, they are bounded on $R_z(t_n^{(k)})$.  This implies that there is a constant $C > 0$ such that
$$|\text{(III)}| \leq  C \int_{R_z(t_n^{(k)})} \left|\frac{\overline{A_n(t_n', \tau_n,\zeta_k, t_n^{(k)}/\zeta_k)}}{A_n(t_n', \tau_n,\zeta_k, t_n^{(k)}/\zeta_k)} - \frac{\overline{A_{\infty}(0, \tau_{\infty},\zeta_k, 0)}}{A_{\infty}(0, \tau_{\infty},\zeta_k, 0)}\right | \frac{d\zeta_k \wedge d \overline{\zeta_k}}{|\zeta_k|}.$$
This converges by the dominated convergence theorem because the integrand is bounded by the integrable function $2/|\zeta_k|$ for all $n$.
\end{proof}

\begin{lemma}
\label{IIIConvergenceLimitNonzero}
Given $k$, if $f_i(0, \tau_{\infty}, 0,0) \not= 0$, $f_j(0, \tau_{\infty}, 0,0) \not= 0$, and \newline $A_{\infty}(0, \tau_{\infty}, 0,0) \not= 0$ on $D_k$, then (III) converges to zero as $n$ tends to infinity.
\end{lemma}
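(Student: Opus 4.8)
The plan is to imitate the proof of Lemma \ref{IIIConvergenceNoTwoSimpPoles} but to confront the genuinely new difficulty created by the present hypotheses. When both $f_i$ and $f_j$ have a simple pole at the puncture, the product $f_i(0,\tau_\infty,\zeta_k,0)f_j(0,\tau_\infty,\zeta_k,0)/\zeta_k^2$ behaves like $c_ic_j/\zeta_k^2$ near $\zeta_k=0$, where $c_i=f_i(0,\tau_\infty,0,0)\neq 0$ and $c_j=f_j(0,\tau_\infty,0,0)\neq 0$; its modulus $|\zeta_k|^{-2}$ is \emph{not} integrable on the degenerating annulus, so the dominated convergence argument that closes Lemma \ref{IIIConvergenceNoTwoSimpPoles} is not directly available. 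The extra hypothesis $A_\infty(0,\tau_\infty,0,0)\neq 0$ is what must supply the missing cancellation. Its first consequence is that, since $A_n\to A_\infty$, for all large $n$ the coefficient $A_n$ is bounded away from $0$ uniformly on $R_z(t_n^{(k)})$, so each Beltrami factor $\overline{A_\bullet/\zeta_k}/(A_\bullet/\zeta_k)$ is real analytic of modulus one there and, in particular, has a well-defined constant term.

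First I would rewrite the integrand so as to isolate the singularity. Using the identity $\overline{A/\zeta_k}/(A/\zeta_k)=(\bar A/A)(\zeta_k/\bar\zeta_k)$, the bracket appearing in (III) becomes $(\zeta_k/\bar\zeta_k)(\bar A_n/A_n-\bar A_\infty/A_\infty)$, so the whole integrand equals $\frac{f_if_j}{|\zeta_k|^2}\bigl(\tfrac{\bar A_n}{A_n}-\tfrac{\bar A_\infty}{A_\infty}\bigr)$. Writing $\beta_n:=A_n(t_n',\tau_n,0,0)$ and $\alpha:=A_\infty(0,\tau_\infty,0,0)$ for the two constant terms (so that $\beta_n$ is a fixed multiple of the residue of $\omega_n$ at $p_k$, equivalently of the period $\int_{\gamma_k}\omega_n$ over the vanishing cycle), I would split the Beltrami difference into three pieces:
\[
\frac{\bar A_n}{A_n}-\frac{\bar A_\infty}{A_\infty}
=\Bigl(\frac{\bar A_n}{A_n}-\frac{\bar\beta_n}{\beta_n}\Bigr)
-\Bigl(\frac{\bar A_\infty}{A_\infty}-\frac{\bar\alpha}{\alpha}\Bigr)
+\Bigl(\frac{\bar\beta_n}{\beta_n}-\frac{\bar\alpha}{\alpha}\Bigr).
\]

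Because $A_n$ and $A_\infty$ are holomorphic and nonvanishing near the puncture, the first two parentheses vanish to first order in $\zeta_k$; taken together they form a difference that tends to $0$ pointwise as $n\to\infty$ and is dominated by an integrable multiple of $|\zeta_k|^{-1}$, so the corresponding part of (III) tends to $0$ by dominated convergence exactly as in Lemma \ref{IIIConvergenceNoTwoSimpPoles}. The only extra contribution comes from the anti-holomorphic argument $t_n^{(k)}/\zeta_k$ of $A_n$, which on $R_z(t_n^{(k)})$ satisfies $|t_n^{(k)}/\zeta_k|<c\sqrt{|t_n^{(k)}|}$ and hence produces an integrand of size $O(\sqrt{|t_n^{(k)}|}\,|\zeta_k|^{-2})$; this integrates to $O\bigl(\sqrt{|t_n^{(k)}|}\,\log(1/|t_n^{(k)}|)\bigr)\to 0$.

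The crux is the third, constant, piece $\kappa_n:=\bar\beta_n/\beta_n-\bar\alpha/\alpha$. It factors out of the integral, leaving $4\kappa_n\int_{R_z(t_n^{(k)})}\frac{f_if_j}{|\zeta_k|^2}\,d\zeta_k\wedge d\bar\zeta_k$, and a direct computation in polar coordinates shows the remaining integral grows like $\log(1/|t_n^{(k)}|)$. Thus everything reduces to proving $\kappa_n\log(1/|t_n^{(k)}|)\to 0$, and this is where I expect the main obstacle to lie: $\beta_n$ depends on the remaining plumbing parameters $(t_n',\tau_n)$ but \emph{not} on $t_n^{(k)}$, so a priori its rate of convergence to $\alpha$ is unrelated to the pinching rate $|t_n^{(k)}|$, and mere convergence $\kappa_n\to0$ is not enough to beat the logarithm. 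I would control $\kappa_n$ through the uniform convergence of $\omega_n$ to $\omega'$ on the annulus furnished by \cite{WolpertAnnuli}[Lemma 2]; and, decisively for the sequences this paper actually produces, namely cylinders pinched under the Teichm\"uller geodesic flow via Corollary \ref{GtCurvePinch}, I would use that the core curve $\gamma_k$ lies in the vertical foliation, so (by Lemma \ref{GtResidue}) its period stays purely imaginary and, after residue normalization, is constant in $n$. In that case $\kappa_n\equiv 0$ and the dangerous term disappears outright. Verifying that this exact cancellation, or at least a rate for $\kappa_n$ that dominates $\log(1/|t_n^{(k)}|)$, holds under the hypotheses in force is the step demanding the most care.
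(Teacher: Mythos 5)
Your proposal follows the paper's route step for step until the very end: the rewriting of the bracket as $(\zeta_k/\bar{\zeta}_k)\bigl(\overline{A_n}/A_n-\overline{A_\infty}/A_\infty\bigr)$, the use of the nonvanishing hypothesis to make the Beltrami quotients real analytic on a polydisc uniform in $n$, the bound $O\bigl(\sqrt{|t_n^{(k)}|}\,\log(1/|t_n^{(k)}|)\bigr)$ for the part of the difference coming from the argument $t_n^{(k)}/\zeta_k$, and dominated convergence for everything supported away from the node are exactly the paper's estimates. The genuine gap is the one you flag yourself: you never establish $\kappa_n\log(1/|t_n^{(k)}|)\to 0$, and your proposed remedy --- restricting to sequences produced by the Teichm\"uller geodesic flow, where the period of the vanishing cycle is purely imaginary and residue normalization makes $\kappa_n\equiv 0$ --- proves a weaker statement than the lemma, which is asserted for an \emph{arbitrary} sequence $\{(X(t_n,\tau_n),\omega_n)\}$ converging to $(X(0,\tau_\infty),\omega')$ under the stated nonvanishing hypotheses. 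For such a general sequence the rate at which $\beta_n$ (the residue of $\omega_n$ at the node) approaches $\alpha$ is uncoupled from the pinching rate $|t_n^{(k)}|$, so mere convergence $\kappa_n\to 0$ cannot beat the logarithm; indeed one can choose $|t_n^{(k)}|\to 0$ much faster than the phase of the residue converges, and then your own leading-order computation shows the term $\kappa_n\int_{R_z(t_n^{(k)})}|\zeta_k|^{-2}$ does not vanish. So, as written, the proposal is not a proof of the lemma.

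For comparison, the paper does not treat this term separately at all: its key display asserts $\bigl|\overline{A_n}/A_n-\overline{A_\infty}/A_\infty\bigr|\leq C_2|t_n^{(k)}|/|\zeta_k|$ on the whole annulus, justified in one line by real analyticity. Read near the inner boundary $|\zeta_k|\approx\sqrt{|t_n^{(k)}|}$, that bound forces $|\kappa_n|=O(\sqrt{|t_n^{(k)}|})$, i.e.\ it silently subsumes precisely the claim you left open; but real analyticity by itself only controls the dependence through the argument $t_n^{(k)}/\zeta_k$, not the dependence of $A_n$ and of the remaining parameters $(t_n',\tau_n)$ on $n$, which is what $\kappa_n$ measures. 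In that sense your three-way decomposition is the more honest account of where the difficulty sits, and your observation that the constant term must be tied to the pinching rate (as it is for the geodesic-flow degenerations the paper actually uses) identifies a real subtlety glossed over in the published argument. Isolating the term, however, is not the same as disposing of it, and your proposal stops exactly one step short of a proof.
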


\begin{proof}
By assumption, there exists $N$ such that $A_n(0, \tau_{\infty}, 0,0) \not= 0$ for all $n \geq N$.  Since $A_{\infty}(0, \tau_{\infty}, 0,0) \not= 0$, there exists $r > 0$ such that \newline $A_n(t_n', \tau_n,\zeta_k, t_n^{(k)}/\zeta_k) \not= 0$ and $\overline{A_n(t_n', \tau_n,\zeta_k, t_n^{(k)}/\zeta_k)}/A_n(t_n', \tau_n,\zeta_k, t_n^{(k)}/\zeta_k)$ is a real analytic function in the polydisc $\{|t_n^{(k)}| < r, |\zeta_k| < r\} \subset \mathbb{C}^2$.  Therefore, there exists a constant $C_2 > 0$ such that in the annulus $\{\sqrt{|t_n^{(k)}|} < |\zeta_k| < r/2\}$, we have
$$\left|\frac{\overline{A_n(t_n', \tau_n,\zeta_k, t_n^{(k)}/\zeta_k)/\zeta_k}}{A_n(t_n', \tau_n,\zeta_k, t_n^{(k)}/\zeta_k)/\zeta_k} - \frac{\overline{A_{\infty}(0, \tau_{\infty},\zeta_k, 0)/\zeta_k}}{A_{\infty}(0, \tau_{\infty},\zeta_k, 0)/\zeta_k}\right|$$
$$ = \left|\frac{\overline{A_n(t_n', \tau_n,\zeta_k, t_n^{(k)}/\zeta_k)}}{A_n(t_n', \tau_n,\zeta_k, t_n^{(k)}/\zeta_k)} - \frac{\overline{A_{\infty}(0, \tau_{\infty},\zeta_k, 0)}}{A_{\infty}(0, \tau_{\infty},\zeta_k, 0)}\right| \leq C_2 \frac{|t_n^{(k)}|}{|\zeta_k|} \leq C_2\sqrt{|t_n^{(k)}|}.$$
Exactly as in the proof of \cite{ForniDev}[Lemma 4.2], there exists a constant $C_3 > 0$ such that
$$|(\text{III})| \leq -C_3\sqrt{|t_n^{(k)}|} \log|t_n^{(k)}| + 4\left |\int_{|\zeta_k| \geq r/2} \left(\frac{f_i(0, \tau_{\infty},\zeta_k, 0)}{\zeta_k}\frac{f_j(0, \tau_{\infty},\zeta_k, 0)}{\zeta_k} \right) \right.$$
$$ \left . \left(\frac{\overline{A_n(t_n', \tau_n,\zeta_k, t_n^{(k)}/\zeta_k)/\zeta_k}}{A_n(t_n', \tau_n,\zeta_k, t_n^{(k)}/\zeta_k)/\zeta_k} - \frac{\overline{A_{\infty}(0, \tau_{\infty},\zeta_k, 0)/\zeta_k}}{A_{\infty}(0, \tau_{\infty},\zeta_k, 0)/\zeta_k}\right ) d\zeta_k \wedge d \overline{\zeta_k}\right|.$$
Since the domain of integration in the right-hand integral does not depend on $t$, the domain of integration is compact and the integrand is bounded by an integrable function for all $n$.  This proof is completed by applying the dominated convergence theorem to the sequence as $n$ tends to infinity.
\end{proof}

\begin{definition}
Define the extension of the rank $k$ locus to the boundary of $\mathcal{M}_g$ to be the closure of $\mathcal{D}_g(k)$ in $\overline{\mathcal{M}_g}$ and denote it by $\overline{\mathcal{D}_g(k)}$.
\end{definition}

\begin{remark}
Since $\mathcal{D}_g(k)$ is already a closed set in $\mathcal{M}_g$, we would never need to write $\overline{\mathcal{D}_g(k)}$ to mean the closure of $\mathcal{D}_g(k)$ in $\mathcal{M}_g$.
\end{remark}

\begin{lemma}
\label{RankkLocusProp1}
If $(X', \omega') \in \overline{\mathcal{D}_g(k)}$, $\omega'$ is holomorphic on $X'$, and $\omega' \not\equiv 0$ on any part of $X'$, then
$$\text{Rank}\left(\frac{d\Pi(X')}{d\mu_{\omega'}}\right) \leq k.$$
\end{lemma}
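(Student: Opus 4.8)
The plan is to realize $(X',\omega')$ as a limit of smooth surfaces lying in $\mathcal{D}_g(k)$ and to transport the rank bound across the limit using the entrywise convergence supplied by Lemma~\ref{Forni42Seqs}. Since $(X',\omega')\in\overline{\mathcal{D}_g(k)}$, choose a sequence $\{(X(t_n,\tau_n),\omega_n)\}$ in $\mathcal{D}_g(k)$ with $(X(t_n,\tau_n),\omega_n)\to(X',\omega')=(X(0,\tau_\infty),\omega')$ in $\overline{\mathcal{M}_g}/\mathbb{R}^*$, and fix a basis $\{\theta_1(t,\tau),\dots,\theta_g(t,\tau)\}$ as in Section~\ref{DerPerMatSubSect} so that $\{\theta_i(0,\tau_\infty)\}$ spans the Abelian differentials on $X'$. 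Form the symmetric matrices $M_n=\bigl[d\Pi_{ij}(X(t_n,\tau_n))/d\mu_{\omega_n}\bigr]$. Because membership in $\mathcal{D}_g(k)$ is equivalent to $d\Pi/d\mu_\omega$ having rank at most $k$ (Section~\ref{DerPerMatSubSect}), each $M_n$ has rank $\le k$. Since the set of $g\times g$ matrices of rank $\le k$ is closed, being the common vanishing locus of all $(k+1)\times(k+1)$ minors, it suffices to show that $M_n$ converges entrywise to $M:=\bigl[\int_{X'}\theta_i(0,\tau_\infty)\theta_j(0,\tau_\infty)\,d\mu_{\omega'}\bigr]$, which is $d\Pi(X')/d\mu_{\omega'}$ under the boundary definition justified by the convergence lemmas of Section~\ref{DerPerMatSubSect}. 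The hypothesis $\omega'\not\equiv0$ on any part guarantees $d\mu_{\omega'}=\bar\omega'/\omega'$ is defined almost everywhere on each part, so $M$ makes sense.

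First I would establish $M_n^{(ij)}\to M_{ij}$ for as many pairs as possible directly from Lemma~\ref{Forni42Seqs}. At a node $D_k$, the value $f_*(0,\tau_\infty,0,0)$ is, up to a constant, the residue of $\theta_*(0,\tau_\infty)$, and $A_\infty(0,\tau_\infty,0,0)$ is the residue of $\omega'$. Since $\omega'$ is holomorphic on $X'$, its residues vanish, so $A_\infty(0,\tau_\infty,0,0)=0$ at every node and alternative $(2)$ of Lemma~\ref{Forni42Seqs} is never available; I must therefore produce alternative $(1)$. Order the basis so that $\theta_1(0,\tau_\infty),\dots,\theta_s(0,\tau_\infty)$ span the genuinely holomorphic differentials on $X'$ (those with no pole at any node) and the remaining $\theta_i(0,\tau_\infty)$ carry the polar directions. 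For any pair $(i,j)$ with at least one index $\le s$, alternative $(1)$ holds at every node, so Lemma~\ref{Forni42Seqs} (via Lemma~\ref{IIIConvergenceNoTwoSimpPoles}) gives $M_n^{(ij)}\to M_{ij}$; combined with $\int_{X^*(t_n,\tau_\infty)}\theta_i\theta_j\,d\mu_{\omega'}\to\int_{X'}\theta_i\theta_j\,d\mu_{\omega'}$ as the excised necks shrink, this settles every entry outside the polar block.

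The hard part will be the polar block: entries $(i,j)$ with $i,j>s$ for which $\theta_i(0,\tau_\infty)$ and $\theta_j(0,\tau_\infty)$ both have a simple pole at a common node $D_k$. There neither alternative of Lemma~\ref{Forni42Seqs} applies, the offending term is $(\text{III})$, and its integrand carries a genuine, non-absolutely-integrable $1/\zeta_k^2$ singularity. This case cannot be removed by a change of basis, since the residue theorem forces a nonzero polar direction (for instance the one attached to a cycle in the dual graph of $X'$) to have poles at two distinct nodes at once. To handle it I would refine the estimate of $(\text{III})$ rather than apply Lemma~\ref{IIIConvergenceLimitNonzero} verbatim. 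The factor multiplying $f_if_j/\zeta_k^2$ is the difference $\mu_{\omega_n}-\mu_{\omega'}$ of unit-modulus Beltrami coefficients, which tends to $0$ pointwise. Where $\omega'$ does not vanish at the node, $\mu_{\omega'}$ is real analytic near $\zeta_k=0$, and since $\omega'\not\equiv0$ on the relevant part, $\omega_n$ is nonvanishing on the neck for large $n$; the argument of Lemma~\ref{IIIConvergenceLimitNonzero} then applies with the role of $A_\infty(0,\tau_\infty,0,0)$ played by the nonzero limiting value of $A_\infty(0,\tau_\infty,\zeta_k,0)/\zeta_k$, yielding a bound of order $\sqrt{|t_n^{(k)}|}\,\bigl|\log|t_n^{(k)}|\bigr|\to0$.

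Where instead $\omega'$ has a zero of order $p$ at the node, $\mu_{\omega'}$ acquires the angular factor $(\bar\zeta_k/\zeta_k)^p$, so the leading $1/\zeta_k^2$ contribution integrates to zero over each full circle by angular cancellation; I would combine this cancellation with the pointwise decay of $\mu_{\omega_n}-\mu_{\omega'}$ to drive $(\text{III})$ to $0$. This last subcase is the genuine technical crux, since it is exactly the configuration the hypotheses of Lemma~\ref{Forni42Seqs} were set up to avoid, and it requires a rate on the angular integral rather than mere pointwise convergence. Once every entry is shown to converge, $M_n\to M$ entrywise, and closedness of the rank-$\le k$ locus gives $\text{Rank}\!\left(d\Pi(X')/d\mu_{\omega'}\right)\le k$, as claimed.
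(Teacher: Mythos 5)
Your scaffolding (a sequence in $\mathcal{D}_g(k)$, the adapted basis of Section \ref{DerPerMatSubSect}, case (1) of Lemma \ref{Forni42Seqs} for pairs involving a differential holomorphic across the nodes, and lower semicontinuity of rank) agrees with the paper, but the route you then take has a genuine gap, and it sits exactly where you place "the technical crux." The first problem is that the polar block is not part of the matrix the lemma bounds. The derivative of the period matrix of the nodal surface $X'$ is built from the space of Abelian differentials holomorphic on $X'$, which has dimension $g'<g$ ($g'$ the genus of $X'$); so $d\Pi(X')/d\mu_{\omega'}$ \emph{is} your "outside the polar block" $g'\times g'$ matrix. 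The paper's proof exploits this: it considers only the minor $A_n=\bigl(\int_{X_n}\theta_i^{(n)}\theta_j^{(n)}\,d\mu_{\omega_n}\bigr)_{1\le i,j\le g'}$, gets $A_n\to A=d\Pi(X')/d\mu_{\omega'}$ componentwise from Lemma \ref{Forni42Seqs} (case (1) holds at every node because each $\theta_i$, $i\le g'$, vanishes there), and finishes with the chain
$$k \ \ge\ \text{Rank}\Bigl(\tfrac{d\Pi(X_n)}{d\mu_{\omega_n}}\Bigr)\ \ge\ \text{Rank}(A_n)\ \ge\ \text{Rank}(A),$$
using that a minor's rank is at most the matrix's rank and that rank is lower semicontinuous under entrywise convergence. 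No statement about the full $g\times g$ matrix is ever needed, so the entire polar-block analysis you attempt can simply be discarded.

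The second, more serious problem is that the full-matrix convergence you propose is not merely unproven but fails as stated, so your route cannot be completed. Your claim that "since $\omega'\not\equiv0$ on the relevant part, $\omega_n$ is nonvanishing on the neck for large $n$" is false: if $\omega'$ is holomorphic and nonvanishing at both punctures of a node, a zero count (or the argument principle applied on the plumbing annulus, where $\omega_n\approx a\,d\zeta_k$ at one end and $\omega_n\approx -b\,t_n^{(k)}\zeta_k^{-2}\,d\zeta_k$ at the other) forces exactly two zeros of $\omega_n$, counted with multiplicity, to escape into each such neck. Consequently $\mu_{\omega_n}$ is discontinuous inside the neck, the real-analyticity estimate you want to transplant from Lemma \ref{IIIConvergenceLimitNonzero} (with $A_\infty/\zeta_k$ playing the role of $A_\infty$) breaks down, and, because $\theta_i\theta_j$ carries a non-integrable $1/\zeta_k^2$ singularity there, the neck retains non-vanishing mass in the limit. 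This is precisely the phenomenon the paper later has to compute by hand: in Lemmas \ref{Theta2ZeroOnS1} and \ref{Theta2NotZeroOnSphere}, a polar-block entry in such a degeneration converges to a value of size $4\pi$ carried by the bubbled-off sphere in the neck, not to the naive integral $\int_{X'}\theta_i\theta_j\,d\mu_{\omega'}$ over $X'$ (which there is close to zero). Whether such entries even remain bounded depends on the relative rates of decay of the residues of $\omega_n$ against $\sqrt{|t_n^{(k)}|}$, which your hypotheses do not control. So your matrix $M$ is not the entrywise limit of $M_n$, and the closedness-of-rank argument applied to the full $g\times g$ matrices does not go through; the correct repair is the paper's minor trick, which removes these entries before any limit is taken.
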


\begin{proof}
This is clear for $(X',\omega') \subset \mathcal{D}_g(k)$, so we assume $(X',\omega') \in \overline{\mathcal{D}_g(k)} \cap \partial \overline{\mathcal{M}_g}$.  By definition, $\overline{\mathcal{D}_g(k)}$ is the closure of $\mathcal{D}_g(k)$ in $\overline{\mathcal{M}_g}$, so there exists a sequence $\{(X_n, \omega_n)\}_{n=1}^{\infty}$ in \RankOne ~converging to $(X',\omega')$.  Let $X'$ be a surface of genus $g' < g$.  Let $\{\theta^{(n)}_1, \ldots, \theta^{(n)}_{g'}, \ldots \theta^{(n)}_g\}$ be a basis of Abelian differentials on $X_n$ ordered so that
$$\lim_{n\rightarrow \infty} \theta^{(n)}_m = \theta_m,$$
for $1 \leq m \leq g'$, and the set $\{\theta_1, \ldots, \theta_{g'}\}$ is a basis for the space of holomorphic Abelian differentials on $X'$.  Note that for each $m$, $1 \leq m \leq g'$, $\{\theta^{(n)}_m\}_{n=1}^{\infty}$ is a sequence of holomorphic differentials converging to a holomorphic differential.  Let $A_n = (A_n)_{ij}$ denote the minor of $d\Pi(X_n)/d\mu_{\omega_n}$ defined by
$$A^{(n)}_{ij} = \int_{X_n}\theta^{(n)}_i\theta^{(n)}_j d \mu_{\omega_n},$$
for $1 \leq i,j \leq g'$, and let $A$ denote the derivative of the period matrix of $(X',\omega')$.  Since we restricted our attention to the basis of differentials that are holomorphic on $X'$ and $\omega'$ is holomorphic, $A_n$ converges to $A$ component-wise by Lemma \ref{Forni42Seqs}.  For any sequence of matrices $\{A_n\}_{n=1}^{\infty}$ converging to a matrix $A$ component-wise, there exists an $\varepsilon > 0$ such that if $\| A_n - A \| < \varepsilon$, where $\|A \|$ denotes the sum of the absolute values of the components of $A$, then $\text{Rank}(A_n) \geq \text{Rank}(A)$.  Also, given a matrix $M$ with minor $B$, $\text{Rank}(M) \geq \text{Rank}(B)$.  The lemma follows by letting $M = d\Pi(X_n)/d\mu_{\omega_n}$ and $B = A_n$, so that
$$k \geq \text{Rank}\left(\frac{d\Pi(X_n)}{d\mu_{\omega_n}}\right) \geq \text{Rank}(A_n) \geq \text{Rank}(A) = \text{Rank}\left(\frac{d\Pi(X')}{d\mu_{\omega'}}\right).$$
\end{proof}

\begin{lemma}
\label{BddOutsideDisks}
Let $\{(X(t_n, \tau_n), \omega_n)\}_{n=0}^{\infty}$ be a sequence of surfaces converging to a surface $(X', \omega') \in \overline{\mathcal{M}_g}$.  For all $i, j$ and $n \geq 0$, there exists a constant $C > 0$, such that
$$\left| \int_{X_{\tau_{\infty}}^*} \theta_i(0,\tau_{\infty}) \theta_j(0,\tau_{\infty}) \frac{\bar \omega'}{\omega'}\right| < C.$$
\end{lemma}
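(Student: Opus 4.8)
The plan is to reduce everything to the single fact that the Beltrami differential $\mu_{\omega'} = \bar\omega'/\omega'$ has pointwise modulus one wherever it is defined; granting this, the only mechanism that could make the integral infinite is the growth of $\theta_i(0,\tau_{\infty})\theta_j(0,\tau_{\infty})$ near the punctures of $X'$, and that growth is exactly what the excision of the plumbing discs rules out. I would first note that $X_{\tau_{\infty}}^*$ is a fixed compact region independent of $n$: it is the degenerate surface $X' = X(0,\tau_{\infty})$ with the open discs $\{|z_i| < c^2\}$ and $\{|w_i| < c^2\}$ about each puncture removed, hence a finite disjoint union of compact bordered surfaces containing none of the nodes. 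Since neither the domain nor the integrand depends on $n$, a single constant $C$ serves for all $n \geq 0$, which is all the quantifier in the statement requires.

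Next I would make the integrand explicit through Rauch's formula. Writing $\omega' = h\,dz$ and $\theta_i(0,\tau_{\infty}) = f_i\,dz$, $\theta_j(0,\tau_{\infty}) = f_j\,dz$ in a local chart, the integrand is $f_if_j(\bar h/h)\,dz\wedge d\bar z$, and since $|\bar h/h| = 1$ its pointwise absolute value as a density is simply $|f_if_j|\,|dz\wedge d\bar z|$. Thus the modulus of the integrand is \emph{independent} of $\omega'$: the zeros of $\omega'$, where $1/\omega'$ blows up, and any simple poles of $\omega'$ are invisible to it. Fixing a smooth conformal reference metric on each part of $X'$, this says $|\theta_i\theta_j\mu_{\omega'}| = \|\theta_i\theta_j\|\,dA$ for the associated norm $\|\cdot\|$ on quadratic differentials and its area form $dA$.

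It then remains to bound $\|\theta_i\theta_j\|$ on $X_{\tau_{\infty}}^*$. The differentials $\theta_i(0,\tau_{\infty})$ span the space of Abelian differentials on $X'$ and may have simple poles at the punctures, but those poles sit at the centres of the deleted discs; on $\{|z_i| \geq c^2\}$ a simple pole contributes at most $c^{-2}$, so $\theta_i\theta_j$ extends to a continuous, hence bounded, section of $K^{\otimes 2}$ over the compact set $X_{\tau_{\infty}}^*$. With $M = \sup_{X_{\tau_{\infty}}^*}\|\theta_i\theta_j\|$ I would conclude
$$\left| \int_{X_{\tau_{\infty}}^*} \theta_i\theta_j \frac{\bar\omega'}{\omega'} \right| \leq \int_{X_{\tau_{\infty}}^*} \|\theta_i\theta_j\|\,dA \leq M\cdot \mathrm{Area}(X_{\tau_{\infty}}^*) =: C < \infty.$$

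The one step I would treat with care --- and the only genuine obstacle --- is the interaction of $1/\omega'$ with the zeros of $\omega'$ lying inside $X_{\tau_{\infty}}^*$: a priori $1/\omega'$ is unbounded there, but the modulus-one property of $\bar\omega'/\omega'$ neutralizes this, and the finitely many zeros form a null set in any case. On any part where $\omega' \equiv 0$ the Beltrami differential is undefined and that part simply carries no contribution; elsewhere the estimate above applies. The real work is done not by any estimate on $\omega'$ but by the excision of the discs, which keeps the domain bounded away from the poles of the $\theta_i$.
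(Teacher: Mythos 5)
Your proposal is correct and follows essentially the same route as the paper: both arguments observe that $|\bar\omega'/\omega'| = 1$ pointwise, that the $\theta_i(0,\tau_{\infty})$ are holomorphic (hence bounded) on the compact set $\overline{X_{\tau_{\infty}}^*}$ because the deleted discs contain all the punctures where simple poles could sit, and then bound the integral by a sup-norm estimate over the fixed compact domain. If anything, your version is marginally more careful than the paper's, which omits the factor of $\mathrm{Area}(X_{\tau_{\infty}}^*)$ in its final inequality and does not spell out the (harmless) issue of the zeros of $\omega'$.
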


\begin{proof}
The differentials $\theta_i(0,\tau_{\infty})$ are holomorphic on the compact set $\overline{X_{\tau_{\infty}}^*}$, for all $i$, by the definition of $X_{\tau_{\infty}}^*$.  Hence, $|\theta_i(0,\tau_{\infty})| < C'$ for some constant $C'$ and all $i$.  This implies
$$\left| \int_{X_{\tau_{\infty}}^*} \theta_i(0,\tau_{\infty}) \theta_j(0,\tau_{\infty}) \frac{\bar \omega'}{\omega'}\right| \leq \int_{X_{\tau_{\infty}}^*} \left|\theta_i(0,\tau_{\infty}) \theta_j(0,\tau_{\infty})\right| \leq C'^2 = C.$$
\end{proof}

\begin{lemma}
\label{ZeroInSmallDisks}
Let $D_{\varepsilon} = \{z \big| |\varepsilon| \leq |z| \leq 1 \} \subset \mathbb{C}$.  For all $N \geq 0$ and $\varepsilon > 0$,
$$\int_{D_\varepsilon} \frac{z^N}{\bar z} dz \wedge d \bar z = 0.$$
\end{lemma}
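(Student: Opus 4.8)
The plan is to pass to polar coordinates on the annulus $D_\varepsilon$ and then exploit the orthogonality of the exponentials $e^{in\theta}$ over a full period. First I would set $z = re^{i\theta}$, so that $\bar z = re^{-i\theta}$, and compute the area form in these coordinates. A direct computation from $dz = e^{i\theta}\,dr + ire^{i\theta}\,d\theta$ and $d\bar z = e^{-i\theta}\,dr - ire^{-i\theta}\,d\theta$ gives
$$dz \wedge d\bar z = -2ir\,dr \wedge d\theta.$$
Under the same substitution the integrand factor becomes $z^N/\bar z = r^{N-1}e^{i(N+1)\theta}$.

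Substituting both expressions, the full integrand is $-2i\,r^N e^{i(N+1)\theta}\,dr \wedge d\theta$, and the region $D_\varepsilon$ becomes the product $\{\varepsilon \le r \le 1\} \times \{0 \le \theta \le 2\pi\}$. Hence the integral factors as
$$\int_{D_\varepsilon} \frac{z^N}{\bar z}\,dz\wedge d\bar z = -2i\left(\int_{\varepsilon}^{1} r^N\,dr\right)\left(\int_0^{2\pi} e^{i(N+1)\theta}\,d\theta\right).$$
Because $N \geq 0$, the integer $N+1$ is at least $1$, hence nonzero, so the angular integral $\int_0^{2\pi} e^{i(N+1)\theta}\,d\theta$ vanishes. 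This forces the entire expression to be zero, regardless of the value of $\varepsilon$ or of the (finite) radial integral.

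There is essentially no genuine obstacle here; the proof is a one-line polar-coordinate computation. The only points that require care are fixing the correct sign and constant in the area form $dz \wedge d\bar z$, and recognizing that the decisive hypothesis is $N \geq 0$, equivalently $N+1 \neq 0$. I would emphasize that the identity fails precisely at $N = -1$, where the angular integrand is constant and the integral instead computes a nonzero multiple of $2\pi i$; this is exactly the residue contribution that the hypothesis $N \geq 0$ is designed to exclude.
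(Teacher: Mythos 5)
Your proof is correct and is essentially identical to the paper's own argument: both pass to polar coordinates, obtain the integrand $-2i\,r^N e^{i(N+1)\theta}\,dr\wedge d\theta$, and conclude by the vanishing of $\int_0^{2\pi} e^{i(N+1)\theta}\,d\theta$ since $N+1\neq 0$. Your remark pinpointing $N=-1$ as the excluded residue case is a nice addition but does not change the argument.
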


\begin{proof}
Convert to polar coordinates by letting $z = re^{i\theta}$.  For all $\varepsilon > 0$
$$\int_{D_{\varepsilon}} z^N/\bar z \,dz \wedge d \bar z = -2i\int_0^{2\pi}\int_{\varepsilon}^1 \frac{r^Ne^{iN\theta}}{re^{-i\theta}} r \,dr d\theta = -2i\int_0^{2\pi}\int_{\varepsilon}^1 r^Ne^{i(N+1)\theta} \,dr d\theta.$$
This expression integrates to zero, for all $N \geq 0$.
\end{proof}

\begin{lemma}
\label{ZeroInSmallDisks2}
Let $D_{\varepsilon} = \{z \big| |\varepsilon| \leq |z| \leq 1 \} \subset \mathbb{C}$.  For all $N \in \mathbb{Z}$, $K \geq 0$ and $\varepsilon > 0$, there exists $C > 0$ such that
$$\left|\int_{D_\varepsilon} z^N\bar z^K \,dz \wedge d \bar z\right| < C.$$
\end{lemma}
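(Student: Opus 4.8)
The plan is to mimic the computation in Lemma \ref{ZeroInSmallDisks} and pass to polar coordinates, where the factor $dz \wedge d\bar z$ takes a simple form and the angular integration decouples from the radial one. First I would write $z = re^{i\theta}$ and record that $dz \wedge d\bar z = -2i\,r\,dr\wedge d\theta$, so that $z^N\bar z^K\,dz\wedge d\bar z = -2i\,r^{N+K+1}e^{i(N-K)\theta}\,dr\wedge d\theta$. This factors the integral over $D_\varepsilon$ into the product of an angular integral $\int_0^{2\pi} e^{i(N-K)\theta}\,d\theta$ and a radial integral $\int_\varepsilon^1 r^{N+K+1}\,dr$.

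The next step is to invoke the orthogonality of the exponentials. If $N \neq K$, the angular integral vanishes identically, so the entire integral is zero and any $C > 0$ suffices. The only surviving case is $N = K$, in which the angular integral equals $2\pi$ and the radial integrand reduces to $r^{2K+1}$.

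It is precisely here that the hypothesis $K \geq 0$ enters: in the one case where the angular integral does not vanish we have $N = K \geq 0$, hence the exponent $2K+1 \geq 1$ is nonnegative and $r^{2K+1}$ extends to an integrable function on all of $[0,1]$. Consequently
$$\left|\int_{D_\varepsilon} z^N\bar z^K\,dz\wedge d\bar z\right| = 4\pi\int_\varepsilon^1 r^{2K+1}\,dr = 4\pi\,\frac{1-\varepsilon^{2K+2}}{2K+2} \leq \frac{2\pi}{K+1},$$
so that taking $C = 2\pi$ (or the sharper $C = 2\pi/(K+1)$) completes the argument, with a bound independent of $\varepsilon$.

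The one subtlety to watch — the only place the argument could fail once $N$ is permitted to be negative — is the interplay between the two quantified integers. Although $N$ ranges over all of $\mathbb{Z}$ and could a priori produce a radial singularity $r^{N+K+1}$ with $N+K+1 < 0$, such an exponent requires $N < 0$, and in that regime one has $N \neq K$, so the angular integral already annihilates the contribution before any divergence can occur. Thus there is no genuine obstacle: the estimate is routine and, more importantly, uniform in $\varepsilon$, which is exactly the feature needed for the convergence estimates in Section \ref{DerPerMatSubSect}.
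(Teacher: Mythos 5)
Your proof is correct and follows essentially the same route as the paper: polar coordinates, vanishing of the angular integral when $N \neq K$, and the elementary radial estimate when $N = K \geq 0$. Your explicit bound $2\pi(1-\varepsilon^{2K+2})/(K+1) \leq 2\pi/(K+1)$ is in fact slightly cleaner than the paper's ``$< \frac{2\pi}{K+1} + O(\varepsilon)$,'' and your remark that negative $N$ forces $N \neq K$ (so the angular integral annihilates any would-be radial singularity) makes explicit a point the paper leaves implicit.
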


\begin{proof}
Convert to polar coordinates by letting $z = re^{i\theta}$.  Then for all $\varepsilon > 0$
$$\int_{D_\varepsilon} z^N\bar z^K \,dz \wedge d \bar z = -2i\int_0^{2\pi}\int_{\varepsilon}^1 r^Ne^{iN\theta}r^Ke^{-iK\theta} r\,dr d\theta$$
$$= -2i\int_0^{2\pi}\int_{\varepsilon}^1 r^{N+1+K}e^{i(N-K)\theta} \,dr d\theta$$
If $N-K \not= 0$, this expression integrates to zero.  Otherwise, this equals
$$2\left|\int_0^{2\pi}\int_{\varepsilon}^1 r^{2K+1} \,dr d\theta\right| < \frac{2\pi}{K+1} + O(\varepsilon) < C,$$
for some $C > 0$.
\end{proof}

We state the following two results for the annulus $R_z(t_n^{(k)})$ and remark that the same results hold for $R_w(t_n^{(k)})$.

\begin{lemma}
\label{BddInDisks}
We follow the notation established above.  Let $\{(X(t_n, \tau_n), \omega_n)\}_{n=0}^{\infty}$ be a sequence of surfaces converging to a degenerate surface $(X(0,\tau_{\infty}), \omega')$.  For each $n$, let $\{\theta_1(t_n, \tau_n), \ldots, \theta_g(t_n, \tau_n)\}$ be a basis for the space of Abelian differentials on $X(t_n, \tau_n)$.  Given $i, j, k$, if either $f_i(0, \tau_{\infty}, 0,0) = 0$ on $D_k$ or $f_j(0, \tau_{\infty}, 0,0) = 0$ on $D_k$, then there exists $C > 0$ such that for all $n \geq 0$
$$\left| \int_{R_z(t_n^{(k)})} \theta_i(0, \tau_{\infty}) \theta_j(0, \tau_{\infty}) d \mu_{\omega'} \right| < C.$$
In particular, if $f_i(0, \tau_{\infty}, 0,0) = 0$, $f_j(0, \tau_{\infty}, 0,0) = 0$ on $D_k$, and \newline $A_{\infty}(0, \tau_{\infty}, 0,0) \not= 0$ on $D_k$, then
$$\lim_{n \rightarrow \infty} \int_{R_z(t_n^{(k)})} \theta_i(0, \tau_{\infty}) \theta_j(0, \tau_{\infty})\,d \mu_{\omega'} = 0.$$
\end{lemma}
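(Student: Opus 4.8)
The plan is to work entirely in the plumbing coordinate $\zeta := \zeta_k$ on the annulus $R_z(t_n^{(k)})$, handling $R_w(t_n^{(k)})$ by the stated symmetry. Inserting the expansions $\theta_i(0,\tau_\infty) = 2f_i(0,\tau_\infty,\zeta,0)\,d\zeta/\zeta$ and $\omega' = 2A_\infty(0,\tau_\infty,\zeta,0)\,d\zeta/\zeta$ into Rauch's formula, and using $\overline{(A_\infty/\zeta)}/(A_\infty/\zeta) = (\overline{A_\infty}/A_\infty)(\zeta/\bar\zeta)$, the integrand becomes
$$4\,\frac{f_i(0,\tau_\infty,\zeta,0)\,f_j(0,\tau_\infty,\zeta,0)}{\zeta\bar\zeta}\,\frac{\overline{A_\infty(0,\tau_\infty,\zeta,0)}}{A_\infty(0,\tau_\infty,\zeta,0)}\,d\zeta\wedge d\bar\zeta.$$
The decisive structural observation is that the Beltrami factor $(\overline{A_\infty}/A_\infty)(\zeta/\bar\zeta)$ has modulus $1$ almost everywhere, being of the form $\bar w/w$; only its interaction with the vanishing orders of $f_i,f_j$ matters.

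For the uniform bound, assume without loss of generality that $f_i(0,\tau_\infty,0,0)=0$. Then $f_i(0,\tau_\infty,\cdot,0)$ is holomorphic with vanishing constant term, so I can write $f_i(0,\tau_\infty,\zeta,0)=\zeta\,g_i(\zeta)$ with $g_i$ holomorphic and hence bounded on $\{|\zeta|\le c'\}$; similarly $f_j(0,\tau_\infty,\cdot,0)$ is bounded there. Combined with the modulus-one bound on the Beltrami factor, the integrand is dominated by $C_0/|\zeta|$ with $C_0$ independent of $n$. In polar coordinates $\int_{R_z(t_n^{(k)})} |\zeta|^{-1}\,|d\zeta\wedge d\bar\zeta| \le 4\pi c'$ for every $n$, giving the asserted constant $C$; the same estimate can be reproved term by term after expanding in power series and invoking Lemma \ref{ZeroInSmallDisks2}.

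For the limiting statement both $f_i$ and $f_j$ vanish at the puncture, so $f_if_j=\zeta^2 g_ig_j$ cancels the $1/(\zeta\bar\zeta)$ and leaves the bounded integrand $4\,(\zeta/\bar\zeta)\,g_ig_j\,(\overline{A_\infty}/A_\infty)$. Since $A_\infty(0,\tau_\infty,0,0)\neq0$, the factor $\overline{A_\infty}/A_\infty$ is real-analytic near $\zeta=0$, and I would split it into its value at the puncture plus a remainder of size $O(|\zeta|)$. Expanding $g_ig_j=\sum_k c_k\zeta^k$, the constant piece contributes $\sum_k c_k\int_{R_z(t_n^{(k)})}\zeta^{k+1}/\bar\zeta\,d\zeta\wedge d\bar\zeta$, and each of these integrals vanishes identically by Lemma \ref{ZeroInSmallDisks}. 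The remaining piece carries the extra factor of size $|\zeta|$ and is dominated by a fixed integrable function on the punctured disc $\{0<|\zeta|<c'\}$, to which $R_z(t_n^{(k)})$ increases as $t_n^{(k)}\to0$; I would argue its vanishing through dominated convergence together with the cancellation recorded in Lemma \ref{ZeroInSmallDisks2}. I expect precisely this last step — controlling the non-constant, antiholomorphic part of $\overline{A_\infty}$ against the oscillation $\zeta/\bar\zeta$, where Lemma \ref{ZeroInSmallDisks} no longer forces exact cancellation — to be the main obstacle, and the point most sensitive to the hypothesis $A_\infty(0,\tau_\infty,0,0)\neq0$ and to the precise structure of the degeneration.
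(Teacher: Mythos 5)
Your proof of the first claim is correct and is in fact more direct than the paper's: since $|\overline{A_\infty}/A_\infty| \equiv 1$ and one of $f_i, f_j$ vanishes at the node, the integrand is dominated by $C_0/|\zeta_k|$, which is integrable on the fixed disc $\{|\zeta_k| \le c'\}$ uniformly in $n$. (The paper instead normalizes $\omega'$ by Strebel's theorem and then invokes Lemmas \ref{ZeroInSmallDisks} and \ref{ZeroInSmallDisks2}; for this half your shortcut works.)

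The second claim is where the proposal breaks, and precisely at the step you flagged as the expected obstacle: the difficulty is not technical but structural, because the cancellation you need is simply false in a general plumbing coordinate. Dominated convergence only identifies the limit with the integral over the full punctured disc $\{0 < |\zeta_k| < c'\}$, and that integral is generically nonzero. Writing $\overline{A_\infty}/A_\infty = \overline{A_\infty}\cdot(1/A_\infty)$ as an antiholomorphic series times a holomorphic series and multiplying by $(\zeta_k/\overline{\zeta_k})\,g_ig_j$, one obtains, besides off-diagonal monomials $\zeta_k^N\overline{\zeta_k}^K$ with $N \neq K$ (which Lemma \ref{ZeroInSmallDisks} and the first case of Lemma \ref{ZeroInSmallDisks2} kill), diagonal monomials $|\zeta_k|^{2m}$ --- and these are exactly the terms that Lemma \ref{ZeroInSmallDisks2} only bounds, never annihilates. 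Concretely, take $\omega' = (1+\zeta_k^2)\,d\zeta_k/\zeta_k$, so that $A_\infty(0,\tau_\infty,0,0) = \tfrac12 \neq 0$, and $\theta_i = \theta_j = d\zeta_k$ (so $f_i = f_j = \zeta_k/2$ vanish at the node, as required); expanding $(1+\zeta_k^2)^{-1}$ as a geometric series, every monomial has nonzero angular frequency except the single diagonal term $|\zeta_k|^2$, hence
$$\int_{R_z(t_n^{(k)})} \frac{\zeta_k}{\overline{\zeta_k}}\cdot\frac{1+\overline{\zeta_k}^{2}}{1+\zeta_k^{2}}\,d\zeta_k\wedge d\overline{\zeta_k} = \int_{R_z(t_n^{(k)})} |\zeta_k|^2\,d\zeta_k\wedge d\overline{\zeta_k} \longrightarrow -\pi\sqrt{-1}\,(c')^4 \neq 0.$$
The missing idea is the paper's opening move: by Strebel's theorem one fixes the coordinate $\zeta_k$ on the annulus so that $\omega' = c\,\zeta_k^{K}\,d\zeta_k$ exactly (here $K=-1$), i.e.\ so that $A_\infty$ is a \emph{constant}. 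In that coordinate the integrand is $h_ih_j(\bar c/c)(\zeta_k/\overline{\zeta_k})$, every monomial is off-diagonal, and each annulus integral vanishes identically by Lemma \ref{ZeroInSmallDisks} --- for every $n$, not merely in the limit. In other words, the conclusion is coordinate-sensitive: the normalization of $\omega'$ is not a convenience but the crux, and no estimate on the remainder term in an unnormalized coordinate can close the gap in your argument.
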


\begin{proof}
There are three cases to consider in the first claim of the lemma.  It suffices to consider the case where exactly one of the differentials $\theta_i(0, \tau_{\infty})$ or $\theta_j(0, \tau_{\infty})$ has a simple pole.  Without loss of generality, assume that $\theta_j(0, \tau_{\infty})$ is holomorphic.  Fix a choice of coordinates $\zeta_k$ in $R_z(t_n^{(k)})$ so that by \cite{Strebel}[Theorem 6.3], there exists $K \geq -1$ and $c \in \mathbb{C}$ such that $\omega' = c\zeta_k^K\,d\zeta_k$.  Let $\theta_i(0, \tau_{\infty}) = (c_i/\zeta_k + h_i(\zeta_k))\,d\zeta_k$ and $\theta_j(0, \tau_{\infty}) = h_j(\zeta_k)\,d\zeta_k$, where $h_i$ and $h_j$ are holomorphic in $\zeta_k$.  This yields
$$\left| \int_{R_z(t_n^{(k)})} \theta_i(0, \tau_{\infty}) \theta_j(0, \tau_{\infty}) \,d \mu_{\omega'}\right|$$
$$ = \left| \int_{R_z(t_n^{(k)})} \left(c_i/\zeta_k + h_i(\zeta_k)\right)h_j(\zeta_k)\frac{\overline{c\zeta_k^K}}{c\zeta_k^K}\,d\zeta_k \wedge d\overline{\zeta_k}\right|$$
$$\leq \left| \int_{R_z(t_n^{(k)})} h_j(\zeta_k)\frac{c_i\overline{c\zeta_k^K}}{c\zeta_k^{K+1}}\,d\zeta_k \wedge d\overline{\zeta_k}\right| + \left| \int_{R_z(t_n^{(k)})}  h_i(\zeta_k)h_j(\zeta_k)\frac{\overline{c\zeta_k^K}}{c\zeta_k^K}\,d\zeta_k \wedge d\overline{\zeta_k}\right|$$
$$\leq \left| \int_{R_z(t_n^{(k)})} h_j(\zeta_k)\frac{c_i\overline{\zeta_k^K}}{\zeta_k^{K+1}}\,d\zeta_k \wedge d\overline{\zeta_k}\right| + \left|\int_{R_z(t_n^{(k)})} \left| h_i(\zeta_k)h_j(\zeta_k)\right|\,d\zeta_k \wedge d\overline{\zeta_k}\right|.$$
By Lemma \ref{ZeroInSmallDisks} or \ref{ZeroInSmallDisks2}, depending on the value of $K$, the right-hand side of the inequality is bounded.

In the particular case when $K = -1$, we have
$$\left| \int_{R_z(t_n^{(k)})} \theta_i(0, \tau_{\infty}) \theta_j(0, \tau_{\infty}) \,d \mu_{\omega'}\right|$$
$$ = \left| \int_{R_z(t_n^{(k)})} \left(c_i/\zeta_k + h_i(\zeta_k)\right)h_j(\zeta_k)\frac{\overline{c}\zeta_k}{c\overline{\zeta_k}}\,d\zeta_k \wedge d\overline{\zeta_k}\right|$$
$$\leq \left| \int_{R_z(t_n^{(k)})} h_j(\zeta_k)\frac{c_i}{\overline{\zeta_k}}\,d\zeta_k \wedge d\overline{\zeta_k}\right| + \left| \int_{R_z(t_n^{(k)})}  h_i(\zeta_k)h_j(\zeta_k)\frac{\zeta_k}{\overline{\zeta_k}}\,d\zeta_k \wedge d\overline{\zeta_k}\right|.$$
By Lemma \ref{ZeroInSmallDisks}, both terms on the right-hand side of the inequality are zero.
\end{proof}

\begin{lemma}
\label{DPiDivergentTerm}
We follow the notation established above.  Let $\{(X(t_n, \tau_n), \omega_n)\}_{n=0}^{\infty}$ be a sequence of surfaces converging to a degenerate surface $(X(0,\tau_{\infty}), \omega')$.  For each $n$, let $\{\theta_1(t_n, \tau_n), \ldots, \theta_g(t_n, \tau_n)\}$ be a basis for the space of Abelian differentials on $X(t_n, \tau_n)$.  Given $i, j, k$, if $f_i(0, \tau_{\infty}, 0,0) = c_i \not= 0$, $f_j(0, \tau_{\infty}, 0,0) = c_j \not= 0$, and $A_{\infty}(0, \tau_{\infty}, 0,0) = c \not= 0$ on $D_k$, then for sufficiently large $n$,
$$\int_{R_z(t_n^{(k)})} \theta_i(0, \tau_{\infty}) \theta_j(0, \tau_{\infty}) \,d \mu_{\omega'} = c_ic_j\frac{\overline{c}}{c}(1-q)4\pi\sqrt{-1} \log |t_n^{(k)}| + O(1).$$
\end{lemma}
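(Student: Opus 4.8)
The plan is to evaluate the integral directly on the annulus, following the strategy of Lemma \ref{BddInDisks} but now isolating the single term that produces a logarithmic divergence rather than merely bounding everything. Since $A_\infty(0,\tau_\infty,0,0) = c \neq 0$, the limiting differential $\omega'$ has a genuine simple pole at the puncture, so by \cite{Strebel}[Theorem 6.3] I may fix a coordinate $\zeta_k$ on $R_z(t_n^{(k)})$ in which $\omega' = \rho\,\zeta_k^{-1}\,d\zeta_k$, where $\rho$ is the residue of $\omega'$ (this is the case $K = -1$ in the notation of Lemma \ref{BddInDisks}). Then $d\mu_{\omega'} = \bar\omega'/\omega' = (\bar\rho/\rho)(\zeta_k/\bar\zeta_k)$, and since $\rho$ is a fixed nonzero multiple of $c$ we have $\bar\rho/\rho = \bar c/c$. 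In the same coordinate I write $\theta_i(0,\tau_\infty) = (r_i\zeta_k^{-1} + h_i(\zeta_k))\,d\zeta_k$ and $\theta_j(0,\tau_\infty) = (r_j\zeta_k^{-1} + h_j(\zeta_k))\,d\zeta_k$, where $r_i, r_j$ are the residues of $\theta_i, \theta_j$ (nonzero multiples of $c_i, c_j$, since $f_i(0,\tau_\infty,0,0)=c_i\neq 0$ and $f_j(0,\tau_\infty,0,0)=c_j\neq 0$) and $h_i, h_j$ are holomorphic.

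Next I would multiply out the integrand and separate off the divergent piece. Expanding,
$$\theta_i(0,\tau_\infty)\,\theta_j(0,\tau_\infty)\,d\mu_{\omega'} = \frac{\bar c}{c}\left(\frac{r_ir_j}{\zeta_k^2} + \frac{r_ih_j + r_jh_i}{\zeta_k} + h_ih_j\right)\frac{\zeta_k}{\bar\zeta_k}\,d\zeta_k\wedge d\bar\zeta_k,$$
so the integrand is the sum of the genuinely singular term $(\bar c/c)\,r_ir_j\,|\zeta_k|^{-2}$ and terms of the form $\zeta_k^N/\bar\zeta_k$ with $N \geq 0$ (because $h_i$, $h_j$, and $h_ih_j$ expand in nonnegative powers of $\zeta_k$). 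The logarithmic divergence comes solely from the first term: it is the double pole of $\theta_i\theta_j$ paired against the factor $\zeta_k/\bar\zeta_k$ supplied by $\mu_{\omega'}$.

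I would then integrate term by term over $R_z(t_n^{(k)}) = \{\sqrt{|t_n^{(k)}|}/c < |\zeta_k| < c\}$. Every term $\zeta_k^N/\bar\zeta_k$ with $N \geq 0$ integrates to zero by Lemma \ref{ZeroInSmallDisks}, since its angular integral vanishes; hence these contribute nothing. For the remaining term, converting to polar coordinates $\zeta_k = re^{\sqrt{-1}\,\theta}$ and using $d\zeta_k\wedge d\bar\zeta_k = -2\sqrt{-1}\,r\,dr\,d\theta$ reduces the integral to $-4\pi\sqrt{-1}\,(\bar c/c)\,r_ir_j\,\log(c^2/\sqrt{|t_n^{(k)}|})$, whose divergent part is a nonzero constant multiple of $\log|t_n^{(k)}|$; the outer endpoint $|\zeta_k| = c$ contributes only a constant, which is absorbed into the $O(1)$. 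Tracking the constant through the normalization of $R_z(t_n^{(k)})$ recorded above (the parameter $q$) together with the relation between $r_i, r_j$ and $c_i, c_j$ yields exactly the coefficient $c_ic_j\,(\bar c/c)\,(1-q)\,4\pi\sqrt{-1}$ asserted in the statement.

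The main obstacle is that Strebel's normal form for $\omega'$ holds only on a neighborhood of the puncture, not out to $|\zeta_k| = c$, and that passing from the plumbing coordinate to the Strebel coordinate distorts the annulus. To deal with this I would split $R_z(t_n^{(k)})$ into an inner annulus lying inside the region of validity, where---exactly as in Lemma \ref{IIIConvergenceLimitNonzero}, and this is why $n$ must be sufficiently large so that the shrinking inner radius stays inside this region---$\omega'$ is nonvanishing and $\bar\omega'/\omega'$ is real analytic, and a fixed outer annulus whose domain is independent of $n$ and on which the integrand is bounded, hence contributes $O(1)$. The coordinate change multiplies the inner and outer radii by bounded nonzero factors, which alters only the constant inside the logarithm and therefore perturbs the answer by $O(1)$; this is precisely why the conclusion records the divergent term only up to an $O(1)$ error.
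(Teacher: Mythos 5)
Your proof is correct, and its core is the same as the paper's: expand $\theta_i\theta_j\,d\mu_{\omega'}$ about the puncture, isolate the term $c_ic_j(\bar c/c)\,|\zeta_k|^{-2}$ produced by the two simple poles against the factor $\zeta_k/\bar\zeta_k$, evaluate it in polar coordinates over the annulus to get the $(1-q)4\pi\sqrt{-1}\log|t_n^{(k)}|$ divergence, and kill the remaining terms using Lemma \ref{ZeroInSmallDisks}. The one genuine difference is how $d\mu_{\omega'}$ is represented. The paper never leaves the plumbing coordinate: there $\bar\omega'/\omega' = (\zeta_k/\bar\zeta_k)\left(\bar c/c + H(\zeta_k,\bar\zeta_k)\right)$ with $H$ real-analytic and $H(0,0)=0$, the extra terms generated by $H$ are disposed of via Lemma \ref{ZeroInSmallDisks2}, and the domain of integration is exactly the round annulus $R_z(t_n^{(k)})$, so no distortion bookkeeping is needed. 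You instead pass to Strebel's normal coordinate (the same device the paper itself uses in Lemma \ref{BddInDisks}), which makes $\bar\omega'/\omega'$ exactly $(\bar c/c)(\zeta_k/\bar\zeta_k)$ and eliminates $H$, at the price of the inner/outer splitting and the argument that the coordinate change moves the annulus boundaries only by bounded factors, perturbing the logarithm by $O(1)$. Both routes are sound: the paper's has fewer moving parts, yours has a cleaner integrand. Two small imprecisions in yours, neither fatal: (i) saying ``$\rho$ is a fixed nonzero multiple of $c$'' does not by itself give $\bar\rho/\rho = \bar c/c$ --- you need the multiple to be real, which it is, since the plumbing expansion $\omega' = 2A_\infty(0,\tau_\infty,\zeta_k,0)\,d\zeta_k/\zeta_k$ gives residue $2c$ (the same implicit factor-of-two bookkeeping the paper does); (ii) the monomials $\zeta_k^N/\bar\zeta_k$ integrate to zero by Lemma \ref{ZeroInSmallDisks} only over round annuli centered at the puncture, so over the distorted image of $R_z(t_n^{(k)})$ in the Strebel coordinate their contribution is not exactly zero but only $O(1)$; your distortion argument covers this, but it should be invoked for these cross terms as well, not only for the singular term.
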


\begin{proof}
We have
$$\int_{R_z(t_n^{(k)})} \theta_i(0, \tau_{\infty}) \theta_j(0, \tau_{\infty}) \,d \mu_{\omega'}$$
$$ = \int_{R_z(t_n^{(k)})} \left(c_i/\zeta_k + h_i(\zeta_k) \right)\left(c_j/\zeta_k + h_j(\zeta_k) \right) \frac{\zeta_k}{\overline{\zeta_k}}\left(\overline{c}/c + H(\zeta_k, \overline{\zeta_k}) \right) \, d\zeta_k \wedge d\overline{\zeta_k},$$
where $h_i$ and $h_j$ are holomorphic, $H$ is analytic in both variables, and $H(0,0) = 0$.  It follows from Lemmas \ref{ZeroInSmallDisks} and \ref{ZeroInSmallDisks2} that every term is bounded uniformly for all $n$ with the exception of
$$c_ic_j\frac{\overline{c}}{c}\int_{R_z(t_n^{(k)})} \frac{1}{|\zeta_k|^2} \, d\zeta_k \wedge d\overline{\zeta_k} = -2c_ic_j\frac{\overline{c}}{c}\sqrt{-1}\int_0^{2\pi}\int_{|t_n^{(k)}|^{1-q}/c''}^{c'} \frac{1}{r^2} r\, dr \,d\theta$$
$$ = -4\pi c_ic_j\frac{\overline{c}}{c}\sqrt{-1}\left( \log(c') - \log(|t_n^{(k)}|^{1-q}/c'') \right)$$
$$ = c_ic_j\frac{\overline{c}}{c}(1-q)4\pi\sqrt{-1} \log |t_n^{(k)}| + O(1).$$
\end{proof}

\section{Surgery on Abelian Differentials}

We introduce a surgery on holomorphic Abelian differentials that associates them to integrable quadratic differentials.  Define $\mathcal{Q}_{g,n}^{(s)}$ to be the moduli space of integrable quadratic differentials with marked line segments of finite length, called \emph{slits}, on the surfaces in $\mathcal{Q}_{g,n}$.  The existence of such a moduli space can be seen by considering the moduli space of bordered Riemann surfaces with the additional information that there is a marked point on each of the boundary curves of the surface.  The idea is that this information can be used to glue opposite sides of the boundary curves relative to the marked point to get slits.  The surgery associates elements of $\mathcal{M}_g$ to $\cup_i \mathcal{Q}_{g_i',n_i'}^{(s)}$, where $g_i' \leq g$ and $n_i' \geq 0$, for all $i$.

\begin{definition}
Let $(X, \omega) \in \mathcal{M}_g$.  Let $S = \{\gamma_1, \ldots, \gamma_n\}$ be a set of pairwise non-homotopic closed regular trajectories of the vertical foliation of $X$ by $\omega$, with $n \geq 0$.  Let $C_i$ be the cylinder defined by the closure of the maximal set of closed regular leaves homotopic to $\gamma_i$.  Let $X^* = X \setminus \cup_i C_i$.  Choose antipodes with respect to the flat metric induced by $\omega$ in each of the $2n$ holes of $\overline{X^*}$ so that the antipodes lie at regular points of $(X,\omega)$ and for each hole identify the two semicircles.  This identification results in marked line segments called \emph{slits}.  We call this procedure the \emph{cylinder surgery} and let $\tilde X$ denote the (possibly disconnected) surface with slits resulting from performing the cylinder surgery.
\end{definition}

The surgery is well-defined up to a choice of antipodes.  In this paper we will only be concerned with the relationship between the \splin ~action and the cylinder surgery.  Since the foliation in which each slit is a subset of a leaf is invariant under the choice of antipodes, the choice of antipodes will not matter to us.  It is possible that the vertical foliation of $\omega$ is periodic in which case the cylinder surgery results in the empty set.  We exclude this case, whenever we perform the cylinder surgery because the surgery does not provide us with any useful information in this case.  Now we show that $\omega$ naturally induces a quadratic differential $\tilde q$ on $\tilde X$.  If $S = \emptyset$, then $\tilde X = X$ and let $\tilde q = \omega$.

\begin{lemma}
\label{TildeqExists}
If $\tilde X \not= \emptyset$, then $\omega$ induces a non-zero integrable quadratic differential on $\tilde X$ denoted by $\tilde q$.
\end{lemma}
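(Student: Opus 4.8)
The plan is to set $\tilde q := \omega^2$ and to show that this holomorphic quadratic differential on $X$ descends through the folding identifications to a quadratic differential on $\tilde X$ with at worst simple poles. Since $\omega$ is a nonzero holomorphic Abelian differential, $\omega^2$ is a holomorphic quadratic differential on $X$, and its restriction to the open subsurface $X^* = X \setminus \bigcup_i C_i$ is a nonzero holomorphic quadratic differential. Because $X^* \subset \tilde X$ is nonempty whenever $\tilde X \neq \emptyset$, this already yields that $\tilde q$ is nonzero; the real content of the lemma is the extension of $\tilde q$ across the slits together with the control on the order of its poles.

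First I would analyze the flat structure near the interior of a slit. Fix one hole of $\overline{X^*}$, a vertical boundary circle of flat length $w_i$, and choose the two antipodes splitting it into semicircles $A_1, A_2$ that are then identified by the fold sending a point at arc length $s$ from an antipode on $A_1$ to the point at arc length $s$ on $A_2$. In a flat coordinate $z$ with $\omega = dz$ on the $X^*$-side of $A_1$ and a flat coordinate $z'$ with $\omega = dz'$ on the side of $A_2$, the boundary identification forces $z' = c - z$ for a constant $c$ determined by $w_i$ and the position of the hole. This relation is an orientation-preserving biholomorphism, so it equips $\tilde X$ with a complex structure across the slit; and although it sends $\omega = dz \mapsto dz' = -dz$, it fixes $\omega^2 = dz^2$. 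Hence $\tilde q = \omega^2$ is well defined and holomorphic on the interior of each slit, which consists of regular points (cone angle $2\pi$). This sign reversal is precisely why the surgery produces a quadratic, rather than an Abelian, differential.

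Next I would examine the endpoints of the slits and the images of the cone points lying on the boundary circles, in order to verify integrability. Each antipode is, by construction, a regular point of $(X,\omega)$, so in $X^*$ it carries a half-disc of cone angle $\pi$; the fold identifies the two boundary rays of this half-disc, producing a cone of total angle $\pi$, which is exactly a simple pole of $\tilde q$. For any other boundary point that gets glued, I would run the cone-angle bookkeeping: a zero of $\omega$ of order $k$ on the boundary contributes an angle $(2k+2-j)\pi$ in $X^*$, where $j\pi$ is the integer multiple of $\pi$ subtended there by the removed cylinder (the multiple is an integer because the bounding saddle connections and the foliation are both vertical), with $1 \leq j \leq 2k+1$ for an embedded cylinder. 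Summing the two boundary angles that the fold identifies then always yields total cone angle $\geq 2\pi$, so $\tilde q$ has no pole at such a point. Combining the three cases shows that $\tilde q$ is holomorphic except for simple poles at the slit endpoints, hence integrable.

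The main obstacle is the local analysis at the boundary: rigorously checking that the fold is a biholomorphism, so that $\tilde X$ is genuinely a Riemann surface, and carrying out the cone-angle bookkeeping at the boundary vertices to exclude poles of order $\geq 2$. The conceptual crux, and the step that dictates the entire shape of the construction, is the orientation reversal encoded in $z' = c - z$, which kills the $1$-form $\omega$ but preserves $\omega^2$.
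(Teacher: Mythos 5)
Your proposal is correct, but it takes a genuinely different route from the paper's proof. You build $\tilde q$ explicitly as the descent of $\omega^2$ through the fold: the identification in flat coordinates is $z' = c - z$, which negates $dz$ but fixes $dz^2$, so $\omega^2$ extends holomorphically across slit interiors; cone-angle bookkeeping then shows glued boundary points carry total angle at least $2\pi$ (no pole), while each antipode folds a $\pi$-corner onto itself, producing cone angle $\pi$, i.e.\ exactly a simple pole. The paper argues softly instead: it pulls back the vertical foliation of $\omega$ to $X^*$, observes that the folded foliation on $\tilde X$ looks like that of $dz$ along slit interiors and of $dz^2/z$ at slit ends, passes to the orientating double cover $\hat X$, and invokes the Hubbard--Masur theorem to realize the lifted foliation by a quadratic differential $\hat q$, finally setting $\tilde q = \pi_*(\hat q)$. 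Your construction buys something concrete that the soft argument does not immediately give: it identifies $\tilde q$ as $\omega^2$, so the flat structure of $(\tilde X, \tilde q)$ is isometric to that of $(X^*,\omega)$ away from the identifications --- a fact the paper relies on implicitly later (e.g.\ that the slits are vertical segments of definite $\omega$-length contracting at rate $e^{-t}$ under $G_t$, as in Proposition \ref{MasurThmwithParSlits}); moreover, your explicit holomorphic transition maps supply the complex structure on $\tilde X$ directly, a point the paper's appeal to Hubbard--Masur treats rather lightly, since that theorem presupposes a complex structure on the surface. What the paper's route buys in exchange is the avoidance of all local flat-geometry computations by outsourcing them to a known theorem. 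One small imprecision in your bookkeeping: a zero of $\omega$ on $\partial \overline{X^*}$ may appear as several distinct corners of $\overline{X^*}$, since its $2k+2$ sectors can be separated by more than one removed cylinder sector, so the accounting should be done corner by corner rather than assigning the single angle $(2k+2-j)\pi$ to the zero; as every corner angle is still a positive integer multiple of $\pi$, the identified pairs still sum to at least $2\pi$ and your conclusion stands.
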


\begin{proof}
Assume that $S \not= \emptyset$.  There is a natural inclusion $i: X^* \rightarrow X$ by the identity.  Let $\mathcal{F}$ denote the vertical foliation of $\omega$ on $X$.  This naturally pulls back to a foliation on $X^*$ under $i$.  Since the boundary of every hole of $X^*$ is a union of saddle connections by definition of the cylinder surgery, gluing opposite sides of the slits of $X^*$ results in a foliation of $\tilde X$, denoted $\mathcal{\tilde F}$, that is identical to $i^*(\mathcal{F})$ away from the slits.  In a sufficiently small neighborhood of any point $p$ in a slit which is not an endpoint or a zero of $\omega$, the foliation  looks like the vertical (or horizontal) foliation of $dz$.  By definition of the cylinder surgery, the ends of the slits locally look like the vertical foliation of $dz^2/z$.  

We define a double cover $\pi: \hat X \rightarrow \tilde X$, which the reader will recognize as the classical orientating double cover construction for a quadratic differential.  Let $\Sigma$ denote the union of the set of zeros of $\omega$ and the set of antipodes chosen in the cylinder surgery.  Let $(U_i, \phi_i)$ be an atlas for $\tilde X \setminus \Sigma$.  For each $U_i$ define $g_i^{\pm}(z) = \pm \sqrt{\phi_i(z)}$ on the open sets $V_i^{\pm}$ which are each a copy of $U_i$.  The charts $\{V_i^{\pm}\}$ can be glued together in a compatible way after filling in the holes of $\Sigma$.  This defines a surface $\hat X$ with a foliation $\mathcal{\hat F}$.  The reader will easily see that the foliation about the endpoints of the slits of $\tilde X$, at which the foliation induced on $\tilde X$ locally has the foliation of the simple pole of a quadratic differential, lifts to the foliation about a regular point on $\hat X$.

By \cite{HubbardMasur}[Main Theorem], the foliation $\mathcal{\hat F}$ induces a quadratic differential $\hat q$ on $\hat X$.  (Hubbard and Masur \cite{HubbardMasur} state their Main Theorem in terms of a horizontal foliation, but it can be stated for a vertical foliation as well simply by considering $\sqrt{-1}\,\hat q$.)  This allows us to view the construction above as the orientating double cover construction, which implies that $\hat q$ defines a quadratic differential $\pi_*(\hat q) = \tilde q$ on $\tilde X$ by pushforward.  It is obvious that $\tilde q$ is not the zero differential.
\end{proof}

Assume that $\tilde X \not= \emptyset$.  The cylinder surgery defines two maps: an injection $i: \overline{X^*} \hookrightarrow X$, which extends to a map on the Abelian differentials, and a ``gluing map'' $g: (\overline{X^*}, \omega^*) \rightarrow (\tilde X, \tilde q)$, where $g \mid_{X^*} = \text{id}$ and $g$ maps $\partial C_j$ to the slits of $\tilde X$ as prescribed by the cylinder surgery for all $j$.  We abuse notation and write $i: (\overline{X^*}, \omega^*) \hookrightarrow (X,\omega)$, where $\omega^*$ is the restriction of $\omega$ to $\overline{X^*}$.  This allows us to define a ``cylinder surgery map'' $P$ such that the diagram commutes.  Furthermore, given $i$ and $g$, $P$ can be inverted, so the cylinder surgery can be regarded as a set of maps $\{i, g\}$ associated to $(X,\omega)$.

$$\ctdiagram{
\ctv 0,50:{(\overline{X^*}, \omega^*)}
\ctv 100,50:{(X,\omega)}
\ctv 100,0:{(\tilde X, \tilde q)}
\ctet 0,50,100,50:{i}
\ctel 0,50,100,0:{g}
\ctdash\cter 100,50,100,0:{P}
}$$

\begin{lemma}
\label{poleSurgBij}
If $\tilde X \not= \emptyset$, then
$$G_t \cdot P(X,\omega) = P \circ G_t \cdot (X,\omega).$$
\end{lemma}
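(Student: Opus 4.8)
The plan is to show that each of the two maps comprising the cylinder surgery—the inclusion $i$ of the cut surface and the gluing map $g$—is equivariant under $G_t$, and then to combine this with the fact that the $\text{GL}_2^+(\mathbb{R})$-action on the square of an Abelian differential agrees with its action on the Abelian differential. The starting observation is that $G_t$ is diagonal, so by the definition of the \splin-action through the horizontal and vertical foliations, $G_t$ preserves the vertical foliation of $\omega$: vertical leaves map to vertical leaves, and closed regular vertical trajectories map to closed regular vertical trajectories. Consequently the set $S$ and the maximal vertical cylinders $C_i$ attached to $(X,\omega)$ are carried by $G_t$ exactly onto the corresponding data for $G_t\cdot(X,\omega)$, namely the cylinders $G_tC_i$. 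Thus the surgery performed on $G_t\cdot(X,\omega)$ uses the $G_t$-images of the data used for $(X,\omega)$, and the statement becomes a matter of tracking $G_t$ through the construction.

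First I would check that $G_t$ commutes with the inclusion $i$. Since $G_t$ maps $C_i$ to $G_tC_i$, it maps $X^* = X\setminus\bigcup_iC_i$ onto $(G_tX)\setminus\bigcup_iG_tC_i=(G_tX)^*$, and the restriction of $G_t\omega$ to this set is $(G_t\omega)^*=G_t(\omega^*)$. Hence the square relating $i$ for $(X,\omega)$ and $i$ for $G_t\cdot(X,\omega)$ commutes. Next I would check that $G_t$ commutes with the gluing map $g$. Each boundary circle of $\overline{X^*}$ is a vertical curve, along which $G_t$ scales the flat length by a single constant factor; therefore a pair of antipodes is carried to a pair of antipodes, and the identification of the two semicircles is preserved. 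Choosing the antipodes for $G_t\cdot(X,\omega)$ to be the $G_t$-images of those for $(X,\omega)$—which is harmless, since by the Remark following the definition of the cylinder surgery $\tilde q$ is independent of the antipode choice—I obtain a homeomorphism $G_t\colon\tilde X\to\tilde X_t$ with $g\circ G_t=G_t\circ g$, where $\tilde X_t$ denotes the underlying surface of $P(G_t\cdot(X,\omega))$.

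Finally I would assemble the two equivariances. Writing $(\tilde X_t,\tilde q_t):=P(G_t\cdot(X,\omega))$, we have away from the slits $\tilde q=g_*\big((\omega^*)^2\big)$ and $\tilde q_t=g_*\big((G_t\omega^*)^2\big)$. Because the \splin-action is defined through the foliations, which coincide for $\omega^*$ and its square $(\omega^*)^2$, one has $(G_t\omega^*)^2=G_t\big((\omega^*)^2\big)$. Pushing forward and using $g\circ G_t=G_t\circ g$ then gives
$$\tilde q_t=g_*\big(G_t(\omega^*)^2\big)=G_t\,g_*\big((\omega^*)^2\big)=G_t\tilde q,$$
an identity of quadratic differentials that extends across the slits by continuity since both sides are integrable and agree on the complement of a measure-zero set. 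Therefore $P\circ G_t\cdot(X,\omega)=(\tilde X_t,\tilde q_t)=G_t\cdot(\tilde X,\tilde q)=G_t\cdot P(X,\omega)$.

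I expect the main obstacle to be the equivariance of the gluing map $g$, that is, showing the antipode identifications are compatible with $G_t$. This is precisely where two features of the setup are needed: that $G_t$ scales the flat length along each vertical boundary curve by a single constant, so that midpoints and antipodes are preserved, and that $\tilde q$ does not depend on the antipode choice, so that matching the antipodes for the two surgeries costs nothing. Once these are in place, the remaining steps are the formal diagram-chase above.
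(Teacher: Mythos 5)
Your proof is correct, and its skeleton matches the paper's: both arguments reduce the lemma to showing that the two maps $i$ and $g$ that determine the surgery commute with $G_t$. The execution, however, is genuinely different. The paper builds a two-level diagram of quasiconformal maps $\hat f_t$, $\tilde f_t$, $f_t$ induced by $G_t$ on $\overline{X^*}$, $\tilde X$, and $X$, obtains $i_t \circ \hat f_t = f_t \circ i$ by quasiconformal continuation \cite{LehtoVirtanenQuasiMap}, and gets commutativity with the gluing maps essentially ``by construction,'' since $\tilde f_t$ is defined across the slits. You instead argue by direct flat geometry: maximal vertical cylinders are carried to maximal vertical cylinders, and since $G_t$ scales flat length along each vertical boundary circle by a single factor, antipodes are carried to antipodes, so the gluing data is equivariant; you then finish with $(G_t\omega^*)^2 = G_t\bigl((\omega^*)^2\bigr)$ and continuation across the slits. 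Your route is more elementary and makes explicit the point the paper leaves implicit, namely why the antipode identifications are compatible with the flow; conversely, your closing step ``extends across the slits by continuity'' is exactly the place where the paper invokes quasiconformal continuation, so a fully rigorous write-up should replace the measure-zero-agreement remark with a removability or continuation argument for the identification map across the slits. One small overstatement to fix: the paper's remark only says the surgery is well-defined \emph{up to} the choice of antipodes and that this choice is immaterial for the foliations, not that $\tilde q$ is literally independent of the choice; this is harmless in your argument, because taking the antipodes on $G_t \cdot (X,\omega)$ to be the $G_t$-images of the original ones is precisely how the paper's map $g_t$ is defined.
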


\begin{proof}
To prove this, we construct the following diagram and prove it commutes.  

$$\ctdiagram{
\ctv 0,150:{(\overline{X^*}, \omega^*)}
\ctv 150,150:{(X,\omega)}
\ctv 75,100:{(\tilde X, \tilde q)}
\ctv 0,50:{(\overline{X^*}_t, \omega^*_t)}
\ctv 150,50:{(X_t,\omega_t)}
\ctv 75,0:{(\tilde X_t, \tilde q_t)}
\ctet 0,150,150,150:{i}
\ctel 0,150,75,100:{g}
\cter 150,150,75,100:{P}
\ctet 0,50,150,50:{i_t}
\ctel 0,50,75,0:{g_t}
\cter 150,50,75,0:{P_t}
\cter 0,150,0,50:{\hat f_t}
\cter 150,150,150,50:{f_t}
\cter 75,100,75,0:{\tilde f_t}
}$$

The cylinder surgery denoted by $P$ is completely determined without any ambiguity by the maps $i$ and $g$ defined above.  The action of $G_t$ is well-defined on all three surfaces in the upper commutative triangle and induces quasiconformal maps on the surfaces which we denote by $\hat f_t$, $\tilde f_t$, and $f_t$ as indicated in the large diagram.

The map $g_t$ is well-defined because it is induced by the map $g$, which dictates a choice of antipodes.  In charts away from the holes of $\overline{X^*}$, or slits of $\tilde X$, $g = g_t = \text{Id}$ and the square trivially commutes.  Since there exists a quasiconformal map which is well-defined across the slit, namely $\tilde f_t$, the map $\hat f_t$ commutes with $g$ and $g_t$ by construction.

The map $i_t$ is well-defined because compact leaves of the horizontal and vertical foliations of $(X,\omega)$ are preserved under the action by $G_t$.  By quasiconformal continuation \cite{LehtoVirtanenQuasiMap}, the map $i_t \circ \hat f_t$ can be continued to a quasiconformal map $f'_t: (X,\omega) \rightarrow (X_t, \omega_t)$, such that $f'_t\mid_{i(\overline{X^*})} = f_t$.  Hence $i_t \circ \hat f_t = f_t \circ i$.  Since the cylinder surgery is completely determined by $i$ and $g$, the map $P_t$ is induced by $i_t$ and $g_t$, and the diagram commutes.

\end{proof}

\begin{proposition}
\label{MasurThmwithParSlits}
If $\tilde X \not= \emptyset$, then there exists $\theta \in \mathbb{R}$ such that $(\tilde X, e^{i\theta}\tilde q)$ has a closed regular trajectory that does not pass through the slits of $\tilde X$.
\end{proposition}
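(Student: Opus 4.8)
The plan is to adapt Masur's existence theorem \cite{MasurClosedTraj}[Theorem 2], whose engine is the recurrence of the Teichm\"uller geodesic flow, while tracking the slit data so that the trajectory produced avoids the slits. First a reduction: by the proof of Lemma \ref{TildeqExists}, $\tilde q$ pulls back under the orienting double cover $\pi:\hat X \to \tilde X$ to $\hat q = \hat\omega^2$ with $\hat\omega$ a holomorphic Abelian differential (the simple poles of $\tilde q$ at the slit endpoints unfold to regular points). Thus $(\hat X,\hat\omega)$ is a genuine compact translation surface of finite area, and the slits lift to a finite union $\hat S$ of vertical saddle connections. Producing a closed regular trajectory of $(\tilde X, e^{i\theta}\tilde q)$ disjoint from the slits is equivalent to producing a closed $\hat\omega$-trajectory in $\hat X \setminus \hat S$, so I would work upstairs.

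The crucial input is Lemma \ref{poleSurgBij}: $G_t \circ P = P \circ G_t$. Hence the Teichm\"uller geodesic $t \mapsto G_t\cdot(\tilde X,\tilde q)$ is the $P$-image of the geodesic $t\mapsto G_t\cdot(X,\omega)$ lying in the stratum of $\mathcal{M}_g$ through $(X,\omega)$. That stratum carries a finite \splin -invariant Masur--Veech measure, so by the Poincar\'e recurrence theorem the geodesic $G_t\cdot(X,\omega)$ is recurrent: there are $t_n\to\infty$ and mapping classes $\varphi_n$ with $\varphi_n\cdot G_{t_n}\cdot(X,\omega)\to(X,\omega)$.

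Following Masur, from this recurrence I would extract the closed trajectory. Were there no closed regular trajectory disjoint from the slits in any direction, the associated Teichm\"uller geodesic would eventually leave every compact subset of moduli space, since no simple closed curve would keep its $\hat\omega$-length uniformly bounded while its transverse holonomy collapses; this contradicts the recurrence just established. Concretely, recurrence yields a simple closed curve whose $\hat\omega$-length stays bounded along $t_n$ while its holonomy in one direction tends to $0$, and its free homotopy class is represented by the core of a cylinder, i.e. a closed regular trajectory in some direction $\theta$. Because, by Lemma \ref{poleSurgBij}, the slits are carried rigidly along by $G_t$ as marked leaf-segments, this construction is performed compatibly with the slit structure and outputs a $\theta$ together with a closed trajectory in $\hat X \setminus \hat S$.

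The genuine difficulty -- and the reason this is a variation of Masur's theorem rather than a direct citation -- is guaranteeing disjointness from the slits rather than mere regularity at the singular points. Since $\hat S$ is a finite union of saddle connections in one fixed (vertical) direction, the work is to localize Masur's cylinder inside $\hat X\setminus\hat S$: one takes the recurrent direction $\theta$ away from the slit direction, notes that the cylinder occurs in an open one-parameter family of parallel closed trajectories, and, if some slit cuts entirely across a candidate cylinder, passes to a complementary periodic component of the same direction. Showing that at least one such component survives disjoint from every slit, uniformly over the possible surgery configurations, is the technical heart of the proposition, and is where I expect the main obstacle to lie.
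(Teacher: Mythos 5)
Your proposal has a genuine gap at its foundation: the recurrence claim is false. Poincar\'e recurrence is an almost-everywhere statement for the Masur--Veech measure; it says nothing about the specific orbit $G_t\cdot(X,\omega)$, and in the present situation that orbit is in fact \emph{divergent}. By construction of the cylinder surgery, the vertical foliation of $(X,\omega)$ contains the cylinders $C_1,\ldots,C_n$ that were removed, and under $G_t$ their moduli blow up, so their core curves pinch (Corollary \ref{GtCurvePinch}); hence $G_t\cdot(X,\omega)$ leaves every compact subset of moduli space and no sequence $\varphi_n\cdot G_{t_n}\cdot(X,\omega)$ can return to $(X,\omega)$. The same applies to $G_t\cdot(\tilde X,\tilde q)$, whose limit may well be a degenerate surface. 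Since recurrence was the engine meant to drive the Masur-style extraction of a closed trajectory, the argument collapses at this point. Moreover, the step you correctly identify as "the technical heart" --- forcing the resulting cylinder to miss the slits --- is precisely what you leave unproved, so even granting some recurrence statement the proposal does not close.

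The paper's proof runs in the opposite direction: instead of seeking recurrence, it exploits the divergence. Because the slits lie in the \emph{vertical} foliation of $\tilde q$, flowing by $G_t$ contracts their total flat length to zero. One then takes a sequence $t_n\to\infty$ along which $G_{t_n}\cdot(\tilde X,\tilde q)$ converges in $\overline{\mathcal{Q}_{g',n'}}$ and argues in two cases. If the limit $\tilde q'$ has a double pole, there is a cylinder on $G_{t_n}\cdot(\tilde X,\tilde q)$ whose height tends to infinity while the slit length tends to zero, so some waist curve misses the slits. If the limit is integrable, it is actually \emph{holomorphic} --- each slit contains a zero of $\tilde q$ (the cylinders removed were maximal), so as the slits collapse every simple pole is absorbed into a zero --- and then \cite{MasurClosedTraj}[Theorem 2] applies directly to the limit surface, producing a cylinder $C$ of definite height $h$; for large $n$ this cylinder persists on $G_{t_n}\cdot(\tilde X,\tilde q)$ while the total slit length $\varepsilon$ satisfies $h\gg\varepsilon$, so again a waist curve avoids the slits. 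Pulling back by the affine map $G_{t_n}^{-1}$, which sends slits to slits and closed regular trajectories to closed regular trajectories, yields the direction $\theta$ of the proposition. The lesson is that the shrinking of the slits under the vertical flow is not an obstacle to be worked around but the very mechanism that produces the slit-avoiding trajectory; your plan, which never uses that the slits are vertical, has no substitute for it.
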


\begin{proof}
Since the slits lie in the vertical foliation of $\tilde q$ by definition, the lengths of the slits tend to zero in this direction as $t$ tends to infinity.  Let $\{t_n\}_{n=1}^{\infty}$ be a divergent sequence of times such that
$$\lim_{n \rightarrow \infty} G_{t_n} \cdot (\tilde X, e^{i\theta}\tilde q) = (\tilde X', \tilde q')$$
has a limit in $\overline{\mathcal{Q}_{g',n'}}$.  There are two cases to consider.  Either $\tilde q'$ has a double pole or it does not.  If $\tilde q'$ does have a double pole, then there is a cylinder with height tending to infinity with $t_n$.  Hence, there is a closed regular trajectory on this cylinder and it cannot cross a slit since the length of the slits are going to zero while the height of the cylinder is tending to infinity.

On the other hand, if $\tilde q'$ is integrable, then we claim it is holomorphic.  Since the cylinders are maximal sets, every slit will contain at least one zero of $\tilde q$.  Then $\tilde q'$ must be holomorphic because all of the simple poles of $\tilde q$ lie at the ends of slits by definition and every simple pole of $\tilde q$ has been contracted to a zero of $\tilde q$.  Since $\tilde q'$ is holomorphic, we can apply \cite{MasurClosedTraj}[Theorem 2] to find a dense set of closed trajectories on $(\tilde X',\tilde q')$.  Each of these trajectories correspond to a cylinder.  Let $C$ denote one such cylinder.  Then $C$ also represents a cylinder on $G_{t_n} \cdot (\tilde X, e^{i\theta}\tilde q)$ for large $n$.  Choose $N > 0$ so that $C$ has height $h$ and the total length of the slits is $\varepsilon > 0$ and $h >> \varepsilon$.  There is a regular trajectory corresponding to a waist curve $\gamma$ of $C$ at time $t_N$ such that $\gamma$ does not intersect any of the slits.
\end{proof}

\section{Complete Periodicity and the Connectivity Graph in \RankOne}

The key results of this section are Theorem \ref{RankOneImpCP} and Lemma \ref{TDDerPerRank1Lem}.  They form the foundation on which the remainder of this paper rests.  The former result proves that every surface generating a Teichm\"uller disc in the rank one locus must be completely periodic, while the latter result describes the configuration of the parts of a degenerate surface in the closure of a Teichm\"uller disc contained in the rank one locus.  We begin by recalling some basic definitions from graph theory.

Let $G$ be a graph consisting of a vertex set $V(G)$ and an edge set $E(G)$.  A \emph{path} is a graph with vertex set $\{v_1, \ldots, v_n\}$ such that there is an edge from $v_i$ to $v_{i+1}$, for all $1 \leq i \leq n-1$.  A \emph{cycle} is a path with an additional edge connecting $v_1$ to $v_n$.  Consider the set of all cycles contained in $G$.  This set forms a finite dimensional vector space over the field $\mathbb{F}_2$ called the \emph{cycle space of $G$}.  Denote the dimension of the cycle space by $\dim^C(G)$.  All the graphs in the discussion below may be multigraphs, i.e. we permit multiple edges between the same pair of vertices and there may be edges from a vertex to itself.

\begin{definition}
Let $G(X')$ be a multigraph associated to $X'$ or simply $G$ when the surface is understood.  There is a bijection sending $V(G)$ to the parts of $X'$ by $v_i \mapsto S_i$.  For all $i, j$ and all pairs of punctures $(p,p')$ from parts $S_i$ to $S_j$ of $X'$, with $i$ not necessarily distinct from $j$, there is a unique edge of $G$ from $v_i$ to $v_j$ representing $(p,p')$.  The graph $G$ is called the \emph{connectivity graph}.  Let $G^P(X', \omega')$ be the subgraph of $G(X')$ such that $V(G^P) = V(G)$ and the edges of $G^P$ correspond to the pairs of punctures at which $\omega'$ has simple poles.
\end{definition}

\begin{remark}
We will be using Lemma \ref{Forni42Seqs} implicitly throughout this section.  It is extremely important to note that nowhere in these results do we require that every component of the derivative of the period matrix has a limit as we take sequences in $\mathcal{M}_g$ converging to a degenerate surface.  We are very careful to choose minors of the derivative of the period matrix such that the limit exists.  This will suffice to provide the requisite lower bounds on the rank of the derivative of the period matrix near the boundary of the moduli space.
\end{remark}

Throughout this section, it will be advantageous to choose a basis of Abelian differentials with very specific properties depending on the surface to which a sequence of Abelian differentials is converging.  Most importantly, the choice of basis we make in the following lemma will facilitate the application of the convergence lemmas from Section \ref{DerPerMatSubSect}.

\begin{lemma}
\label{AbDiffBasis}
Given a degenerate surface $X(0,\tau_{\infty})$ in the boundary of $\overline{\mathcal{R}_g}$, there exists a set of Abelian differentials $\{\theta_1(0,\tau_{\infty}), \ldots, \theta_g(0,\tau_{\infty})\}$ on $X(0,\tau_{\infty})$ such that for all $t = (t_1, \ldots, t_m)$, with $t_j \not= 0$ for all $j$, $\{\theta_1(t,\tau), \ldots, \theta_g(t,\tau)\}$ is a basis for the space of holomorphic Abelian differentials on $X(t,\tau)$.  Moreover, this set can be constructed so that $\{\theta_1(0,\tau_{\infty}), \ldots, \theta_g(0,\tau_{\infty})\}$ has the following properties:

\begin{itemize}
\item[(1)] For some $1 \leq g_1 \leq g$, $\theta_i(0,\tau_{\infty})$ is holomorphic if and only if $1 \leq i \leq g_1$.
\item[(2)] For all $(p_i, p_i')$ such that $(p_i, p_i') \in S$ for some part $S \subset X'$, $\theta_i(0,\tau_{\infty})$ has simple poles at $(p_i, p_i')$, $\theta_i(0,\tau_{\infty})$ is holomorphic across all other punctures of $S$, and $\theta_i(0,\tau_{\infty}) \equiv 0$ on $X' \setminus S$.
\item[(3)] For each cycle $C_i \in G(X')$ consisting of more than one edge, $\theta_i(0,\tau_{\infty})$ has poles at the pairs of punctures corresponding to the edges of $C_i$ and $\theta_i \equiv 0$ for all $S \subset X'$ such that $S$ does not correspond to a vertex of $C_i$.
\item[(4)] For any puncture $p \in X'$ and for all $i,j$, if $\text{Res}_p(\theta_i) \not= 0$ and $\text{Res}_p(\theta_j) \not= 0$, then $\text{Res}_p(\theta_i) = \text{Res}_p(\theta_j) = \pm 1$.
\end{itemize}
\end{lemma}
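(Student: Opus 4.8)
The plan is to build the required differentials directly on the nodal surface $X' = X(0,\tau_{\infty})$ and then propagate them to the nearby smooth fibers. The space of Abelian differentials on $X'$ is the space of sections of the dualizing sheaf $\omega_{X'}$: differentials holomorphic on each part away from the nodes, with at most simple poles at the punctures, satisfying $\mathrm{Res}_p(\omega') = -\mathrm{Res}_{p'}(\omega')$ at each pair $(p,p')$. The organizing principle is the residue exact sequence
\[ 0 \longrightarrow \bigoplus_{S} H^0(\Omega^1_S) \longrightarrow H^0(\omega_{X'}) \xrightarrow{\ \mathrm{Res}\ } Z_1(G(X')) \longrightarrow 0, \]
where the first space is the holomorphic differentials (one summand per part, of dimension $\sum_S \mathrm{genus}(S) =: g_1$), and $\mathrm{Res}$ records the residues at the nodes. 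After orienting each edge of the connectivity graph $G = G(X')$ from $p_i$ to $p_i'$, the residue theorem applied on each part says precisely that the residue vector is a $1$-cycle of $G$; conversely every $1$-cycle is realized by a differential of the third kind, so the image is exactly the cycle space $Z_1(G) \cong H_1(G;\mathbb{C})$ of dimension $\dim^C(G) = g - g_1$. First I would record this sequence, noting that it also recovers the arithmetic-genus identity $g = \sum_S \mathrm{genus}(S) + \dim^C(G)$.

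Next I would construct the two blocks of the basis. For the holomorphic block I take, on each part $S$, a basis of $H^0(\Omega^1_S)$ and extend each differential by zero across the remaining parts; since a holomorphic differential has vanishing residues, the extension is a legitimate (holomorphic) section, and these give $g_1$ differentials, establishing property (1) and accounting for the indices $1 \le i \le g_1$. For the polar block I choose a basis of the cycle space consisting of fundamental cycles with respect to a spanning forest of $G$. Because every self-loop is automatically a non-tree edge, this basis can be arranged to contain all self-loops (each a length-one cycle), say $s$ of them, together with $\dim^C(G) - s$ simple cycles of length at least two. To a self-loop at a part $S$, with node $(p_i,p_i')$, I attach a differential of the third kind on $S$ with simple poles at $p_i, p_i'$ of residues $+1$ and $-1$, holomorphic at every other puncture of $S$, extended by zero elsewhere; this yields property (2). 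To a multi-edge simple cycle $C_i$, I orient $C_i$ and, on each part it traverses (exactly once, since $C_i$ is simple), take a third-kind differential with simple poles at the two punctures of $C_i$ lying on that part, residues $+1, -1$ chosen consistently with the orientation so that the two residues at each node of $C_i$ are opposite, holomorphic at all other punctures of that part, and identically zero on parts off $C_i$; this yields property (3). In every case all residues lie in $\{0,\pm 1\}$, which is exactly property (4). The existence of such third-kind differentials, with poles at a prescribed pair of punctures, prescribed residues summing to zero, and holomorphic at the remaining punctures, is the classical Riemann--Roch statement on each part.

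Then I would verify that the $g_1 + \dim^C(G) = g$ differentials so constructed form a basis of $H^0(\omega_{X'})$: the holomorphic block spans $\ker(\mathrm{Res})$, while the polar block maps under $\mathrm{Res}$ to the chosen basis of $Z_1(G)$, so the two blocks are linearly independent and their span has the full dimension $g$. Finally, to pass from $X'$ to the nearby smooth surfaces $X(t,\tau)$, I invoke the Cartan--Serre theorem with parameters (equivalently \cite{MasurExt}[Proposition 4.1]) quoted above: the sections of the relative dualizing sheaf form a rank-$g$ holomorphic vector bundle over the plumbing family, so the basis on the central fiber extends to holomorphic sections $\theta_i(t,\tau)$ that remain linearly independent, hence a basis, for all small $t$ with every $t_j \neq 0$.

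I expect the main obstacle to be the bookkeeping in the polar block: choosing the orientations around each multi-edge cycle and the signs of the residues so that the third-kind differentials glued across the cycle genuinely satisfy the opposite-residue condition at every node while remaining simultaneously holomorphic at all nodes off the cycle, all the while keeping the fundamental cycles (including all self-loops) a basis of $Z_1(G)$. Everything else reduces to the residue exact sequence and the classical existence of differentials of the third kind.
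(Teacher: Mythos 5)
Your proposal is correct and takes essentially the same approach as the paper: the same three blocks of differentials (bases of holomorphic differentials on each part extended by zero, third-kind differentials with residues $\pm 1$ for a pair of punctures lying on a single part, and cycle differentials built from third-kind differentials on the parts traversed by a multi-edge cycle), with the same appeal to the existence of differentials of the third kind and to the Cartan--Serre theorem with parameters (Masur's Proposition 4.1) for the extension to nearby smooth fibers. The only difference is in the bookkeeping: where the paper justifies the count $g = g_1 + g_2 + \dim^C(G_1)$ by matching basis differentials with horizontal homology curves on nearby surfaces, you package the same count in the residue exact sequence, which is a somewhat cleaner verification that the constructed differentials are independent and span.
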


\begin{proof}
The first claim follows from the Cartan-Serre theorem or \cite{MasurExt}[Proposition 4.1].  We proceed by explicitly constructing a basis of Abelian differentials on $X'$ with the desired properties.  The first $g_1$ differentials can be taken as a union of the bases of holomorphic differentials on each part such that if $\theta_i$ is an element of the basis of Abelian differentials on a part $S \subset X'$, then define $\theta_i \equiv 0$ on $X' \setminus S$.

Let the parts of $X'$ be given by $S_1 \sqcup \cdots \sqcup S_n$.  By \cite{FarkasKra}[Theorem II.5.1 b.], given two punctures $(p,p')$ on a connected Riemann surface $S$, there exists a meromorphic Abelian differential on $S$ which is holomorphic everywhere on $S$ and across all punctures of $S$ except $p$ and $p'$, where it can be expressed as $dz/z$ and $-dw/w$, in terms of local coordinates $z$ and $w$, respectively.  Hence, for each part $S_j$ carrying a pair of punctures $(p,p')$ we can take a basis element to be a differential which has simple poles only at those two punctures and is zero on every other part.  Let the basis of Abelian differentials on $X'$ consist of $g_2$ such differentials with exactly two simple poles, where $0 \leq g_2 \leq g$.

Finally, let $G_1$ be the subgraph of $G(X',\omega')$ such that $G_1$ has no edges from a vertex to itself.  We claim $\dim^C(G_1) = g - g_1 - g_2$.  This follows because each basis differential on $X'$ corresponds to a closed horizontal homology curve on a surface near $X'$ in the interior of the moduli space $\mathcal{R}_g$.  The only horizontal homology curves that have not been accounted for in the description above are those that split over several parts.  Define the remaining basis differentials as follows.  For each $j$, with $0 \leq j \leq g - g_1 - g_2$, let $C_j$ be an element of the cycle basis of $G$.  Define $\theta_j$ to be zero on every part which does not correspond to a vertex of $C_j$.  Each vertex $v$ of $C_j$ corresponds to a part $S$ of $X'$ such that $S$ has two punctures $p_1$ and $p_2$ corresponding to edges of $C_j$ incident to $v$.  The punctures $p_1$ and $p_2$ are not paired.  By \cite{FarkasKra}[Theorem II.5.1 b.], there is a meromorphic differential holomorphic everywhere on $S$ and across all punctures of $S$ except for $p_1$ and $p_2$ at which it has simple poles with residues $1$ and $-1$, respectively.  Define the differential $\theta_j$ to have two poles on each part corresponding to a vertex in the cycle $C_j$.  The only restriction is given by the rule that if the residue of the simple pole at $p_1$ is $\pm 1$, then the residue of the simple pole at $p_1'$ is $\mp 1$.  This construction completes the proof that such a basis exists.

By construction, the residues of each differential at every pole are $\pm 1$.  In order to satisfy the final property, it may be necessary to multiply some of the differentials by $-1$ so that the residues at each puncture are equal.
\end{proof}

\begin{lemma}
\label{PairPolesMakeRank}
Let $\{(X_n, \omega_n)\}_{n=0}^{\infty}$ be a sequence of surfaces in a Teichm\"uller disc $D$ converging to a degenerate surface $(X', \omega')$.  Let $S \subset X'$ be a part of $X'$.  If $\omega'$ has $k_1$ pairs of poles on $S$, then
$$\sup_n \text{Rank}\left(\frac{d\Pi(X_n)}{d\mu_{\omega_n}}\right) \geq k_1.$$
\end{lemma}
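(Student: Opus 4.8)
The plan is to exhibit a $k_1 \times k_1$ minor of $d\Pi(X_n)/d\mu_{\omega_n}$ that becomes nonsingular for all large $n$, by playing the logarithmic divergence of Lemma \ref{DPiDivergentTerm} on the diagonal against the boundedness of Lemma \ref{BddInDisks} off the diagonal. Let $(p_1,p_1'),\ldots,(p_{k_1},p_{k_1}')$ be the $k_1$ pairs of punctures on $S$ at which $\omega'$ has simple poles. Using property (2) of Lemma \ref{AbDiffBasis}, I would choose for each $\ell$ a basis differential $\theta_\ell(0,\tau_\infty)$ with simple poles exactly at $(p_\ell,p_\ell')$, holomorphic across every other puncture of $S$, and identically zero on $X'\setminus S$, and extend these to a full basis $\{\theta_1(t,\tau),\ldots,\theta_g(t,\tau)\}$. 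Let $M_n$ denote the principal $k_1\times k_1$ minor of $d\Pi(X_n)/d\mu_{\omega_n}$ indexed by $\theta_1,\ldots,\theta_{k_1}$, and let $k(\ell)$ be the plumbing coordinate of the node $(p_\ell,p_\ell')$, so that $|t_n^{(k(\ell))}|\to 0$ as the surfaces degenerate along this node.

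First I would bound the off-diagonal entries. Fix $\ell\neq m$. On the plumbing annulus around any puncture, at most one of $\theta_\ell,\theta_m$ has a simple pole, since their poles sit at distinct pairs of punctures; hence on every $D_k$ one of $f_i(0,\tau_\infty,0,0),f_j(0,\tau_\infty,0,0)$ vanishes. Thus hypothesis (1) of Lemma \ref{Forni42Seqs} holds on every annulus, so $M_n^{\ell m}$ is asymptotic to $\int_{X^*(t_n,\tau_\infty)}\theta_\ell(0,\tau_\infty)\theta_j(0,\tau_\infty)\,d\mu_{\omega'}$; combining the thick-part bound of Lemma \ref{BddOutsideDisks} with the per-annulus bound of Lemma \ref{BddInDisks} gives $M_n^{\ell m}=O(1)$.

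Next I would show each diagonal entry diverges. Since $(p_\ell,p_\ell')$ is a pair of poles of $\omega'$, the local coefficient $A_\infty$ is nonzero at both $p_\ell$ and $p_\ell'$, so hypothesis (2) of Lemma \ref{Forni42Seqs} holds on the two annuli where $\theta_\ell$ has poles while hypothesis (1) holds elsewhere; hence $M_n^{\ell\ell}$ is asymptotic to $\int_{X^*(t_n,\tau_\infty)}\theta_\ell(0,\tau_\infty)^2\,d\mu_{\omega'}$. Applying Lemma \ref{DPiDivergentTerm} to $R_z$ and $R_w$ around $p_\ell$ and $p_\ell'$, and bounding the remaining annuli by Lemma \ref{BddInDisks}, yields $M_n^{\ell\ell}=C_\ell\log|t_n^{(k(\ell))}|+O(1)$. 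Here the residue relation $\text{Res}_{p_\ell}(\omega')=-\text{Res}_{p_\ell'}(\omega')$ and property (4) of Lemma \ref{AbDiffBasis} make the two annular contributions carry the same nonzero coefficient $c_\ell^2\,(\bar c/c)(1-q)4\pi\sqrt{-1}$, so they reinforce rather than cancel and $C_\ell\neq 0$; since $|t_n^{(k(\ell))}|\to 0$, it follows that $|M_n^{\ell\ell}|\to\infty$.

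Finally I would conclude by a dominance argument on $\det M_n$. Expanding the determinant, the identity permutation contributes $\prod_\ell M_n^{\ell\ell}\sim\prod_\ell C_\ell\log|t_n^{(k(\ell))}|$, a product of $k_1$ factors each tending to infinity in modulus, while every other permutation contains at least two off-diagonal (hence bounded) factors and therefore at most $k_1-2$ divergent factors; the ratio of the identity term to any such term is a product of at least two factors each $\to\infty$, so the identity term dominates and $|\det M_n|\to\infty$. In particular $\det M_n\neq 0$ for all large $n$, whence $\text{Rank}(d\Pi(X_n)/d\mu_{\omega_n})\geq\text{Rank}(M_n)=k_1$ and $\sup_n\text{Rank}(d\Pi(X_n)/d\mu_{\omega_n})\geq k_1$. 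I expect the main obstacle to be the non-cancellation of the two annular contributions to $M_n^{\ell\ell}$, where the residue relation is essential, together with the bookkeeping that lets diagonal entries diverging at possibly different rates still force $\det M_n$ to be eventually nonzero.
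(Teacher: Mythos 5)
Your proposal is correct and follows essentially the same route as the paper: the same choice of basis differentials with poles at distinct pairs of punctures (Lemma \ref{AbDiffBasis}), the same $k_1\times k_1$ minor, boundedness of off-diagonal entries via Lemmas \ref{BddOutsideDisks} and \ref{BddInDisks}, and divergence of diagonal entries via Lemma \ref{DPiDivergentTerm} with the residue relation $\text{Res}_{p}(\omega')=-\text{Res}_{p'}(\omega')$ guaranteeing that the two annular contributions reinforce rather than cancel. The only difference is that you spell out the final linear-algebra step (the determinant dominance of the identity permutation) which the paper leaves implicit in asserting the minor has full rank; this is a welcome but not substantively different addition.
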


\begin{proof}
We show that a single pair of poles on $X'$ corresponds to a divergent diagonal term of $d\Pi(X_n)/d\mu_{\omega_n}$ as $n$ tends to infinity, while the off-diagonal terms in the row and column of that unbounded diagonal term are bounded for all $n$.  Let $b_{ij}^{(n)}$ be the $ij$ component of $d\Pi(X_n)/d\mu_{\omega_n}$.  Let $(p_i,p_i')$ be a pair of punctures on $S$ such that $\omega'$ has a pair of poles at $(p_i,p_i')$, for $1 \leq i \leq k_1$.  As in Lemma \ref{AbDiffBasis}, let $\theta_i$ have a pair of poles with residue $\pm 1$ at $(p_i,p_i')$ and let $\theta_i$ be holomorphic everywhere else on $X'$, for $1 \leq i \leq k_1$.  We consider the $k_1 \times k_1$ minor of $d\Pi(X_n)/d\mu_{\omega_n}$ given by $(b_{ij}^{(n)})$, for $1 \leq i,j \leq k_1$, and show that it has full rank for sufficiently large $n$.  By Lemmas \ref{BddOutsideDisks} and \ref{BddInDisks}, all of the off-diagonal terms $b_{ji}^{(n)} = b_{ij}^{(n)}$ are bounded, for all $n$, because $\theta_i$ and $\theta_j$ do not have any poles at the same pair of punctures for $i \not= j$.  Furthermore, for each $i$, the contribution of the integral in Rauch's formula to the diagonal term $b_{ii}^{(n)}$ is bounded everywhere outside of the discs around $p_i$ and $p_i'$ by Lemmas \ref{BddOutsideDisks} and \ref{BddInDisks}.  By Lemma \ref{DPiDivergentTerm}, the contribution to the integral in Rauch's formula on $R_z(t_n^{(k)})$ diverges with $n$.  Recall that if $\omega'$ has residue $c$ at $p_i$, then it has residue $-c$ at $p_i'$.  Since the quotient $\bar c/c = -\bar c / -c$, the sum of the two divergent terms coming from Lemma \ref{DPiDivergentTerm} do not cancel and $b_{ii}^{(n)}$ diverges to infinity with $n$.
\end{proof}

\begin{lemma}
\label{CycleImpliesRank}
Let $\{(X_n, \omega_n)\}_{n=0}^{\infty}$ be a sequence of surfaces in a Teichm\"uller disc $D$ converging to a degenerate surface $(X', \omega')$.  Let $G'^P$ be the subgraph of $G^P$ formed by removing all edges from each vertex to itself.  Let $k_2 = \min(\dim^C(G'^P), 2)$.  Then
$$\sup_n \text{Rank}\left(\frac{d\Pi(X_n)}{d\mu_{\omega_n}}\right) \geq k_2.$$
\end{lemma}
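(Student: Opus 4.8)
The plan is to follow the template of Lemma \ref{PairPolesMakeRank}: exhibit a $k_2 \times k_2$ minor of $d\Pi(X_n)/d\mu_{\omega_n}$ whose determinant is unbounded in $n$, and hence nonzero for infinitely many $n$. Since $k_2 = \min(\dim^C(G'^P),2)$, the case $\dim^C(G'^P)=0$ is vacuous, and it suffices to produce a divergent $1\times1$ minor when $\dim^C(G'^P)=1$ and a nondegenerate $2\times2$ minor when $\dim^C(G'^P)\geq 2$. Before constructing these minors I would normalize: the rank of $d\Pi/d\mu_\omega$ agrees with that of $H_\omega$ and is therefore invariant under the \splin~action, so replacing $(X_n,\omega_n)$ and $(X',\omega')$ by $A\cdot(X_n,\omega_n)$ and $A\cdot(X',\omega')$ leaves $\sup_n\text{Rank}(d\Pi(X_n)/d\mu_{\omega_n})$ unchanged and preserves $G^P$. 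Choosing $A$ from Lemma \ref{MostlyImagRes}, I may thus assume every residue $c_k$ of $\omega'$ at a node of $G^P$ is $\varepsilon$-nearly imaginary, i.e. $\bar c_k/c_k = -e^{-2i\delta_k}$ with $|\delta_k|<\varepsilon$ for a small $\varepsilon$ fixed later. This is the device that keeps the divergent contributions of different nodes from cancelling.

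Next I would fix linearly independent simple cycles $C_1,C_2\subset G'^P$ (only $C_1$ when $\dim^C(G'^P)=1$) and take the associated differentials $\theta_1,\theta_2$ from Lemma \ref{AbDiffBasis}(3). By construction $\theta_\alpha$ has residue $c_{\alpha,k}=\pm1$ at a node $k$ exactly when the edge $k$ lies on $C_\alpha$, and is holomorphic at every other node; in particular its poles occur only at edges of $G^P$, where $A_\infty\neq0$, so Lemma \ref{DPiDivergentTerm} applies. Forming the minor with entries $b^{(n)}_{\alpha\beta}=\int_{X_n}\theta_\alpha\theta_\beta\,d\mu_{\omega_n}$, Lemmas \ref{BddOutsideDisks} and \ref{BddInDisks} render all contributions bounded except on the annuli around the common poles of $\theta_\alpha$ and $\theta_\beta$; there Lemma \ref{DPiDivergentTerm}, applied on both $R_z$ and $R_w$ (whose contributions coincide once one uses $\text{Res}_{p_k'}=-\text{Res}_{p_k}$), yields
$$b^{(n)}_{\alpha\beta} = 8\pi(1-q)\sqrt{-1}\sum_{k} c_{\alpha,k}c_{\beta,k}\frac{\bar c_k}{c_k}\log|t_n^{(k)}| + O(1),$$
where $k$ ranges over the edges common to $C_\alpha$ and $C_\beta$.

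To read off the rank, set $w_k=(c_{1,k},c_{2,k})\in\mathbb{R}^2$ and $L_n^{(k)}=-\log|t_n^{(k)}|\to+\infty$, so that the divergent part of the minor is the Gram-type matrix $8\pi(1-q)\sqrt{-1}\sum_k e^{-2i\delta_k}L_n^{(k)}\,w_kw_k^{\mathsf T}$. When $\dim^C(G'^P)=1$ the single entry is $8\pi(1-q)\sqrt{-1}\sum_k e^{-2i\delta_k}L_n^{(k)}$, whose modulus is at least $\cos(2\varepsilon)\sum_k L_n^{(k)}\to\infty$; hence $b^{(n)}_{11}$ diverges and the minor has rank $1$. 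When $\dim^C(G'^P)\geq2$ I would apply Cauchy--Binet,
$$\det\Big(\textstyle\sum_k a_k w_kw_k^{\mathsf T}\Big)=\sum_{k<k'} a_k a_{k'}\big(\det[\,w_k\,|\,w_{k'}\,]\big)^2,$$
with $a_k=e^{-2i\delta_k}L_n^{(k)}$. Since $C_1$ and $C_2$ are distinct simple cycles neither can contain the other, so there is an edge $a$ on $C_1\setminus C_2$ and an edge $b$ on $C_2\setminus C_1$, giving $w_a=(\pm1,0)$, $w_b=(0,\pm1)$ and $\det[w_a|w_b]=\pm1$. For $\varepsilon<\pi/8$ every summand has argument within $4\varepsilon$ of $0$ and nonnegative modulus, so the real part of the determinant is bounded below by the $(a,b)$ term $\cos(4\varepsilon)L_n^{(a)}L_n^{(b)}\to\infty$; as all cycle nodes pinch (and the fastest-pinching node still pairs with a divergent, linearly independent partner), this growth dominates the $O(1)$ corrections to the minor, so the $2\times2$ minor is nondegenerate for large $n$. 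Either way $\sup_n\text{Rank}(d\Pi(X_n)/d\mu_{\omega_n})\geq k_2$.

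The main obstacle is precisely the $2\times2$ case. Unlike Lemma \ref{PairPolesMakeRank}, two cycle differentials may share poles, so the off-diagonal entry $b^{(n)}_{12}$ can itself diverge and a priori collapse the rank. The resolution rests on two points: the near-imaginary normalization aligns the phases of all divergent contributions so that no cancellation occurs, and the combinatorial fact that two independent simple cycles always possess mutually private edges guarantees two linearly independent support vectors $w_a,w_b$; together these force the Gram-type determinant to grow rather than vanish.
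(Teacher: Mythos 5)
Your proposal is correct, and up to the last step it is the paper's own argument: the same reduction to a $2\times 2$ (or $1\times 1$) minor built from the cycle differentials of Lemma \ref{AbDiffBasis}, the same normalization of residues via Lemma \ref{MostlyImagRes}, and the same decomposition of Rauch's integral into bounded pieces (Lemmas \ref{BddOutsideDisks}, \ref{BddInDisks}) plus divergent logarithmic pieces (Lemma \ref{DPiDivergentTerm}). Where you genuinely diverge is in how nondegeneracy of the minor is extracted. The paper never computes a determinant: it observes that $b_{12}^{(n)}$ collects divergent terms only from $E(C\cap C')$, a proper subset of $E(C)$ and of $E(C')$, and that the near-imaginary phases forbid cancellation, so $|b_{11}^{(n)}|>|b_{12}^{(n)}|$ and $|b_{22}^{(n)}|>|b_{21}^{(n)}|$ for large $n$; strict diagonal dominance then gives full rank. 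You instead expand the divergent Gram-type part by Cauchy--Binet and bound the determinant below by $\cos(4\varepsilon)\,L_n^{(a)}L_n^{(b)}$ using privately owned edges $a\in C_1\setminus C_2$, $b\in C_2\setminus C_1$ (which exist since no circuit contains another). Each route buys something. Yours is more quantitative and makes explicit a point the paper's phrasing glosses over: what matters is not the \emph{number} of divergent terms (the rates $L_n^{(k)}=-\log|t_n^{(k)}|$ need not be comparable across edges) but term-by-term domination, and your closing observation --- that the fastest-pinching node always pairs with a linearly independent partner ($w_k=(\pm 1,\pm 1)$ for intersection edges pairs nondegenerately with $w_a=(\pm1,0)$) whose $L_n$ also diverges --- is exactly what is needed for the quadratic Gram growth to beat the $O(1)\cdot\max_k L_n^{(k)}$ error in the determinant, so the sketch closes. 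The paper's diagonal-dominance argument is shorter precisely because it avoids that quadratic-versus-linear comparison: the dominance inequalities are entry-wise and survive $O(1)$ perturbations immediately, though to be fully rigorous they too should be justified by term-by-term domination (every divergent term of $b_{12}^{(n)}$ recurs in $b_{11}^{(n)}$ with the same modulus, plus extra aligned divergent terms) rather than by counting terms. As a final remark, your uniform use of the phase normalization also covers the $\dim^C(G'^P)=1$ case, where the paper's one-line argument is silent about possible cancellation between distinct nodes of the single cycle; this is a small but genuine tightening.
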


\begin{proof}
If $\dim^C(G'^P) = 0$, we are done.  If $\dim^C(G'^P) = 1$, then we claim that $d\Pi(X_n)/d\mu_{\omega_n}$ is not the zero matrix, for some choice of $n$.  Let $\theta_1$ be the differential with poles along the cycle of $G'^P$.  Let $(p_1, p_1')$ be a pair of poles of $\omega'$ in the cycle.  The claim follows from Lemma \ref{DPiDivergentTerm} by letting $c_1 = \pm 1$, $\lim_{n \rightarrow \infty} c^{(n)} = c_1 = \pm 1$, where $c^{(n)}$ is the residue of $\omega_n$ in local coordinates about $p_1$, and considering the $1,1$ component of $d\Pi(X_n)/d\mu_{\omega_n}$.

Assume $\dim^C(G'^P) \geq 2$.  Let $C \subset G'^P$ be a cycle.  Using Lemma \ref{MostlyImagRes} assume that the residues of $\omega'$ are $\delta$-nearly imaginary.  It can be shown that given $\varepsilon > 0$, there exists $\delta > 0$ such that, for all $c \in \mathbb{C}$ that are $\delta$-nearly imaginary
$$\left| \frac{\bar c}{c} + 1 \right| < \varepsilon.$$
Hence, the coefficients of the unbounded $\log|t_n^{(k)}|$ terms in Lemma \ref{DPiDivergentTerm}, for all $k$, differ from each other by at most $2\varepsilon$.

By Lemma \ref{AbDiffBasis}, there is a basis $\{\theta_1, \ldots, \theta_g\}$ such that for all $1 \leq i \leq g$, $\theta_i$ has residue $\pm 1$ at all of its simple poles.  Without loss of generality, let $\theta_1$ be an element of the basis of Abelian differentials that has pairs of simple poles corresponding to all of the edges of $C$.  Again, let $b_{ij}^{(n)}$ denote the $ij$ component of the derivative of the period matrix on $X_n$ with respect to $\omega_n$.  By Lemma \ref{BddOutsideDisks}, the integral in Rauch's formula for the derivative of the period matrix is bounded outside of all discs around the punctures of $X'$.  However, it is possible that two different elements in the basis of differentials have simple poles at the same pairs of punctures at which $\omega'$ has a simple pole.

Let $C' \subset G'^P$ be a cycle distinct from $C$ (though it may have non-trivial intersection with $C$).  Let $\theta_2$ be the differential with poles at the pairs of punctures corresponding to edges of $C'$.  Every edge of both $C$ and $C'$ corresponds to a pair of poles of $\omega'$.  (Note that Lemma \ref{Forni42Seqs} guarantees that we can apply all of the lemmas of Section \ref{DerPerMatSubSect} to the $2 \times 2$ minor $(b_{ij}^{(n)})$, for $1 \leq i,j \leq 2$, because $\omega'$ has poles at every puncture where $\theta_1$ or $\theta_2$ have poles.)  We claim that for all $n$ sufficiently large, $|b_{11}^{(n)}| > |b_{12}^{(n)}| = |b_{21}^{(n)}|$.  Lemma \ref{DPiDivergentTerm} implies that each of these three terms is a sum of divergent terms.  However, $\sharp(E(C \cap C')) < \sharp(E(C))$ implies that $b_{12}^{(n)}$ is a sum of fewer divergent terms than $b_{11}^{(n)}$, and there is no cancellation between the divergent terms by the $\delta$-nearly imaginary assumption.  For the exact same reason, $|b_{22}^{(n)}| > |b_{12}^{(n)}| = |b_{21}^{(n)}|$.  Thus the diagonal term of each row and column is strictly larger than the off-diagonal terms in its row and column, for $n$ sufficiently large.  This implies that the derivative of the period matrix has a $2 \times 2$ minor of full rank.
\end{proof}

\begin{lemma}
\label{GPisCycle}
Let $D$ be a Teichm\"uller disc contained in \RankOne .  If $(X', \omega')$ is a degenerate surface in the closure of $D$ and $\omega'$ is not holomorphic, then $G^P(X',\omega')$ is the union of a cycle (possibly on one or two vertices) and a finite (possibly empty) set of isolated vertices.
\end{lemma}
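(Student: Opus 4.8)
The plan is to combine two independent constraints on $G^P(X',\omega')$: a \emph{homological} constraint coming from the residue theorem, which forces every edge of $G^P$ to lie on a cycle, and a \emph{rank} constraint coming from Lemmas \ref{PairPolesMakeRank} and \ref{CycleImpliesRank}, which bounds the dimension of its cycle space. Together these pin down the graph. First I would fix a sequence $\{(X_n,\omega_n)\}_{n=0}^\infty$ in $D$ converging to $(X',\omega')$ with each $\omega_n$ holomorphic. Since $D \subset \mathcal{D}_g(1)$, the form $H_{\omega_n}$, and hence $d\Pi(X_n)/d\mu_{\omega_n}$, has rank exactly one for every $n$, so $\sup_n \text{Rank}(d\Pi(X_n)/d\mu_{\omega_n}) = 1$. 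This is the quantity bounded below in the rank lemmas.

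The homological step runs as follows. On each part $S$ of $X'$ the differential $\omega'$ is a meromorphic form on a compact Riemann surface, so the sum of its residues over the punctures of $S$ vanishes; moreover at each pair of punctures $(p,p')$ one has $\text{Res}_p(\omega') = -\text{Res}_{p'}(\omega')$. Orienting each edge of $G^P$ and assigning to it the residue of $\omega'$ at its tail therefore produces a flow on $G^P$ that is conserved at every vertex, i.e.\ a circulation. Because every edge of $G^P$ carries a genuine simple pole, this circulation is nonzero on each edge. A standard bridge argument, summing the conservation law over one side of a would-be bridge, then shows that no edge of $G^P$ can be a bridge, so every edge of $G^P$ lies on a cycle (and in particular $G^P$ has no pendant edges).

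For the rank constraint I would write $G^P$ as the loopless graph $G'^P$ together with $L$ loops, so that $\dim^C(G^P) = \dim^C(G'^P) + L$. Lemma \ref{CycleImpliesRank} together with $\sup_n \text{Rank} = 1$ gives $\dim^C(G'^P) \le 1$. The key claim is $\dim^C(G^P) \le 1$. If instead $\dim^C(G^P) \ge 2$, then since $\dim^C(G'^P) \le 1$ we have either a nonloop cycle together with at least one loop, or at least two loops; in either case there are two of these cycles whose sets of polar punctures are disjoint (a loop and a nonloop cycle, or two distinct loops, never share a pair of punctures). Taking the two basis differentials of Lemma \ref{AbDiffBasis} with poles along these, and first normalizing by Lemma \ref{MostlyImagRes} so that the residues of $\omega'$ are nearly imaginary (which does not change $G^P$), the corresponding $2\times 2$ minor $(b^{(n)}_{ij})_{1\le i,j\le 2}$ of $d\Pi(X_n)/d\mu_{\omega_n}$ has diagonal entries that diverge by Lemma \ref{DPiDivergentTerm}, with no cancellation by the nearly imaginary normalization and the sign relation $\bar c/c = \overline{-c}/(-c)$, while its off-diagonal entry stays bounded by Lemmas \ref{BddOutsideDisks} and \ref{BddInDisks} since the pole sets are disjoint. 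Hence the minor has full rank for large $n$, forcing $\sup_n \text{Rank} \ge 2$, a contradiction.

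Finally, since $\omega'$ is not holomorphic, $G^P$ has at least one edge, which lies on a cycle, so $\dim^C(G^P) \ge 1$; combined with the previous step, $\dim^C(G^P) = 1$. A graph in which every edge lies on a cycle and whose cycle space is one-dimensional is a single cycle together with isolated vertices: each component carrying edges is bridgeless with positive first Betti number, so exactly one component carries edges and it is a single cycle (a loop, a double edge, or a polygon), while every other component is an isolated vertex. I expect the main obstacle to be the bookkeeping in the rank step, namely arranging the two chosen differentials to have disjoint polar punctures so that the off-diagonal minor entry stays bounded while both diagonal entries diverge; the circulation observation, though short, is exactly what excludes pendant edges and is essential to the conclusion.
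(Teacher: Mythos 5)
Your proposal is correct, and its skeleton is the same as the paper's: a residue-theorem constraint on $G^P$ combined with the rank bounds coming from Lemmas \ref{PairPolesMakeRank} and \ref{CycleImpliesRank} (fed by Lemmas \ref{AbDiffBasis}, \ref{MostlyImagRes}, \ref{BddOutsideDisks}, \ref{BddInDisks}, and \ref{DPiDivergentTerm}), followed by elementary graph theory. The differences are in execution, and they cut both ways. The paper extracts from the residue theorem only that no vertex of $G^P$ has degree one, then argues by cases on $k_1 + k_2 \le 1$, reusing the degree-one fact to rule out trees attached to the cycle; your circulation argument is a strictly stronger use of the same input, showing $G^P$ is bridgeless, which lets you finish with a clean count: $\dim^C(G^P) = 1$ together with bridgelessness forces exactly one edge-carrying component, which must be a single cycle, plus isolated vertices. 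In the other direction, your rank step supplies a detail the paper compresses into the bare assertion $k_1 + k_2 \le 1$: neither Lemma \ref{PairPolesMakeRank} (stated for pairs of poles on a single part) nor Lemma \ref{CycleImpliesRank} (stated for the loopless subgraph) literally covers the mixed configurations --- a loop together with a non-loop cycle, or loops on two different parts --- and your explicit $2 \times 2$ minor argument, choosing two basis differentials with disjoint polar punctures so that the off-diagonal entries stay bounded by Lemmas \ref{BddOutsideDisks} and \ref{BddInDisks} while both diagonal entries diverge by Lemma \ref{DPiDivergentTerm} under the nearly-imaginary normalization, is exactly what is needed to justify that inequality. So the two proofs agree in substance; yours is more self-contained on the rank side and more streamlined on the graph-theoretic side.
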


\begin{proof}
Since every Abelian differential with a simple pole on a Riemann surface $S$ has at least two simple poles on $S$, no vertex in $G^P$ has degree one.  Using the notation of Lemmas \ref{PairPolesMakeRank} and \ref{CycleImpliesRank}, we must have $k_1 + k_2 \leq 1$.  The case where $k_1 + k_2 = 0$ is excluded by the assumption that $\omega'$ in not holomorphic, so we assume $k_1 + k_2 = 1$.  If $k_1 = 1$, then $G^P$ has a vertex with an edge forming a loop and Lemmas \ref{PairPolesMakeRank} and \ref{CycleImpliesRank} imply that there are no other edges.  If $k_2 = 1$, then $G^P$ contains a cycle $C$.  However, we claim $G^P$ cannot contain any other edges.  There are no additional paths in $G^P$ between any two vertices in $C$ because $k_2 = 1$.  Since $k_1 = 0$ implies there are no vertices from an edge to itself, there are no additional paths emanating from a vertex in $C$ because any such path would have to end in a vertex of degree one.  Hence, $k_2 = 1$ implies $E(G^P) = E(C)$.
\end{proof}

\begin{definition}
Given $(X,\omega)$, let $\mathcal{F}_{\theta}$ denote the vertical foliation of $(X,e^{i\theta}\omega)$.  If, for all $\theta \in \mathbb{R}$, $\mathcal{F}_{\theta}$ has a closed regular trajectory implies that $\mathcal{F}_{\theta}$ is periodic, then $(X,\omega)$ is \emph{completely periodic}.
\end{definition}

\begin{theorem}
\label{RankOneImpCP}
If the Teichm\"uller disc $D$ generated by $(X,\omega)$ is contained in \RankOne , then $(X,\omega)$ is completely periodic.
\end{theorem}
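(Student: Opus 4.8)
The plan is to argue the contrapositive. Suppose $(X,\omega)$ is not completely periodic; then there is an angle $\theta_0$ for which $\mathcal{F}_{\theta_0}$ has a closed regular trajectory but is not periodic. Since rotations lie in \spolin $\subset$ \splin and $D$ is \splin-invariant, I may replace $(X,\omega)$ by $(X,e^{i\theta_0}\omega)$ and assume the \emph{vertical} foliation of $(X,\omega)$ has a closed regular trajectory yet is not periodic; this surface still generates a disc in \RankOne. The vertical foliation then decomposes into a nonempty union of cylinders $C_1,\dots,C_k$ (with $k\ge 1$, as there is a closed trajectory) together with at least one minimal component (as the foliation is not periodic). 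A minimal component has genus $\ge 1$, and no direction on a torus is simultaneously non-periodic and contains a cylinder; hence $g\ge 2$.

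The key step is to degenerate inside $\overline{D}$ so as to force a pole graph whose cycle space has dimension at least two, contradicting Lemma \ref{GPisCycle}. First I pinch all of the vertical cylinders: by Corollary \ref{GtCurvePinch}, flowing along a divergent sequence $G_{t_n}$ sends each $C_i$ to a node, and since the $C_i$ are vertical their core curves have nonzero purely imaginary holonomy, so the limit $(X',\omega')\in\overline{D}$ carries simple poles with nonzero residues at every one of these nodes. Thus $\omega'$ is not holomorphic and every such node is an edge of $G^P(X',\omega')$, so that $G^P=G$ for this surface. By Lemma \ref{GPisCycle}, $G^P(X',\omega')$ is a single cycle together with isolated vertices, whence $\dim^C(G^P(X'))=1$. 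Feeding this into the genus identity $g=\sum_i \mathrm{genus}(S_i)+\dim^C(G^P(X'))$ gives $\sum_i \mathrm{genus}(S_i)=g-1\ge 1$, so some part $S_0$ of $X'$ has genus $\ge 1$.

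Next I produce one more pole-edge homologically independent of those already present. Applying the cylinder surgery to $S_0$ and invoking Proposition \ref{MasurThmwithParSlits} (the variant of Masur's theorem valid in the presence of the simple poles, i.e. slits, of $\omega'|_{S_0}$) furnishes a direction $e^{i\theta}$ in which $S_0$ has a closed regular trajectory avoiding those poles; using that $S_0$ has positive genus I take this cylinder $\widehat C$ to have \emph{non-separating} core. Rotating $(X',\omega')$ so that $\widehat C$ becomes vertical and letting $s\to\infty$ under $G_s$ pinches $\widehat C$ (and possibly further cylinders) to reach $(X'',\omega'')\in\overline{D}$; the residues of the previously created poles remain nonzero under this rotation and flow by Lemma \ref{GtResidue}, and $\widehat C$ contributes a new node with nonzero residue, so again $G^P(X'')=G(X'')$. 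Because $\widehat C$ is non-separating, pinching it raises the first Betti number of the pole graph by one (equivalently, via $g=\sum_i\mathrm{genus}(S_i)+\dim^C(G^P)$, it trades a unit of genus for a loop), so $\dim^C(G^P(X''))\ge \dim^C(G^P(X'))+1= 2$. But $\omega''$ is not holomorphic, so Lemma \ref{GPisCycle} forces $G^P(X'')$ to be a single cycle together with isolated vertices, i.e. $\dim^C(G^P(X''))\le 1$. This contradiction shows $(X,\omega)$ is completely periodic.

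\textbf{Main obstacle.} I expect the delicate point to be the third paragraph: guaranteeing that the positive-genus part $S_0$ really contains a \emph{non-separating} cylinder to pinch, and that pinching it (amid the other cylinders that may simultaneously degenerate in the chosen direction) genuinely increases $\dim^C(G^P)$ rather than merely subdividing parts. This is where the topology of the degeneration, encoded by $g=\sum_i\mathrm{genus}(S_i)+\dim^C(G^P)$, must be combined with the analytic fact that cylinder core curves carry nonzero residues so that $G^P=G$ throughout. I would also need to verify carefully that every intermediate surface stays in $\overline{D}$ under the iterated rotate-and-flow procedure, which follows from the continuity of the \splin-action used in Lemma \ref{poleSurgBij} and \cite{BainbridgeMoller}. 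As an alternative to the single extra pinch, one could instead keep pinching non-separating cylinders until all parts have genus $0$, in which case $\dim^C(G^P)=g\ge 2$ directly contradicts Lemma \ref{GPisCycle}; this packages the same mechanism, again hinging on the existence of non-separating cylinders in positive-genus parts.
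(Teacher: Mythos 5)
Your reduction to contradicting Lemma \ref{GPisCycle} is the right target, but the argument breaks at the very first degeneration step: the claim that pinching the vertical cylinders $C_1,\dots,C_k$ produces simple poles with nonzero residues at every resulting node. At finite time the core curves do have nonzero purely imaginary holonomy, but under $G_{t_n}$ this holonomy is $ie^{-t_n}w_i \to 0$, and the limit in $\overline{\mathcal{M}_g}/\mathbb{R}^*$ is normalized by the minimal components, which retain positive area precisely because the foliation is not periodic. Relative to that normalization the circumferences (hence the residues) of the pinching cylinders vanish, and the limiting differential is \emph{holomorphic} at those nodes --- there is no infinite cylinder there. This is not a technicality one can wave away: the paper's own proof of Theorem \ref{RankOneImpCP} explicitly establishes it (the argument comparing the ratios $w_1^{(n)}/w_2^{(n)}$ shows that every slit length converges to zero, ``so the cylinder $C_1'$ does not exist''). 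Consequently $G^P(X',\omega')$ need not equal $G(X')$; your genus identity $g=\sum_i\mathrm{genus}(S_i)+\dim^C(\cdot)$ is valid only for the full dual graph $G$, not for $G^P$; and the concluding estimate $\dim^C(G^P(X''))\ge 2$ has no basis, since holomorphic nodes contribute no edges to $G^P$ and Lemma \ref{GPisCycle} is never violated. The same objection defeats your alternative packaging (pinch non-separating cylinders until all parts have genus zero).

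The actual difficulty --- and the bulk of the paper's proof --- is manufacturing genuine poles from non-homologous cylinders. The paper produces a second, non-parallel (hence non-homologous, by Lemma \ref{HomCylImpParLem}) cylinder $C_2$ via Proposition \ref{MasurThmwithParSlits}, then applies matrices $G_{T_N}B_N$ that make $C_1$ vertical, $C_2$ horizontal, and equalize their circumferences; both moduli then diverge, so both core curves pinch, and \emph{if} the common circumference stays bounded away from zero one obtains two pole-edges that cannot lie on a common cycle of $G^P$, contradicting Lemma \ref{GPisCycle}. Because even this balanced limit can again yield holomorphic nodes, the paper must iterate the construction over up to $g$ pairwise non-homologous cylinders before the contradiction is forced. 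Your proposal contains no mechanism playing this role, so the gap is essential rather than a matter of unfinished verification. (A secondary point: you invoke Proposition \ref{MasurThmwithParSlits} on a part $S_0$ of a degenerate surface carrying simple poles, whereas it is stated for the compact slit surface produced by the cylinder surgery; that misuse could plausibly be repaired, but the residue issue cannot.)
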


\begin{proof}

By \cite{MasurClosedTraj}[Theorem 2], there exists a real number $\theta$ such that $(X,e^{i\theta}\omega)$ admits a cylinder in the vertical foliation.  Without loss of generality, let $(X,\omega)$ admit a cylinder $C_1$ in its vertical foliation.  Performing the cylinder surgery on $(X,\omega)$ results in the set $\tilde X$.  By contradiction, assume that $\tilde X \not = \emptyset$, i.e. the vertical foliation of $X$ by $\omega$ is not periodic.  Then the cylinder surgery results in a surface $(\tilde X, \tilde q)$ carrying an integrable quadratic differential with slits.  Recall that $\tilde q$ has simple poles at the ends of the slits.  Since the slits correspond to cylinders in $X$ with parallel core curves, the slits themselves are parallel, in other words, they are leaves of the same foliation.  By acting on $(\tilde X, \tilde q)$ by the Teichm\"uller geodesic flow, the slits, which lie in the vertical foliation, contract at the maximum rate $e^{-t}$ under the area normalization.  Consider the one parameter family of surfaces $G_t \cdot (\tilde X, \tilde q)$ for all $t \geq 0$.  From this family, choose a sequence of times $\{t_n\}_{n=0}^{\infty}$, with $t_0 = 0$, such that
$$\lim_{n \rightarrow \infty} G_{t_n} \cdot (\tilde X, \tilde q) = (\tilde X', \tilde q'),$$
where $\tilde X'$ is either a degenerate Riemann surface or $\tilde q'$ is holomorphic.  If $\tilde q'$ is holomorphic, this implies that the slits contracted to points.  By the definition of the cylinder surgery, $\tilde q$ will always have a zero on every slit.  In other words, every simple pole of $\tilde q$ converged to a zero, resulting in a holomorphic quadratic differential.  Let
$$(\tilde X_n, \tilde q_n) = G_{t_n} \cdot (\tilde X, \tilde q).$$

Equivalently, consider the sequence $\{(X_n, \omega_n)\}_{n=0}^{\infty}$ defined by
$$(X_n, \omega_n) = G_{t_n} \cdot (X,\omega).$$
By Lemma \ref{poleSurgBij}, we can freely pass between $(X_n, \omega_n)$ and $(\tilde X_n, \tilde q_n)$ for all $n$.  Let $(X', \omega')$ denote the degenerate surface to which $\{(X_n, \omega_n)\}_{n=0}^{\infty}$ converges as $n$ tends to infinity.  Let $\{C_1^{(n)}\}_{n=0}^{\infty}$ denote the sequence of cylinders such that $C_1^{(0)} = C_1$ and $C_1^{(n)}$ is the corresponding cylinder in $X_n$ after action by $G_{t_n}$.  By Corollary \ref{GtCurvePinch}, the core curve of $C_1^{(n)}$ pinches as $n$ tends to infinity.  Let $C_1'$ denote the cylinder to which $C_1^{(n)}$ converges as $n$ tends to infinity, if such a cylinder exists.  It is possible that $\omega'$ is holomorphic at the pair of punctures resulting from pinching the core curves of the cylinder $C_1^{(n)}$, in which case the surface $(X', \omega')$ does not have an infinite cylinder at that pair of punctures, i.e. $C_1'$ does not exist.  On the other hand, if $C_1'$ exists, then it is an infinite cylinder.

We claim that there is a sequence of cylinders $\{C_2^{(n)}\}_{n=0}^{\infty}$ such that $C_2^{(n)} \subset \tilde X_n$, for all $n \geq 0$, and the sequence converges to a cylinder $C_2' \subset \tilde X'$ such that $C_2'$ does not intersect the slits of $\tilde X'$ and $C_2'$ has finite circumference.  It suffices to find a cylinder $C_2' \subset \tilde X'$ with the desired properties.  We consider four cases.  In the first case, the slits have contracted to points and $\tilde q'$ is holomorphic.  In this case we can find a cylinder $C_2'$ on $(\tilde X', \tilde q')$ by \cite{MasurClosedTraj}[Theorem 2].  If the slits have contracted to points and $\tilde q'$ has double poles, then we let $C_2'$ be the infinite cylinder corresponding to a pair of double poles.  If the slits have positive length and $\tilde q'$ is integrable, then Proposition \ref{MasurThmwithParSlits}, guarantees that we can find a cylinder $C_2'$ not intersecting the slits.  Finally, if the slits have positive length and $\tilde q'$ has double poles, then, as before, we let $C_2'$ be the infinite cylinder corresponding to a pair of double poles.

We claim that $\tilde q'$, as defined above, can never have simple poles, though it may have double poles.  In other words, the length of every slit on $\tilde X$ will always converge to zero as $n$ tends to infinity regardless of our choice of normalization.  By contradiction, assume that the lengths of the slits on $\tilde q'$ have nonzero length.  Let $w_j^{(n)}$ denote the circumference of $C_j^{(n)}$, and $w_j'$ denote the circumference of $C_j'$, for $j = 1,2$.  In this case $w_j' > 0$ for $j = 1,2$.  Consider the ratio $w_1^{(n)}/w_2^{(n)}$.  By assumption,
$$\lim_{n \rightarrow \infty} w_1^{(n)}/w_2^{(n)} > C > 0.$$
Pass to a subsequence of times $\{t_n\}_{n=0}^{\infty}$ such that there is a constant $C^L$ satisfying $0 < C^L \leq w_1^{(n)}/w_2^{(n)}$, for all $n$.  Recall that under the area normalization, the length of the slit contracts by $e^{-t_n}$ for each $n$.  Therefore, 
$$\lim_{n \rightarrow \infty} e^{t_n}w_1^{(n)} = w_1' < \infty,$$
and $w_1' > 0$ by assumption.  This implies $w_2' < \infty$ because $e^{t_n}$ is the maximal rate of expansion and
$$w_2' \leq \lim_{n \rightarrow \infty} e^{t_n}w_2^{(n)} < \infty.$$
Hence, the core curves of the cylinders $C_1^{(n)}$ and $C_2^{(n)}$ contract for all $n$ at the maximal rate under the area normalization.  This is only possible if the core curves of $C_1^{(n)}$ and $C_2^{(n)}$ are parallel for all $n$.  Otherwise, there would be an $N > 0$ sufficiently large, such that $w_2^{(n)}$ increases exponentially for all $n \geq N$.  However, it was assumed above that $C_1^{(0)}$ and $C_2^{(0)}$ are not parallel because all cylinders parallel to $C_1^{(0)}$ were removed from the surface.  This contradiction implies that the length of every slit must indeed converge to zero.

Since each slit converges to a point, the cylinder $C_1'$ does not exist.  If $C_2'$ has finite height, pinch the core curve of the cylinder $C_2'$ under the Teichm\"uller geodesic flow while normalizing the largest residue.  The new degenerate surface, denoted $(X', \omega')$ by abuse of notation, either has (Case A:) poles resulting from an infinite cylinder $C_2'$, or (Case B:) neither $C_1'$ nor $C_2'$ exist.  By the continuity of the \splin ~action to the boundary of the moduli space \cite{BainbridgeMoller}[Proposition 11.1], there is a sequence $\{(X_n, \omega_n)\}_{n=0}^{\infty}$ in $D$ converging to $(X', \omega')$.  We address Cases A and B in the course of the remainder of the proof.

By Lemma \ref{HomCylImpParLem}, $C_1^{(0)}$ is not homologous to $C_2^{(0)}$ because $C_1^{(0)}$ is not parallel to $C_2^{(0)}$.  Since the \splin ~action preserves homology, $C_1^{(n)}$ is not homologous to $C_2^{(n)}$ for all $n \geq 0$.  The remainder of this proof is dedicated to finding a degenerate surface $(X', \omega')$ in the closure of $D$ such that $G^P(X', \omega')$ contradicts the conclusion of Lemma \ref{GPisCycle}.

Consider the case when $\omega'$ has one or more pairs of simple poles arising from pinching a set of cylinders that are pairwise homologous.  In this case, let $C_2'$ be an infinite cylinder, while $C_1'$ does not exist because the circumferences of the cylinders in the sequence $\{C_1^{(n)}\}_{n=0}^{\infty}$ converge to zero.  Given $\varepsilon' > 0$, we can find a surface $(X^{(n)}, \omega^{(n)}) \in D$, where $n$ depends on $\varepsilon'$, such that $(X^{(n)}, \omega^{(n)})$ has two non-homologous cylinders of equal circumference at most $\sqrt{\varepsilon'}$ and the moduli of the cylinders tend to infinity as $\varepsilon'$ tends to zero.  Choose $\varepsilon < \varepsilon'$ such that the circumference of $C_1^{(N)}$ is equal to $\varepsilon$ for a sufficiently large value of $N$.  Since the sequence $\{C_2^{(n)}\}_{n=0}^{\infty}$ converges to a cylinder of finite nonzero circumference, the circumferences of the cylinders $C_2^{(n)}$, denoted $w_2^{(n)}$ satisfy $0 < w_2^L \leq w_2^{(n)} \leq w_2^U < \infty$, for all $n$.  The core curves of $C_1^{(n)}$ and $C_2^{(n)}$ are not parallel for all $n$, so for each $n$ there exists a matrix $B_n \in $ \splin ~that transforms the core curve of $C_1^{(n)}$ into a leaf of the vertical foliation and transforms the core curve of $C_2^{(n)}$ into a leaf of the horizontal foliation.  For each $N$, consider the one parameter family of matrices, $G_t B_N \in $ \splin .  Action by $G_t B_N$ on $(X_N, \omega_N)$ results in the core curve of $C_1^{(N)}$ expanding at the maximal rate $e^t$, while the core curve of $C_2^{(N)}$ contracts at the maximal rate $e^{-t}$.  At time $t$, the circumference of $C_1^{(N)}$ is given by $e^t \varepsilon$, and the circumference of $C_2^{(N)}$ is given by $e^{-t} w_2^{(N)}$.  Let $T_N$ be the time satisfying the equation $e^{T_N} \varepsilon = e^{-T_N} w_2^{(N)}$.  At time $T_N$, the circumference of each cylinder is given by $\sqrt{w_2^{(N)} \varepsilon}$.  Define a sequence by
$$(X^{(N)}, \omega^{(N)}) := G_{T_N}B_N \cdot (X_N, \omega_N)$$
and consider $C_1^{(N)}, C_2^{(N)}$ to be cylinders in $X^{(N)}$.  We claim the moduli of $C_1^{(N)}$ and $C_2^{(N)}$ diverge to infinity with $N$.  Let $h$ denote the height of a cylinder $C$, $w$ its circumference, $A(C)$ its area, and $\text{Mod}(C)$ its modulus.  By the definition of the modulus,
$$\text{Mod}(C) = \frac{h}{w} = \frac{A(C)}{w^2}.$$
In the case at hand, the areas of the cylinders $C_1^{(N)}$ and $C_2^{(N)}$ are bounded below for all $N$ because \splin ~preserves area.  Both cylinders have circumference $\sqrt{w_2^{(N)} \varepsilon}$, so their core curves pinch because
$$\lim_{\varepsilon' \rightarrow 0} \sqrt{w_2^{(N)} \varepsilon} \leq \lim_{\varepsilon' \rightarrow 0} \sqrt{w_2^U \varepsilon'} = 0.$$
Note that this argument can be applied to Case A above.  Let $(X'^{(2)}, \omega'^{(2)})$ be the limit of the sequence $\{(X^{(N)}, \omega^{(N)})\}_{N=0}^{\infty}$.  As $N$ tends to infinity, the cylinders $C_1^{(N)}$ and $C_2^{(N)}$ degenerate to cylinders of equal circumference.  If that circumference is non-zero, then $\omega'^{(2)}$ has two pairs of simple poles coming from non-homologous cylinders.  By Lemma \ref{GPisCycle}, $G^P(X', \omega')$ has a cycle with the pair of punctures represented by $C_2'$ corresponding to an edge of $G^P$.  Since cylinders with pinched core curves remain pinched under this procedure, $G^P(X'^{(2)}, \omega'^{(2)})$ must contain an edge $e$ corresponding to $C_1'$ in addition to the cycle of $G^P(X', \omega')$.  It is impossible for $e$ and the edges of $G^P(X', \omega')$ to be part of a larger cycle in $G^P(X'^{(2)}, \omega'^{(2)})$ because that would imply that $e$ represents a cylinder whose core curve, a posteriori, must be parallel to the core curves of the cylinders represented by the edges of $G^P(X', \omega')$.  This contradicts Lemma \ref{GPisCycle}.  However, it is still possible that the circumferences of both cylinders converge to zero in which case neither $C_1'$ nor $C_2'$ exist and $\omega'^{(2)}$ is holomorphic at both pairs of punctures.  We address this possibility.

By Lemma \ref{NonHoloPart}, we can assume without loss of generality, that $\omega'^{(2)}$ has a pair of simple poles.  We proceed by induction, where each step of the induction is to perform the argument of the preceding paragraph until we reach a contradiction.  The first step is already done.  We present the $j^{\text{th}}$ step of the procedure.  Let $\{(X_n, \omega_n)\}_{n=0}^{\infty}$ denote the sequence of surfaces converging to a degenerate surface $(X'^{(j)}, \omega'^{(j)})$ such that $(X_n, \omega_n)$ has $j$ pairwise non-homologous cylinders all of whose circumferences converge to zero while another sequence of cylinders $\{C_{j+1}^{(n)}\}_{n=0}^{\infty}$ converges to a pair of poles of $\omega'^{(j)}$.  Let $\{C_k^{(n)}\}_{n=0}^{\infty}$, for $1 \leq k \leq j$, denote the $j$ distinct sequences of cylinders whose circumferences converge to zero as $n$ tends to infinity.  Without loss of generality, let $\{C_1^{(n)}\}_{n=0}^{\infty}$ be a sequence of cylinders such that for infinitely many values of $n$ and all $k \not = 1$, the circumference of $C_k^{(n)}$ is less than or equal to the circumference of $C_1^{(n)}$.  This may require the sequences to be renamed.  We pass to a subsequence such that this holds for all $n$.  Recall that $\varepsilon' > 0$ was fixed in the preceding paragraph and an appropriate $\varepsilon > 0$ was chosen.  It will become apparent that the circumference of the cylinder $C_1^{(n)}$ is $w_1^{(n)} \varepsilon^{1/(2^j)}$, where $w_1^{(n)}$ is a constant satisfying $0 < w_1^L \leq w_1^{(n)} \leq w_1^U < \infty$ for all $n$.  Let $w_{j+1}^{(n)}$ denote the circumference of $C_{j+1}^{(n)}$, which also satisfies $0 < w_{j+1}^L \leq w_{j+1}^n \leq w_{j+1}^U < \infty$ for all $n$.  We highlight the differences that arise in the course of repeating the argument of the preceding paragraph.  Solving the equation $e^{T_N} w_1^{(N)} \varepsilon^{1/(2^j)} = e^{-T_N} w_{j+1}^{(N)}$ shows that at time $T_N$ the lengths of the circumferences are $\sqrt{w_{j+1}^{(N)} w_1^{(N)}} \varepsilon^{1/(2^{j+1})}$.  To see that the core curves of all $j+1$ cylinders still pinch as $\varepsilon'$ tends to zero, note that, as before, the areas of all of the cylinders are fixed under the \splin ~action and thus their areas are bounded from below.  Finally,
$$\lim_{\varepsilon' \rightarrow 0} \sqrt{w_{j+1}^{(N)} w_1^{(N)}} \varepsilon^{1/(2^{j+1})} \leq \lim_{\varepsilon' \rightarrow 0} \sqrt{w_{j+1}^U w_1^U} \varepsilon'^{1/(2^{j+1})} = 0.$$
Note that this induction procedure includes Case B that was left unaddressed above.  Let $(X'^{(j+1)}, \omega'^{(j+1)})$ denote the degenerate surface formed by letting $N$ tend to infinity in the sequence $\{G_{T_N} B_N \cdot (X_N, \omega_N)\}_{N=0}^{\infty}$.  As above, the cylinders $C_1^{(N)}$ and $C_{j+1}^{(N)}$ degenerate to cylinders of equal circumference.  If that circumference is non-zero, then $\omega'^{(j+1)}$ has at least two pairs of simple poles coming from non-homologous cylinders, namely $C_1'$ and $C_{j+1}'$.  By Lemma \ref{GPisCycle}, $G^P(X'^{(j)}, \omega'^{(j)})$ has a cycle with the pair of punctures represented by $C_{j+1}'$ corresponding to an edge of $G^P$.  Since cylinders with pinched core curves remain pinched under this procedure, $G^P(X'^{(j+1)}, \omega'^{(j+1)})$ must contain an edge $e$ corresponding to $C_1'$ in addition to the cycle from $G^P(X'^{(j)}, \omega'^{(j)})$.  As before, $e$ and the edges of $G^P(X'^{(j)}, \omega'^{(j)})$ cannot be edges of a larger cycle.  This contradicts Lemma \ref{GPisCycle}.  However, it is still possible that the circumferences of all $j+1$ cylinders converge to zero in which case $\omega'^{(j+1)}$ is holomorphic at $j+1$ pairs of punctures.  In that case, repeat this argument.

This procedure must terminate at worst when $j = g$ because the core curves of the cylinders chosen at each step are pairwise non-homologous, and there are at most $g$ such curves.  Hence, performing this procedure at the $g-1$ iteration guarantees at least two poles from sequences of non-homologous cylinders and results in a contradiction.  This contradiction demonstrates that $\tilde X$ must in fact be the empty set.  In other words, the surface is filled by cylinders, and the vertical foliation of $X$ by $\omega$ is periodic.  Since this argument holds for all $\theta \in \mathbb{R}$ such that $(X, e^{i\theta}\omega)$ admits a cylinder in the vertical foliation, $(X,\omega)$ is completely periodic.
\end{proof}

Theorem \ref{RankOneImpCP} is used implicitly in the following corollary to guarantee that it is not a vacuous statement.  Compare this statement with \cite{MollerShimuraTeich}[Lemma 5.3].

\begin{corollary}
\label{RankOneCylConfig}
Let $(X,\omega)$ generate a Teichm\"uller disc $D \subset $ \RankOne .  For each $\theta \in \mathbb{R}$ such that the vertical foliation of $(X, e^{i\theta}\omega)$ is periodic, $(X, e^{i\theta}\omega)$ decomposes into a union of cylinders $C_1, \ldots, C_k$ such that all of the saddle connections on the top of $C_i$ are identified to the saddle connections on the bottom of $C_{i+1}$ and vice versa, for all $i \leq k-1$, and all of the saddle connections on the top of $C_k$ are identified to the saddle connections on the bottom of $C_1$ and vice versa.  Furthermore, the circumference of $C_i$ equals the circumference of $C_j$, for all $i,j$.
\end{corollary}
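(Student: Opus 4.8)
The plan is to pinch the core curves of all the cylinders at once and recover the configuration from the connectivity graph of the limit surface. Since $D \subset$ \RankOne, Theorem \ref{RankOneImpCP} shows $(X,\omega)$ is completely periodic, so for each $\theta$ as in the statement the foliation is periodic and $(X,e^{i\theta}\omega)$ is a union of cylinders $C_1,\ldots,C_k$ with parallel vertical core curves. First I would apply the Teichm\"uller geodesic flow $G_t$ and pass to a convergent subsequence $G_{t_n}\cdot(X,e^{i\theta}\omega)\to(X',\omega')$ in $\overline{\mathcal{M}_g}/\mathbb{R}^*$. By Corollary \ref{GtCurvePinch} every vertical core curve pinches, so each $C_i$ degenerates to a node. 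The holonomy of a vertical core curve is purely imaginary, so by the $2\pi i$ relation the residue it contributes is real and, by Lemma \ref{GtResidue}, scales like $e^{-t_n}$, exactly tracking the circumference of $C_i$; hence after residue normalization all $k$ residues survive with modulus proportional to the circumferences, and $\omega'$ carries a genuine pair of simple poles at each of the $k$ nodes. By \cite{BainbridgeMoller}[Proposition 11.1], $(X',\omega')$ lies in the closure of $D$.

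Next I would study the connectivity graph $G(X')$. Every node of $X'$ comes from pinching a cylinder and carries simple poles, so every edge of $G$ belongs to $G^P$ and $G=G^P$. Because the surface is exhausted by the cylinders, every part of $X'$ is assembled from cylinder halves and therefore meets at least one pole, so $G$ has no isolated vertices; together with Lemma \ref{GPisCycle} this forces $G=G^P$ to be a single cycle. Counting edges, the cycle has exactly $k$ vertices $v_1,\ldots,v_k$ and we may label the cylinders so that $C_i$ joins $v_i$ and $v_{i+1}$ cyclically, with every vertex of degree two.

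The geometric content now follows. Since $\omega$ is Abelian its vertical foliation is orientable, which furnishes a global transverse orientation distinguishing the two boundary circles of each cylinder as its top and bottom; crossing any horizontal saddle connection always passes from the top of one cylinder to the bottom of another. As $v_{i+1}$ has degree two, exactly two cylinder ends are identified there, and by the orientation they are the top of $C_i$ and the bottom of $C_{i+1}$; since the entire top boundary circle of $C_i$ must be glued at $v_{i+1}$ and only $C_{i+1}$ is available, all saddle connections on the top of $C_i$ are identified with those on the bottom of $C_{i+1}$, cyclically in $i$. Finally, each part $v_{i+1}$ is a compact Riemann surface on which $\omega'$ has exactly two simple poles, one from $C_i$ and one from $C_{i+1}$; the residue theorem forces these residues to sum to zero, and since the modulus of each equals the circumference of the corresponding cylinder up to the universal factor, $\mathrm{circ}(C_i)=\mathrm{circ}(C_{i+1})$ for all $i$. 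Hence all circumferences agree.

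I expect the main obstacle to be the passage from ``$G^P$ is a single cycle with every vertex of degree two'' to the geometric claim that the top of $C_i$ is glued entirely to the bottom of $C_{i+1}$. This requires identifying the parts of $X'$ precisely as the loci where consecutive cylinders are glued, so that degree two genuinely means that only two boundary circles are identified there, checking via orientability that tops meet bottoms rather than tops meeting tops, and ruling out the possibility that a part secretly carries extra genus or a holomorphic node that would evade the $G=G^P$ bookkeeping.
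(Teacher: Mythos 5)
Your overall strategy is the same as the paper's: degenerate the periodic foliation under $G_t$, identify the limit's connectivity graph, and force it to be a cycle via Lemma \ref{GPisCycle}. The paper's proof is a short paragraph doing exactly this, and your further unwinding of the cycle structure (orientability of the vertical foliation forces tops to meet bottoms, degree two at each vertex forces the entire top of $C_i$ to be glued to the entire bottom of $C_{i+1}$, and the residue theorem on each part yields equal circumferences) is a correct elaboration of what the paper leaves implicit with the phrase ``this implies that the cylinders must be arranged in exactly the configuration described.''

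The genuine gap is the step you flag yourself but do not close: the assertion that \emph{every} node of $X'$ comes from pinching a core curve, so that $G(X')=G^P(X',\omega')$. Corollary \ref{GtCurvePinch} and your residue-scaling argument show that each core curve pinches and that those $k$ nodes carry simple poles with residues proportional to the circumferences, but they say nothing about whether \emph{other} curves pinch as well. A priori, a connected union of boundary saddle connections (a ``spine'') could contain an essential loop, not homotopic to any core curve, whose flat length also contracts like $e^{-t_n}$; if such a loop degenerated it would produce an extra node at which $\omega'$ could be holomorphic. Then $G \neq G^P$, and Lemma \ref{GPisCycle} — which constrains only $G^P$ — would no longer force $G$ itself to be a cycle, so the degree-two bookkeeping on which your entire gluing argument rests would collapse. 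The paper closes precisely this hole by citing \cite{MasurClosedTraj}-style flat geometry in the form of \cite{MasurThesis}[Theorem 3]: for a periodic foliation the $G_t$-limit is obtained by pinching \emph{exactly} the core curves, and $\omega'$ has simple poles at \emph{all} resulting punctures. Alternatively you can close it by hand along the lines you began: under residue normalization (multiplying by roughly $e^{t_n}$) the boundary saddle connections have flat length constant in $n$ while the cylinders' moduli diverge, so each part converges to a fixed meromorphic differential — a fixed spine with half-infinite cylinders attached — and its interior conformal structure does not degenerate further; but some such argument must actually be supplied, since ``the core curves have flat length tending to zero'' does not by itself identify the complete list of nodes, nor does it certify that residue normalization is the scaling realizing the projectivized limit.
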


\begin{proof}
Without loss of generality, assume that the vertical foliation of $(X,\omega)$ is periodic.  Consider a divergent sequence of times $\{t_n\}$ such that the sequence $G_{t_n} \cdot (X, \omega)$ converges to a degenerate surface $(X', \omega')$.  By \cite{MasurThesis}[Theorem 3], the limit of this sequence is given by pinching the core curves of every cylinder in the cylinder decomposition of $(X,\omega)$.  Furthermore, $\omega'$ has a pair of simple poles at all of the pairs of punctures of $X'$.  Hence, $G(X', \omega') = G^P(X', \omega')$.  Since $G(X', \omega')$ is a connected graph, $G(X', \omega')$ must be a cycle by Lemma \ref{GPisCycle}.  This implies that the cylinders must be arranged in exactly the configuration described in the statement of the corollary.  Clearly this argument does not depend on $\theta$, so the result follows.
\end{proof}

\begin{lemma}
\label{RankOneImpNotHoloEverywhere}
Let $(X,\omega)$ generate a Teichm\"uller disc $D \subset$ \RankOne .  If $(X',\omega')$ is a degenerate surface in the closure of $D$ and $\omega'$ is not holomorphic, then $\omega'$ has simple poles on every part of $X'$.
\end{lemma}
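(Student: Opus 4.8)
The plan is to argue by contradiction. Suppose some part $S$ of $X'$ carries no simple pole of $\omega'$, so that the vertex $v_S$ is isolated in $G^P(X',\omega')$. Since $\omega'$ is not holomorphic, Lemma \ref{GPisCycle} tells us that the poles of $\omega'$ already determine a single cycle of $G^P$, necessarily supported on parts other than $S$. The overall goal is to manufacture, within the closure of $D$, an additional simple pole on the part coming from $S$: this adds to $G^P$ an edge lying in a connected component disjoint from the cycle, so the resulting graph is neither a single cycle nor a cycle plus isolated vertices, contradicting Lemma \ref{GPisCycle}.

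First I would dispose of the case $\omega'|_S \not\equiv 0$ by a rank computation, which is the conceptual heart of the matter. Filling in the punctures of $S$, across which $\omega'$ is holomorphic, realizes $(S,\omega'|_S)$ as a closed surface of genus at least one carrying a nonzero holomorphic differential; since $\Lambda_1 \equiv 1$, the form $H_{\omega'|_S}$, equivalently the derivative of the period matrix of $(S,\omega'|_S)$, has rank at least one. Choosing basis differentials supported on $S$ as in Lemma \ref{AbDiffBasis}, the corresponding block of $d\Pi(X_n)/d\mu_{\omega_n}$ converges to this nonzero $S$-block by Lemma \ref{Forni42Seqs}(1) (at every annulus one of the two differentials vanishes), while the cycle of poles contributes a divergent diagonal term as in Lemmas \ref{CycleImpliesRank} and \ref{DPiDivergentTerm}. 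The cross terms between $S$-supported differentials and the cycle differentials vanish in the limit by disjointness of supports and remain bounded for finite $n$ by Lemmas \ref{BddOutsideDisks} and \ref{BddInDisks}. Hence $d\Pi(X_n)/d\mu_{\omega_n}$ carries an asymptotically block-diagonal minor with a divergent block of rank at least one and a convergent nonzero block of rank at least one, so its rank is at least two for large $n$, contradicting that $D$ lies in \RankOne .

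It remains to treat $\omega'|_S \equiv 0$, where the $S$-block collapses and one must instead degenerate. On the approximating surfaces the region $S_n \subset X_n$ converging to $S$ still carries the nonzero differential $\omega_n|_{S_n}$, hence a cylinder by \cite{MasurClosedTraj}[Theorem 2]; I would select one whose core curve is non-separating in $S_n$, so that pinching it yields a self-loop at the $S$-vertex of $G^P$ (a separating pinch would force residue zero), and whose direction differs from the common direction of the cycle's cylinders, which are parallel by Lemma \ref{HomCylImpParLem} and Corollary \ref{RankOneCylConfig}. Pinching it produces a new pole on the $S$-part, and since $v_S$ is connected to the cycle only through holomorphic nodes, this self-loop lives in a $G^P$-component disjoint from the cycle, contradicting Lemma \ref{GPisCycle}.

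The main obstacle, and the delicate step, is to realize the new pole while retaining the \emph{entire} cycle of old poles in a single projective limit: a naive application of the Teichm\"uller flow rescales residues (Lemma \ref{GtResidue}) so that, after normalization, either the old poles or the new one collapse to a holomorphic node, leaving $G^P$ a single cycle and yielding no contradiction. This is overcome exactly by the rate-matching device from the proof of Theorem \ref{RankOneImpCP}: apply $G_{T_n}B_n$ so that the new $S$-cylinder and the old cylinders pinch with matched circumferences, using that all cylinders of the cycle have equal circumference (Corollary \ref{RankOneCylConfig}) so that they stay balanced simultaneously, and invoking continuity of the \splin ~action to the boundary \cite{BainbridgeMoller}[Proposition 11.1] to place the limit in the closure of $D$. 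Verifying this simultaneous balancing, and the bookkeeping for the shrinking cylinder on $S_n$ in the case $\omega'|_S\equiv 0$, is the technical core of the argument.
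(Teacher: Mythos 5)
Your first case ($\omega'|_S \not\equiv 0$) is essentially sound, and it is a genuinely different argument from the paper's: the divergent diagonal entry produced by the pole cycle (Lemmas \ref{CycleImpliesRank} and \ref{DPiDivergentTerm}, with the nearly-imaginary normalization of Lemma \ref{MostlyImagRes} to prevent cancellation), together with a convergent nonzero $S$-block (rank of $H_{\omega'|_S}$ at least one since $\Lambda_1 \equiv 1$) and cross terms that stay bounded by disjointness of supports and Lemma \ref{Forni42Seqs}, does force a $2 \times 2$ minor whose determinant diverges, hence rank at least two. The genuine gap is your second case, $\omega'|_S \equiv 0$, and it occurs exactly where you defer to "the technical core." First, a local repair: Masur's theorem \cite{MasurClosedTraj}[Theorem 2] applies to closed translation surfaces, not to the bordered region $S_n \subset X_n$ carrying a differential whose scale is collapsing; to get a cylinder associated to $S$ you must first rescale as in Lemma \ref{NonZeroPart} so that the limit on $S$ is a nonzero differential and then pull cylinders back, which is precisely the mechanism of Lemma \ref{NonHoloPart}. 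Second, and more seriously, the rate-matching device from Theorem \ref{RankOneImpCP} does \emph{not} by itself deliver a limit in which both the new pole on $S$ and the old cycle of poles survive: as the paper's own proof of that theorem shows, after applying $G_{T_N}B_N$ it can happen that \emph{both} families of circumferences converge to zero, so both nodes become holomorphic and $G^P$ of the limit is again a single cycle (or empty), giving no contradiction with Lemma \ref{GPisCycle}. Ruling this out is exactly the inductive argument over at most $g$ pairwise non-homologous cylinders that occupies the second half of the proof of Theorem \ref{RankOneImpCP}. Since pinching any single periodic direction always yields a $G^P$ that is a cycle --- consistent with Lemma \ref{GPisCycle} --- the entire burden of your contradiction rests on this two-direction balancing, which your proposal sketches but does not carry out.

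For comparison, the paper's proof avoids all of this and is a few lines long: since $\omega'$ is not holomorphic, some sequence of cylinders $C_1^{(n)} \subset X_n$ pinches to form a pair of simple poles; by Theorem \ref{RankOneImpCP} the direction of $C_1^{(n)}$ is periodic, so cylinders in that direction fill $X_n$; by Corollary \ref{RankOneCylConfig} all of these cylinders have equal circumference, so if one pinches they all pinch, and each limits to an infinite cylinder. Because these cylinders fill the surface, every part of $X'$ acquires simple poles --- no case analysis, no auxiliary degeneration, and no rank computation. In effect, your case 2 would force you to re-run the proof of Theorem \ref{RankOneImpCP} from scratch, which is why the paper instead derives this lemma as a direct consequence of that theorem and of Corollary \ref{RankOneCylConfig}.
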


\begin{proof}
Let $\{(X_n, \omega_n)\}_{n=0}^{\infty}$ be a sequence in $D$ converging to the degenerate surface $(X', \omega')$ as $n$ tends to infinity.  Since $\omega'$ is not holomorphic, there is a sequence of cylinders $\{C_1^{(n)}\}_{n=0}^{\infty}$, such that $C_1^{(n)} \subset X_n$ and the core curve of $C_1^{(n)}$ pinches to form a pair of simple poles of $\omega'$.  By Theorem \ref{RankOneImpCP}, the foliation in which $(X_n,\omega_n)$ admits the cylinder $C_1^{(n)}$ is periodic.  Therefore, there is a collection of cylinders $\{C_1^{(n)}, \ldots, C_k^{(n)}\}$ that fill $X_n$.  Let $w_i^{(n)}$ denote the circumference of $C_i^{(n)}$.  By Corollary \ref{RankOneCylConfig}, the ratios $w_i^{(n)}/w_1^{(n)} = 1$ for all $i \leq k$ and $n \geq 0$.  Hence, if the core curve of $C_1^{(n)}$ pinches, then the core curve of every cylinder in that foliation pinches.  Since the ratios between the circumferences are constant, every sequence of cylinders converges to an infinite cylinder on $X'$, and $\omega'$ cannot be holomorphic on any part of $X'$.
\end{proof}

\begin{corollary}
\label{RankOneImpNonZero}
Let $(X,\omega)$ generate a Teichm\"uller disc $D \subset$ \RankOne .  If $(X',\omega')$ is a degenerate surface in the closure of $D$ and $\omega' \not \equiv 0$ on a part $S \subset X'$, then $\omega'$ is not identically zero on any part of $X'$.
\end{corollary}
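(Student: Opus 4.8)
The plan is to split into two cases according to whether $\omega'$ is holomorphic on all of $X'$. First suppose $\omega'$ is not holomorphic. Then Lemma \ref{RankOneImpNotHoloEverywhere} applies directly and guarantees that $\omega'$ has simple poles on every part of $X'$. A part carrying a differential with simple poles certainly carries a differential that is not identically zero, so $\omega'$ is nonzero on every part and the corollary holds. Note that in this case the hypothesis that $\omega'$ is nonzero on some part $S$ is not even needed, since it follows from non-holomorphicity.

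It remains to treat the case where $\omega'$ is holomorphic on $X'$, and here I would argue by contradiction. Suppose $\omega' \not\equiv 0$ on $S$ but $\omega' \equiv 0$ on some part $S' \subset X'$. Since $(X', \omega')$ lies in the closure of $D$, there is a sequence in $D$ converging to it, and I would apply Lemma \ref{NonHoloPart} to this sequence to produce a surface $(X'', \omega'')$ in the closure of $D$ with $\omega''$ not holomorphic, where $X''$ is reached from $X'$ by pinching additional curves of $X'$. The \emph{crucial} point is to track the zero part $S'$ through this degeneration: the procedure of Lemma \ref{NonHoloPart} degenerates only parts on which the differential is nonzero, as it repeatedly locates a cylinder via \cite{MasurClosedTraj}[Theorem 2] on a part where the differential does not vanish and pinches its core curve under $G_t$. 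Since $\omega' \equiv 0$ on $S'$, the part $S'$ carries no flat structure, so the Teichm\"uller geodesic flow and the subsequent $\mathbb{R}^*$-normalization act trivially on it and pinch no curve on it. Consequently $S'$ persists as a part of $X''$ on which $\omega''$ is still identically zero.

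Finally, I would derive the contradiction. The surface $(X'', \omega'')$ lies in the closure of $D$ and $\omega''$ is not holomorphic, so Lemma \ref{RankOneImpNotHoloEverywhere} forces $\omega''$ to have simple poles on every part of $X''$. In particular $\omega''$ would have simple poles on $S'$, contradicting $\omega'' \equiv 0$ on $S'$. This contradiction shows that no such $S'$ can exist, so $\omega'$ is not identically zero on any part of $X'$. The main obstacle is the middle step, namely verifying that the zero part $S'$ is untouched by the degeneration of Lemma \ref{NonHoloPart}: one must confirm that no curve on $S'$ is pinched and that the normalization by $\mathbb{R}^*$ cannot revive a nonzero differential on $S'$, since the entire argument hinges on $S'$ surviving into $X''$ with a vanishing differential.
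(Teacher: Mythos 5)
Your proposal is correct and follows essentially the same route as the paper: both arguments invoke Lemma \ref{RankOneImpNotHoloEverywhere} to reduce to the holomorphic case, then use Lemma \ref{NonHoloPart} (with the continuity of the \splin\ action) to produce a further degeneration $(X'',\omega'')$ with a non-holomorphic differential, and conclude by noting that the identically-zero part survives this degeneration—since the zero differential is fixed by the \splin\ action—contradicting Lemma \ref{RankOneImpNotHoloEverywhere}. Your more careful tracking of the zero part through the pinching procedure is simply an expanded version of the paper's one-line justification.
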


\begin{proof}
By contradiction, assume that there is a part $S$ of $(X', \omega')$ such that $\omega' \equiv 0$ on $S$.  By Lemma \ref{RankOneImpNotHoloEverywhere}, $\omega'$ must be holomorphic on every part of $X'$.  By Lemma \ref{NonHoloPart} and \cite{BainbridgeMoller}[Proposition 11.1], we can find a sequence of surfaces $\{(X_n, \omega_n)\}_{n=0}^{\infty}$ in the closure of $D$ converging to the degenerate surface $(X'', \omega'')$ as $n$ tends to infinity, such that $\omega''$ is not holomorphic on a part of $X''$.  Since the zero differential is obviously fixed by the \splin ~action, this contradicts Lemma \ref{RankOneImpNotHoloEverywhere} and completes the proof.
\end{proof}

\begin{definition}
An edge $e$ of a connectivity graph $G(X')$ is called a \emph{holomorphic edge} with respect to $\omega'$ if $\omega'$ is holomorphic at the pair of punctures corresponding to $e$.
\end{definition}

\begin{lemma}
\label{NoHoloEdges}
Let $(X,\omega)$ generate a Teichm\"uller disc $D \in$ \RankOne ~and let $(X',\omega')$ be a degenerate surface in the closure of $D$.  If $e$ is an edge in the connectivity graph $G(X')$ between two distinct vertices, then $e$ is not a holomorphic edge with respect to $\omega'$.
\end{lemma}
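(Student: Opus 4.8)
The plan is to argue by contradiction, using the $\text{SL}_2(\mathbb{R})$-balancing construction from the proof of Theorem \ref{RankOneImpCP}. Suppose $e$ is a holomorphic edge joining distinct parts $S_i \neq S_j$, fix a sequence $\{(X_n,\omega_n)\} \subset D$ with $\lim_n (X_n,\omega_n) = (X',\omega')$, and let $\gamma_n \subset X_n$ be the curves pinching to the node recorded by $e$. By Lemma \ref{NonZeroPart} and Corollary \ref{RankOneImpNonZero} we may assume $\omega' \not\equiv 0$ on every part, in particular on $S_i$. Because $e$ is holomorphic, the residue of $\omega'$ at its node vanishes, so in the plumbing coordinates of Section \ref{DerPerMatSubSect} the collar about $\gamma_n$ is a flat cylinder $C_n$ whose circumference $w_n$ is the modulus of the $\omega_n$-period of $\gamma_n$ and hence tends to $0$, while its modulus tends to infinity. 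By Theorem \ref{RankOneImpCP} the direction of $C_n$ is periodic, and by Corollary \ref{RankOneCylConfig} every cylinder in that direction has the same circumference $w_n \to 0$.

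First I would produce a second cylinder transverse to $\gamma_n$. Since $\omega' \not\equiv 0$ on $S_i$, \cite{MasurClosedTraj}[Theorem 2] supplies a closed regular trajectory on $S_i$, hence a cylinder whose core $\delta_n$ (realized on $X_n$ for large $n$) has $\omega_n$-period converging to a nonzero value; in particular the circumference of $\delta_n$ is bounded away from $0$. As $w_n \to 0$, the curves $\gamma_n$ and $\delta_n$ cannot be homologous. They also cannot be parallel: if they were, their common direction would be periodic and Corollary \ref{RankOneCylConfig} would force $\gamma_n$ and $\delta_n$ to have equal circumference, against $w_n \to 0$.

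With $\gamma_n$ and $\delta_n$ non-parallel I would run the balancing argument of Theorem \ref{RankOneImpCP} verbatim: pick $B_n \in \text{SL}_2(\mathbb{R})$ carrying the core of $\gamma_n$ into the vertical foliation and that of $\delta_n$ into the horizontal foliation, so that under $G_t B_n$ the two circumferences scale at the pure rates $e^{\pm t}$. Choosing $T_n$ so that the two circumferences coincide makes them a common length tending to $0$, while \splin ~preserves the (bounded below) areas of the two cylinders; hence both moduli tend to infinity and both cores pinch. Passing to a limit $(X'',\omega'')$ in the closure of $D$ via \cite{BainbridgeMoller}[Proposition 11.1] then yields simple poles at both the $\gamma$- and the $\delta$-nodes, and since $\text{SL}_2(\mathbb{R})$ preserves homology these two pole-pairs remain non-homologous and non-parallel.

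This is the desired contradiction. By Lemma \ref{GPisCycle} the pole-graph $G^P(X'',\omega'')$ is a single cycle together with isolated vertices, and, exactly as in the proof of Theorem \ref{RankOneImpCP}, the $\gamma$-node cannot be absorbed into the cycle carrying the $\delta$-node, since that would force its core to be parallel to the cycle's cores; thus $G^P(X'',\omega'')$ is not a single cycle plus isolated vertices. Equivalently, the two non-homologous pole-pairs force $\text{Rank}(d\Pi(X'')/d\mu_{\omega''}) \geq 2$ by Lemma \ref{CycleImpliesRank} (or Lemma \ref{PairPolesMakeRank}), contradicting $D \subset$ \RankOne . The hardest step is the middle one: arranging, through the deformation $G_{T_n}B_n$, that both cylinders degenerate \emph{simultaneously} with circumferences tending to zero and that the two resulting pole-pairs survive as non-homologous edges in the limit. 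Distinctness of $S_i$ and $S_j$ is what lets us treat $e$ as an edge joining two vertices of the pole-graph rather than a loop, so that combining it with the transverse family genuinely violates the single-cycle conclusion of Lemma \ref{GPisCycle}.
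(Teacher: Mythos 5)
Your proof has a genuine gap at its foundation, in the very first step: the claim that, because the residue of $\omega'$ at the node of $e$ vanishes, ``the collar about $\gamma_n$ is a flat cylinder $C_n$ whose circumference $w_n$ tends to $0$ while its modulus tends to infinity.'' Vanishing residue gives exactly the opposite picture. In the plumbing coordinates of Section \ref{DerPerMatSubSect} the differential near the neck has the form $2A_n(z,t_n/z)\,dz/z$, and holomorphicity of $\omega'$ at the puncture means $A_\infty(0,0)=0$, i.e.\ near the node $\omega'$ looks like $\phi(z)\,dz$ with $\phi$ holomorphic; the neck is then a union of \emph{expanding} annuli and the vanishing curve is a union of saddle connections, not the core of a flat cylinder. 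Indeed, in the rank one setting such a flat cylinder provably cannot exist, by the same mechanism as Lemma \ref{OnePartHoloNoVanCyl}: if $C_n$ existed, Theorem \ref{RankOneImpCP} would make its direction periodic and Corollary \ref{RankOneCylConfig} would force every cylinder in that direction to have circumference $w_n\to 0$, arranged in a closed chain; then (unit area) every closed regular trajectory transverse to that direction passes through every cylinder of the chain and so has length at least $1/w_n\to\infty$, while trajectories in that direction have length $w_n\to 0$. This is incompatible with your own second cylinder $\delta_n$, whose circumference is bounded away from zero by Masur's theorem applied on $S_i$ where $\omega'\not\equiv 0$. So the two cylinders you need for the balancing step cannot coexist: $C_n$ does not exist, there is no ``direction of $\gamma_n$'' for $B_n$ to rotate to the vertical, nothing scales like $e^{-t}$, and the $G_{T_n}B_n$ deformation cannot convert the holomorphic node at $e$ into a pole pair. (A secondary problem, even granting $C_n$: its area need not be bounded below, since the $C_n$ are different cylinders on different surfaces $X_n$, so $\text{SL}_2(\mathbb{R})$-invariance of area gives no uniform lower bound.)

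For contrast, the paper's proof never attempts to pinch the holomorphic edge into a pole; it works with the cylinders that genuinely exist, namely the pole cylinders. When $\omega'$ is not holomorphic, Lemmas \ref{GPisCycle} and \ref{RankOneImpNotHoloEverywhere} place both endpoints of $e$ on the pole cycle; the cores of the pole cylinders are pairwise homologous (consecutive ones cobound parts of $X'$), hence parallel by Lemma \ref{HomCylImpParLem}, so one can pinch that whole periodic direction using \cite{MasurThesis}[Theorem 3]. In the resulting limit $(X'',\omega'')$ every node carries poles and all the old pole nodes persist, while the holomorphic edge $e$ is gone, merging (or pole-connecting) the two parts it joined; the old cycle together with this new identification forces $\dim^C(G^P(X'',\omega''))\geq 2$, contradicting Lemma \ref{GPisCycle}. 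The case where $\omega'$ is holomorphic on all of $X'$ is handled separately via Lemma \ref{NonHoloPart} and Lemma \ref{RankOneImpNotHoloEverywhere}. If you want to repair your argument, this is the viable direction: derive the contradiction from the flat cylinders producing poles, not from a (nonexistent) flat cylinder at the holomorphic node.
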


\begin{proof}
By contradiction, assume there is a holomorphic edge $e$ between two distinct vertices.  First, we claim that $\omega'$ cannot be holomorphic on a surface with two or more parts.  By Lemma \ref{NonHoloPart}, we can act by the \splin ~action on $(X', \omega')$ to reach a surface $(X'', \omega'')$ such that $\omega''$ has a pair of simple poles.  By Lemma \ref{RankOneImpNotHoloEverywhere}, $\omega''$ must have simple poles on every part of $X''$.  However, for every pair of punctures $(p,p')$ on $X'$ where $\omega'$ is holomorphic, $\omega''$ must also be holomorphic at the corresponding pair of punctures on $X''$.  This forces $G^P(X'', \omega'')$ to be a disconnected graph where every vertex in $G^P$ has degree at least two.  This contradicts Lemma \ref{GPisCycle}, so we assume that $\omega'$ is not holomorphic on every part of $X'$.

If $\omega'$ is not holomorphic, then Lemmas \ref{GPisCycle} and \ref{RankOneImpNotHoloEverywhere} imply that $e$ is an edge between two vertices of the cycle $G^P(X', \omega')$.  Let $C_1$ be a cylinder corresponding to an edge of $G^P(X', \omega')$.  Let $(X_1,\omega_1)$ be a surface whose vertical foliation contains the core curve of $C_1$.  The vertical foliation of $(X_1, \omega_1)$ is periodic by Theorem \ref{RankOneImpCP}, and \cite{MasurThesis}[Theorem 3] implies that the core curves of all of the cylinders parallel to $C_1$ pinch under $G_t$.  Let $(X'', \omega'')$ be the resulting degenerate surface.  Note that $\omega''$ has simple poles at every pair of punctures on $X''$.  Moreover, since we pinched the core curve of every cylinder parallel to $C_1$, $\omega''$ must have poles at all of the same punctures at which $\omega'$ has poles on $X'$.  However, the edge $e$ is no longer in the graph $G(X'', \omega'')$, which implies that the two vertices it joined are a single vertex in $G(X'', \omega'')$.  This is impossible because it would imply that $\text{dim}_C(G^P) \geq 2$.  Therefore, $G(X')$ has no holomorphic edges with respect to $\omega'$.
\end{proof}

\begin{lemma}
\label{TDDerPerRank1Lem}
If $(X',\omega')$ is a degenerate surface in the closure of a Teichm\"uller disc $D \subset \mathcal{D}_g (1)$, then $(X',\omega')$ has one of the following three configurations:
\begin{itemize}
\item[(1)] $(X',\omega')$ has exactly one part with \emph{at most} two simple poles.
\item[(2)] $(X',\omega')$ has exactly two parts that are joined by exactly two pairs of poles.
\item[(3)] $X' = S_1 \sqcup \cdots \sqcup S_n$ has $n \geq 3$ parts such that $\omega'$ has exactly one pair of poles joining $S_j$ to $S_{j+1}$, for $1 \leq j \leq n-1$, and exactly one pair of poles joining $S_n$ to $S_1$.
\end{itemize}
Furthermore, there are no pairs of punctures joining two distinct parts in the second and third configuration above such that $\omega'$ is holomorphic at those pairs of punctures.
\end{lemma}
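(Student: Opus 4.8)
The plan is to read off all three configurations directly from the connectivity graph $G(X')$ and its pole-subgraph $G^P(X',\omega')$, assembling Lemmas \ref{GPisCycle}, \ref{NoHoloEdges}, and \ref{RankOneImpNotHoloEverywhere}. First I would record two global facts. Since $X'$ is a limit of connected surfaces obtained by pinching curves, the stable-curve limit is connected, so $G(X')$ is a connected (multi)graph. Second, by Lemma \ref{NonZeroPart} together with Corollary \ref{RankOneImpNonZero}, $\omega'$ is not identically zero on any part. I would then observe that the asserted ``furthermore'' clause is nothing more than a restatement of Lemma \ref{NoHoloEdges}: every edge of $G(X')$ joining two \emph{distinct} parts is a pole edge, i.e. lies in $G^P(X',\omega')$.

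Next I would dispose of the case in which $\omega'$ is holomorphic on all of $X'$. In that case every edge between distinct parts would be a holomorphic edge, which Lemma \ref{NoHoloEdges} forbids; since $G(X')$ is connected, this forces $X'$ to consist of a single part carrying no simple poles, which is configuration (1) with zero poles (subsumed under ``at most two''). So I may assume $\omega'$ is not holomorphic. Then Lemma \ref{RankOneImpNotHoloEverywhere} guarantees that $\omega'$ has simple poles on every part, so every vertex of $G^P$ has positive degree and $G^P$ has no isolated vertices. By Lemma \ref{GPisCycle}, $G^P$ is therefore a single cycle whose vertex set is \emph{all} of $V(G(X'))$; in particular the number of parts equals the number of vertices of this cycle.

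It then remains to split on the number $n$ of vertices of this spanning cycle. If $n=1$, the cycle is a single loop, giving one part with exactly one pair of poles (two simple poles), and the uniqueness of the cycle in Lemma \ref{GPisCycle} rules out any further pole-loop, so this is configuration (1). If $n=2$, the cycle is a bigon, i.e. exactly two pole edges between the two parts; Lemma \ref{GPisCycle} forbids additional pole edges and Lemma \ref{NoHoloEdges} forbids holomorphic edges between the two distinct parts, so these are the only edges joining the parts, which is configuration (2). If $n\geq 3$, the cycle visits the parts in cyclic order $S_1,\dots,S_n$ with exactly one pole edge between consecutive parts and between $S_n$ and $S_1$, and once more Lemmas \ref{GPisCycle} and \ref{NoHoloEdges} exclude any other edge between distinct parts, giving configuration (3). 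In the last two cases the absence of holomorphic inter-part edges is precisely the ``furthermore.''

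I expect the only genuinely delicate point to be the bookkeeping that matches the graph-theoretic trichotomy—cycle on one, two, or $\geq 3$ vertices—to the three stated configurations, and in particular the observation that connectivity of $G(X')$ together with Lemma \ref{NoHoloEdges} forces the number of parts to equal the number of vertices of the pole-cycle, so that no part is left stranded off the cycle. All of the analytic substance (the rank estimates near the boundary via the derivative of the period matrix) has already been absorbed into Lemmas \ref{GPisCycle}, \ref{RankOneImpNotHoloEverywhere}, and \ref{NoHoloEdges}, so the present argument is essentially a combinatorial assembly of those results.
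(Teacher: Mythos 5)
Your proposal is correct and follows essentially the same route as the paper, whose proof is exactly this assembly of Lemma \ref{GPisCycle}, Lemma \ref{RankOneImpNotHoloEverywhere}, Corollary \ref{RankOneImpNonZero}, and Lemma \ref{NoHoloEdges} (the paper cites Lemma \ref{PairPolesMakeRank} directly for the one-part bound, whereas you deduce it from the loop case of Lemma \ref{GPisCycle}, which amounts to the same thing). Your writeup simply makes explicit the combinatorial bookkeeping that the paper leaves implicit.
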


\begin{proof}
By Lemma \ref{PairPolesMakeRank}, if $X'$ has one part, then $\omega'$ has at most one pair of poles.  If $X'$ has more than one part, then this lemma follows from Lemmas \ref{GPisCycle}, \ref{RankOneImpNotHoloEverywhere}, Corollary \ref{RankOneImpNonZero}, and Lemma \ref{NoHoloEdges}.
\end{proof}

\begin{remark}
Case (2) describes a cycle on two vertices that is simply a degenerate version of Case (3).  We distinguished it from Case (3) for clarity.
\end{remark}

\section{Applications of Complete Periodicity in \RankOne}
\label{TDThmAppsSec}

The property of complete periodicity imposes very strong restrictions on a surface.  With little effort we prove that there are no Teichm\"uller discs in \RankOne ~in certain strata of Abelian differentials and apply this to genus two.

\begin{lemma}
\label{MoreThan1Cyl}
Given a completely periodic surface $(X,\omega) \in \mathcal{M}_g$, $g \geq 2$, there exists $\theta \in \mathbb{R}$ such that the cylinder decomposition of $(X, e^{i\theta}\omega)$ has at least two cylinders.
\end{lemma}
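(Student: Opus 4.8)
The plan is to argue by contradiction: suppose the cylinder decomposition of $(X,e^{i\theta}\omega)$ consists of a single cylinder for \emph{every} $\theta$ whose vertical foliation is periodic. By \cite{MasurClosedTraj}[Theorem 2] together with complete periodicity, such periodic directions exist and are in fact dense, so there are at least two non-parallel ones; under the contradiction hypothesis each of them exhibits $X$ as a single cylinder filling the whole surface. I will then extract homological information from this, following the spirit of the period-matrix analysis used throughout the paper.

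The key computation is an intersection formula. Write $p(\gamma)=\int_\gamma\omega$ for the holonomy homomorphism $p\colon H_1(X;\mathbb{Z})\to\mathbb{C}$, and let $\langle\cdot,\cdot\rangle$ be the algebraic intersection form on $H_1(X;\mathbb{R})$. First I would show that if $(X,\omega)$ is a single cylinder of area $A$ with core $\alpha$, then for every $\delta\in H_1(X;\mathbb{R})$
\[
\langle\alpha,\delta\rangle=\tfrac1A\,\mathrm{Im}\!\big(\overline{p(\alpha)}\,p(\delta)\big).
\]
To see this, rotate so the cylinder is horizontal with $\alpha$ a central core; then $p(\alpha)=w$ is real, the height is $h$, $A=wh$, and $\delta$ crosses $\alpha$ algebraically $\mathrm{Im}(p(\delta))/h$ times, which rearranges to the displayed formula. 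In particular $\langle\alpha,\delta\rangle=0$ whenever $\delta\in\ker p$, so the core of every single space-filling cylinder lies in the symplectic orthogonal complement $(\ker p)^{\perp}$. Since $\mathrm{Re}\,\omega$ and $\mathrm{Im}\,\omega$ are linearly independent in $H^1(X;\mathbb{R})$, the map $p$ has rank two, hence $\dim\ker p=2g-2$ and, by nondegeneracy of the intersection form, $\dim(\ker p)^{\perp}=2$. Thus under the hypothesis the core curves of \emph{all} cylinders of $(X,\omega)$ are confined to a fixed $2$-dimensional subspace of $H_1(X;\mathbb{R})$.

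To finish I would contradict this with the fact that, for $g\ge2$, the homology classes of cylinder core curves cannot all lie in a $2$-dimensional subspace — ideally because they span $H_1(X;\mathbb{R})$, of dimension $2g\ge4$. This spanning statement is the main obstacle, and it is where complete periodicity must be used in earnest: in each periodic direction the intersection $\langle\delta,\gamma_i\rangle$ with a core $\gamma_i$ is the net number of times $\delta$ crosses the cylinder $C_i$, so a class orthogonal to every core crosses no cylinder and, running over two transverse periodic directions, must have vanishing holonomy. I would push this further by observing that, since $\langle\alpha,\delta\rangle\in\mathbb{Z}$ for integral $\delta$, the formula forces the component of $p(\delta)$ perpendicular to $p(\alpha)$ to be discrete for each single-cylinder direction; using two non-parallel such directions makes the period group $p(H_1(X;\mathbb{Z}))$ a genuine rank-two lattice, so $(X,\omega)$ is square-tiled, and the problem reduces to the combinatorial statement that a square-tiled surface which is a single cylinder in every rational direction must be a torus. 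The delicate point throughout is that a single space-filling cylinder already carries all of $H_1$ (rank $2g$) while only its one core is detected by the intersection form as lying in $(\ker p)^{\perp}$; arranging the crossing-number bookkeeping so that some zero-holonomy class is necessarily linked with a core is the crux of the argument.
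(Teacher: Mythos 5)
Your reduction steps are correct and nicely executed: the intersection formula $\langle\alpha,\delta\rangle=\tfrac1A\,\mathrm{Im}\bigl(\overline{p(\alpha)}\,p(\delta)\bigr)$ for a single space-filling cylinder is right, it does confine the core of every single-cylinder direction to the $2$-dimensional symplectic subspace $(\ker p)^{\perp}$, and the integrality observation does force the absolute periods into a lattice, so that $X$ is a branched torus cover. But the proof is not finished, and you say so yourself. No contradiction is ever derived: what you end with is the statement that a torus cover which is a single cylinder in every rational direction must be a torus, which is precisely the lemma you are trying to prove, restricted to the class of surfaces you have reduced to. As written, the argument is circular at its final step; the ``crossing-number bookkeeping'' you defer is the entire content of the lemma.

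Moreover, the specific contradiction you hope for --- that cylinder core curves cannot all lie in a $2$-dimensional subspace of $H_1(X;\mathbb{R})$, ``ideally because they span'' --- is false for completely periodic surfaces in general, so no completion that ignores the single-cylinder hypothesis can work. The Eierlegende Wollmilchsau $(M_3,\omega_{M_3})$ is a counterexample: it is a completely periodic (Veech) surface of genus $3$, and in each periodic direction its cylinders are cyclically glued so that consecutive cores cobound a cylinder (cf.\ Corollary \ref{RankOneCylConfig}); hence all cores in a given direction are homologous to a single class $\alpha_\theta$, the crossing formula $\mathrm{Im}\,p(\delta)=(\sum_i h_i)\langle\alpha_\theta,\delta\rangle$ puts $\alpha_\theta\in(\ker p)^{\perp}$, and so \emph{all} cores over \emph{all} periodic directions lie in that fixed $2$-dimensional subspace. (This confinement is essentially the mechanism that places $(M_3,\omega_{M_3})$ in $\mathcal{D}_3(1)$.) Your own observation that a class symplectically orthogonal to every core has vanishing holonomy shows only that the span of the cores equals $(\ker p)^{\perp}$, which is consistent, not contradictory. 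Any correct completion must therefore exploit the one-cylinder hypothesis combinatorially, which is exactly what the paper does, far more directly: in a single-cylinder direction a saddle connection $\sigma$ on the bottom is a proper subset of the boundary (this uses $g\geq 2$), and the trajectories inside the cylinder in the direction joining an endpoint $z_1$ of $\sigma$ to its glued copy $z_1'$ on the top close up, forming a cylinder $C'$ that does not fill the surface; complete periodicity then forces the direction of $C'$ to decompose into at least two cylinders.
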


\begin{proof}
Assume that $(X,\omega)$ is filled by a single cylinder $C$.  We show that there exists a direction such that $(X,\omega)$ is not filled by a single cylinder.  The top and bottom of $C$ consist of a union of saddle connections.  Choose one such saddle connection $\sigma$ on the bottom of $C$ joining zeros $z_1$ to $z_2$, which are not necessarily distinct.  Let $\sigma'$ be the saddle connection on the top of $C$ to which $\sigma$ is identified.  Let $\sigma'$ have endpoints $z_1'$ and $z_2'$ such that $z_i$ is identified to $z_i'$, for $i = 1,2$.  Consider the family of trajectories in $C$ parallel to a trajectory from $z_1$ to $z_1'$.  This determines a cylinder $C' \subset X$ with $z_1$ on its top and $z_2$ on its bottom formed by identifying $\sigma$ to $\sigma'$.  Since $\sigma$ is a proper subset of the top of cylinder $C$, the cylinder $C'$ does not fill $(X,\omega)$.  Furthermore, $(X,\omega)$ is completely periodic, so the complement of $C'$ must contain at least one cylinder.
\end{proof}

\begin{proposition}
\label{H2gm2DerPerRank1}
There are no Teichm\"uller discs contained in $\mathcal{D}_g (1) \cap \mathcal{H}(2g-2)$.
\end{proposition}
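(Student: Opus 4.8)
The plan is to combine complete periodicity with the single-zero feature of $\mathcal{H}(2g-2)$ and then degenerate. Suppose, for contradiction, that $(X,\omega) \in \mathcal{H}(2g-2)$ generates a Teichm\"uller disc $D \subset \mathcal{D}_g (1)$. By Theorem \ref{RankOneImpCP} the surface $(X,\omega)$ is completely periodic, and by Lemma \ref{MoreThan1Cyl} there is a direction $\theta$ in which the cylinder decomposition of $(X,e^{i\theta}\omega)$ has at least two cylinders $C_1, \ldots, C_k$ with $k \geq 2$. Complete periodicity guarantees that this direction is periodic, so these cylinders fill the surface; after rotating I may assume $\theta = 0$. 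Corollary \ref{RankOneCylConfig} moreover tells us these cylinders are arranged cyclically with equal circumferences, which reinforces that every boundary curve threads through the singular points.

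Next I would degenerate along this direction. Flowing by $G_t$ pinches the core curves $\gamma_1, \ldots, \gamma_k$ of all $k$ cylinders, and by \cite{MasurThesis}[Theorem 3] the limit $(X', \omega')$ is a degenerate surface on which $\omega'$ acquires a pair of simple poles at each of the $k$ pairs of punctures so created, one pair per cylinder. Since $(X,\omega) \in D \subset \mathcal{D}_g (1)$, the limit $(X', \omega')$ lies in the closure of a Teichm\"uller disc contained in \RankOne, so Lemma \ref{TDDerPerRank1Lem} constrains its configuration into one of the three listed types.

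The step I expect to carry the weight is showing that this degeneration produces only a \emph{single} part. Because $(X,\omega) \in \mathcal{H}(2g-2)$ has a unique zero $P$ of order $2g-2$, every saddle connection is a loop based at $P$, so the union $\Gamma$ of the boundary curves of the cylinders (the singular leaves of the vertical foliation) is connected, all of its components being identified through $P$. The complement $X \setminus \bigcup_i \gamma_i$ is a union of half-cylinders, each of which deformation retracts onto $\Gamma$; hence $X \setminus \bigcup_i \gamma_i$ is connected, and $X'$ therefore has exactly one part $S$. Consequently all $k$ pairs of simple poles of $\omega'$ lie on the single part $S$.

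This yields the contradiction. A single part carrying $k \geq 2$ pairs of simple poles is incompatible with Lemma \ref{TDDerPerRank1Lem}(1), which permits a single part to carry \emph{at most} two simple poles, i.e.\ one pair; equivalently, by Lemma \ref{PairPolesMakeRank} the presence of $k \geq 2$ pairs of poles on one part forces $\sup_n \text{Rank}\left(\frac{d\Pi(X_n)}{d\mu_{\omega_n}}\right) \geq 2$, contradicting $D \subset \mathcal{D}_g (1)$. The main obstacle is the connectivity claim of the third paragraph: a priori, pinching several cylinders could disconnect the surface into several parts (which is exactly the cyclic configuration of Lemma \ref{TDDerPerRank1Lem}(2)--(3)), and it is precisely the single-zero hypothesis defining $\mathcal{H}(2g-2)$ that forces $\Gamma$ to be connected and hence collapses every admissible configuration down to the forbidden single-part case.
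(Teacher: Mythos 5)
Your proof is correct, and while it shares its setup with the paper's proof (contradiction, Theorem \ref{RankOneImpCP}, Lemma \ref{MoreThan1Cyl}, degeneration under $G_t$ via \cite{MasurThesis}[Theorem 3]), the decisive step runs in the opposite logical direction. The paper invokes Lemma \ref{TDDerPerRank1Lem} to force the limit surface to have \emph{two or more} parts, then uses the uniqueness of the zero to locate a part carrying two simple poles and no zeros --- a twice-punctured sphere, which is not admissible under the Deligne-Mumford compactification; the contradiction there is with stability of the limit. You instead use the uniqueness of the zero \emph{first}: since every saddle connection is a loop through the single zero $P$, the singular leaves form a connected set $\Gamma$, each half-cylinder of $X \setminus \bigcup_i \gamma_i$ is attached to $\Gamma$ along saddle connections containing $P$, so the complement of the core curves is connected and the limit has exactly \emph{one} part carrying $k \geq 2$ pairs of simple poles; the contradiction is then with Lemma \ref{TDDerPerRank1Lem}(1), equivalently Lemma \ref{PairPolesMakeRank}. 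Both routes are sound, and they trade different strengths. Yours is topologically self-contained and never appeals to admissibility of nodal curves; but it is tailored to single-zero strata. The paper's Chern-formula-on-parts style of contradiction transfers essentially verbatim to Proposition \ref{HnmDerPerRank1} for $\mathcal{H}(n,m)$ with $n,m$ odd, where your connectivity argument breaks down (with two zeros the singular level set need not be connected, and the limit genuinely can have several parts). One small imprecision worth fixing: each half-cylinder deformation retracts onto its \emph{own} boundary component of $\Gamma$, not onto all of $\Gamma$; connectivity of $X \setminus \bigcup_i \gamma_i$ then follows because every such boundary component passes through $P$.
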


\begin{proof}
By contradiction, assume that there is a surface $(X,\omega)$ generating a Teichm\"uller disc in $\mathcal{D}_g (1) \cap \mathcal{H}(2g-2)$.  By Lemma \ref{MoreThan1Cyl}, choose a direction $\theta$ such that $(X,e^{i\theta}\omega)$ decomposes into two or more cylinders.  Under the Teichm\"uller geodesic flow, $(X,e^{i\theta}\omega)$ degenerates to a surface $(X', \omega')$ with two or more parts by Lemma \ref{TDDerPerRank1Lem} and \cite{MasurThesis}[Theorem 3].  Moreover, the zero of order $2g-2$ must lie on exactly one of the parts because \cite{MasurThesis}[Theorem 3] implies that only the core curves of cylinders are pinched.  This implies that there is a part of $X'$ with two simple poles and no zeros, i.e. a twice punctured sphere.  This is not admissible under the Deligne-Mumford compactification, thus we get a contradiction.
\end{proof}

\begin{proposition}
\label{HnmDerPerRank1}
Let $n$ and $m$ be odd numbers such that $n+m = 2g-2$.  There are no Teichm\"uller discs contained in $\mathcal{D}_g (1) \cap \mathcal{H}(n,m)$.
\end{proposition}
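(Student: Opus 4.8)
The plan is to follow the template of Proposition \ref{H2gm2DerPerRank1} but to extract a \emph{parity} contradiction from the oddness of $n$ and $m$ rather than from the mere non-existence of a twice-punctured sphere. Suppose for contradiction that $(X,\omega)$ generates a Teichm\"uller disc $D \subset \mathcal{D}_g(1) \cap \mathcal{H}(n,m)$. By Theorem \ref{RankOneImpCP}, $(X,\omega)$ is completely periodic, so by Lemma \ref{MoreThan1Cyl} there is a direction $\theta$ in which $(X,e^{i\theta}\omega)$ decomposes into $k \geq 2$ cylinders. Degenerating along the Teichm\"uller geodesic flow and invoking \cite{MasurThesis}[Theorem 3] pinches the core curves of all $k$ cylinders simultaneously; by Corollary \ref{RankOneCylConfig} the resulting degenerate surface $(X',\omega')$ has connectivity graph equal to a single cycle, hence $k$ parts arranged cyclically as in configurations (2) or (3) of Lemma \ref{TDDerPerRank1Lem}. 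In particular $X'$ has at least two parts.

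Next I would show that every part of $X'$ must carry at least one zero of $\omega'$, and this is where the Chern formula does the work. On a part $S$ of genus $g_S$ bearing no zeros and exactly $p$ simple poles (the punctures coming from the adjacent nodes), the Chern relation $\sharp(\text{zeros}) - \sharp(\text{poles}) = 2g_S - 2$ forces $p = 2 - 2g_S$. Thus either $g_S = 0$ and $p = 2$, making $S$ a twice-punctured sphere, which is inadmissible under the Deligne--Mumford compactification, or $g_S = 1$ and $p = 0$, making $S$ an isolated part and contradicting that $X'$ arises from the connected surface $X$. Either way we obtain a contradiction, so each part contains a zero. Since $\omega$ has exactly two zeros, and these persist as the two zeros of $\omega'$ under the pinching (only regular core curves are contracted, away from the zeros, so the orders are unchanged), there can be at most two parts; combined with the previous paragraph there are exactly two, each carrying exactly one zero. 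This is configuration (2) of Lemma \ref{TDDerPerRank1Lem}: two parts $S_1, S_2$ joined by exactly two pairs of poles, so each part has precisely two simple poles.

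Finally I would apply the Chern formula to the part $S_1$ containing the zero of order $n$. With one zero of order $n$ and two simple poles the relation reads $n - 2 = 2g_{S_1} - 2$, i.e. $n = 2g_{S_1}$, which is even, contradicting the hypothesis that $n$ is odd; an identical computation on $S_2$ handles $m$. I expect the main obstacle to be the bookkeeping of the second step---verifying that the two zeros really survive the degeneration with their orders intact, and that a zero-free part is genuinely forced to be either a bare twice-punctured sphere or a disconnected genus-one piece---rather than the closing parity count, which is immediate once the configuration has been pinned down to two parts with one zero apiece.
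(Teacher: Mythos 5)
Your proposal is correct and follows essentially the same route as the paper's proof: use Theorem \ref{RankOneImpCP} and Lemma \ref{MoreThan1Cyl} to get a direction with at least two cylinders, pinch their core curves via \cite{MasurThesis}[Theorem 3] together with Lemma \ref{TDDerPerRank1Lem}, and derive the contradiction from the Chern formula applied to a part carrying a single zero of odd order $n$ and two simple poles. The only difference is organizational: the paper splits into cases according to whether the two zeros land on the same part (excluded by the twice-punctured-sphere argument) or on different parts, whereas you first rule out zero-free parts and then run the parity count on both parts --- the same ingredients in a slightly different order.
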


\begin{proof}
By contradiction, assume that there is a surface $(X,\omega)$ generating a Teichm\"uller disc in $\mathcal{D}_g (1) \cap \mathcal{H}(n,m)$.  By Lemma \ref{MoreThan1Cyl}, choose a direction $\theta$ such that $(X,e^{i\theta}\omega)$ decomposes into two or more cylinders.  Under the Teichm\"uller geodesic flow, $(X,e^{i\theta}\omega)$ degenerates to a surface $(X', \omega')$ with two or more parts by Lemma \ref{TDDerPerRank1Lem} and \cite{MasurThesis}[Theorem 3].  Moreover, the zeros must lie on one or two of the parts of $X'$ because \cite{MasurThesis}[Theorem 3] implies that only the core curves of cylinders were pinched.  If they lie on the same part, then as before, every other part must be a twice punctured sphere, which is impossible.  However, if they lie on different parts, then there is a part with two simple poles and a zero of order $n$.  Since there does not exist an integer $g' \geq 0$ such that $n-2 = 2g'-2$, the Chern formula cannot be satisfied and we have a contradiction.
\end{proof}

Though Proposition \ref{Genus2Cor} is well-known, we provide an original proof that there are no Teichm\"uller discs contained in $\mathcal{D}_2(1)$.  The best possible result for the Lyapunov exponents of genus two surfaces was proven by Bainbridge \cite{BainbridgeThesis}, who used McMullen's \cite{McMullenGenus2} classification of \splin -invariant ergodic measures in genus two to calculate the Lyapunov exponents of the Kontsevich-Zorich cocycle explicitly.  Bainbridge found $\lambda_2 = 1/2$, for all \splin -invariant ergodic measures with support in $\mathcal{H}(1,1)$, and $\lambda_2 = 1/3$, for all \splin -invariant ergodic measures with support in $\mathcal{H}(2)$.

\begin{proposition}
\label{Genus2Cor}
There are no Teichm\"uller discs contained in $\mathcal{D}_2(1)$.
\end{proposition}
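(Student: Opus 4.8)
The plan is to observe that genus two is special because the two preceding propositions together already exhaust the entire stratification of $\mathcal{M}_2$ into strata of Abelian differentials. First I would recall that the moduli space of holomorphic Abelian differentials in genus two decomposes as a disjoint union $\mathcal{M}_2 = \mathcal{H}(2) \sqcup \mathcal{H}(1,1)$, since by the Chern formula the orders of the zeros of a holomorphic differential on a genus two surface sum to $2g-2 = 2$, leaving only the partitions $(2)$ (one double zero) and $(1,1)$ (two simple zeros).

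The key structural fact I would invoke is that any Teichm\"uller disc lies entirely within a single stratum. This holds because the \splin ~action is given by real-affine maps in the flat coordinates determined by $\omega$ away from the singularities, and therefore preserves the orders of the zeros of the Abelian differential. Consequently a Teichm\"uller disc $D \subset \mathcal{D}_2(1)$ must be contained in exactly one of $\mathcal{H}(2)$ or $\mathcal{H}(1,1)$.

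I would then dispatch each stratum by citing the relevant proposition. For $\mathcal{H}(2)$, apply Proposition \ref{H2gm2DerPerRank1} with $g = 2$, noting $2g - 2 = 2$, to conclude that there are no Teichm\"uller discs in $\mathcal{D}_2(1) \cap \mathcal{H}(2)$. For $\mathcal{H}(1,1)$, apply Proposition \ref{HnmDerPerRank1} with $n = m = 1$; these are odd and satisfy $n + m = 2 = 2g - 2$, so there are no Teichm\"uller discs in $\mathcal{D}_2(1) \cap \mathcal{H}(1,1)$. Since these two strata exhaust $\mathcal{M}_2$, there are no Teichm\"uller discs in $\mathcal{D}_2(1)$.

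There is essentially no remaining obstacle at this stage: the genuine work lives upstream in Propositions \ref{H2gm2DerPerRank1} and \ref{HnmDerPerRank1}, which in turn rest on the complete periodicity established in Theorem \ref{RankOneImpCP} and the configuration analysis of Lemma \ref{TDDerPerRank1Lem}. The only point requiring a moment's care is confirming that the twice-punctured-sphere and Chern-formula obstructions used in those two propositions genuinely cover both genus two strata, which they do precisely because $2g-2 = 2$ admits only the partitions $(2)$ and $(1,1)$, and in the latter both parts are odd.
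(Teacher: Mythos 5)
Your proposal is correct and is essentially identical to the paper's proof, which likewise deduces the result from Propositions \ref{H2gm2DerPerRank1} and \ref{HnmDerPerRank1} together with the decomposition $\mathcal{M}_2 = \mathcal{H}(2) \cup \mathcal{H}(1,1)$. Your added remark that the \splin\ action preserves the stratum is a sound (and implicitly assumed) justification, but it introduces no new content beyond the paper's argument.
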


\begin{proof}
This follows from Propositions \ref{H2gm2DerPerRank1} and \ref{HnmDerPerRank1} because $\mathcal{M}_2 = \mathcal{H}(2) \cup \mathcal{H}(1,1)$.
\end{proof}

Note that $\mathcal{D}_2(1)$ is the determinant locus in genus two.  We remark that the author has another proof of Proposition \ref{Genus2Cor} using more direct methods than those in this paper and more elementary than those of \cite{BainbridgeThesis}.

\section{Convergence to Veech Surfaces}

The goal of this section is to prove Theorem \ref{CPImpVeech}, which will serve as the first step toward bridging the gap between the problem of classifying all Teichm\"uller discs in \RankOne ~and M\"oller's \cite{MollerShimuraTeich} nearly complete classification of Teichm\"uller \emph{curves} in \RankOne .

\begin{lemma}
\label{TDAlsoInDg1}
Given a surface $(X,\omega)$ generating a Teichm\"uller disc $D_1 \subset \mathcal{D}_g (1)$, let $\{(X_n,\omega_n)\}_{n=1}^{\infty}$ be a sequence of surfaces in $D_1$ converging to $(X',\omega') \in \overline{\mathcal{M}_g}$, where $(X',\omega') \not \in D_1$ and $\omega'$ is holomorphic.  If $D_2$ is the Teichm\"uller disc generated by $(X',\omega')$, then $D_2 \subset \overline{\mathcal{D}_g (1)}$.  Furthermore, $D_2 \subset \overline{D_1}$.
\end{lemma}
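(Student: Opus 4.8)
The plan is to prove both containments at once by routing everything through the closure $\overline{D_1}$, rather than by trying to verify the rank-one condition directly at the points of $D_2$. The latter approach is unavailable: the rank of $d\Pi(Y)/d\mu_\eta$ need not be constant along an $SL_2(\mathbb{R})$-orbit in general, so there is no reason a single point of $D_2$ should satisfy the rank-one condition for an intrinsic reason, and Lemma \ref{RankkLocusProp1} cannot be applied pointwise on $D_2$. What we do know is that $(X',\omega')$ is a boundary limit of the \emph{entire} rank-one orbit $D_1$, and the idea is to commute the group action past this limit.

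First I would fix an arbitrary $A \in \text{SL}_2(\mathbb{R})$. Since $D_2$ is by definition the $\text{SL}_2(\mathbb{R})$-orbit of $(X',\omega')$ in $\overline{\mathcal{M}_g}/\mathbb{R}^*$, it suffices to show that $A\cdot(X',\omega')\in\overline{D_1}$ for every such $A$. Because $D_1 = \text{SL}_2(\mathbb{R})\cdot(X,\omega)$ is an orbit, each translate $A\cdot(X_n,\omega_n)$ again lies in $D_1$. Applying the continuity of the $\text{GL}_2^+(\mathbb{R})$ action to the boundary \cite{BainbridgeMoller}[Proposition 11.1] (equivalently, the $\text{SL}_2(\mathbb{R})$ action on $\overline{\mathcal{M}_g}/\mathbb{R}^*$), I would commute $A$ with the limit:
\[
A\cdot(X',\omega') = A\cdot\lim_{n\to\infty}(X_n,\omega_n) = \lim_{n\to\infty}A\cdot(X_n,\omega_n).
\]
Every term on the right lies in $D_1$, so the limit lies in $\overline{D_1}$. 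As $A$ was arbitrary, this gives $D_2\subset\overline{D_1}$, the ``furthermore'' clause. The first assertion is then immediate: from $D_1\subset\mathcal{D}_g(1)$ we get $\overline{D_1}\subset\overline{\mathcal{D}_g(1)}$ by taking closures in $\overline{\mathcal{M}_g}$, whence $D_2\subset\overline{D_1}\subset\overline{\mathcal{D}_g(1)}$.

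The hypotheses that $\omega'$ is holomorphic and $(X',\omega')\notin D_1$ are not needed for these containments; they fix the context in which the lemma will be used in the proof of Theorem \ref{CPImpVeech}. Indeed, $(X',\omega')\notin D_1$ merely ensures $D_2$ is a genuinely new disc (if $(X',\omega')\in D_1$ then $D_2=D_1$ and the statement is trivial), while holomorphicity together with Corollary \ref{RankOneImpNonZero} guarantees that $(X',\omega')$ is a holomorphic limit that is not identically zero on any part --- the form in which Lemma \ref{RankkLocusProp1} later reinterprets membership in $\overline{\mathcal{D}_g(1)}$ as a genuine rank bound. The only real obstacle is thus conceptual rather than computational: one must avoid the tempting but unjustified pointwise rank argument on $D_2$ and instead recognize that the rank-one property is inherited from the orbit $D_1$ through its closure. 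Once the problem is framed this way, it reduces to the single invocation of boundary-continuity above, with no estimate on $d\Pi$ from Section \ref{DerPerMatSubSect} required.
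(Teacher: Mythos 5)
Your proof is correct and is essentially the paper's own argument: both rest on the single ingredient that the $\text{SL}_2(\mathbb{R})$ action extends continuously to $\overline{\mathcal{M}_g}$ (\cite{BainbridgeMoller}[Proposition 11.1]), so that every point $A\cdot(X',\omega')$ of $D_2$ is a limit of the points $A\cdot(X_n,\omega_n)\in D_1$, giving $D_2\subset\overline{D_1}\subset\overline{\mathcal{D}_g(1)}$ since $\overline{\mathcal{D}_g(1)}$ is closed. Your explicit commutation of $A$ with the limit is just a cleaner phrasing of the paper's neighborhood argument, so no further comment is needed.
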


\begin{proof}
We recall that the \splin ~action on $\overline{\mathcal{M}_g}$ is continuous by \cite{BainbridgeMoller}[Proposition 11.1].  Since $\overline{\mathcal{D}_g (1)}$ is closed, the closure of $D_1$ in $\overline{\mathcal{M}_g }$ is also contained in $\overline{\mathcal{D}_g (1)}$.  Furthermore, every point in $D_2$ is the limit of a sequence of points in $D_1$.  This can be seen by taking a sufficiently small neighborhood of $(X',\omega')$, which contains points in $D_1$ by assumption.  By the continuity of the \splin ~action on $\overline{\mathcal{M}_g}$, there is an arbitrarily small neighborhood of any point in $D_2$ that also contains points in $D_1$.  Hence, $D_2 \subset \overline{D_1} \subset \overline{\mathcal{D}_g (1)}$.
\end{proof}

\begin{definition}
A surface $(X,\omega)$ is called a \emph{Veech surface} if its group $\text{SL}(X,\omega)$ of affine diffeomorphisms is a lattice in \splin .  The Teichm\"uller disc generated by a Veech surface in the moduli space $\mathcal{M}_g$ is called a \emph{Teichm\"uller curve}.
\end{definition}

The reason for the term Teichm\"uller curve follows from a result of Smillie, which states that the \splin ~orbit of a Veech surface projected into $\mathcal{R}_g$ is closed.  This result was never published by John Smillie.  However, it was communicated to William Veech, who outlined a proof of it in \cite{VeechGeomRealHypCurv}.  Moreover, when projected into $\mathcal{R}_g$, Teichm\"uller curves are algebraic curves.  One striking property of Veech surfaces is the \emph{Veech dichotomy}.  The Veech dichotomy completely describes the dynamics of the trajectory of any point on the surface $X$ \cite{VeechTeichCurvEisen}.  It says that the geodesic flow on $X$ with respect to the flat structure induced by $\omega$ is either periodic or uniquely ergodic.  The following definition was introduced in \cite{CheungHubertMasurTopDich}.

\begin{definition}
A completely periodic surface satisfies \emph{topological dichotomy} if any direction that admits a saddle connection is periodic.
\end{definition}

\begin{lemma}
\label{CPZeroConv}
Given a Teichm\"uller disc $D \subset$ \RankOne ~of a completely periodic surface $(X_0,\omega_0) \in \mathcal{M}_g$, which does not satisfy topological dichotomy, there exists a sequence of surfaces $\{(X_n, \omega_n)\}_{n=0}^{\infty}$ in $D$ converging to a surface $(X',\omega') \in \overline{\mathcal{D}_g(1)}$ such that $X'$ has one part, $\omega'$ is holomorphic, and a saddle connection of $(X_0,\omega_0)$ contracts to a point on $(X', \omega')$.
\end{lemma}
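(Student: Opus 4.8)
The plan is to use the failure of topological dichotomy to produce a saddle connection in a non-periodic direction, rotate it to be vertical, and then contract it by running the Teichm\"uller geodesic flow, arguing that the limit can neither acquire poles nor split into several parts.

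First I would unwind the definitions. Since $(X_0,\omega_0)$ is completely periodic but fails topological dichotomy, there is a direction that admits a saddle connection yet is not periodic; after replacing $\omega_0$ by $e^{i\theta}\omega_0$ I may assume it is the vertical direction, so $(X_0,\omega_0)$ carries a vertical saddle connection $\sigma$ and its vertical foliation is not periodic (note that $\sigma$ is the same arc on the underlying surface regardless of the rotation, so it is a saddle connection of $(X_0,\omega_0)$). By the definition of complete periodicity a non-periodic direction contains no closed regular trajectory, hence $(X_0,\omega_0)$ has \emph{no vertical cylinder}. I then choose a divergent sequence $\{t_n\}$ with $G_{t_n}\cdot(X_0,\omega_0)\to(X',\omega')$ in $\overline{\mathcal{M}_g}/\mathbb{R}^*$; such a subsequence exists by compactness, and by Lemma \ref{NonZeroPart} together with Corollary \ref{RankOneImpNonZero} I may take $\omega'$ nonzero on every part. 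Since $D\subset\mathcal{D}_g(1)$ and the action extends continuously to the boundary by \cite{BainbridgeMoller}[Proposition 11.1], the limit lies in $\overline{D}\subset\overline{\mathcal{D}_g(1)}$.

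The heart of the matter is to show $\omega'$ is holomorphic, and here the key point is that $G_t$ contracts only the vertical direction. A cylinder of $(X_0,\omega_0)$ whose core curve has nonzero horizontal holonomy has circumference growing like $e^{t}$, so its modulus (its area being bounded) tends to $0$ and it cannot pinch; combined with Corollary \ref{GtCurvePinch} this shows that the only cylinders that can pinch under $G_{t_n}$ are vertical ones, of which there are none. A simple pole of $\omega'$ would, however, be a half-infinite flat cylinder, i.e. the limit of genuine cylinders of circumference bounded below with modulus tending to infinity; equivalently, the residue at such a pole is the limit of the periods $\tfrac{1}{2\pi i}\int_{\gamma_n}\omega_n$ over the vanishing cycles $\gamma_n$, which are vertical curves whose flat length, and hence whose holonomy, tends to $0$ (cf. Lemma \ref{GtResidue}). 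Either way the residues vanish, so $\omega'$ has no simple poles and is holomorphic.

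Finally I would conclude that $X'$ has one part and that $\sigma$ collapses. The connectivity graph $G(X')$ is connected, being a limit of connected surfaces; since $\omega'$ is holomorphic, every edge of $G(X')$ is a holomorphic edge, so Lemma \ref{NoHoloEdges} forbids any edge between distinct vertices, forcing $G(X')$ to have a single vertex and $X'$ a single part (equivalently, one lands in configuration (1) of Lemma \ref{TDDerPerRank1Lem}). The vertical saddle connection has length $e^{-t_n}|\sigma|\to 0$, so in the limit it collapses to a single point of $(X',\omega')$, merging its endpoints. I expect the holomorphicity step to be the main obstacle: one must argue carefully, within the $\mathbb{R}^*$-quotient, that no choice of representative can force a pole to appear, and the delicate bookkeeping is between the normalization that keeps the surviving part nonzero and the geometry of which vertical objects actually pinch in the absence of vertical cylinders.
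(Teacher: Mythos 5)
Your reduction to the vertical direction, the final one-part argument via Lemma \ref{TDDerPerRank1Lem} (or Lemma \ref{NoHoloEdges}), and the observation that $\sigma$ contracts at rate $e^{-t_n}$ are all fine, but the holomorphicity step --- which you correctly identify as the heart of the matter --- contains a genuine gap, and it is exactly the point where the paper has to work hardest. Your claim is that since $(X_0,\omega_0)$ has no vertical cylinder, no curve can pinch into a simple pole under $G_{t_n}$. This conflates a \emph{fixed} cylinder on $X_0$ with a \emph{sequence} of cylinders. A pole of the limit arises from cylinders $C_n\subset X_0$ in periodic directions $\theta_n$ converging to the vertical, whose core curves have horizontal holonomy $x_n\to 0$; under $G_{t_n}$ the circumference is of order $e^{t_n}x_n$, which tends to zero whenever $x_n$ decays faster than $e^{-t_n}$, even though each individual $C_n$ has nonzero horizontal holonomy. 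The absence of a vertical cylinder rules out none of this, and indeed the paper's proof explicitly allows for the possibility that an arbitrary divergent sequence of times yields a non-holomorphic limit: it exhibits the short waist curves $\gamma_{n,t_n}$ in periodic directions, shows this family is infinite and eventually transverse, and then \emph{re-chooses} intermediate times $t_n'$ (at which the shortest periodic waist curve has length exactly one, using Corollary \ref{RankOneCylConfig} to bound all transverse closed curves from below) precisely so that no regular closed curve is short at those times; only then is the limit holomorphic.

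Your fallback argument via residues fails for a related reason: the limit $(X',\omega')$ is taken in $\overline{\mathcal{M}_g}/\mathbb{R}^*$, so its residues are computed only after projective rescaling (residue normalization), and ``flat holonomy of the vanishing cycle tends to zero'' does \emph{not} imply ``residue zero.'' The standard counterexample is any periodic direction degenerated under $G_t$: by \cite{MasurThesis}[Theorem 3] the core curves pinch, their holonomies $e^{-t}y\to 0$, yet the limit differential has simple poles with nonzero residues at every node. So neither of your two routes closes the gap, and the lemma genuinely requires the paper's renormalization-of-times argument (or some substitute that controls cylinders in directions converging to the vertical), not just the nonexistence of vertical cylinders on $X_0$.
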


\begin{proof}
By assumption, there exists a saddle connection $\sigma_0$ lying in a nonperiodic foliation of the surface $(X_0,\omega_0)$.  Without loss of generality, let $\sigma_0$ lie in the vertical foliation of $(X_0, \omega_0)$.  Act by the Teichm\"uller geodesic flow $G_t$ on $(X_0,\omega_0)$ so that $\sigma_0$ contracts by $e^{-t}$ as $t$ tends to infinity.  We prove that we can choose a divergent sequences of times $\{t_n\}_{n=0}^{\infty}$ such that the corresponding sequence of surfaces $\{(X_n, \omega_n)\}_{n=0}^{\infty}$, defined by
$$(X_n, \omega_n) = G_{t_n} \cdot (X_0, \omega_0),$$
converges to a degenerate surface $(X', \omega')$, where $\omega'$ is holomorphic.  Let $t_0 = 0$.

Let $\sigma_t$ be the saddle connection on $G_t \cdot (X_0, \omega_0)$ defined by contracting the saddle connection $\sigma$ by $e^{-t}$.  If $\omega'$ is not holomorphic, then, by Lemma \ref{RankOneCylConfig}, for all $\varepsilon > 0$, there exists an $N$ and $\theta_N$, such that the vertical foliation of $(X_N, e^{i\theta_N}\omega_N)$ determines a decomposition of $(X_N, e^{i\theta_N}\omega_N)$ into a union of cylinders $C_1, \ldots, C_p$, with waist lengths $\varepsilon$ and heights $h_1, \ldots, h_p$, respectively, such that $\sum_k h_k = 1/\varepsilon$.  This follows from the assumption that the area of every surface in the sequence is one.  This sequence of surfaces defines a sequence of closed curves $\{\gamma_{n,t_n}\}_{n=0}^{\infty}$ whose lengths tend to zero as $n$ tends to infinity, where $\gamma_{n,t_n}$ is the waist curve of a cylinder on $(X_n, e^{i\theta_n}\omega_n)$.  Furthermore, for each $n$, the curve $\gamma_{n,t_n}$ corresponds to a closed curve $\gamma_{n,t_0}$ on $(X_0,\omega_0)$ with the property that the image of $\gamma_{n,t_0}$ under $G_{t_n}$ is $\gamma_{n,t_n}$.  Note that for all $n$ and $t_n$, no curve $\gamma_{n,t_n}$ is parallel to $\sigma_{t_n}$ because $\sigma_{t_n}$ does not lie in a periodic foliation while $\gamma_{n,t_n}$ always lies in a periodic foliation.

We claim that we can pass to a subsequence such that $\gamma_{n,t_n}$ is transverse to $\gamma_{n+1,t_n}$.  Let $0 \leq \alpha_{n,t} < \pi$ denote the angle between $\gamma_{n,t}$ and $\sigma_{n,t}$.  For all $n$ and $t_n$, $\alpha_{n,t_n} \not= 0$ because $\gamma_{n,t_n}$ is not parallel to $\sigma_{t_n}$.  Fixing $n$ and letting $t$ tend to infinity, $|\alpha_{n,t}|$ tends to $\pi/2$ because $\gamma_{n,t_n}$ has nontrivial length in the maximally expanding direction of $G_t$, so for sufficiently large $t$, $\gamma_{n,t}$ converges to the direction of maximum expansion, which is orthogonal to the direction of minimal expansion in which $\sigma_0$ lies.  We prove that the set $\Gamma = \{\gamma_{n,0}|n \geq 0\}$ is infinite.  If not, the previous comment would imply that given $\delta > 0$, there exists a time $T>0$, such that for all $n$ and $t > T$, 
$$\sup_{n}\left||\alpha_{n,t}| - \pi/2\right| < \delta.$$
This would contradict the fact that the lengths of the curves $\{\gamma_{n,t}\}_{n=0}^{\infty}$ tend to zero.  Hence, the set $\Gamma$ is infinite and we can pass to a subsequence such that $\gamma_{n,t_n}$ is transverse to $\gamma_{n+1,t_n}$.  Equivalently, $\gamma_{n,t_{n+1}}$ is transverse to $\gamma_{n+1,t_{n+1}}$.

Now we can construct a sequence of surfaces corresponding to a divergent sequence of times $\{t_n'\}_{n=0}^{\infty}$ such that the limit is holomorphic and the saddle connection $\sigma_n$ degenerates to a point.  Let $\varepsilon_N > 0$ be the infimum, taken over all cylinder decompositions of $(X_N, \omega_N)$, of the length of the waist curves of the cylinders at time $t_N$.  By passing to a subsequence of times, we can assume $\gamma_{N+1,t_{N+1}}$ has length $\varepsilon_{N+1} < \varepsilon_N$.  However, $\gamma_{N,t_N}$ has length $\varepsilon_N$ and $\gamma_{N+1,t_N}$ is transverse to $\gamma_{N,t_N}$.  For any surface $(X,\omega)$, whose Teichm\"uller disc is contained in \RankOne , let $\gamma$ be the waist curve of a cylinder $C_j$ which is an element of a cylinder decomposition $\mathcal{C}$ of $(X,\omega)$.  It follows from Lemma \ref{RankOneCylConfig} that every closed regular trajectory transverse to $\gamma$ must pass through every cylinder in $\mathcal{C}$ at least once.  Thus, in this case, $\gamma_{N+1,t_N}$ has length at least $1/\varepsilon_N$.  Since $\gamma_{N+1,t_{N+1}}$ has length $\varepsilon_{N+1} < \varepsilon_N$ and $\gamma_{N+1,t_N}$ can be pinched under the Teichm\"uller geodesic flow so that the direction of $\sigma_N$ contracts, then there is a time $t_{N+1}'$ such that $t_N < t_{N+1}' < t_{N+1}$ and $\gamma_{N+1,t_{N+1}'}$ has length one.  Furthermore, if $\gamma_{N+1,t_{N+1}'}$ has length one, then by the assumption that the area of $(X_0, \omega_0)$ is one, the fact that $G_t$ preserves area, and the Teichm\"uller disc of $(X_0,\omega_0)$ is contained in \RankOne , we have that the minimum length of any curve transverse to $\gamma_{N+1,t_{N+1}'}$ is also one.  This implies that there are no short closed curves which are not unions of saddle connections.  This defines a divergent sequence of times $\{t_n'\}_{n=0}^{\infty}$ such that the corresponding sequence of surfaces $\{(X_n, \omega_n)\}_{n=0}^{\infty}$ converges to a degenerate surface $(X',\omega')$, where $\omega'$ is holomorphic and $\sigma_0$ contracts to a point on $X'$.  Finally, by Lemma \ref{TDDerPerRank1Lem}, the only admissible boundary points of a Teichm\"uller disc contained in \RankOne , which carry holomorphic Abelian differentials, must have exactly one part.
\end{proof}

The following definition was introduced by Vorobets \cite{VorobetsPlaneStrctsBilliards}.  In \cite{SmillieWeissCharLattice}[Theorem 1.3, Parts (i) and (ii)], Smillie and Weiss prove that a surface is uniformly completely periodic if and only if it is a Veech surface.

\begin{definition}
\label{UnifCP}
Let $\mathcal{S}_{\theta}$ denote the set of saddle connections of the vertical foliation of $(X,e^{i\theta}\omega)$.  A surface is called \emph{uniformly completely periodic} if it satisfies topological dichotomy and there exists a real number $s > 0$ such that for all $\theta$, where $\mathcal{S}_{\theta} \not= \emptyset$, the ratio of the length of the longest saddle connection in $\mathcal{S}_{\theta}$ to the shortest saddle connection in $\mathcal{S}_{\theta}$ is bounded by $s$.
\end{definition}

\begin{lemma}
\label{CPPIZeroConv}
Given a Teichm\"uller disc $D \subset$ \RankOne ~of a surface satisfying topological dichotomy $(X_0,\omega_0) \in \mathcal{M}_g$ that is not uniformly completely periodic, there exists a sequence of surfaces $\{(X_n, \omega_n)\}_{n=0}^{\infty}$ in $D$ converging to a surface $(X',\omega') \in \overline{\mathcal{D}_g(1)}$ such that $X'$ has one part, $\omega'$ is holomorphic, and a saddle connection of $(X_0,\omega_0)$ contracts to a point on $(X', \omega')$.
\end{lemma}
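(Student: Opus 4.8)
The plan is to parallel Lemma~\ref{CPZeroConv}, but to produce the contracting saddle connection and the holomorphic limit directly from the failure of uniform complete periodicity rather than from a nonperiodic saddle connection. First I would unwind Definition~\ref{UnifCP}: since $(X_0,\omega_0)$ satisfies topological dichotomy but is not uniformly completely periodic, for every $n$ there is a periodic direction $\theta_n$ with $\mathcal{S}_{\theta_n}\neq\emptyset$ in which the ratio of the longest to the shortest saddle connection exceeds $n$. Let $A_n\in\text{SL}_2(\mathbb{R})$ rotate $\theta_n$ to the vertical and then rescale by the Teichm\"uller flow so that the common circumference of the vertical cylinders of $(X_n,\omega_n):=A_n\cdot(X_0,\omega_0)$ equals $1$; this is possible because, by Corollary~\ref{RankOneCylConfig}, all cylinders in a periodic direction share one circumference $w_n$, and $\text{SL}_2(\mathbb{R})$ preserves area, so after this normalization the total height is $1$. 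Because every saddle connection in a periodic direction lies on a cylinder boundary, its length is at most $w_n$; hence the shortest saddle connection $\sigma_n$ of $(X_n,\omega_n)$ has length at most $1/n$, and $\sigma_n$ is the image under $A_n$ of a saddle connection of $(X_0,\omega_0)$.

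The heart of the argument is to show that no closed curve pinches along this sequence, so that the limit is a smooth, one-part, holomorphic surface on which $\sigma_n$ contracts to a point; the key is to bound the modulus of \emph{every} cylinder by $1$. A vertical cylinder has circumference $1$ and height $h_j\le\sum_k h_k=1$, so its modulus is at most $1$. A cylinder in any other periodic direction has a core curve transverse to the vertical foliation, which by Corollary~\ref{RankOneCylConfig} must cross every vertical cylinder and therefore has length at least $\sum_k h_k=1$; since its area is at most the total area $1$, its modulus is again at most $1$. As every direction is either vertical or transverse to the vertical foliation, every flat cylinder on $(X_n,\omega_n)$ has modulus at most $1$.

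I would then conclude as follows. Any essential simple closed curve of small extremal length is surrounded by a flat cylinder of large modulus, whose straight core lies in some periodic direction by complete periodicity; the uniform bound of the previous paragraph forbids this, so the extremal lengths of all essential curves are bounded below. By Lemma~\ref{CylModImpPinch} and Mumford compactness, the Riemann surfaces $X_n$ remain in a compact part of $\mathcal{R}_g$, so a subsequence converges to a smooth genus-$g$ surface $X'$ with one part and $\omega_n\to\omega'$ holomorphic with $A(\omega')=1$. Since $\sigma_n$ has length tending to zero while the surface stays smooth, its two endpoints, which are zeros of $\omega_n$, collide, i.e. $\sigma_n$ contracts to a point of $(X',\omega')$; and $(X',\omega')\in\overline{\mathcal{D}_g(1)}$ as a limit of points of $D\subset\mathcal{D}_g(1)$ (cf. Lemma~\ref{TDAlsoInDg1}). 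The main obstacle is precisely the modulus bound for cylinders transverse to $\theta_n$: one must rule out a short closed curve built from the short saddle connection $\sigma_n$ together with a short transverse hop across a thin cylinder, and this is exactly where Corollary~\ref{RankOneCylConfig} is indispensable, since it forces every transverse core curve to traverse all of the vertical cylinders and hence to be long.
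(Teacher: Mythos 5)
Your first paragraph is essentially the paper's own opening: rotate the bad direction $\theta_n$ to the vertical, rescale by the Teichm\"uller flow so the common circumference is $1$ (hence total height $1$), and conclude that the shortest saddle connection has length at most $1/n$; the observation that any closed curve transverse to the vertical foliation has flat length at least $1$ is also in the paper. The gap is in your second and third paragraphs, where you try to upgrade ``every flat cylinder has modulus at most $1$'' to ``no essential curve pinches'' and then invoke Mumford compactness to get a smooth limit. The step ``any essential simple closed curve of small extremal length is surrounded by a flat cylinder of large modulus'' is false. A closed curve that is a union of the short saddle connections themselves (for instance two vertical saddle connections of length at most $1/n$ joining the same pair of zeros, or a single short saddle connection from a zero to itself) has flat length tending to $0$, hence extremal length at most $\ell^2/\mathrm{Area} \rightarrow 0$, and therefore pinches --- yet it is the core of no flat cylinder: the large-modulus annulus around it consists of two expanding annuli with empty flat part. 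This is exactly the situation the paper itself describes in the proof of Lemma \ref{SphereBubble}, where $F = \emptyset$ because the pinching curve is a union of saddle connections rather than a closed regular trajectory. Your cylinder bound is correct but says nothing about such curves, so you get no lower bound on the extremal length of all essential curves, Mumford compactness cannot be applied, and the limit need not be a smooth surface.

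The paper's proof does not claim smoothness. With the same normalization it shows that the only curves that can pinch are unions of saddle connections (your length-at-least-one argument for transverse curves, plus circumference one for vertical ones), it \emph{permits} such pinching, notes that the resulting limit differential is still holomorphic (the pinching curves have flat length, hence holonomy, tending to zero, so no infinite cylinders and no simple poles appear), and then invokes Lemma \ref{TDDerPerRank1Lem} to conclude that a degenerate surface in $\overline{\mathcal{D}_g(1)}$ carrying a holomorphic differential has exactly one part. That final case analysis is precisely what your argument is missing; note also that the downstream use of this lemma in Theorem \ref{CPImpVeech} explicitly allows the alternative that ``a closed curve of $X_{0,1}$ converges to a pair of punctures,'' which your stronger (but unproved) conclusion would wrongly exclude.
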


\begin{proof}
Since the surface $(X_0, \omega_0)$ is not uniformly completely periodic, given a divergent sequence of positive real numbers $\{s_j\}_{j=0}^{\infty}$, there exists a corresponding sequence of angles $\{\theta_j\}_{j=0}^{\infty}$ such that the ratio of the longest saddle connection to the shortest saddle connection on $(X_0, e^{i\theta_j}\omega_0)$ is greater than $s_j$, for all $j$.  We show that there exists a sequence of times $\{t_n\}_{n=0}^{\infty}$ such that the sequence of surfaces $\{G_{t_n} \cdot (X_0, e^{i\theta_n}\omega_0)\}_{n=0}^{\infty}$ converges to a surface $(X',\omega')$, where $\omega'$ is holomorphic.  Moreover, there is a sequence of saddle connections on $G_{t_n} \cdot (X_0, e^{i\theta_n}\omega_0)$ converging to a point as $n$ tends to infinity.

Pass to a subsequence of $\{\theta_j\}_{j=0}^{\infty}$ defined as follows.  Since there is a finite number of zeros, there is a finite number of pairs of zeros.  Choose a pair of zeros $z_1$ and $z_2$ that occur infinitely often in the sequence $\{(X_0, e^{i\theta_n}\omega_0)\}_{n=0}^{\infty}$ as the pairs of zeros which are joined by the shortest saddle connection.  Pass to a subsequence $\{(X_0, e^{i\theta_n}\omega_0)\}_{n=0}^{\infty}$ such that a saddle connection between $z_1$ and $z_2$ represents the shortest saddle connection on $(X_1, e^{i\theta_n}\omega_1)$.  By Lemma \ref{RankOneCylConfig}, all of the cylinders in the cylinder decomposition of a surface in \RankOne ~have equal circumference and we can assume that $(X_1,\omega_1)$ has unit area and cylinders of unit circumference.  For each angle $\theta_j$, denote by $w_j > 1$ the length of the circumference of the cylinders in that direction.  Then define the times $t_j$ by $e^{-t_j}w_j = 1$, for all $j$.  Then $G_{t_n} \cdot (X_0,e^{i\theta_n} \omega_0) = (X_n, \omega_n)$ is the action on the surface such that the waist curves of the cylinders of circumference $w_j$ contract at the maximal rate.  Furthermore, since the length of each saddle connection is bounded above by the circumference of the cylinders, the length of the shortest saddle connection on $(X_n, \omega_n)$ is bounded above by $1/s_n$.  Note that $\lim_{n \rightarrow \infty} 1/s_n = 0$.  The Teichm\"uller geodesic flow preserves area, so the surface $(X_n, \omega_n)$ also has unit area for all $n$.  This implies that the sum of the heights of the cylinders is equal to one, as well.  It follows from Lemma \ref{RankOneCylConfig} that any curve transverse to a horizontal curve has length at least one because any such curve must travel the heights of every cylinder in the cylinder decomposition.  Since the minimum length of a curve transverse to a vertical curve is the waist curve of a cylinder which has length one, there are no closed curves that can pinch that are not unions of saddle connections.

If a closed curve, which is a union of saddle connections, degenerates as $n$ tends to infinity, then the limit is a degenerate surface carrying a holomorphic Abelian differential.  By Lemma \ref{TDDerPerRank1Lem}, the only such degenerate surfaces in the boundary of \RankOne ~have one part.
\end{proof}

\begin{theorem}
\label{CPImpVeech}
If the Teichm\"uller disc $D$ of $(X,\omega)$ is contained in \RankOne , then there is a Veech surface $(X',\omega') \in \overline{\mathcal{M}_g}$ such that the Teichm\"uller disc $D'$ generated by $(X',\omega')$ is contained in $\overline{\mathcal{D}_g(1)}$.  Furthermore, every surface in $D'$ is the limit of a sequence of surfaces in $D$.
\end{theorem}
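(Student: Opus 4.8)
The plan is to produce the Veech surface by a descent argument: starting from $(X,\omega)$, repeatedly degenerate within the closure of $D$ until no further degeneration is forced, and show that the terminal surface must be Veech. First I would invoke Theorem \ref{RankOneImpCP} to record that $(X,\omega)$ is completely periodic. The governing dichotomy is the Smillie--Weiss criterion quoted before Definition \ref{UnifCP}: a surface is uniformly completely periodic if and only if it is a Veech surface. So at each stage I examine a completely periodic surface whose disc lies in $\overline{\mathcal{D}_g(1)}$; if it is uniformly completely periodic it is already the desired Veech surface and the process stops, and otherwise it fails uniform complete periodicity, so exactly one of Lemma \ref{CPZeroConv} (when topological dichotomy fails) or Lemma \ref{CPPIZeroConv} (when topological dichotomy holds but uniformity fails) applies.

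In the non-terminal case the applicable lemma produces a sequence inside the current disc converging to a surface $(X_1,\omega_1) \in \overline{\mathcal{D}_g(1)}$ with a single part, holomorphic differential, and along which a saddle connection contracts to a point. Since $\omega_1$ is holomorphic and $(X_1,\omega_1)$ lies in a boundary stratum (a saddle connection has length zero, so it is not in the current disc), Lemma \ref{TDAlsoInDg1} shows that the Teichm\"uller disc $D_1$ generated by $(X_1,\omega_1)$ satisfies $D_1 \subset \overline{\mathcal{D}_g(1)}$ and $D_1 \subset \overline{D}$. Composing these inclusions across successive stages, and passing to diagonal subsequences, preserves the invariant that every surface in the current disc is a limit of surfaces in $D$, which is exactly the final clause of the theorem. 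The complexity I would track is the complex dimension of the stratum containing the current surface (in its own genus): contracting a saddle connection to a point either merges two distinct zeros, lowering the number of zeros, or pinches a non-separating handle, lowering the genus, so this dimension strictly decreases at each non-terminal stage.

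The step I expect to be the main obstacle is verifying that complete periodicity is inherited by each new surface, so that the dichotomy can be reapplied. When $(X_1,\omega_1)$ is non-degenerate it lies in $\overline{\mathcal{D}_g(1)} \cap \mathcal{M}_g = \mathcal{D}_g(1)$, and Theorem \ref{RankOneImpCP} applies directly. The delicate case is when the contracting curve pinches a handle, so $(X_1,\omega_1)$ is a genuinely nodal one-part surface of lower genus $g' < g$; here I would pass to the underlying genus-$g'$ translation surface $(S,\omega_S)$, the former nodes being regular points since $\omega_S$ is holomorphic with vanishing residues. By Lemma \ref{RankkLocusProp1} the rank of $d\Pi(X_1)/d\mu_{\omega_1}$ is at most one, and this derivative restricts to the derivative of the period matrix of $(S,\omega_S)$, so $(S,\omega_S) \in \mathcal{D}_{g'}(1)$ and Theorem \ref{RankOneImpCP} applied in genus $g'$ forces $(S,\omega_S)$ to be completely periodic. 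Because the flat structure, and hence the affine group, of $(X_1,\omega_1)$ coincides with that of $(S,\omega_S)$, the Veech property is detected in genus $g'$ as well, so the descent may be continued in the appropriate genus while all bookkeeping is carried out inside $\overline{\mathcal{M}_g}$ and $\overline{\mathcal{D}_g(1)}$.

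Finally, since the complexity is a nonnegative integer that strictly decreases at every non-terminal stage, the descent halts after finitely many steps. It can only halt at a uniformly completely periodic surface, which by Smillie--Weiss is a Veech surface $(X',\omega') \in \overline{\mathcal{M}_g}$; by construction its disc $D'$ lies in $\overline{\mathcal{D}_g(1)}$ and every surface of $D'$ is a limit of surfaces of $D$, which completes the proof.
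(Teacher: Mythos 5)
Your descent is essentially the paper's own proof: complete periodicity via Theorem \ref{RankOneImpCP}, the Smillie--Weiss dichotomy, Lemmas \ref{CPZeroConv} and \ref{CPPIZeroConv} to degenerate, Lemma \ref{TDAlsoInDg1} to keep the new disc in $\overline{\mathcal{D}_g(1)}$ and in $\overline{D}$, and termination because genus and the number of zeros cannot decrease forever. Your handling of the inheritance of complete periodicity on the nodal stages (restricting $d\Pi$ to the underlying genus-$g'$ surface via Lemma \ref{RankkLocusProp1} and applying Theorem \ref{RankOneImpCP} in genus $g'$) is a reasonable elaboration of a step the paper leaves implicit.

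The genuine gap is your terminal-case analysis: the claim that the descent ``can only halt at a uniformly completely periodic surface'' is not justified, because the entire machinery degenerates when the descent reaches a surface whose differential has no zeros. Concretely, pinching a handle can a priori drop the genus all the way to a punctured torus, and on a torus a holomorphic differential has no zeros, hence no saddle connections: topological dichotomy and Definition \ref{UnifCP} become vacuous, so your dichotomy declares the surface ``uniformly completely periodic,'' yet a torus with marked points at generic (non-torsion) positions is \emph{not} a lattice surface, so Smillie--Weiss does not hand you a Veech surface; and conversely, if it is not Veech, Lemmas \ref{CPZeroConv} and \ref{CPPIZeroConv} cannot be applied to continue (their proofs and hypotheses are built on saddle connections and zeros). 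The same remark applies, more mildly, to the sphere (which carries no nonzero holomorphic differential, so Lemma \ref{NonZeroPart} must be cited to exclude it). This is precisely why the paper ends its proof by enumerating the possible terminal configurations and excluding them: the sphere via Lemma \ref{NonZeroPart}, a single zero via Proposition \ref{H2gm2DerPerRank1}, and the punctured torus via Lemma \ref{NoDisksInD11} (whose proof is a separate argument pinching two parallel curves on the torus to contradict Lemma \ref{TDDerPerRank1Lem}). Without invoking Lemma \ref{NoDisksInD11}, or reproving something equivalent, your descent can stall at a punctured torus and the conclusion fails.
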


\begin{proof}
If $(X,\omega)$ is a Veech surface, let $(X,\omega) = (X',\omega')$.  Otherwise, assume that $(X,\omega) = (X_{0,1},\omega_{0,1})$ is not a Veech surface and let $D_1$ be its Teichm\"uller disc.  Since $(X,\omega)$ is not a Veech surface, but its Teichm\"uller disc is contained in $\overline{\mathcal{D}_g(1)}$, $(X,\omega)$ is completely periodic by Theorem \ref{RankOneImpCP}.  Furthermore, $(X,\omega)$ is not uniformly completely periodic by \cite{SmillieWeissCharLattice}[Theorem 1.3, Parts (i) and (ii)].  By Lemmas \ref{CPZeroConv} and \ref{CPPIZeroConv}, there exists a sequence $\{(X_{n,1},\omega_{n,1})\}_{n=1}^{\infty}$ converging to a surface $(X_{0,2},\omega_{0,2}) \in \overline{\mathcal{M}_g}$ with one part carrying a holomorphic Abelian differential with $(X_{0,2},\omega_{0,2}) \in$ \RankOne ~and a saddle connection on $\omega_{0,1}$ degenerates to a point on $X_{0,2}$.  A degenerate saddle connection implies either two or more zeros of $\omega_{0,1}$ converge to a single zero of $\omega_{0,2}$ or a closed curve of $X_{0,1}$ converges to a pair of punctures on $X_{0,2}$.  Then $(X_{0,2},\omega_{0,2})$ has Teichm\"uller disc $D_2$ and by Lemma \ref{TDAlsoInDg1}, $D_2 \subset \overline{\mathcal{D}_g(1)}$ .  By Theorem \ref{RankOneImpCP}, $(X_{0,2},\omega_{0,2})$ is also completely periodic.  If it is a Veech surface, then we are done.  Otherwise, we proceed by induction using Lemmas \ref{CPZeroConv} and \ref{CPPIZeroConv} to create a sequence of surfaces $\{(X_{0,j},\omega_{0,j})\}_{j=1}^{N}$ in $\overline{\mathcal{M}_g}$ such that each surface in the sequence carries a differential either with fewer distinct zeros or lower genus than the previous surface in the sequence.  Since both the number of zeros as well as the genus are finite, this process will terminate at some step $N$ resulting in a surface $(X_{0,N},\omega_{0,N}) \in \overline{\mathcal{D}_g(1)}$ with Teichm\"uller disc $D_N$.  By Lemma \ref{TDAlsoInDg1}, $D_N \subset \overline{\mathcal{D}_g(1)}$.  The surface $X_{0,N}$ cannot be a sphere because $\omega_{0,N}$ is holomorphic and $\omega_{0,N}$ is nonzero by Lemma \ref{NonZeroPart}.  Hence, there are three possibilities.  Either $\omega_{0,N}$ has a single zero, $X_{0,N}$ is a punctured torus, or $(X_{0,N},\omega_{0,N})$ is a Veech surface.  By Lemma \ref{H2gm2DerPerRank1}, $\omega_{0,N}$ cannot have a single zero and Lemma \ref{NoDisksInD11} says $X_{0,N}$ cannot be a punctured torus.  Thus, the only remaining possibility is that $(X_{0,N},\omega_{0,N})$ is a Veech surface.

Let $D'$ be the Teichm\"uller disc generated by $(X_{0,N}, \omega_{0,N})$.  Lemma \ref{TDAlsoInDg1} implies that every surface in $D'$ is the limit of a sequence of surfaces in $D_1$.
\end{proof}

\section{Punctured Veech Surfaces}
\label{PctdVeechSurfSect}

There are several key results that give a nearly complete picture of Teichm\"uller curves in \RankOne .  We recall all of the results here for the sake of completeness and convenience of the reader.  There are two similarly named, related concepts: a square-tiled covering and a square-tiled cyclic cover.  A \emph{square-tiled covering} is a specific type of Veech surface introduced by Thurston formed by gluing unit squares together to form a genus $g$ surface.  Naturally, such a surface comes with a covering of the unit square, i.e. the torus.  A surface is a square-tiled covering if and only if it has affine group commensurable to $\text{SL}_2(\mathbb{Z})$, by \cite{GutkinJudge}[Theorem 5.9].

We define a \emph{square-tiled cyclic cover} using the exposition of \cite{ForniMatheusZorichSqTiled}.  A square-tiled cyclic cover is a specific type of square-tiled covering.  Let $N > 1$ be an integer and $(a_1, a_2, a_3, a_4) \in \mathbb{Z}^4$ such that they satisfy
$$0 < a_i \leq N; \text{  } \gcd(N, a_1, \ldots, a_4) = 1; \text{  } \sum_{i=1}^4 a_i \equiv 0 (\mod N).$$
Then the algebraic equation
$$w^N = (z-z_1)^{a_1}(z-z_2)^{a_2}(z-z_3)^{a_3}(z-z_4)^{a_4}$$
defines a closed, connected and nonsingular Riemann surface denoted by \newline $M_N(a_1, a_2, a_3, a_4)$.  By construction, $M_N(a_1, a_2, a_3, a_4)$ is a ramified cover over the Riemann sphere $\mathbb{P}^1(\mathbb{C})$ branched over the points $z_1, \ldots, z_4$.  Consider the meromorphic quadratic differential
$$q_0 = \frac{dz^2}{(z-z_1)(z-z_2)(z-z_3)(z-z_4)}$$
on $\mathbb{P}^1(\mathbb{C})$.  It has simple poles at $z_1, \ldots, z_4$ and no other zeros or poles.  Then the canonical projection
$$p: M_N(a_1, a_2, a_3, a_4) \rightarrow \mathbb{P}^1(\mathbb{C})$$
induces a quadratic differential $q = p^*q_0$ by pull-back.  Lemma \ref{VeechRankOneImpSqTil} follows from \cite{MollerShimuraTeich}[Cor. 3.3, Sect. 3.6].

\begin{remark}
The name cyclic cover comes from the fact that the group of deck transformations of a cyclic cover is the cyclic group $\mathbb{Z}/N\mathbb{Z}$.
\end{remark}

\begin{lemma}[M\"oller]
\label{VeechRankOneImpSqTil}
If $(X,\omega)$ is a Veech surface whose Teichm\"uller disc is contained in \RankOne , then $(X,\omega)$ is a square-tiled covering.
\end{lemma}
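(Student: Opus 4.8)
The plan is to reduce the statement to the arithmeticity of the Veech group and then invoke the criterion of Gutkin--Judge quoted above. By \cite{GutkinJudge}[Theorem 5.9], $(X,\omega)$ is a square-tiled covering if and only if $\text{SL}(X,\omega)$ is commensurable to $\text{SL}_2(\mathbb{Z})$, i.e. if and only if the non-uniform lattice $\text{SL}(X,\omega)$ is arithmetic. For Veech groups, arithmeticity is in turn equivalent to the trace field $K = \mathbb{Q}(\{\text{tr}\,\gamma : \gamma \in \text{SL}(X,\omega)\})$ being $\mathbb{Q}$; recall that $K$ is a totally real number field with $1 \le [K:\mathbb{Q}] = d \le g$. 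Thus it suffices to prove that the rank one hypothesis forces $d = 1$.

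First I would record the Lyapunov-theoretic consequence of the hypothesis. The Teichm\"uller disc $D$ of the Veech surface $(X,\omega)$ carries a canonical \splin -invariant ergodic probability measure $\nu$, namely Haar measure on the associated Teichm\"uller curve. Since $D \subset$ \RankOne, the Hermitian form $H_\omega$ has rank one at every point of $D$, hence $\nu$-almost everywhere; by Lemma \ref{CompDegImpHRank1} the KZ-spectrum of $\nu$ is completely degenerate, so $1 = \lambda_1^\nu > \lambda_2^\nu = \cdots = \lambda_g^\nu = 0$. In particular, exactly one of the nonnegative exponents $\lambda_1^\nu, \ldots, \lambda_g^\nu$ is strictly positive.

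The central input is M\"oller's description of the variation of Hodge structure over a Teichm\"uller curve (the content of \cite{MollerShimuraTeich}[Cor. 3.3, Sect. 3.6]). After passing to a finite cover and complexifying, the relevant local system contains $d$ Galois-conjugate rank-two sub-local-systems $\mathbb{L}^{\sigma}$ indexed by the real embeddings $\sigma : K \hookrightarrow \mathbb{R}$; each $\mathbb{L}^{\sigma}$ is maximal Higgs, and the maximality of its Higgs field forces a strictly positive Lyapunov exponent (the embedding attached to the generating differential yields the exponent $1$). Since distinct embeddings give non-isomorphic, independent pieces, the number of strictly positive exponents among $\lambda_1^\nu, \ldots, \lambda_g^\nu$ is at least $d$. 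Comparing with the previous paragraph gives $d \le 1$, hence $d = 1$ and $K = \mathbb{Q}$, so $(X,\omega)$ is square-tiled by Gutkin--Judge. This last implication is exactly where the difficulty lies: the passage from \emph{one positive exponent} to \emph{$d = 1$} rests entirely on the correspondence between positive Lyapunov exponents and the maximal-Higgs Galois-conjugate pieces of the VHS, which is the heart of M\"oller's apparatus (real multiplication on the Jacobian together with the Kodaira--Spencer/Higgs field), so in practice I would cite it rather than reprove it. An alternative, more self-contained but not obviously shorter route would be to argue directly that $D \subset$ \RankOne ~forces the non-tautological part of the period map to be locally constant along $D$, so that the image of $D$ in $\mathcal{A}_g$ is a one-dimensional modular curve; but identifying that curve as the \emph{classical} modular curve, rather than a Hilbert or quaternionic one, again amounts to showing $d = 1$, returning to the same obstacle.
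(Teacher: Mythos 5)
Your route is essentially the paper's route: the paper gives no independent argument for this lemma, stating only that it follows from \cite{MollerShimuraTeich}[Cor. 3.3, Sect. 3.6], and your outline --- reduce via Gutkin--Judge to showing the trace field $K$ equals $\mathbb{Q}$, then invoke M\"oller's variation-of-Hodge-structures apparatus, with the hard step cited rather than reproved --- is a faithful unpacking of how that cited argument actually runs. So there is no real divergence of method; both proofs delegate the same core step to M\"oller.

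One sentence of your paraphrase of M\"oller is, however, wrong as stated: the Galois-conjugate subsystems $\mathbb{L}^\sigma$, $\sigma \neq \mathrm{id}$, are \emph{not} maximal Higgs. Only the tautological plane (spanned by $\Re\,\omega$ and $\Im\,\omega$) is maximal Higgs; a maximal Higgs rank-two piece carries Lyapunov exponents $\pm 1$, so if all $d$ conjugate pieces were maximal Higgs then $d$ of the exponents $\lambda_1, \ldots, \lambda_g$ would equal $1$, contradicting Veech's theorem that $\lambda_2 < 1$ \cite{VeechGt}. What M\"oller actually proves is that the conjugates are \emph{non-unitary} (nonzero Higgs field, i.e. nonzero Kodaira--Spencer map), and it is non-unitarity that forces each of them to contribute a strictly positive exponent --- equivalently, in the form used in \cite{MollerShimuraTeich}, the rank-one hypothesis forces the complement of the tautological plane to be a unitary (fixed) subsystem, in which no conjugate piece can lie. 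Your count ``at least $d$ strictly positive exponents, hence $d = 1$'' survives verbatim under this correction, since you defer to M\"oller for the correspondence anyway. A second, minor point: \cite{GutkinJudge}[Theorem 5.9] as quoted gives ``square-tiled if and only if commensurable to $\text{SL}_2(\mathbb{Z})$''; the further equivalence with ``trace field $= \mathbb{Q}$'' that you interpose is true for lattice Veech groups but is a separate result (on holonomy and trace fields, due to Gutkin--Judge and Kenyon--Smillie), so it needs its own citation rather than being folded into Theorem 5.9.
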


We recall the two known examples of surfaces that generate Teichm\"uller discs in \RankOne .  The genus three example, denoted here by $(M_3, \omega_{M_3})$, is commonly known as the Eierlegende Wollmilchsau for its numerous remarkable properties \cite{HerrlichSchmithusenEier}.  Forni \cite{ForniHand} discovered that its Kontsevich-Zorich spectrum is indeed completely degenerate.  The surface $(M_3, \omega_{M_3})$ is a square-tiled surface given by the algebraic equation
$$w^4 = (z-z_1)(z-z_2)(z-z_3)(z-z_4).$$
Its differential, given in \cite{MatheusYoccoz}, can be written explicitly as
$$\omega_{M_3} = \frac{dz}{w^2}.$$
It is easy to see that this lies in the principal stratum of genus three, $\mathcal{H}(1,1,1,1)$.  The surface is pictured in Figure \ref{Gen3Ex} and the zeros lie at the corners of the squares and are denoted by $v_1, \ldots, v_4$.  For completeness, note that the stratum $\mathcal{H}(1,1,1,1)$ is connected by \cite{KontsevichZorichConnComps}.

\begin{figure}
 \centering
 \includegraphics[width=120mm]{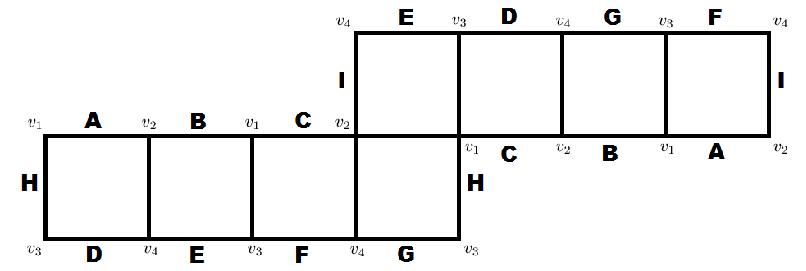}
 \caption{The Eierlegende Wollmilchsau $(M_3, \omega_{M_3})$}
 \label{Gen3Ex}
\end{figure}

\begin{proposition}[Forni]
\label{ForniGen3}
The square-tiled surface $(M_3, \omega_{M_3})$ generates a Teichm\"uller curve in $\mathcal{D}_3(1)$.
\end{proposition}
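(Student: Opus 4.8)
The statement packages two claims: that $(M_3,\omega_{M_3})$ generates a Teichm\"uller \emph{curve}, and that this curve lies in $\mathcal{D}_3(1)$. The first is immediate from the preceding discussion: $(M_3,\omega_{M_3})$ is square-tiled, so by \cite{GutkinJudge}[Theorem 5.9] its affine group is commensurable to $\text{SL}_2(\mathbb{Z})$ and in particular a lattice, whence $(M_3,\omega_{M_3})$ is a Veech surface and its \splin-orbit is a Teichm\"uller curve. All the content lies in the second claim. By the reduction recorded just before the Problem it suffices to control the rank of the symmetric bilinear form $Q(\phi,\psi)=\int_X\phi\psi\,d\mu_\omega$ on $H^0(X,\Omega)$: by Rauch's formula its Gram matrix in the normalized basis is $d\Pi(X)/d\mu_\omega$, so $\mathrm{rank}\,Q=\mathrm{rank}\,(d\Pi/d\mu_\omega)$ and I may evaluate $Q$ in \emph{any} basis. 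Thus the goal is to show $\mathrm{rank}\,Q=1$ at \emph{every} point of the orbit (note $\mathcal{D}_3(1)$ is not \emph{a priori} \splin-invariant, so the whole orbit must be treated).

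The plan is to exploit the order-four cyclic symmetry $T\colon (z,w)\mapsto (z,iw)$ of $w^4=\prod_{k=1}^4(z-z_k)$. First I would record an eigenbasis of the three-dimensional space $H^0(X,\Omega)$: a local computation at the four totally ramified branch points and at the four points over $\infty$ shows the holomorphic forms are exactly $\theta_1=dz/w^2=\omega_{M_3}$, $\theta_2=dz/w^3$, and $\theta_3=z\,dz/w^3$. Since $T^*(dz/w^j)=i^{-j}\,dz/w^j$, these are $T$-eigenforms with eigenvalues $\alpha_1=-1$ and $\alpha_2=\alpha_3=i$. In particular $\omega=\theta_1$ has eigenvalue $-1$, so $T^*\bar\omega=-\bar\omega$ and hence $\mu_\omega=\bar\omega/\omega$ is $T$-invariant.

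The key step is a symmetry-forced vanishing. As $T$ is a holomorphic, hence orientation- and measure-preserving, automorphism, $\int_X\eta=\int_X T^*\eta$ for every integrable two-form $\eta$. The integrand $\theta_i\theta_j\,\bar\omega/\omega$ is a $T$-eigen-two-form with eigenvalue $\alpha_i\alpha_j$ (the factor $\bar\omega/\omega$ being $T$-invariant), so
\[
Q(\theta_i,\theta_j)=\int_X \theta_i\theta_j\,\frac{\bar\omega}{\omega}=0 \qquad\text{whenever } \alpha_i\alpha_j\neq 1 .
\]
Among the products of $\{-1,i,i\}$ the only one equal to $1$ is $(-1)(-1)$, so in the eigenbasis the Gram matrix of $Q$ has a single possibly-nonzero entry, $Q(\theta_1,\theta_1)=\int_X\omega\wedge\bar\omega$, a nonzero multiple of $A(\omega)$. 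Since rank is basis-independent, $Q$, and therefore $d\Pi/d\mu_\omega$, has rank exactly one at $(M_3,\omega_{M_3})$.

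Finally I would promote this to the whole orbit. For $A\in\splin$ the map $T$ remains an affine automorphism of $A\cdot(M_3,\omega_{M_3})$ with the \emph{same} derivative $-I$ (the scalar $-I$ is central in $\text{GL}_2^+(\mathbb{R})$, so $A(-I)A^{-1}=-I$); being conformal, $T$ stays holomorphic and still satisfies $T^*\omega_A=-\omega_A$, so the vanishing computation applies verbatim once one knows which $T$-eigenvalues occur on $H^{1,0}$. This is the one point needing care and is the main (mild) obstacle: one must rule out the eigenvalue $-i$ on $H^{1,0}_A$, which would pair with the $i$-eigenforms to produce a rank-raising cross term. This is settled by rigidity of the isotypic decomposition: the $\mathbb{Z}/4$-action on $H^1(X,\mathbb{C})$ is a fixed topological representation, $H^{1,0}_A$ stays $T$-stable, and the multiplicities $\dim(H^{1,0}_A\cap H^1_\chi)$ are governed by the Chevalley--Weil formula, hence are constant along the orbit, equal to their values $\{-1\colon 1,\ i\colon 2,\ -i\colon 0,\ 1\colon 0\}$ at $(M_3,\omega_{M_3})$. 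With no $-i$-eigenform present, the rank-one conclusion holds uniformly, giving $\splin\cdot(M_3,\omega_{M_3})\subset\mathcal{D}_3(1)$, as required.
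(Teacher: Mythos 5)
Your proof is correct, but note that the paper itself does not prove this statement at all: Proposition \ref{ForniGen3} is attributed to Forni and simply cited from \cite{ForniHand}, with the surrounding text of Section \ref{PctdVeechSurfSect} only recording the algebraic model of $(M_3,\omega_{M_3})$, its differential, and its stratum. What you have written is a correct, self-contained reconstruction of essentially Forni's original symmetry argument, the one later systematized in \cite{ForniMatheusZorichSqTiled} for all square-tiled cyclic covers: decompose $H^0(X,\Omega)$ into eigenspaces of the order-four deck transformation $T$, observe that $\mu_\omega=\bar\omega/\omega$ is $T$-invariant because $\omega$ lies in the $(-1)$-eigenspace, and kill every Gram entry of the form $\int_X\theta_i\theta_j\,d\mu_\omega$ except the $(\omega,\omega)$-entry via the change-of-variables identity $\int_X\eta=\int_X T^*\eta$, since the only product of eigenvalues from $\{-1,i,i\}$ equal to $1$ is $(-1)(-1)$. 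Two points in your write-up are genuinely necessary and are handled correctly: first, $\mathcal{D}_3(1)$ is indeed not a priori $\text{SL}_2(\mathbb{R})$-invariant (the functionals $\Lambda_k$ are not equivariant), so the rank computation must be done at every point of the orbit rather than only at the square-tiled representative; second, the only failure mode along the orbit is the appearance of the eigenvalue $-i$ on $H^{1,0}_A$, and your rigidity argument disposes of it. For the latter, an even quicker route than Chevalley--Weil is available: the characteristic polynomial of $T^*$ restricted to $H^{1,0}_A$ varies continuously over the connected orbit while its roots are confined to the discrete set of fourth roots of unity, so the multiplicities $\{-1\colon 1,\ i\colon 2,\ -i\colon 0\}$ are locally constant, hence constant. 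The machinery the paper develops (Lemma \ref{Forni42Seqs}, Lemma \ref{TDDerPerRank1Lem}, and the degeneration arguments) is aimed at the converse classification problem and plays no role in this proposition, so beyond the citation there is nothing in the paper to compare your argument against.
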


The genus four example was discovered by Forni and Matheus \cite{ForniMatheus} and we denote it by $(M_4, \omega_{M_4})$.  Recently, Vincent Delecroix and Barak Weiss have proposed to Carlos Matheus that $(M_4, \omega_{M_4})$ be named the Ornithorynque (Platypus in French).  We adopt this terminology here.  The surface $(M_4, \omega_{M_4})$ is a square-tiled surface given by the algebraic equation
$$w^6 = (z-z_1)(z-z_2)(z-z_3)(z-z_4)^3.$$
Its differential, see \cite{MatheusYoccoz}, can be written explicitly as
$$\omega_{M_4} = \frac{z\,dz}{w^2}.$$
It is easy to see that this lies in the stratum $\mathcal{H}(2,2,2)$.  The surface is pictured in Figure \ref{Gen4Ex} and the zeros, denoted by $v_1, v_2, v_3$, lie at the corners of the squares.  For completeness, note that $\mathcal{H}(2,2,2)$ has two connected components by \cite{KontsevichZorichConnComps}, and it was proven in \cite{MatheusYoccoz} and again in \cite{ForniMatheusZorichSqTiled} that $(M_4, \omega_{M_4})$ lies in the connected component $\mathcal{H}^{even}(2,2,2)$ where the spin-structure has even parity.

\begin{proposition}[Forni-Matheus]
\label{FMGen4}
The square-tiled surface $(M_4, \omega_{M_4})$ generates a Teichm\"uller curve in $\mathcal{D}_4(1)$.
\end{proposition}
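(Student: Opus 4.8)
The plan is to treat the two assertions of the proposition separately: that $(M_4,\omega_{M_4})$ generates a Teichm\"uller curve, and that this curve lies in $\mathcal{D}_4(1)$. The first is immediate. Since $(M_4,\omega_{M_4})$ is a square-tiled surface, \cite{GutkinJudge}[Theorem 5.9] shows that its affine group $\text{SL}(M_4,\omega_{M_4})$ is commensurable to $\text{SL}_2(\mathbb{Z})$ and is therefore a lattice in \splin . Hence $(M_4,\omega_{M_4})$ is a Veech surface, and by definition its \splin -orbit projects to a Teichm\"uller curve.

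For the second assertion, recall from the discussion of Rauch's formula that $H_{\omega_{M_4}}$ has rank one if and only if $d\Pi(M_4)/d\mu_{\omega_{M_4}}$ has rank one, and that by Lemma \ref{CompDegImpHRank1} the latter is equivalent to the KZ-spectrum being completely degenerate, i.e. to $(M_4,\omega_{M_4}) \in \mathcal{D}_4(1)$. The approach I would take is to exploit the cyclic symmetry of the cover. Let $T$ be a generator of the group $\mathbb{Z}/6\mathbb{Z}$ of deck transformations of $p\colon M_4 \to \mathbb{P}^1(\mathbb{C})$. Then $T^*$ acts on the four-dimensional space $H^0(M_4,\Omega^1)$ of holomorphic Abelian differentials, and I would choose an eigenbasis $\theta_1,\dots,\theta_4$ with $T^*\theta_i = \zeta_6^{k_i}\theta_i$, where $\zeta_6 = e^{2\pi i/6}$; the form $\omega_{M_4} = z\,dz/w^2$ is itself an eigenvector, say $T^*\omega_{M_4} = \zeta_6^{m}\omega_{M_4}$ with $\theta_{i_0} = \omega_{M_4}$.

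The key observation is that in Rauch's formula the integrand $\theta_i\theta_j\,\overline{\omega_{M_4}}/\omega_{M_4}$ transforms under $T$ by the scalar $\zeta_6^{k_i+k_j-2m}$, while $T$ preserves $M_4$ together with the area form $dz\wedge d\bar z$. Consequently the entry $d\Pi_{ij}(M_4)/d\mu_{\omega_{M_4}}$ vanishes unless $k_i + k_j \equiv 2m \pmod 6$. The substance of the argument is then to compute the multiset $\{k_1,\dots,k_4\}$ and to verify that the congruence $k_i + k_j \equiv 2m \pmod 6$ admits no solution other than $i = j = i_0$; once this is checked, $d\Pi(M_4)/d\mu_{\omega_{M_4}}$ has a single nonzero entry and hence rank one, as desired.

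I expect the main obstacle to be precisely this eigenvalue bookkeeping: it requires an explicit basis for each $T^*$-eigenspace entering $H^0(M_4,\Omega^1)$, obtained from forms of the shape $z^b\,dz/w^c$ together with their holomorphy conditions at the branch points and over $\infty$, and a verification that each occupied eigenspace is one-dimensional so that the eigenbasis is essentially forced. This computation, and the resulting fact that no off-diagonal pair satisfies the congruence, is carried out for $(M_4,\omega_{M_4})$ in \cite{MatheusYoccoz} and \cite{ForniMatheusZorichSqTiled}; alternatively, the rank-one conclusion may be quoted directly from \cite{ForniMatheus}, where the completely degenerate spectrum of the Ornithorynque was first established. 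The argument is entirely parallel to the treatment of the Eierlegende Wollmilchsau in Proposition \ref{ForniGen3}, the only difference being the value $N = 6$ and the exponent vector $(1,1,1,3)$ in place of $N = 4$ and $(1,1,1,1)$.
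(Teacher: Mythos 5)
You should first be aware that the paper does not actually prove this proposition: it is stated with the attribution Forni--Matheus and imported from \cite{ForniMatheus} (with the stratum and spin data quoted from \cite{MatheusYoccoz} and \cite{ForniMatheusZorichSqTiled}), exactly as Proposition \ref{ForniGen3} is imported from Forni's work. So the real comparison is with the cited literature, and there your outline is the standard argument: the Veech property is immediate from \cite{GutkinJudge} because the surface is square-tiled, and the rank-one property follows from equivariance of Rauch's formula under the deck group, which is the mechanism behind \cite{ForniMatheusZorichSqTiled}. Your expected bookkeeping does close up: for $w^6 = (z-z_1)(z-z_2)(z-z_3)(z-z_4)^3$ the eigenform decomposition of the space of holomorphic Abelian differentials is spanned by $(z-z_4)\,dz/w^3$ (character $\zeta_6^{-3}$; this is $\omega_{M_4}$, being the square root of $p^*q_0$), $(z-z_4)^2\,dz/w^4$ (character $\zeta_6^{-4}$), and $(z-z_4)^2\,dz/w^5$, $z(z-z_4)^2\,dz/w^5$ (character $\zeta_6^{-5}$). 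Thus the exponents are $(k_1,k_2,k_3,k_4) = (3,2,1,1)$ with $m = 3$, the congruence $k_i + k_j \equiv 2m \pmod 6$ has only the solution $i = j = i_0$, and the surviving diagonal entry $\int_{M_4} \omega_{M_4}\wedge\overline{\omega_{M_4}} \neq 0$ gives rank exactly one. (A warning if you carry this out: the formula $\omega_{M_4} = z\,dz/w^2$ displayed in the paper is \emph{not} holomorphic for the displayed algebraic equation --- it has simple poles at the three points over $z_4$ --- so work with $(z-z_4)\,dz/w^3$, equivalently $z\,dz/w^3$ after normalizing $z_4 = 0$.)

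The one genuine gap is at the end of your argument: the symmetry computation establishes rank one of $d\Pi/d\mu_{\omega}$ at the single point $(M_4,\omega_{M_4})$, whereas the proposition asserts that the entire Teichm\"uller curve lies in $\mathcal{D}_4(1)$, i.e. rank one at every point of the \splin -orbit. These are not the same statement: $\mathcal{D}_g(1)$ is a pointwise-defined locus that is not a priori \splin -invariant (the whole point of the paper is to determine which discs lie inside it), and Lemma \ref{CompDegImpHRank1} concerns almost every point of the support of an invariant \emph{measure}; it is not a device for propagating a computation at one surface to its orbit. The missing sentence is this: every deck transformation multiplies $\omega_{M_4}$ by $\pm 1$, hence is an affine automorphism whose derivative $\pm\mathrm{Id}$ is central in $\text{SL}_2(\mathbb{R})$; therefore the cyclic group still acts by holomorphic automorphisms on every surface $A\cdot(M_4,\omega_{M_4})$ of the orbit, its character on the space of Abelian differentials is locally constant and hence constant along the connected orbit, and your congruence argument applies verbatim at every point. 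With that remark your sketch is a complete proof; alternatively, your fallback route --- quoting the completely degenerate KZ-spectrum from \cite{ForniMatheus} and applying Lemma \ref{CompDegImpHRank1} to the invariant measure on the orbit closure --- already yields containment of the full curve, and is in effect what the paper itself does by citation.
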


\begin{figure}
 \centering
 \includegraphics[width=120mm]{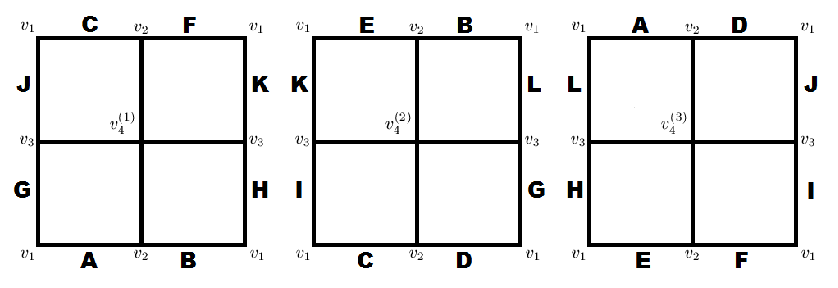}
 \caption{The Ornithorynque $(M_4, \omega_{M_4})$}
 \label{Gen4Ex}
\end{figure}

M\"oller \cite{MollerShimuraTeich} showed that Teichm\"uller curves in \RankOne ~must also be Shimura curves.  This allowed him to give a nearly complete classification of Teichm\"uller curves in \RankOne .

\begin{theorem}[M\"oller]
\label{MollerClass}
Other than possible examples in certain strata of $\mathcal{M}_5$, listed in the table in \cite{MollerShimuraTeich}[Corollary 5.15], and the examples of Propositions \ref{ForniGen3} and \ref{FMGen4}, there are no other Teichm\"uller curves contained in \RankOne , for $g \geq 2$.
\end{theorem}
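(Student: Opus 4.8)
The plan is to exploit M\"oller's dictionary between the Kontsevich-Zorich cocycle and the variation of Hodge structure (VHS) carried by the family of Jacobians over the Teichm\"uller curve, and then to translate complete degeneracy of the KZ-spectrum into a rigidity statement that simultaneously identifies the curve as a Shimura curve. Let $(X,\omega)$ be a Veech surface generating a Teichm\"uller curve $C$ contained in \RankOne. By Lemma \ref{CompDegImpHRank1} the form $H_\omega$ has rank one along $C$, equivalently $d\Pi/d\mu_\omega$ has rank one; the first step is to read this as a statement about the Higgs field of the Hodge bundle. Over $C$ the weight-one Hodge bundle decomposes, as a local system, into the tautological rank-two subsystem $\mathbb{L}$ whose Higgs field is maximal (the piece responsible for $\lambda_1 = 1$) and a complementary subsystem $\mathbb{M}$ of rank $2(g-1)$. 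Complete degeneracy, $\lambda_2 = \cdots = \lambda_g = 0$, forces the Higgs field to vanish identically on $\mathbb{M}$, so that $\mathbb{M}$ is a \emph{unitary} (flat, hence rigid up to finite monodromy) local system.

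The second step is to recognize this splitting as the defining property of a Shimura curve: a curve in the moduli space $\mathcal{A}_g$ of principally polarized abelian varieties whose VHS splits into a maximal-Higgs rank-two factor together with a unitary complement is totally geodesic for the period metric, and by the Mumford-Tate/Deligne characterization is a Shimura curve. Thus $C$ is simultaneously a Teichm\"uller curve and a Shimura curve. The third step extracts arithmeticity from this coincidence. The Teichm\"uller structure endows the trace field $k$ of $\text{SL}(X,\omega)$ with a totally real multiplication acting through the Galois conjugates of the tautological factor, each such conjugate carrying a nonzero Higgs field and hence forcing a positive exponent; the Shimura structure, on the other hand, demands that everything outside $\mathbb{L}$ be unitary. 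Comparing the two structures forces $k = \mathbb{Q}$, so the affine group is commensurable to $\text{SL}_2(\mathbb{Z})$. By \cite{GutkinJudge}[Theorem 5.9] the surface is square-tiled, and combining with Lemma \ref{VeechRankOneImpSqTil} together with the shape of the unitary part $\mathbb{M}$ identifies it as a square-tiled cyclic cover $M_N(a_1,a_2,a_3,a_4)$.

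The final step is a finite computation. The deck group $\mathbb{Z}/N\mathbb{Z}$ splits the Hodge bundle into eigenspaces indexed by the characters $\chi^j$, and the degrees of the Hodge subbundles on each eigenspace are computed explicitly in terms of $N$ and $(a_1,\ldots,a_4)$ by a Chern-class argument. The requirement that the Higgs field be maximal on exactly one eigenspace and unitary on all the others becomes a system of congruences and degree equalities in $N$ and the $a_i$; solving it leaves only the parameters $(N;a)=(4;1,1,1,1)$ and $(6;1,1,1,3)$, giving $(M_3,\omega_{M_3})$ and $(M_4,\omega_{M_4})$, together with a residual family in genus five that these arithmetic constraints do not exclude and that is recorded in \cite{MollerShimuraTeich}[Corollary 5.15].

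I expect the main obstacle to be the third step: proving that the coexistence of the real-multiplication structure coming from the Teichm\"uller curve and the Shimura datum forces the trace field down to $\mathbb{Q}$. This is where the genuinely new input lies, since one must rule out higher-degree totally real fields by analyzing how the unitary part $\mathbb{M}$ interacts with the Galois conjugates of the tautological factor. The eigenspace decomposition of the cyclic cover is precisely the tool that renders this incompatibility computable, which is why the reduction to square-tiled cyclic covers in the preceding step is indispensable rather than merely convenient.
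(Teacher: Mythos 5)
You should first be aware that the paper never proves Theorem \ref{MollerClass}: it is imported wholesale from \cite{MollerShimuraTeich} (with the arithmeticity portion quoted separately as Lemma \ref{VeechRankOneImpSqTil}), so there is no ``paper's own proof'' to compare against --- the only benchmark is M\"oller's published argument. Against that benchmark, your first three steps track M\"oller faithfully: complete degeneracy of the KZ-spectrum makes the complement $\mathbb{M}$ of the tautological rank-two local system unitary; a maximal-Higgs factor plus a unitary complement is the Viehweg--Zuo characterization of Shimura curves; and arithmeticity is extracted by playing the Galois conjugates of the tautological piece against unitarity. One small caveat there: your appeal to ``a nonzero Higgs field and hence a positive exponent'' for each conjugate is more than is needed and harder to justify; the standard argument is simply that conjugates of parabolic elements of the Veech group still have trace $2$, so the conjugate local systems contain nontrivial parabolics and cannot be unitary, which already contradicts the Shimura structure unless $k = \mathbb{Q}$. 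Square-tiledness then follows via \cite{GutkinJudge}, as you say.

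The genuine gap is your fourth step. Nothing you have written justifies the claim that ``the shape of the unitary part $\mathbb{M}$'' forces $(X,\omega)$ to be a square-tiled \emph{cyclic} cover $M_N(a_1,a_2,a_3,a_4)$: being square-tiled with a $(g-1)$-dimensional fixed part in the family of Jacobians does not place the surface in that four-parameter family, and neither this paper nor \cite{MollerShimuraTeich} proves any such reduction. The eigenspace computation you then invoke is essentially the Forni--Matheus--Zorich analysis \cite{ForniMatheusZorichSqTiled}, which classifies completely degenerate spectra only \emph{within} the class of square-tiled cyclic covers, finding exactly $(M_3, \omega_{M_3})$ and $(M_4, \omega_{M_4})$ and, notably, no genus-five examples. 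This exposes an internal inconsistency in your write-up: if your reduction to cyclic covers were valid, the computation would leave no ``residual family in genus five,'' and you would have proven something strictly stronger than Theorem \ref{MollerClass} by eliminating the exceptional strata of \cite{MollerShimuraTeich}[Corollary 5.15]; since those strata are precisely what M\"oller could not exclude, step four is where your argument breaks. M\"oller's actual final step is a degeneration argument, not a cyclic-cover computation: the $(g-1)$-dimensional fixed part forces the core curves of all cylinders in any periodic direction to be homologous (his Lemma 5.3, echoed in this paper as Corollary \ref{RankOneCylConfig}), and a stratum-by-stratum combinatorial analysis of such cylinder configurations yields the classification --- it is exactly this analysis that fails to close in certain genus-five strata.
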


These results are key to the remainder of the paper.  Theorem \ref{CPImpVeech} implies that for any Teichm\"uller disc in \RankOne ~there is a sequence of surfaces converging to a Veech surface.  This Veech surface may arise from pinching curves to pairs of punctures thereby resulting in a punctured Veech surface.  Moreover, Lemma \ref{VeechRankOneImpSqTil} implies that this punctured Veech surface is, in fact, a punctured square-tiled surface.  The strategy will be to proceed by contradiction and assume that there is such a sequence of surfaces converging to a punctured square-tiled surface.  The theme of the remainder of this paper is captured in the following question.

\begin{question}
Given a sequence of surfaces in a Teichm\"uller disc contained in \RankOne ~converging to a degenerate surface $X'$, which is square-tiled and carries a holomorphic Abelian differential $\omega'$, at which points of $X'$ can the pairs of punctures lie?
\end{question}

\begin{definition}
Let $(X,\omega) \in \mathcal{M}_g$ and $p \in X$.  Let $\Gamma_p(X)$ denote the set of all closed regular trajectories $\gamma$ passing through $p$ with respect to $e^{i\theta}$, for all $\theta \in \mathbb{R}$.  Define the set
$$C_{p}(X) = \bigcap_{\gamma \in \Gamma_p(X)} \gamma.$$
\end{definition}

It should be obvious to the reader that for any compact Riemann surface $X$ and any $p \in X$, $C_{p}(X)$ is a finite set.  Otherwise, it would have an accumulation point on $X$, which is impossible.

\begin{theorem}
\label{PctsAreCCConj}
Let $D$ be a Teichm\"uller disc in \RankOne .  Let $(X', \omega')$ be a degenerate surface in the closure of $D$ such that $\omega'$ is holomorphic and $X'$ has exactly one part.  If $(p, p')$ is a pair of punctures on $(X', \omega')$, then $p' \in C_p(X')$.
\end{theorem}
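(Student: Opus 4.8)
The plan is to argue by contradiction. Suppose $p' \notin C_p(X')$, so there is a direction $\theta$ and a closed regular trajectory $\gamma$ through $p$ in the direction of $e^{i\theta}\omega'$ with $p' \notin \gamma$. After first reducing to the case where $p$ is a regular point of $\omega'$ (if $p$ is a zero, one replaces $\gamma$ by nearby regular leaves and argues that $p'$ must lie on all of them), I would invoke Theorem \ref{RankOneImpCP} to conclude that $(X',\omega')$ is completely periodic, and Corollary \ref{RankOneCylConfig} to put the direction $\theta$ into the rigid cyclic form: $(X', e^{i\theta}\omega')$ decomposes into cylinders $C_1,\dots,C_k$ of \emph{equal} circumference, glued top-to-bottom cyclically. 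The leaf $\gamma$ then lies in the interior of a single cylinder $C_{i_0}$, and the hypothesis $p' \notin \gamma$ says $p'$ does not lie on the $\theta$-leaf through $p$.

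Next I would degenerate. Pinching all the core curves in direction $\theta$ under the Teichm\"uller geodesic flow (using \cite{MasurThesis}[Theorem 3]) produces a boundary surface $(X'',\omega'')$, and by Lemma \ref{TDDerPerRank1Lem} this surface is a single cycle of parts joined by pairs of poles, with no extra edges and no holomorphic edges between distinct parts. The marked points $p,p'$ are preserved by the flow, and since the only new poles of $\omega''$ come from the pinched $\theta$-cylinders (whose cores had nonzero circumference), $\omega''$ remains holomorphic at $p$ and $p'$. Thus $(p,p')$ is a \emph{holomorphic edge} of $G(X'')$. Lemma \ref{NoHoloEdges} forbids a holomorphic edge between two distinct vertices, so $p$ and $p'$ must lie on the same part of $X''$; equivalently, no pinched core separates them. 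Combined with the fact that $p'$ avoids the $\theta$-leaf through $p$, this forces $p$ and $p'$ into the closure of a single cylinder $C_{i_0}$ but on \emph{distinct} leaves (distinct heights, or $p'$ on a bounding saddle connection).

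The crux, and what I expect to be the main obstacle, is upgrading "$p,p'$ lie on the same part" to "$p,p'$ lie on the same leaf." The plan here is to use the equal-circumference cyclic rigidity of Corollary \ref{RankOneCylConfig} to produce a \emph{second} periodic direction (or an \splin -image of $\theta$) whose cylinder cores pass between the heights of $p$ and $p'$; pinching in that direction then separates $p$ from $p'$ onto two distinct parts while $(p,p')$ remains a holomorphic edge, contradicting Lemma \ref{NoHoloEdges} exactly as above. Guaranteeing that such a separating direction exists, and handling the degenerate situation in which $p$ and $p'$ stubbornly reappear on a common part, is where the fine flat geometry of how the holomorphic punctures sit inside the cyclic cylinder configuration must be controlled; I would carry this out by induction on the complexity (the genus together with the number of cylinders), each step producing a strictly simpler one-part boundary surface in $\overline{\mathcal{D}_g(1)}$ carrying the same holomorphic node $(p,p')$, until the configuration is small enough that a bad leaf is directly incompatible with the single-cycle structure of Lemmas \ref{GPisCycle} and \ref{TDDerPerRank1Lem}. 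This contradiction shows that every closed regular trajectory through $p$ must contain $p'$, i.e. $p' \in C_p(X')$.
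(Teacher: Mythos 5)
Your setup---the contradiction hypothesis, the direction $\theta$ with a closed leaf $\gamma$ through $p$ avoiding $p'$, complete periodicity via Theorem \ref{RankOneImpCP}, the equal-circumference cyclic decomposition of Corollary \ref{RankOneCylConfig}, pinching in direction $\theta$, and the appeal to Lemma \ref{TDDerPerRank1Lem} and Lemma \ref{NoHoloEdges}---matches the paper's proof exactly up to the decisive step, but there you invert the logic and the contradiction slips away. You use Lemma \ref{NoHoloEdges} to \emph{conclude} that $p$ and $p'$ lie on the same part of $X''$, and then spend the rest of the proposal (your admitted ``crux'') trying to manufacture a second separating direction by an induction you never carry out. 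What you are missing is that the geometry of the first pinching already decides where $p$ and $p'$ land, and it puts them on \emph{distinct} parts: since $p'$ is not on the $\theta$-leaf through $p$, the two points either lie in different $\theta$-cylinders, or in the same cylinder at different transverse positions (or one lies on a bounding saddle connection). Because $p$ and $p'$ are punctures, i.e. marked points, the sub-annuli separating them---for instance the sub-cylinder of $C_{i_0}$ between the leaf through $p$ and the leaf or boundary containing $p'$---are essential annuli of the \emph{marked} surface, and their moduli diverge like $e^{2t}$ under $G_t$. By Lemma \ref{CylModImpPinch} the curves separating $p$ from $p'$ pinch, so in the limit $(X'',\omega'')$ the pair $(p,p')$ is a pair of punctures at which $\omega''$ is holomorphic joining two \emph{distinct} parts. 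That is precisely the configuration forbidden by the ``Furthermore'' clause of Lemma \ref{TDDerPerRank1Lem}, so the proof ends right there; this is the paper's one-step argument (``the distance between them tends to infinity exponentially with $t$'').

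Concretely, then, the gap is this: the step ``Lemma \ref{NoHoloEdges} forbids a holomorphic edge between distinct vertices, so $p$ and $p'$ must lie on the same part'' treats a constraint on admissible boundary surfaces as if it could override the degeneration, whereas in a proof by contradiction you must compute which configuration the degeneration actually produces---and here it produces the forbidden one, since two marked points on distinct leaves of the cyclic cylinder decomposition are necessarily separated by the pinching. The ``same part, distinct leaves'' configuration you go on to analyze cannot occur, so your third paragraph (the second periodic direction, the induction on genus and number of cylinders) is both unnecessary and, as written, not a proof: you never exhibit the separating direction nor define the decreasing complexity. The skeleton and the lemma citations are right; the single geometric observation that closes the argument is absent.
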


\begin{proof}
We proceed by contradiction and assume $p' \not \in C_p(X')$.  By definition of $C_p(X')$, there exists a $\theta \in \mathbb{R}$ such that $(X',e^{i\theta}\omega')$ has a closed leaf $\gamma$ passing through $p$ and not through $p'$.  We act on $(X',e^{i\theta}\omega')$ by $G_t$ and claim that we can find a divergent sequence of times $\{t_n\}_{n=1}^{\infty}$ such that $G_{t_n} \cdot (X',e^{i\theta}\omega')$ converges to a degenerate surface which cannot be a boundary point of a Teichm\"uller disc in $\overline{\mathcal{D}_g(1)}$.  Since $(X',\omega')$ is completely periodic, by Theorem \ref{RankOneImpCP}, all of the leaves of the vertical foliation of $(X',e^{i\theta}\omega')$ are closed.  By Corollary \ref{RankOneCylConfig}, all of the leaves have the same length $\ell$.  After time $t$, they have length $e^{-t}\ell$.  Furthermore, since $p$ and $p'$ do not both lie on $\gamma$, the distance between them tends to infinity exponentially with $t$.  Let $(X',\omega')$ degenerate to $(X'',\omega'')$ under the action by $G_t$.  This implies that $(p, p')$ are a pair of holomorphic punctures paired between two distinct parts of $(X'',\omega'')$.  However, Lemma \ref{TDDerPerRank1Lem} says that there cannot be a pair of holomorphic punctures on two distinct parts of a degenerate surface whose Teichm\"uller disc is contained in \RankOne .  This contradiction implies that $p' \in C_p(X')$.
\end{proof}

\begin{lemma}
\label{TorusSepCurve}
Let $\mathbb{T}^2$ denote the torus.  For all $p \in \mathbb{T}^2$, $C_p(\mathbb{T}^2) = \{p\}$.
\end{lemma}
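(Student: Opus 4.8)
The plan is to use the flat-structure description of the torus and reduce the claim to the transverse intersection of two closed geodesics. Since a holomorphic Abelian differential on a genus-one surface has no zeros, the flat structure determined by $\omega$ uniformizes the torus as $\mathbb{C}/\Lambda$ for a lattice $\Lambda$, with $\omega = dz$; denote by $\pi : \mathbb{C} \to \mathbb{C}/\Lambda$ the projection. After translating we may assume $p = \pi(0)$, and we fix a basis $e_1, e_2$ of $\Lambda$. Because $p$ lies on every trajectory in $\Gamma_p(\mathbb{T}^2)$, we automatically have $p \in C_p(\mathbb{T}^2)$, so only the reverse inclusion $C_p(\mathbb{T}^2) \subseteq \{p\}$ requires work.

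First I would exhibit two specific elements of $\Gamma_p(\mathbb{T}^2)$. As $\theta$ ranges over $\mathbb{R}$, the vertical foliation of $(\mathbb{T}^2, e^{i\theta}\omega)$ ranges over all directions, and any direction that is rational with respect to $\Lambda$ is periodic. In particular the directions of the primitive lattice vectors $e_1$ and $e_2$ are periodic, and the leaves through $p$ in these directions are the closed regular trajectories $\gamma_1 = \pi(\mathbb{R}e_1)$ and $\gamma_2 = \pi(\mathbb{R}e_2)$; both belong to $\Gamma_p(\mathbb{T}^2)$.

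The key step is to compute $\gamma_1 \cap \gamma_2$. The preimage $\pi^{-1}(\gamma_i)$ is the union of all lines parallel to $e_i$ through points of $\Lambda$, so writing a lift of an arbitrary point as $\alpha e_1 + \beta e_2$ with $\alpha, \beta \in \mathbb{R}$, membership in $\gamma_1$ forces $\beta \in \mathbb{Z}$ and membership in $\gamma_2$ forces $\alpha \in \mathbb{Z}$. Hence a point lies on both $\gamma_1$ and $\gamma_2$ exactly when its lift lies in $\Lambda$, i.e. exactly when the point equals $p$. Thus $\gamma_1 \cap \gamma_2 = \{p\}$.

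Finally, since $C_p(\mathbb{T}^2)$ is by definition the intersection of all trajectories in $\Gamma_p(\mathbb{T}^2)$ and $\gamma_1, \gamma_2 \in \Gamma_p(\mathbb{T}^2)$, we get $C_p(\mathbb{T}^2) \subseteq \gamma_1 \cap \gamma_2 = \{p\}$, which combined with $p \in C_p(\mathbb{T}^2)$ yields the claim. There is no serious obstacle here; the only points demanding care are the verification that the two lattice-basis directions are genuinely periodic (so that $\gamma_1$ and $\gamma_2$ are bona fide closed regular trajectories lying in $\Gamma_p(\mathbb{T}^2)$) and the bookkeeping in the intersection computation, both of which are routine on the flat torus.
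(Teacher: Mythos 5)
Your proof is correct and takes essentially the same approach as the paper: the paper identifies $\mathbb{T}^2$ with the unit square and intersects the horizontal and vertical closed trajectories through $p$, which is exactly your argument with the lattice basis directions $e_1, e_2$ playing the role of the two coordinate directions. Your version merely carries out the lift-and-intersect computation in more detail for a general lattice $\Lambda$.
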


\begin{proof}
Identify $\mathbb{T}^2$ with the unit square $S$.  Consider the horizontal and vertical lines intersecting at $p \in S$.  It is obvious that these two lines have no other intersection point.  Hence, $C_p(\mathbb{T}^2) = \{p\}$.
\end{proof}

\begin{corollary}
\label{PctsLieOverSamePt}
Let $D$ be a Teichm\"uller disc in \RankOne ~such that $(X', \omega')$ is a degenerate surface carrying a holomorphic Abelian differential, and $(X', \omega')$ is a square-tiled surface with covering map $\pi: X \rightarrow \mathbb{T}^2$.  If $(p, p')$ is a pair of punctures on $X'$, then $\pi(p) = \pi(p')$.
\end{corollary}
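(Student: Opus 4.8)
The plan is to chain Theorem~\ref{PctsAreCCConj} with Lemma~\ref{TorusSepCurve} through the covering map $\pi\colon X' \to \mathbb{T}^2$. Since $(X',\omega')$ is square-tiled it is connected and carries a holomorphic differential, so $X'$ consists of a single part and the hypotheses of Theorem~\ref{PctsAreCCConj} are met; that theorem gives $p' \in C_p(X')$. It therefore suffices to prove that $\pi$ collapses $C_p(X')$ onto the single point $\pi(p)$, i.e. $\pi\bigl(C_p(X')\bigr) = \{\pi(p)\}$, from which $\pi(p') = \pi(p)$ is immediate.

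To establish this I would exploit that $\pi$ is a translation covering of the torus branched over finitely many points, with $\omega' = \pi^*(dz)$, so that $\pi$ carries closed regular trajectories on $X'$ to closed geodesics on $\mathbb{T}^2$ in the same direction. Note that only this pushforward direction is needed, not lifting. Because a square-tiled surface is a Veech surface, it is completely periodic (as also guaranteed by Theorem~\ref{RankOneImpCP}); by Corollary~\ref{RankOneCylConfig} a regular point of $(X',\omega')$ lying off the horizontal and vertical saddle connections sits in the interior of a cylinder in each of the horizontal and vertical directions. Thus there exist a horizontal closed regular trajectory $\gamma_h$ and a vertical closed regular trajectory $\gamma_v$ through $p$. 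Since $C_p(X')$ is the intersection over \emph{all} closed regular trajectories through $p$, we have $C_p(X') \subseteq \gamma_h \cap \gamma_v$, whence
\[
\pi\bigl(C_p(X')\bigr) \subseteq \pi(\gamma_h) \cap \pi(\gamma_v).
\]
Here $\pi(\gamma_h)$ and $\pi(\gamma_v)$ are precisely the horizontal and vertical closed geodesics through $\pi(p)$ on $\mathbb{T}^2$, and these intersect only at $\pi(p)$ — this is exactly the computation in the proof of Lemma~\ref{TorusSepCurve}. Hence $\pi(\gamma_h)\cap\pi(\gamma_v) = \{\pi(p)\}$, and combining with $p' \in C_p(X')$ gives $\pi(p') = \pi(p)$.

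The step requiring the most care, and which I expect to be the main obstacle, is the hypothesis that $p$ be a regular point of $\omega'$ positioned so that horizontal and vertical closed regular trajectories actually pass through it. If the puncture $p$ coincides with a zero of $\omega'$ — equivalently a ramification point of $\pi$ — then no closed regular trajectory passes through $p$, so $\Gamma_p(X') = \emptyset$, $C_p(X')$ degenerates to all of $X'$, and the trajectory argument above is vacuous. I would address this by arguing that the pairs of punctures produced by the degeneration in Theorem~\ref{CPImpVeech} may be taken to lie at regular points of $\omega'$ (so that the marked points are genuine regular points rather than cone points), or alternatively by replacing the horizontal and vertical directions with two transverse rational directions in which $p$ lies in cylinder interiors and then controlling the finite intersection $\pi(\gamma_{\theta_1})\cap\pi(\gamma_{\theta_2})$ via the cyclic symmetry of the cover together with the constraint $p'\in C_p(X')$ from Theorem~\ref{PctsAreCCConj}. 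The remaining points — that $\pi$ respects the foliation directions and sends closed trajectories to closed geodesics, and that $(X',\omega')$ is completely periodic in rational directions — are routine consequences of the square-tiled structure and the results already established.
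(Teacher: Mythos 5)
Your core argument is exactly the paper's own proof: the paper likewise feeds Theorem \ref{PctsAreCCConj} into the covering map and Lemma \ref{TorusSepCurve}, concluding $\pi(p') \in \pi(C_p(X')) = C_{\pi(p)}(\mathbb{T}^2) = \{\pi(p)\}$, and your horizontal/vertical trajectories $\gamma_h, \gamma_v$ are just a careful unwinding of the pushforward step that the paper compresses into one line. So for $p$ a regular point lying on closed horizontal and vertical trajectories, your write-up matches the paper and is correct.

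One concrete warning about the obstacle you flag. The degenerate case ($p$ at a zero, so $\Gamma_p(X') = \emptyset$) is a real issue, and the paper's two-line proof silently has the same hole; but your first proposed patch --- arranging for the punctures to lie at regular points of $\omega'$ --- cannot work within this paper's logic. Corollary \ref{PctsLieOverSamePt} is invoked in Lemmas \ref{Gen3NoPctsAtZeros} and \ref{Gen4NoPctsAtZeros} precisely under the hypothesis that $p$ \emph{is} a zero, and Theorem \ref{NoPctsAtCCCPts} later proves that at least one puncture of every pair must lie at a zero, so regular--regular pairs are exactly the configuration that never occurs. The cheap repair is different: a pair of punctures is unordered, so if exactly one of $p, p'$ is a zero, run the argument with the regular point as the base of $C_\cdot(X')$; and if both are zeros, note that all zeros of a square-tiled surface sit at vertices of the tiling, and every vertex projects to the single lattice point of $\mathbb{T}^2$, so $\pi(p) = \pi(p')$ is immediate. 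Finally, in your fallback of replacing horizontal/vertical by two other rational directions (needed when the base point lies on a horizontal or vertical saddle connection), you must choose the two primitive directions $(q_1,p_1), (q_2,p_2)$ with $|q_1 p_2 - q_2 p_1| = 1$: two closed torus geodesics through a common point meet in exactly $|q_1 p_2 - q_2 p_1|$ points, so for a non-unimodular pair the intersection is not a single point and the conclusion genuinely fails without further argument.
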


\begin{proof}
Closed trajectories on $X$ descend to closed trajectories on $\mathbb{T}^2$ under $\pi$.  Hence, it follows from Theorem \ref{PctsAreCCConj} and Lemma \ref{TorusSepCurve} that
$$\pi(p') \in \pi(C_{p}(X)) = C_{\pi(p)}(\mathbb{T}^2) = \{\pi(p)\}.$$
\end{proof}

\begin{remark}
Corollary \ref{PctsLieOverSamePt} is weaker than Theorem \ref{PctsAreCCConj} because $\pi(p) = \pi(p')$ does \emph{not} imply $p' \in C_{p}(X)$.
\end{remark}

\begin{lemma}
\label{NoDisksInD11}
Given a Teichm\"uller disc $D \subset $ \RankOne , for $g \geq 2$, there is no degenerate surface in the closure of $D \subset \overline{\mathcal{D}_g(1)}$ of the form $(S, \omega)$, where $S$ is a punctured torus and $\omega$ is holomorphic.
\end{lemma}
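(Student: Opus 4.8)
The plan is to argue by contradiction, combining the puncture-localization result of Theorem \ref{PctsAreCCConj} with the triviality of $C_p$ on a flat torus established in Lemma \ref{TorusSepCurve}. Suppose, for contradiction, that $(S,\omega)$ is a degenerate surface in the closure of $D$ with $S$ a punctured torus and $\omega$ holomorphic. Since $(S,\omega)$ is a genuine boundary point of $\overline{\mathcal{M}_g}$ arising as a limit of genus $g \geq 2$ surfaces, it must be nodal (otherwise its genus would be $g \neq 1$), so removing a node produces a pair of punctures $(p,p')$ with $p \neq p'$. Because $S$ has a single part, no node can join two distinct parts; every node is a self-node, and hence both $p$ and $p'$ lie on $S$ itself.

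Next I would verify that $(S,\omega)$ meets the hypotheses of Theorem \ref{PctsAreCCConj}: $\omega$ is holomorphic and $S$ consists of exactly one part. Applying that theorem to the pair $(p,p')$ gives $p' \in C_p(S)$. On the other hand, a nonzero holomorphic Abelian differential on a genus one surface has no zeros (the Chern count gives $2g-2 = 0$), so it is nonvanishing and realizes $(S,\omega)$ as a flat torus $\mathbb{C}/\Lambda$ with $\omega = c\,dz$. Lemma \ref{TorusSepCurve} then yields $C_p(S) = \{p\}$. Combining the two, $p' \in \{p\}$, i.e. $p' = p$, contradicting $p \neq p'$. Therefore no such $(S,\omega)$ can appear in the closure of $D$.

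The only steps needing care are the two structural observations made at the outset: that the degenerate surface carries at least one pair of punctures, and that both members of any such pair lie on the single torus part. Both are forced by the fact that $S$ is connected of genus strictly less than $g$, so some curve was genuinely pinched and the resulting node is necessarily a self-node. I expect essentially all of the analytic content to be carried by Theorem \ref{PctsAreCCConj}, whose proof already supplies the decisive mechanism — flowing the closed leaf $\gamma$ through $p$ under $G_t$ so as to split a holomorphic puncture pair across two distinct parts, in contradiction with Lemma \ref{TDDerPerRank1Lem}. From this viewpoint the present lemma is precisely the statement that on a torus there is no room for a holomorphic puncture pair to escape the single closed leaf through $p$, so the configuration simply cannot occur.
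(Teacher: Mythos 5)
Your proof is correct, but it is not the paper's argument. The paper proves Lemma \ref{NoDisksInD11} without ever invoking Theorem \ref{PctsAreCCConj}: it uses Lemma \ref{TorusSepCurve} only to produce a direction in which the closed curves through $p$ and $p'$ are distinct and parallel, then chooses two further parallel curves avoiding all punctures, non-homotopic on the \emph{punctured} torus and separating $p$ from $p'$, pinches them, and checks that the resulting union of spheres carries a configuration of nodes (three pairs of punctures between two parts, or equivalently the holomorphic pair $(p,p')$ joining distinct parts) that is forbidden by Lemma \ref{TDDerPerRank1Lem}. You instead quote Theorem \ref{PctsAreCCConj} as a black box and then only need $C_p(S)=\{p\}$. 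This is legitimate and non-circular: Theorem \ref{PctsAreCCConj} precedes this lemma in the paper, and its proof relies only on Theorem \ref{RankOneImpCP}, Corollary \ref{RankOneCylConfig}, and Lemma \ref{TDDerPerRank1Lem} — not on Lemma \ref{NoDisksInD11} or Theorem \ref{CPImpVeech} — so the $G_t$-degeneration mechanism the paper re-runs by hand is simply reused where it has already been proved. Your route buys economy and makes transparent that the only torus-specific input is the triviality of $C_p$; the paper's route buys independence from Theorem \ref{PctsAreCCConj} and exhibits an explicit forbidden nodal configuration. Two small points you should make explicit to be complete: first, $\omega\not\equiv 0$ on $S$ (this follows from Lemma \ref{NonZeroPart} because $S$ has a single part), which is needed before the Chern count realizes $(S,\omega)$ as a flat torus; second, Lemma \ref{TorusSepCurve} concerns the unpunctured torus, so one must observe that the punctures are regular points of the nonvanishing differential, hence the closed regular trajectories of $S$ and of $\mathbb{T}^2$ coincide and $C_p(S)=C_p(\mathbb{T}^2)=\{p\}$. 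Both points are routine, and the paper elides them in exactly the same way.
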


\begin{proof}
By contradiction, assume there is a degenerate surface of the form $(S, \omega)$ in the boundary of $D$.  By the assumption that $S$ arises from pinching curves on a higher genus surface, $S$ has an even, nonzero, number of punctures.  Let $(p,p')$ be a pair of punctures on $S$.  By Lemma \ref{TorusSepCurve}, there are two parallel curves on $S$, $\gamma_1$ and $\gamma_2$ passing through $p$ and $p'$, respectively.  There are two more curves $\gamma_1'$ and $\gamma_2'$ parallel to $\gamma_1$ that do not pass through punctures of $S$ such that $\gamma_1'$ is not homotopic to $\gamma_2'$ (because $S$ is not a torus, but a \emph{punctured} torus).  Pinching the curves $\gamma_1'$ and $\gamma_2'$ degenerates the torus $S$ into a union of two or more spheres $S'$ such that $p$ and $p'$ do not lie on the same sphere.  However, $(p,p')$ represent a pinched curve, so if $S'$ consists of exactly two parts, then there are three pairs of punctures between those two parts and if $S'$ has more than two parts, then there are two parts of $S'$ with two pairs of punctures between them.  This directly contradicts Lemma \ref{TDDerPerRank1Lem} and proves that no surface can degenerate to a punctured torus carrying a holomorphic Abelian differential.
\end{proof}

\begin{theorem}
\label{Genus3Classification}
The Eierlegende Wollmilchsau $(M_3, \omega_{M_3})$ generates the only Teichm\"uller disc in $\mathcal{D}_3(1)$.
\end{theorem}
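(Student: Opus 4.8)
The plan is to show that every surface generating a Teichm\"uller disc in $\mathcal{D}_3(1)$ is itself a Veech surface; once this is established, the classification of Teichm\"uller \emph{curves} finishes the job. Indeed, if $(X,\omega)$ is a Veech surface with disc $D \subset \mathcal{D}_3(1)$, then $D$ is a Teichm\"uller curve in $\mathcal{D}_3(1)$, and Propositions \ref{H2gm2DerPerRank1} and \ref{HnmDerPerRank1} exclude the strata $\mathcal{H}(4)$ and $\mathcal{H}(3,1)$, so by M\"oller's Theorem \ref{MollerClass} (whose only genus three possibility is the Eierlegende Wollmilchsau) we obtain $(X,\omega) = (M_3,\omega_{M_3})$ up to the \splin-action, i.e. $D$ is the disc of Proposition \ref{ForniGen3}. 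I would therefore argue by contradiction: suppose $(X,\omega)$ generates a disc $D \subset \mathcal{D}_3(1)$ but is \emph{not} a Veech surface.

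By Theorem \ref{CPImpVeech} there is a Veech surface $(X',\omega') \in \overline{\mathcal{M}_3}$ with one part and $\omega'$ holomorphic, whose disc $D'$ lies in $\overline{\mathcal{D}_3(1)}$ and all of whose surfaces are limits of surfaces in $D$; by Lemma \ref{VeechRankOneImpSqTil} it is square-tiled. Because $(X,\omega)$ is not Veech, the degeneration procedure of Theorem \ref{CPImpVeech} (via Lemmas \ref{CPZeroConv} and \ref{CPPIZeroConv}) takes at least one step, so $(X',\omega')$ has either strictly fewer distinct zeros or strictly smaller genus than $(X,\omega)$. The next step is a purely topological classification of $(X',\omega')$: since it has a single part, any pair of punctures arises from pinching a non-separating curve, and each such pinch drops the genus by one, so a punctured $(X',\omega')$ has genus at most two. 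Genus zero carries no holomorphic differential and genus one is excluded by Lemma \ref{NoDisksInD11}; hence the only surviving possibilities are that $(X',\omega')$ is a non-degenerate genus three surface, or a genus two surface with exactly one pair of holomorphic punctures.

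Disposing of the genus two case is the step I expect to be the main obstacle. I would forget the two marked points and regard $(X',\omega')$ as a genuine point $(Y,\eta) \in \mathcal{M}_2$. Acting by \splin ~keeps $\eta$ holomorphic, so every surface of the genus two disc generated by $(Y,\eta)$ is the puncture-forgetting image of a surface of $D' \subset \overline{\mathcal{D}_3(1)}$ carrying a holomorphic, nowhere-identically-zero differential; Lemma \ref{RankkLocusProp1} then forces the $2 \times 2$ matrix $d\Pi/d\mu_{\eta}$ to have rank at most one at every such surface. Thus the genus two disc is contained in $\mathcal{D}_2(1)$, contradicting Proposition \ref{Genus2Cor}. (Alternatively, one can feed the persistent holomorphic pair of punctures into Corollary \ref{PctsLieOverSamePt}, Theorem \ref{PctsAreCCConj}, and a re-degeneration via Lemma \ref{NonHoloPart}; but the rank computation is the cleanest route.)

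Finally, if $(X',\omega')$ is non-degenerate of genus three, then its disc $D'$ is a Teichm\"uller curve in $\mathcal{D}_3(1)$, so by Theorem \ref{MollerClass} together with Propositions \ref{H2gm2DerPerRank1} and \ref{HnmDerPerRank1} it equals the disc of $(M_3,\omega_{M_3})$, which lies in the principal stratum $\mathcal{H}(1,1,1,1)$. But $\mathcal{H}(1,1,1,1)$ realizes the maximal number of zeros and the full genus in genus three, so $(X',\omega')$ cannot have strictly fewer distinct zeros or smaller genus than $(X,\omega)$, contradicting the fact that the procedure took at least one step. Both surviving cases being contradictory, $(X,\omega)$ must be a Veech surface, and the opening paragraph then identifies it as $(M_3,\omega_{M_3})$.
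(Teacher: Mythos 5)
Your proposal is correct and follows essentially the same route as the paper's proof: argue by contradiction, invoke Theorem \ref{CPImpVeech} to produce a Veech limit $(X',\omega')$, rule out lower-genus limits via Lemma \ref{NoDisksInD11}, Proposition \ref{Genus2Cor} and the absence of nonzero holomorphic differentials on the sphere, and rule out a nondegenerate genus three limit because Theorem \ref{MollerClass} would force it to be $(M_3,\omega_{M_3})$ in the principal stratum, contradicting the loss of zeros guaranteed by the degeneration procedure. Your only real addition is to make the genus two exclusion explicit by forgetting the punctures and applying Lemma \ref{RankkLocusProp1} to place the resulting disc in $\mathcal{D}_2(1)$, a detail the paper leaves implicit in its bare citation of Proposition \ref{Genus2Cor}.
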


\begin{proof}
By \cite{MollerShimuraTeich} (restated in Theorem \ref{MollerClass} above), the Eierlegende Wollmilchsau is the only Veech surface that generates a Teichm\"uller disc in $\mathcal{D}_3(1)$.  By contradiction, assume that there is a genus three surface $(X,\omega)$ that generates a Teichm\"uller disc $D \subset \mathcal{D}_3(1)$.  Then $X$ is not a Veech surface, but it is completely periodic by Theorem \ref{RankOneImpCP}.  By Theorem \ref{CPImpVeech}, there is a sequence of surfaces in $D$ converging to a Veech surface $(X',\omega')$ contained in $\overline{\mathcal{D}_3(1)}$.  Since Theorem \ref{CPImpVeech} guarantees that either the Abelian differential $\omega'$ has fewer zeros than $\omega$, which implies $(X',\omega')$ cannot lie in the principal stratum of $\mathcal{M}_3$, or $X'$ has lower genus than $X$.  However, $X'$ cannot have lower genus by Lemma \ref{NoDisksInD11} and Proposition \ref{Genus2Cor} and the fact that the sphere carries no nonzero holomorphic differentials.  Moreover, Theorem \ref{MollerClass} implies that $(X', \omega')$ cannot be a Veech surface because $(X', \omega')$ does lie in the principal stratum.  This contradiction implies that no other Teichm\"uller disc is contained in $\mathcal{D}_3(1)$.
\end{proof}

We complete this section by proving that pairs of punctures cannot lie at the zeros in the known examples of Veech surfaces generating Teichm\"uller discs in \RankOne .

\begin{lemma}
\label{Gen3NoPctsAtZeros}
Let $(M_3',\omega_{M_3}')$ denote $(M_3,\omega_{M_3})$ with punctures.  Let $(X,\omega)$ be a surface generating a Teichm\"uller disc in \RankOne ~such that $(M_3',\omega_{M_3}')$ is a degenerate surface in the closure of $D$.  If $(p,p')$ is a pair of punctures on $(M_3',\omega_{M_3}')$, then
$$\{p,p'\} \cap \{v_1, v_2, v_3, v_4\} = \emptyset.$$
\end{lemma}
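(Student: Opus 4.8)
The plan is to argue by contradiction, assuming that one of the two punctures, say $p$, coincides with a zero $v_i$ (the case $p' = v_i$ is symmetric), and to split into two cases according to whether the partner $p'$ is a regular point or is itself a zero. Throughout I use that $(M_3', \omega_{M_3}')$ is a one-part degenerate surface carrying a holomorphic differential, so that Theorem \ref{PctsAreCCConj} applies to it.

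First I would dispose of the case in which $p'$ is a regular point of $\omega_{M_3}'$. Since a pair of punctures is unordered, Theorem \ref{PctsAreCCConj} applies to the same pair with the labels interchanged, so that $v_i \in C_{p'}(M_3')$. On the other hand, the Eierlegende Wollmilchsau is completely periodic (it is a Veech surface, and also by Theorem \ref{RankOneImpCP}), so there is a periodic direction in which the regular point $p'$ lies in the interior of a cylinder; avoiding the countably many directions placing $p'$ on a saddle connection, the leaf through $p'$ is then a closed regular trajectory $\gamma \in \Gamma_{p'}(M_3')$. By definition a closed regular trajectory passes through no singularity, so $v_i \notin \gamma$; since $C_{p'}(M_3') = \bigcap_{\gamma \in \Gamma_{p'}} \gamma \subseteq \gamma$, this gives $v_i \notin C_{p'}(M_3')$, contradicting the previous sentence. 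This excludes a zero paired with a regular point.

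The remaining case is that both punctures sit at zeros, $p = v_i$ and $p' = v_j$ with $i \neq j$ (they are distinct points of $M_3$, being the two branches of a node on the normalization). Here $C_p$ and $C_{p'}$ carry no information, because no closed regular trajectory meets a zero, so I would revert to a direct degeneration argument in the spirit of the proof of Theorem \ref{PctsAreCCConj}. The idea is to choose a periodic direction $\theta$ for $M_3$ such that $v_i$ and $v_j$ lie on boundary saddle connections belonging to different levels of the cyclic cylinder decomposition furnished by Corollary \ref{RankOneCylConfig}. Flowing by $G_t$ in this direction pinches the core curves of all cylinders by \cite{MasurThesis}[Theorem 3], producing a limit $(X'', \omega'')$ with the cyclic configuration of Lemma \ref{TDDerPerRank1Lem}. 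The marked holomorphic punctures at $v_i$ and $v_j$ are carried to distinct parts of $X''$ and remain holomorphic, since pinching cylinder cores creates simple poles only along the newly pinched pairs and not at $v_i$ or $v_j$. This yields a holomorphic pair of punctures joining two distinct parts, in direct violation of the final assertion of Lemma \ref{TDDerPerRank1Lem}, and the contradiction excludes the both-zeros case as well.

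The main obstacle is the existence step in the second case: producing a periodic direction that places the two distinct zeros on different parts after pinching. This is the only place where the explicit geometry of the Eierlegende Wollmilchsau enters. By Corollary \ref{PctsLieOverSamePt} one has $\pi(v_i) = \pi(v_j)$ under the square-tiling projection $\pi : M_3 \rightarrow \mathbb{T}^2$, so both zeros lie over the single vertex of the torus; I would then exploit the $\mathbb{Z}/4\mathbb{Z}$ deck symmetry of the cover together with a suitable rational slope to exhibit a cylinder decomposition in which the two preimages of the vertex fall on different boundary levels. Verifying that such a slope exists is a finite, explicit check on the square-tiled model pictured in Figure \ref{Gen3Ex}, after which the general machinery of the rank one locus closes the argument and shows that neither $p$ nor $p'$ can be a zero.
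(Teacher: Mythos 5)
Your reduction is structurally the same as the paper's: assume a puncture sits at a zero, use the covering $\pi:M_3\rightarrow\mathbb{T}^2$ to constrain the partner, then pinch a periodic cylinder decomposition so that the two punctures land on distinct parts and contradict the final assertion of Lemma \ref{TDDerPerRank1Lem}. Two remarks on your first case. The paper dispatches it in one line: by Corollary \ref{PctsLieOverSamePt}, $\pi(p')=\pi(p)$ is the vertex of the torus, and for the Eierlegende Wollmilchsau $\pi^{-1}(\pi(v_1))=\{v_1,v_2,v_3,v_4\}$ (every corner of every square is a simple zero), so $p'$ is forced to be a zero and the zero--regular case never occurs. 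Your alternative via $C_{p'}$ is sound in spirit, but the justification that $\Gamma_{p'}(M_3')\neq\emptyset$ is loose: the set of periodic directions and the set of directions placing $p'$ on a singular leaf are \emph{both} countable, so ``avoiding the countably many bad directions'' does not by itself produce a good periodic direction. (The claim is true for square-tiled surfaces -- e.g. two distinct bad rational directions through $p'$ force both coordinates of $p'$ to be rational, and one then argues separately in the rational case -- but that argument is missing.)

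The genuine gap is in your second case, and it is the heart of the lemma. You assert the existence, for each pair of distinct zeros $v_i,v_j$, of a periodic direction whose cylinder decomposition places them on different parts after pinching, and you defer this to ``a finite, explicit check'' via the $\mathbb{Z}/4\mathbb{Z}$ deck symmetry. That check \emph{is} the proof: the paper verifies on the explicit square-tiled model (Table \ref{Gen3SlopePfTab}) that the horizontal direction separates $(v_1,v_3),(v_2,v_4),(v_2,v_3),(v_1,v_4)$ and the vertical direction separates $(v_1,v_2),(v_3,v_4)$. Such a separating direction is not guaranteed by any general principle of the rank one locus; it depends on the particular flat geometry. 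The genus-four analogue makes this vivid: in Lemma \ref{Gen4NoPctsAtZeros} the corner points $v_4^{(1)},v_4^{(2)},v_4^{(3)}$ of $(M_4,\omega_{M_4})$ cannot be handled by this pinching trick (the paper deliberately sets those pairs aside), and pairs of punctures at regular points ultimately require the entirely different analytic argument of Theorem \ref{NoPctsAtCCCPts}. So until the slope-by-slope verification is actually carried out, what you have is a correct reduction of the lemma to its essential computation, not a proof of it.
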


\begin{proof}
Let $V = \{v_1, v_2, v_3, v_4\}$ and $\pi: M_3 \rightarrow \mathbb{T}^2$ be a covering map of the torus.  The points in $V$ lie at the corners of each of the squares in the square tile decomposition of $M_3$, so $\pi^{-1}\circ \pi(v_1) = V$.  By contradiction, assume $p \in V$.  By Corollary \ref{PctsLieOverSamePt}, $p \in V$ implies $\{p, p'\} \subset V$.  Hence, there are six pairs of points at which $(p, p')$ can lie.  For each pair of points, listed in the first column of Table \ref{Gen3SlopePfTab}, choose the closed trajectory on $(M_3', \omega_{M_3}')$ in the direction specified in the second column of Table \ref{Gen3SlopePfTab}, according to the orientation of Figure \ref{Gen3Ex}, and pinch the core curves of the cylinders in that direction.  This results in $p$ and $p'$ lying on different parts of a degenerate surface.  This contradicts Lemma \ref{TDDerPerRank1Lem}, which states that there are no holomorphic punctures between parts of a degenerate surface in the boundary of \RankOne .

\begin{table}[ht]
\centering
\begin{tabular}{c|l}
Pair of Points & Direction \\
\hline
$(v_1, v_3), (v_2,v_4), (v_2, v_3), (v_1, v_4)$ & Horizontal \\
$(v_1, v_2), (v_3,v_4)$ & Vertical \\
\end{tabular}
\caption{Pinching Directions for Lemma \ref{Gen3NoPctsAtZeros}}
\label{Gen3SlopePfTab}
\end{table}

\end{proof}

\begin{lemma}
\label{Gen4NoPctsAtZeros}
Let $(M_4',\omega_{M_4}')$ denote $(M_4,\omega_{M_4})$ with punctures.  Let $(X,\omega)$ be a surface generating a Teichm\"uller disc in \RankOne ~such that $(M_4',\omega_{M_4}')$ is a degenerate surface in the closure of $D$.  If $(p,p')$ is a pair of punctures on $(M_4',\omega_{M_4}')$, then
$$\{p,p'\} \cap \{v_1, v_2, v_3\} = \emptyset.$$
\end{lemma}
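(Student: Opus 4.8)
The plan is to imitate the proof of Lemma \ref{Gen3NoPctsAtZeros} essentially verbatim, replacing the Eierlegende Wollmilchsau by the Ornithorynque and the four order-one zeros by the three order-two zeros $v_1, v_2, v_3$. Let $\pi : M_4 \to \mathbb{T}^2$ denote the square-tiling covering map. The first step is a structural observation that can be read off from Figure \ref{Gen4Ex}: every corner of every square is sent by $\pi$ to the single vertex of $\mathbb{T}^2$, and the only corners of the tiling are the three zeros, so that $\pi^{-1}\circ\pi(v_1) = \{v_1, v_2, v_3\}$. This is consistent with the Euler characteristic: an origami in $\mathcal{H}(2,2,2)$ whose only singular points are its vertices has exactly three vertices, each of cone angle $6\pi$, and no regular vertices.

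Next I would argue by contradiction. Suppose some puncture, say $p$, lies at a zero $v_i$. Since $(M_4', \omega_{M_4}')$ is square-tiled with $\omega_{M_4}'$ holomorphic, Corollary \ref{PctsLieOverSamePt} gives $\pi(p) = \pi(p')$, so that $p'$ also lies over the vertex of $\mathbb{T}^2$ and hence $p' \in \{v_1, v_2, v_3\}$. Thus $\{p, p'\}$ is one of the three unordered pairs of distinct zeros, and it suffices to rule out each of them.

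For each such pair I would exhibit a periodic direction $\theta$, recorded in a short table as in Lemma \ref{Gen3NoPctsAtZeros}, with the property that in the cylinder decomposition of $(M_4', e^{i\theta}\omega_{M_4}')$ the two zeros never lie on a common boundary between two consecutive cylinders. By Theorem \ref{RankOneImpCP} this direction is completely periodic, and by Corollary \ref{RankOneCylConfig} its cylinders form a single cyclic chain whose consecutive members are glued along saddle connections. Pinching all the core curves via \cite{MasurThesis}[Theorem 3] degenerates $(M_4', e^{i\theta}\omega_{M_4}')$ to a surface whose parts correspond to the gluing loci between consecutive cylinders; a zero lands on a given part precisely when it lies on the corresponding gluing locus, so two zeros occupy distinct parts exactly when they share no such locus. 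As the punctures $p, p'$ sit at vertices rather than on the pinched core curves, they survive as holomorphic punctures joining two distinct parts, contradicting the last assertion of Lemma \ref{TDDerPerRank1Lem}. This contradiction proves the lemma.

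The main obstacle is the third step. Unlike genus three, where the horizontal and vertical decompositions alone separate every pair of zeros, here one must inspect several cylinder decompositions of the Ornithorynque (the horizontal, vertical, and slope-one diagonal directions) and verify directly that each of the three pairs of order-two zeros is separated by at least one of them. Because an order-two zero meets many saddle connections and therefore borders several cylinders simultaneously, the delicate point is to confirm that two given zeros truly end up on \emph{different} parts, not merely on different cylinders; to cut down the bookkeeping it is convenient to use the automorphisms of $(M_4, \omega_{M_4})$ that permute $v_1, v_2, v_3$, reducing the number of pairs that must be checked by hand.
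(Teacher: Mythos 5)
Your overall strategy (choose a periodic direction so that the two marked points land on distinct parts of the pinched surface, then contradict the final assertion of Lemma \ref{TDDerPerRank1Lem}) is exactly the paper's, but your first structural step is false, and this leaves a genuine gap. The Ornithorynque is tiled by twelve squares, not nine: its flat area is six times that of the pillowcase $(\mathbb{P}^1(\mathbb{C}), q_0)$, which consists of two unit squares. Consequently the fiber $\pi^{-1}(\pi(v_1))$ consists of \emph{six} points, namely the three double zeros $v_1, v_2, v_3$ (cone angle $6\pi$, so local degree $3$ for $\pi$) together with three \emph{regular} points $v_4^{(1)}, v_4^{(2)}, v_4^{(3)}$ of cone angle $2\pi$ and local degree $1$; indeed $3\cdot 3 + 3\cdot 1 = 12$, and the vertex count $V = d - (2g-2) = 12 - 6 = 6$ is consistent. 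Your Euler-characteristic computation only shows that an origami in $\mathcal{H}(2,2,2)$ \emph{with no regular vertices} would have nine squares; it does not show that the Ornithorynque is such an origami, and in fact it is not: Figure \ref{Gen4Ex} has three corners that are regular points of $\omega_{M_4}$.

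Because of this, your case analysis is incomplete. If $p$ lies at a zero $v_i$, Corollary \ref{PctsLieOverSamePt} only gives $p' \in \pi^{-1}(\pi(v_i))$, which is the six-point set above, so besides the three pairs of distinct zeros you must also exclude the nine mixed pairs $(v_i, v_4^{(j)})$, $1 \leq i,j \leq 3$, in which one puncture sits at a zero and the other at a regular vertex; these pairs are not excluded by the statement you are proving, yet they do contain a zero, so they must be ruled out for the conclusion $\{p,p'\} \cap \{v_1,v_2,v_3\} = \emptyset$ to hold. The paper's Table \ref{Gen4SlopePf} handles precisely these cases (horizontal direction for $(v_1, v_4^{(j)})$ and $(v_2, v_4^{(j)})$, vertical for $(v_3, v_4^{(j)})$). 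Your separation-by-pinching argument would apply to these pairs as well, but as written your proof never considers them, so the lemma is not established.
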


\begin{proof}
Let $V = \{v_1, v_2, v_3, v_4^{(1)}, v_4^{(2)}, v_4^{(3)}\}$ and $\pi: M_4 \rightarrow \mathbb{T}^2$ be a covering map of the torus.  As above, $\pi^{-1}\circ \pi(v_1) = V$.  By contradiction, assume $p \in V$.  Then Corollary \ref{PctsLieOverSamePt} tells us that $\{p, p'\} \subset V$.  There are $15$ subsets of $V$ of order two.  We exclude the three subsets containing only points of the form $v_4^{(j)}$, for all $j$, because they are not relevant to the statement of this lemma.  We proceed as in the proof of Lemma \ref{Gen3NoPctsAtZeros} using Table \ref{Gen4SlopePf} to specify the direction in which to pinch $M_4'$ given the orientation of Figure \ref{Gen4Ex} to reach a contradiction with Lemma \ref{TDDerPerRank1Lem}.

\begin{table}[ht]
\centering
\begin{tabular}{c|l}
Pair of Points & Direction \\
\hline
$(v_1, v_3), (v_2, v_3), (v_2, v_4^{(j)}), (v_1, v_4^{(j)}), \forall j$ & Horizontal \\
$(v_1, v_2), (v_3, v_4^{(j)}), \forall j$ & Vertical \\
\end{tabular}
\caption{Pinching Directions for Lemma \ref{Gen4NoPctsAtZeros}}
\label{Gen4SlopePf}
\end{table}

\end{proof}

\section{Pairs of Punctures at Regular Points}
\label{PctsAtRegPtsSect}

The purpose of this section is to answer part of the question posed in the previous section.  Given a pair of punctures on the square-tiled surface arising from Theorem \ref{CPImpVeech}, we prove in Theorem \ref{NoPctsAtCCCPts} that both punctures cannot lie at regular points of the differential on the surface.  The consequences of this theorem are Corollary \ref{NoTeichDiskSuffHighGenus} and the Conditional Theorem in Section \ref{RegPtsMainThm}.

We point out to the reader a subtle change in perspective in this section that allows us to prove the result of Theorem \ref{NoPctsAtCCCPts}.  So far we have assumed the standard Deligne-Mumford compactification of the moduli space of Riemann surfaces and discussed limits of surfaces within this context.  In this section we will choose to add a sphere to the degenerate surface so that a sequence of degenerating surfaces results in a degenerate surface and a sphere.  To make this more familiar to those in Teichm\"uller theory, we puncture the surface so that we get a thrice punctured sphere in place of a two punctured sphere, which would not be admissible under the Deligne-Mumford compactification.  However, this puncture is used simply as a tool to appeal to the usual perspective of the Deligne-Mumford compactification found in papers on Teichm\"uller theory.  We will take the differential on the sphere to be the zero differential.

We begin with a few general lemmas that will be applied to the specific case of this section.  The following lemma is similar to the theorems found in \cite{Strebel}[Chapter III, $\S 6$].  However, Lemma \ref{StebelForAnnulus} applies to an annulus, which in general could have non-trivial monodromy.

\begin{lemma}
\label{StebelForAnnulus}
Let $\{(X_n, \omega_n)\}_{n=0}^{\infty}$ be a sequence of surfaces carrying Abelian differentials that converge to $(X_{\infty}, \omega_{\infty})$.  Let $A_n \subset X_n$ be a sequence of annuli converging to a punctured disc $\Delta \subset X_{\infty}$ such that the sequence of Abelian differentials $\{\omega_n\}_n$ are band bounded.  For a fixed $k \geq 0$, assume that there is a local coordinate $z$ on $\Delta$ such that $\omega_{\infty} = z^k\,dz$.  Then there exists a local coordinate $z_n$ for $\omega_n$ on $A_n$ such that $\omega_n = z_n^k\,dz_n$, and $z_n$ converges to $z$ as $n$ tends to infinity.
\end{lemma}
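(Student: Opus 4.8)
The plan is to realize $z_n$ as a \emph{natural (distinguished) parameter} for $\omega_n$, in the spirit of Strebel's local normal form $\zeta^{k+1}=(k+1)\int\omega$ near a zero of order $k$ \cite{Strebel}, and then to prove that these parameters converge. First I would integrate: set $F_n(p)=\int_{p_0}^{p}\omega_n$ and $F_\infty(p)=\int_{p_0}^{p}\omega_\infty$. In the given coordinate $z$ on $\Delta$ one has $F_\infty = z^{k+1}/(k+1)$ up to an additive constant, which I normalize to $0$. Band-boundedness of $\{\omega_n\}$ together with \cite{WolpertAnnuli}[Lemma 2] gives $\omega_n\to\omega_\infty$ locally uniformly on $\Delta$, so after fixing the additive constants appropriately $F_n\to F_\infty$ locally uniformly on $A_n$.

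Next I would show that $F_n$ is single-valued on $A_n$. The only obstruction is the period $\int_{\gamma_n}\omega_n$ around the core curve $\gamma_n$ of $A_n$; but since $A_n$ degenerates to the punctured disc $\Delta$, the curve $\gamma_n$ bounds a disc in $X_n$ on which $\omega_n$ is holomorphic — indeed the limit $\omega_\infty=z^k\,dz$ has no pole, i.e. zero residue — so Cauchy's theorem forces this period to vanish exactly (mere convergence would only give it tending to $0$, which is why the disc-bounding is essential). With $F_n$ single-valued I set
\[ z_n := \big((k+1)F_n\big)^{1/(k+1)}, \]
which formally satisfies $\omega_n=z_n^k\,dz_n$, because $d\big(z_n^{k+1}\big)=(k+1)\,dF_n=(k+1)\omega_n$ gives $(k+1)z_n^k\,dz_n=(k+1)\omega_n$.

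The crux — and what the remark about ``non-trivial monodromy'' refers to — is that this $(k+1)$-st root is single-valued on the \emph{annulus} $A_n$, which is not simply connected and does not contain the vanishing point of $F_n$, so a priori the root could pick up a factor $e^{2\pi i/(k+1)}$ upon traversing $\gamma_n$. To rule this out I would use a winding-number argument: $(k+1)F_\infty=z^{k+1}$ is non-vanishing on $\Delta$ and winds exactly $k+1$ times about $0$ along $\gamma$, so by the locally uniform convergence $F_n\to F_\infty$ the function $(k+1)F_n$ is non-vanishing on $\gamma_n$ with winding number $k+1$ for all large $n$. Hence after one circuit of $\gamma_n$ the argument of $(k+1)F_n$ increases by $2\pi(k+1)$ and that of $z_n$ by $2\pi$, so $z_n$ returns to itself and is single-valued; I fix the branch of the root so that $z_n\to z$ at one interior point.

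Finally I would check that $z_n$ is a genuine coordinate and that $z_n\to z$. Convergence is immediate from $z_n=\big((k+1)F_n\big)^{1/(k+1)}\to\big((k+1)F_\infty\big)^{1/(k+1)}=z$ locally uniformly with the branch pinned down above, and injectivity of $z_n$ on compact sub-annuli for large $n$ follows from Hurwitz's theorem since the limit $z$ is injective. I expect the main obstacle to be exactly the two facts that make the monodromy genuinely subtle: ensuring that $F_n$ remains non-vanishing with the correct winding \emph{all the way down to the collapsing inner boundary} of $A_n$ — not merely on fixed compact subsets of $\Delta$ — so that the root has no branch point inside $A_n$ and closes up around $\gamma_n$. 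This is precisely where the band-bounded hypothesis and the quantitative estimates of \cite{WolpertAnnuli} must be invoked, rather than the soft convergence on compacta alone.
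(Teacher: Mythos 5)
Your overall strategy is the same as the paper's: pass to Strebel's natural parameter $F_n=\int\omega_n$, observe that $F_\infty=z^{k+1}/(k+1)$ up to an additive constant, define $z_n$ as a $(k+1)$-st root of $(k+1)F_n$, and deduce $z_n\to z$ from convergence of the differentials; your winding-number argument is an explicit version of the paper's assertion that the root of $\Omega_n$ can be taken with the branch determined by the root of $\Omega_\infty$. The difference is that you attempt to justify two monodromy points that the paper's proof passes over, and one of those justifications is wrong.

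The flawed step is the claim that the core curve $\gamma_n$ of $A_n$ ``bounds a disc in $X_n$ on which $\omega_n$ is holomorphic,'' so that Cauchy's theorem forces $\int_{\gamma_n}\omega_n=0$ exactly. In the setting of this lemma the topology is the opposite: $A_n$ is a plumbing collar around a curve that is being pinched to a node, and the puncture of $\Delta$ \emph{is} that node. Such a core curve is essential in $X_n$ (only essential curves can be pinched to nodes of the Deligne--Mumford limit), and in the application where this lemma is invoked (Lemma \ref{SphereBubble} and Section \ref{PuncSeqSetup}) it is even non-separating, since removing the resulting node leaves a surface $X'$ with a single part; hence $\gamma_n$ bounds no disc and is not even null-homologous, and there is no Cauchy-type or homological reason for its period to vanish. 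You are right that exact vanishing, not merely $\int_{\gamma_n}\omega_n\to 0$, is what single-valuedness of $F_n$ requires --- indeed the conclusion itself imposes it, because $\omega_n=z_n^k\,dz_n$ with $z_n$ a genuine coordinate forces $\int_{\gamma_n}\omega_n=\oint z^k\,dz=0$ --- but your argument for it fails, so the very first step of your construction (single-valuedness of $F_n$ on the non-simply-connected annulus) is unsupported. Note that this additive-period obstruction is prior to, and independent of, the multiplicative monodromy of the root that your winding-number argument correctly handles. For comparison, the paper's own proof is silent on the additive obstruction and only addresses the choice of branch of the root, so you have put your finger on a real subtlety; but it has to be resolved either by adding vanishing of the core periods as a hypothesis or by verifying it for the specific degenerating families to which the lemma is applied, not by the disc-bounding claim. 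Your closing paragraph also leaves the non-vanishing of $F_n$ all the way down to the collapsing inner boundary of $A_n$ as an unproved estimate; that is a second, acknowledged gap, which the paper effectively fills by working with Wolpert's Laurent expansions on the plumbing fibration rather than with convergence on compacta.
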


\begin{proof}
As in \cite{Strebel}[Chapter II, $\S 5.1$], there are natural local coordinates about a point $P_n \in A_n$ given by $\Omega_n = \int_{\gamma} \omega_n$, where $\gamma$ is a path from $P_n$ to any point $P \in A_n$.  With respect to the differential $\omega_{\infty}$, we have
$$\Omega_{\infty} = \int_{\gamma} \omega_{\infty} = \int_{\gamma}^P z^k\,dz = \frac{z^{k+1}}{k+1} + C.$$
This implies that for sufficiently large $n$, there is a well-defined $k+1$ root of $\Omega_n$ whose branch cut is given by the well-defined root of $\Omega_{\infty}$.  (This is equivalent to solving the linear, separable ODE for $z_n$:
$$z_n^k\,dz_n = f_n(\zeta)\,d\zeta.)$$

On the other hand, we have $\Omega_n' = \omega_n$ converges to $\Omega_{\infty}' = \omega_{\infty}$ by assumption.  Therefore, $\Omega_n$ converges to $\Omega_{\infty}$ up to addition by a constant.  Choose the local coordinates $z_n$ so that the constant induced on $\Omega_n$ is exactly the sequence of constants that guarantees convergence to $\Omega_{\infty}$.  This simultaneously resolves both claims of the lemma.
\end{proof}

We note that the following lemma will be used in the context of Theorem \ref{CPImpVeech}, which means that we will permit any finite number of pinchings.  Moreover, this lemma will only be applied in the case $k_1 = k_2 = 0$.

\begin{lemma}
\label{SphereBubble}
For $g \geq 2$, let $D \subset \mathcal{M}_g$ be a Teichm\"uller disc.  Let $\{(X_n,\omega_n)\}_{n=0}^{\infty}$ be a sequence of surfaces in $D$ converging to a degenerate surface $(X',\omega')$, where $X'$ has one part and $\omega'$ is holomorphic and not identically zero.  Let $\gamma_n$ be a closed curve on $(X_n,\omega_n)$, which is a union of saddle connections of $\omega_n$.  Let $\gamma_n$ converge to a node as $n$ tends to infinity resulting in a pair of punctures $(p,p')$ on $X'$.  Let $\xi_n \in X_n$ be a zero on $\gamma_n$.  For each $n$, let $(X_n^{(p)}, \omega_n^{(p)}) \in \mathcal{M}_{g,1}$ denote the surface $(X_n,\omega_n)$ punctured at $\xi_n$.  There is a choice of limit such that the sequence $\{(X_n^{(p)}, \omega_n^{(p)})\}_{n=0}^{\infty}$ in $\mathcal{M}_{g,1}$ converges to a degenerate surface $(X'^{(p)}, \omega'^{(p)})$, where $X'^{(p)} = X' \sqcup S'$ and $S'$ is the thrice punctured sphere.  The differential $\omega'^{(p)}$ restricted to $X'$ is equal to $\omega'$, and $\omega'^{(p)}$ restricted to $S'$ is the zero differential.

For $j = 1,2$, there is a differential $\omega_n^{(p)}$ on an annulus $R_w(t_n^{(j)})$ converging to a punctured disc contained in $S'$ such that $\omega_n^{(p)}$ converges to the zero differential on $S'$, and if $w_n^{(j)}$ are local coordinates on the annulus with pinching parameter $t_n^{(j)}$, then
$$\omega_n^{(p)} = -\left(\frac{t_n^{(j)}}{w_n^{(j)}}\right)^{k_j+1}\frac{dw_n^{(j)}}{w_n^{(j)}}.$$
\end{lemma}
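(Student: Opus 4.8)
The plan is to identify the limit of the punctured sequence $\{(X_n^{(p)},\omega_n^{(p)})\}$ inside the Deligne-Mumford compactification of the \emph{pointed} moduli space $\overline{\mathcal{M}_{g,1}}$, and then to read off the local form of the differential from Lemma \ref{StebelForAnnulus}. First I would recall the structure of the stable limit when a marked point lies on a vanishing cycle. Since $\xi_n \in \gamma_n$ and $\gamma_n$ pinches to the node giving the pair $(p,p')$ on $X'$, the marked point $\xi_n$ is forced onto that node in the unmarked limit. A marked point cannot lie at a node of a stable pointed curve, so stability forces a $\mathbb{P}^1$ to bubble off: the single node of the unmarked limit $X'$ is replaced by a sphere $S'$ joined to $X'$ by two nodes, one on the $p$-branch and one on the $p'$-branch, carrying the limit of $\xi_n$. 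Since $S'$ then has exactly three special points, namely the two nodes $q_1,q_2$ attaching it to $X'$ and the image of $\xi_n$, it is a thrice-punctured sphere, which is stable. Away from the vanishing neck nothing is altered (the differential on the punctured surface is literally $\omega_n$), so $\omega'^{(p)}$ agrees with $\omega'$ on $X'$; the vanishing $\omega'^{(p)}\equiv 0$ on $S'$ will drop out of the coordinate computation.

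Next I would introduce plumbing coordinates for this limit on $\overline{\mathcal{M}_{g,1}}$. For $j=1,2$ let the $j$-th neck joining $X'$ to $S'$ have plumbing parameter $t_n^{(j)}\to 0$, with $X'$-side coordinate $z_n^{(j)}$ and $S'$-side coordinate $w_n^{(j)}$ related by $z_n^{(j)}w_n^{(j)}=t_n^{(j)}$, so that $R_z(t_n^{(j)})$ abuts $X'$ (around $p$ for $j=1$ and $p'$ for $j=2$) and $R_w(t_n^{(j)})$ abuts $S'$. On the $X'$-side the limit of $\omega_n$ is $\omega'=z^{k_j}\,dz$ in a coordinate $z$ centered at the corresponding puncture, with $k_j\geq 0$ the order of vanishing (possibly zero). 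Taking the $\omega_n$ band bounded on these annuli, Lemma \ref{StebelForAnnulus} supplies a coordinate $z_n^{(j)}\to z$ on $R_z(t_n^{(j)})$ with
$$\omega_n^{(p)} = \left(z_n^{(j)}\right)^{k_j}\,dz_n^{(j)}.$$

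Finally I would transport this expression onto the $S'$-side annulus $R_w(t_n^{(j)})$ via the plumbing change of coordinates $z_n^{(j)}=t_n^{(j)}/w_n^{(j)}$. Using $dz_n^{(j)} = -\frac{t_n^{(j)}}{w_n^{(j)}}\frac{dw_n^{(j)}}{w_n^{(j)}}$ one obtains
$$\omega_n^{(p)} = \left(\frac{t_n^{(j)}}{w_n^{(j)}}\right)^{k_j}\left(-\frac{t_n^{(j)}}{w_n^{(j)}}\frac{dw_n^{(j)}}{w_n^{(j)}}\right) = -\left(\frac{t_n^{(j)}}{w_n^{(j)}}\right)^{k_j+1}\frac{dw_n^{(j)}}{w_n^{(j)}},$$
which is the asserted formula. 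For fixed $w_n^{(j)}\neq 0$ the prefactor $(t_n^{(j)})^{k_j+1}\to 0$ as $n\to\infty$, so $\omega_n^{(p)}$ converges to the zero differential on the punctured disc in $S'$, confirming $\omega'^{(p)}\equiv 0$ on $S'$ and completing the identification of the limit. The main obstacle is the first step: justifying the precise combinatorial type of the stable pointed limit (a single sphere bubbling off with exactly two nodes to $X'$ and carrying the marked point) and choosing plumbing coordinates on $\overline{\mathcal{M}_{g,1}}$ compatibly with those of the unmarked degeneration, so that Lemma \ref{StebelForAnnulus} may be applied neck by neck. Once that structure is fixed, the remaining steps are a direct application of Lemma \ref{StebelForAnnulus} together with the elementary change of variables above.
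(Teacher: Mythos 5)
Your proposal is correct, and its second half (the formula) is exactly the paper's computation: apply Lemma \ref{StebelForAnnulus} to get $\omega_n^{(p)} = (z_n^{(j)})^{k_j}\,dz_n^{(j)}$ on $R_z(t_n^{(j)})$, then substitute $z_n^{(j)} = t_n^{(j)}/w_n^{(j)}$ to obtain $-\bigl(t_n^{(j)}/w_n^{(j)}\bigr)^{k_j+1}\,dw_n^{(j)}/w_n^{(j)}$, whose prefactor forces the zero differential on $S'$; likewise your identification $\omega'^{(p)} = \omega'$ on $X'$ by unchanged data away from the neck matches the paper's analytic-continuation step. Where you genuinely diverge is the first step, identifying the limit as $X' \sqcup S'$ with $S'$ a thrice-punctured sphere. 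You argue via Deligne--Mumford stability in $\overline{\mathcal{M}_{g,1}}$: the marked point collides with the node, a marked point cannot sit at a node, so a $\mathbb{P}^1$ bridge bubbles off carrying $\xi_n$, with exactly three special points. The paper instead argues flat-geometrically: it decomposes the neighborhood of $\gamma_n$ \`a la Minsky into expanding annuli $E_n$, $G_n$ and a flat cylinder $F$, observes that $F = \emptyset$ precisely because $\gamma_n$ is a union of saddle connections rather than a closed regular trajectory, and notes that the puncture forces each of $E_n$, $G_n$ to split into two pinching regions, so that $R_w(t_n^{(1)}) \cup R_w(t_n^{(2)})$ converges to a flat pair of pants. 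Your route is shorter and rests on a standard fact about the universal curve, but it leaves implicit exactly what you flag as the main obstacle: the compatibility of plumbing coordinates on $\overline{\mathcal{M}_{g,1}}$ with those of the unmarked degeneration, which is what licenses applying Lemma \ref{StebelForAnnulus} neck by neck. The paper's flat construction pays for itself there, since it produces the regions $R_z(t_n^{(j)})$, $R_w(t_n^{(j)})$ directly from the flat metric, and it additionally pins down the flat geometry of $S'$ (e.g.\ where its zeros sit), which the subsequent lemmas of Section \ref{PctsAtRegPtsSect} rely on.
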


\begin{proof}
First we prove that $X'^{(p)} = X' \sqcup S'$.  We decompose the flat region around the pinching curve $\gamma_n$ as in \cite{MinskyHarmMapsLengthEnergyTeichSp}, into expanding annuli $E$ and $G$, and a cylinder $F$ (a flat annulus in the language of \cite{MinskyHarmMapsLengthEnergyTeichSp}).  In the context of this lemma, $F = \emptyset$ because the pinching curve is a union of saddle connections and not a closed regular trajectory.  The curve $\gamma_n$ cannot determine a cylinder, so it must lie at the intersection of two expanding annuli $E_n$ and $G_n$.  The moduli of the cylinders $E_n$ and $G_n$ diverge to infinity by the assumption that $\gamma_n$ pinches as $n$ tends to infinity.  However, since the curve $\gamma_n$ contains a puncture, the regions $E_n$ and $G_n$ must each split into two pinching regions, e.g. $E_n = R_z(t_n^{(1)}) \cup R_w(t_n^{(1)})$.  Let $R_w(t_n^{(1)}) \subset E_n$ and $R_w(t_n^{(2)}) \subset G_n$.  Then $R_w(t_n^{(1)}) \cup R_w(t_n^{(2)})$ converges as $n$ tends to infinity to a sphere or equivalently, a flat pair of pants (cf. the remark at the end of this proof).  The two core pinching curves $\gamma_1$, $\gamma_2$ lying in $E_n$ and $G_n$ are homotopic to $\gamma$ if we ignore the puncture.  Therefore, the surface splits into two parts as $n$ tends to infinity: a surface homeomorphic to $X'$ and a sphere $S'$.  We denote the pair of punctures resulting from pinching $\gamma_i$ by $(p_i, p_i')$, for $i = 1,2$.

Now we prove the claim about $\omega'^{(p)}$.  Let $X^* \subset X' \subset X'^{(p)}$ denote a subsurface of $X'$ determined by removing small discs of radius $R$ around $p_1$ and $p_2$.  On $X^*$, $\omega'^{(p)} = \omega'$, so by analytic continuation in local coordinates about $p_1$ and $p_2$, $\omega'^{(p)} = \omega'$ on all of $X' \subset X'^{(p)}$.

Finally, we give explicit local coordinates for $\omega_n^{(p)}$ on the annuli, denoted $R_w(t_n^{(j)})$, converging to $S'$.  We use the pinching coordinate relation $z_n^{(j)} w_n^{(j)} = t_n^{(j)}$ to induce the differential.  On $X'$, $\omega'^{(p)} = \omega'$.  On $R_z(t_n^{(j)})$, $\omega_n^{(p)}$ can be expressed as $(z_n^{(j)})^{k_j}\,dz_n^{(j)}$, by Lemma \ref{StebelForAnnulus}, where $z_n^{(j)}$ represents local coordinates on the annulus $R_z(t_n^{(j)})$, and $w_n^{(j)}$ is used for local coordinates on $R_w(t_n^{(j)})$.  Identifying $z_n^{(j)}$ with $t_n^{(j)}/w_n^{(j)}$ yields
$$(z_n^{(j)})^{k_j}\,dz_n^{(j)} = \left(\frac{t_n^{(j)}}{w_n^{(j)}}\right)^{k_j}\,d\left(\frac{t_n^{(j)}}{w_n^{(j)}} \right) = \left(\frac{t_n^{(j)}}{w_n^{(j)}}\right)^{k_j} \left(\frac{-t_n^{(j)}\,dw_n^{(j)}}{(w_n^{(j)})^2}\right).$$
\end{proof}

\begin{remark}
We refer the reader to \cite{EskinKontsevichZorich2}[Figure 7.c.], which provides a visualization for exactly this setting.  In their figure the curve $\gamma_n$ and the expanding annuli $E_n$ and $G_n$ are completely clear.
\end{remark}

\begin{lemma}
\label{OnePartHoloNoVanCyl}
For $g \geq 2$, let $D \subset$ \RankOne ~be a Teichm\"uller disc.  Let $\{(X_n,\omega_n)\}_{n=0}^{\infty}$ be a sequence of surfaces in $D$ converging to a degenerate surface $(X',\omega')$, where $X'$ is a punctured surface with exactly one part and $\omega'$ is holomorphic and not identically zero.  If $\gamma_n$ is a closed curve on $(X_n, \omega_n)$ that pinches as $n$ tends to infinity resulting in a pair of punctures on $X'$, then $\gamma_n$ is a union of saddle connections of $(X_n, \omega_n)$, and there is no closed regular trajectory homotopic to $\gamma_n$.
\end{lemma}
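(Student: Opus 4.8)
The plan is to argue by contradiction: I would assume that some closed regular trajectory is freely homotopic to $\gamma_n$ and derive a contradiction with the hypothesis that $\omega'$ is holomorphic and not identically zero. A closed regular trajectory lies in a maximal cylinder, so this assumption means that $\gamma_n$ is freely homotopic to the core curve of a cylinder $C^{(n)}$ in some direction $\theta_n$ on $(X_n,\omega_n)$. Since $D \subset \mathcal{D}_g(1)$, every surface in $D$ is completely periodic by Theorem \ref{RankOneImpCP} (complete periodicity is preserved by the \splin ~action), so the direction $\theta_n$ is periodic and $(X_n,e^{i\theta_n}\omega_n)$ decomposes into a union of cylinders filling the surface. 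By Corollary \ref{RankOneCylConfig} all of these cylinders share a common circumference, which I denote $w_n$, equal to the flat length of the core of $C^{(n)}$, i.e. to the modulus of the holonomy of $\gamma_n$.

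Next I would extract the key quantitative consequence of the hypotheses, namely $w_n \to 0$. Since $\gamma_n$ pinches, the cylinder $C^{(n)}$ must have modulus tending to infinity, and writing $\mathrm{Mod}(C^{(n)}) = h_n/w_n$ while $\mathrm{Area}(C^{(n)}) = w_n h_n \leq 1$ (the total area is normalized to one), one gets the identity $w_n^2 = \mathrm{Area}(C^{(n)})/\mathrm{Mod}(C^{(n)}) \leq 1/\mathrm{Mod}(C^{(n)}) \to 0$. Equivalently, because pinching the core of a cylinder produces a pair of simple poles whose residues are the holonomy of the core up to the factor of $2\pi i$ noted before Lemma \ref{GtResidue}, the vanishing of the residue of the holomorphic limit $\omega'$ at the resulting pair of punctures forces the holonomy of $\gamma_n$, hence $w_n$, to zero. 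By Corollary \ref{RankOneCylConfig} it follows that every cylinder in the periodic direction $\theta_n$ has circumference $w_n \to 0$.

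The heart of the argument is then to see that this simultaneous collapse of all circumferences forces $\omega' \equiv 0$, contradicting the hypothesis. On each cylinder, writing $\omega_n = dz$ in a flat chart with $z$ of period $w_n$ and passing to the coordinate $u = e^{2\pi i z/w_n}$ adapted to the degenerating annulus, one has $\omega_n = \frac{w_n}{2\pi i}\,\frac{du}{u}$, whose coefficient tends to $0$. More precisely, any part of $X'$ on which $\omega'$ is nonzero must arise as a nondegenerate conformal limit of a region of $X_n$ of area bounded below; but every such region lies inside the union of the filling cylinders, each of width $w_n \to 0$, and a region of positive limiting area inside a cylinder of width $w_n \to 0$ is forced to degenerate (its modulus tends to infinity), with $\omega_n \to 0$ on it in the limiting coordinate. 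Hence no part of $X'$ carries a nonzero limit and $\omega' \equiv 0$, contradicting the hypothesis that $\omega'$ is not identically zero. This contradiction shows that no closed regular trajectory is homotopic to $\gamma_n$.

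Finally, for the first assertion I would invoke the structure of geodesics in the flat metric $|\omega_n|$: the free homotopy class of $\gamma_n$ has a geodesic representative, and such a representative is either a closed regular trajectory (the core of a cylinder) or passes through a singularity and is therefore a concatenation of saddle connections. Having just ruled out the former, the representative is a union of saddle connections, and we take $\gamma_n$ to be this representative. I expect the main obstacle to be making rigorous the step that circumference tending to zero on all of the filling cylinders forces the limit differential to vanish; the cleanest route is to combine the area normalization with the explicit plumbing expression for $\omega_n$ on each degenerating annulus, in the spirit of Lemma \ref{StebelForAnnulus} and the computations of Section \ref{DerPerMatSubSect}, so that the limit is visibly zero on every part of $X'$.
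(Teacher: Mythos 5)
Your proof is correct and is essentially the paper's argument: assume a closed regular trajectory homotopic to $\gamma_n$, invoke Theorem \ref{RankOneImpCP} and Corollary \ref{RankOneCylConfig} to obtain a filling decomposition into cylinders of one common circumference, and conclude that the simultaneous degeneration of all these cylinders is incompatible with the hypotheses on $\omega'$. The paper runs the final dichotomy in the opposite direction (non-vanishing of $\omega'$ forces the cylinders to limit to infinite cylinders, producing simple poles and contradicting holomorphicity, rather than holomorphicity forcing the common circumference to zero and hence $\omega' \equiv 0$, contradicting non-vanishing), but this is the same mechanism, and your treatment of the saddle-connection assertion via flat geodesic representatives matches what the paper leaves implicit.
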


\begin{proof}
The idea of this proof is identical to that of Lemma \ref{RankOneImpNotHoloEverywhere}.  By contradiction, assume that $\gamma_n$ is homotopic to a closed regular trajectory on $(X_n, \omega_n)$.  Then $\gamma_n$ determines a cylinder $C_1^{(n)} \subset X_n$.  By Theorem \ref{RankOneImpCP}, the foliation in which $(X_n,\omega_n)$ admits the cylinder $C_1^{(n)}$ is periodic.  Therefore, there is a collection of cylinders $\{C_1^{(n)}, \ldots, C_k^{(n)}\}$ that fill $X_n$.  Since it is assumed that $C_1^{(n)}$ does not exist on $(X', \omega')$ as $n$ tends to infinity, we must have $k > 1$.  Let $w_i^{(n)}$ denote the circumference of $C_i^{(n)}$.  By Corollary \ref{RankOneCylConfig}, the ratios $w_i^{(n)}/w_1^{(n)} = 1$ for all $i \leq k$ and $n \geq 0$.  Hence, if the core curve of $C_1^{(n)}$ pinches, then the core curve of every cylinder in that foliation pinches.  Since the ratios between the circumferences are constant, every sequence of cylinders $\{C_i^{(n)}\}_n$ converges to an infinite cylinder on $X'$, and $\omega'$ cannot be holomorphic at the pair of punctures resulting from pinching $\gamma_n$.  This contradicts the assumption that $\gamma_n$ is homotopic to a closed regular trajectory of $(X_n, \omega_n)$.
\end{proof}

\begin{lemma}
\label{DegVeechUnionPctdDiscs}
Let $(X',\omega')$ denote a degenerate surface in the boundary of the Teichm\"uller disc generated by a Veech surface.  If $S_1$ is a part of $X'$ and $\omega'$ has $m$ poles on $S_1$, then $S_1$ can be written as a union of $m$ punctured discs with an identification scheme for the boundary of the discs.
\end{lemma}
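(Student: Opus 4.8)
The plan is to reduce the statement to an explicit description of how a Veech surface degenerates, and then read off the punctured-disc decomposition cylinder by cylinder. First I would argue that any degenerate surface $(X',\omega')$ in the boundary of the Teichm\"uller disc of a Veech surface is obtained by pinching all of the core curves of a cylinder decomposition. Indeed, a nontrivial node forces some sequence of cylinders to pinch (Lemma \ref{CylModImpPinch} and Corollary \ref{GtCurvePinch}), so $(X',\omega')$ is a limit $\lim_n G_{t_n}\cdot(X,e^{i\theta}\omega)$ in a direction $\theta$ carrying a cylinder; since a Veech surface satisfies the Veech dichotomy, any such direction is periodic, and \cite{MasurThesis}[Theorem 3] then identifies the limit as the surface obtained by pinching the core curves of \emph{every} cylinder of the periodic decomposition of $(X,e^{i\theta}\omega)$. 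Continuity of the \splin ~action to the boundary \cite{BainbridgeMoller}[Proposition 11.1] guarantees $(X',\omega')$ lies in the closure of the disc. In particular every node of $X'$ is a pinched cylinder core, and since the period of a core curve is nonzero in the pinching direction, Lemma \ref{GtResidue} shows $\omega'$ has a genuine simple pole at each of the two punctures created, so that the poles of $\omega'$ on $X'$ are in bijection with the half-cylinders incident to them.

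Next I would analyze a single cylinder $C$ of the periodic decomposition. Pinching its core curve separates $C$ into a top and a bottom half-cylinder. Using the natural coordinate for $\omega$ on $C$ together with the plumbing coordinate of Lemma \ref{StebelForAnnulus} (cf. \cite{Strebel}[Theorem 6.3]), each half-cylinder is conformally a once-punctured disc, with the puncture sitting at the node and $\omega'$ expressed near it as $c\,dz/z$ with $c \neq 0$; the boundary circle of this disc is exactly the corresponding boundary component of $C$, which is a union of saddle connections. Thus pinching the entire periodic decomposition writes $X'$ as a union of such punctured discs, two for each cylinder, glued along their boundary circles according to the identification of saddle connections already present in $(X,e^{i\theta}\omega)$.

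Finally I would assemble the count on the fixed part $S_1$. By the previous paragraph $S_1$ is precisely the union of those half-cylinder punctured discs that become joined into a single connected component under the boundary identifications, and the poles of $\omega'$ on $S_1$ are exactly the centers of these discs. Since each half-cylinder carries exactly one pole and, conversely, every pole on $S_1$ arises as the center of one such disc, the number of discs comprising $S_1$ equals the number of poles, namely $m$. This exhibits $S_1$ as a union of $m$ punctured discs together with the boundary identification scheme inherited from the cylinder decomposition, as claimed.

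The hard part will be the reduction in the first paragraph: pinning down that an \emph{arbitrary} boundary point of the disc (not merely the limit along a single geodesic ray) is a full cylinder pinch, which rests on the Veech dichotomy and the structure of the cusps of the lattice $\text{SL}(X,\omega)$, and on verifying that the residue at each node is nonzero so that the pole-count and the disc-count genuinely agree. The remaining local analysis—that a pinched half-cylinder is conformally a punctured disc carrying a simple pole—is routine from the plumbing coordinates set up in Section \ref{DerPerMatSubSect} and Lemma \ref{StebelForAnnulus}.
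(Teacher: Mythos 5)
Your proposal is correct and takes essentially the same route as the paper's proof: identify the boundary point as a $G_t$-limit in a periodic direction (using the Veech/lattice property, which the paper asserts in one line exactly as you do), invoke Masur's theorem that all core curves of the cylinder decomposition pinch, and observe that each pinched half-cylinder is conformally a punctured disc glued along saddle connections. The paper compresses this to three sentences; your additional bookkeeping matching the $m$ poles on $S_1$ to the $m$ discs, and the residue nonvanishing, is left implicit there but is the same argument.
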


\begin{proof}
Every degenerate surface in the boundary of the Teichm\"uller disc of a Veech surface is given by fixing a periodic direction and degenerating the surface under the Teichm\"uller geodesic flow.  Let $(X,\omega)$ be the surface such that $\lim_{t \rightarrow \infty} G_t \cdot (X,\omega) = (X', \omega')$.  Then $(X,\omega)$ is a union of cylinders, and by \cite{MasurThesis}[Theorem 3], $(X', \omega')$ is given by pinching the core curves of each of these cylinders.  Therefore, $(X', \omega')$ is a union of punctured discs.
\end{proof}

\subsection{Setup of the Punctured Sequence}
\label{PuncSeqSetup}

Let $D$ be a Teichm\"uller disc contained in \RankOne .  By Theorem \ref{CPImpVeech}, if $D$ is not a Teichm\"uller curve, then there exists a sequence of surfaces $\{(X_n,\omega_n)\}_{n=1}^{\infty}$ in $D$ converging to a punctured Veech surface $(X',\omega')$ carrying a holomorphic Abelian differential.  By Lemma \ref{OnePartHoloNoVanCyl}, the assumptions of Lemma \ref{SphereBubble} are satisfied in this case.  We consider the specific case of Lemma \ref{SphereBubble} when $k_1 = k_2 = 0$ for the remainder of Section \ref{PctsAtRegPtsSect}.  In this case, the resulting punctures of $X'$ are regular points of $\omega'$.  This implies that the zeros along the union of saddle connections that form the pinching curve $\gamma_n$ have total order two.  Thus, $\gamma_n$ must be the union of one or two saddle connections.  Pick a zero $\xi_1$ in this union of saddle connections and puncture it as in the setup of Lemma \ref{SphereBubble} to form the sequence $\{(X_n^{(p)}, \omega_n^{(p)})\}_{n=0}^{\infty}$ in $\mathcal{M}_{g,1}$.  By Lemma \ref{SphereBubble}, we can regard the limit of this sequence as a degenerate surface consisting of the punctured Veech surface $(X',\omega')$ and a sphere $S'$.  The Veech surface $(X',\omega')$ can be degenerated, say under the Teichm\"uller geodesic flow, to a surface $(X'', \omega'')$ as described in Lemma \ref{TDDerPerRank1Lem}.  By the continuity of the \splin ~action \cite{BainbridgeMoller}[Proposition 11.1], we can abuse notation and assume the sequence $\{(X_n^{(p)}, \omega_n^{(p)})\}_{n=0}^{\infty}$ converges to a surface, denoted by $(X''^{(p)}, \omega''^{(p)})$, formed by degenerating the underlying Veech surface $(X', \omega') \subset (X'^{(p)}, \omega'^{(p)})$.  If $X'' = S_1 \sqcup \cdots \sqcup S_m$, then $X''^{(p)} = S_1 \sqcup \cdots \sqcup S_m \sqcup S'$ by Lemma \ref{SphereBubble}.  Let $(p_j, p_j')$ denote the pairs of punctures from $X'$ to $S'$, for $j = 1,2$.  Since $p_1$ and $p_2$ lie at regular points, Theorem \ref{PctsAreCCConj} says $p_1$ and $p_2$ always lie on a closed regular trajectory of $(X', \omega')$.  The punctures $p_1$ and $p_2$ must lie on the same part of $X''$ because $X''$ results from pinching a collection of closed curves of $X'$.

\begin{remark}
Note that this does not contradict Lemma \ref{TDDerPerRank1Lem} because that lemma assumes that the Teichm\"uller disc lies in the moduli space $\mathcal{M}_g$, \emph{not} $\mathcal{M}_{g,1}$.
\end{remark}

We consider a basis of Abelian differentials on the elements of $\mathcal{M}_{g,1}$.  It should be clear that a basis of Abelian differentials, such as the one described in Lemma \ref{AbDiffBasis}, still form a linearly independent set of Abelian differentials on a surface in $\overline{\mathcal{M}}_{g,1}$.  In fact, the Riemann-Roch theorem implies that the complex dimension of the bundle of Abelian differentials on $\mathcal{R}_{g,1}$ is $g+1-1 = g$.  Hence, any basis of Abelian differentials on a surface $X \in \mathcal{R}_g$ remains a basis for the space of Abelian differentials after puncturing $X$ once.  By the Cartan-Serre Theorem, this basis extends to the boundary of the moduli space.  Let $\{\theta_1(t_n,\tau_n), \ldots, \theta_g(t_n,\tau_n)\}$ denote the basis of Abelian differentials on $(X_n^{(p)}, \omega_n^{(p)})$.  We choose basis elements and denote them by $\theta_1(t_n,\tau_n)$ and $\theta_2(t_n,\tau_n)$ with very specific properties that will be advantageous to us in the course of our calculations below.

Without loss of generality, let $\theta_1(t_n,\tau_n) \equiv \omega''^{(p)}(t_n, \tau_n)$, for all $n$, and let $\theta_1(0,\tau_{\infty}) \equiv \omega''^{(p)}(0,\tau_{\infty})$.  Let $\theta_2(0,\tau_{\infty}) \equiv 0$ on $S_i$, for $i \geq 2$, and let $\theta_2(0,\tau_{\infty})$ be holomorphic everywhere on $S_1$ and across all punctures except at $(p_j, p_j')$, where it has simple poles for $j = 1,2$.  This implies that $\theta_2(0,\tau_{\infty})$ has simple poles at zero and infinity on $S'$.  Hence, $\theta_2(0,\tau_{\infty})$ can be written explicitly as $\pm dw/w$ on $S'$, for the local coordinate $w$ about $w = 0$ or $w = \infty$ on the Riemann sphere.

\begin{remark}
Puncturing a surface at a zero is similar to what is done in \cite{EskinKontsevichZorich2}, where every zero of a differential is punctured.  Degenerating saddle connections in their setting causes flat pairs of pants to ``bubble'' out of the surface as in \cite{EskinKontsevichZorich2}[Section 7].

In the formulas of this paper, the resulting degenerate surfaces ``seemingly'' carry Abelian differentials with higher order poles.  The moral of Lemma \ref{SphereBubble} is that the limiting differential only appears to have higher order poles because it is taken as a limit of differentials that are scaled at each step to prevent the limit from being identically zero.  In our setting this will provide us with exactly the needed information because the Abelian differential will determine a Beltrami differential and the magnitude of the Abelian differential will be irrelevant in that setting.
\end{remark}

\begin{lemma}
\label{ZerosLessThant}
Assume the notation established thus far in this section.  Let $\zeta_k$ be local coordinates for a neighborhood about the punctures $p_k$, for $k = 1,2$.  Let $t_n^{(k)}$ be the pinching parameter for these annuli.  Let
$$\omega_n^{(p)} = H_n(\zeta_k, t_n^{(k)}/\zeta_k )\,d\zeta_k.$$
Then the zeros of $H_n(\zeta_k, t_n^{(k)}/\zeta_k)$ in $\{|\zeta_k| < c'\} \subset S_1$ lie in the disc $\{|\zeta_k| \leq |t_n^{(k)}|/c''\} \subset S_1$, and the zeros of $H_n(t_n^{(k)}/\zeta_k, \zeta_k)$ in the disc $\{|\zeta_k| < c'' < 1\} \subset S'$ are contained in $\{|\zeta_k| \leq |t_n^{(k)}|/c'\}$.  Furthermore, the simple zeros (or double zero) of $H_n(t_n^{(k)}/\zeta_k, \zeta_k)$ on $S'$ are on the equator of $S'$.
\end{lemma}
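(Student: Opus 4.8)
The plan is to read off the local form of $\omega_n^{(p)}$ near each of the two nodes from Lemma \ref{SphereBubble} (specialized to $k_1 = k_2 = 0$) and to exploit the fact that these forms are nowhere vanishing in order to confine every zero to the core of the bubble $S'$. First I would record that, by Lemma \ref{StebelForAnnulus}, on each annulus $R_z(t_n^{(k)})$ there is a coordinate in which $\omega_n^{(p)} = dz_n^{(k)}$, while by Lemma \ref{SphereBubble} on $R_w(t_n^{(k)})$ one has $\omega_n^{(p)} = -t_n^{(k)}(w_n^{(k)})^{-2}\,dw_n^{(k)}$; both expressions vanish nowhere. Since vanishing of a differential is coordinate independent, $\omega_n^{(p)}$ has no zeros on either gluing annulus. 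Translating this through the plumbing relation $z_n^{(k)} w_n^{(k)} = t_n^{(k)}$ then yields the two disc statements: a zero of $H_n(\zeta_k, t_n^{(k)}/\zeta_k)$ inside $\{|\zeta_k| < c'\} \subset S_1$ cannot meet the zero-free region carried by $R_z(t_n^{(k)})$, so it is pushed into $\{|\zeta_k| \leq |t_n^{(k)}|/c''\}$, and the symmetric computation in the $w$-chart confines the zeros of $H_n(t_n^{(k)}/\zeta_k, \zeta_k)$ inside $\{|\zeta_k| < c''\} \subset S'$ to $\{|\zeta_k| \leq |t_n^{(k)}|/c'\}$. The bookkeeping with $c', c''$ is routine once the explicit inner and outer radii of $R_z$ and $R_w$ are inserted into the relation $z_n^{(k)}w_n^{(k)} = t_n^{(k)}$.

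Next I would locate the zeros that genuinely occur, which lie on $S'$. The decisive input is that the expansions supplied by Lemma \ref{SphereBubble} near $p_1'$ and $p_2'$ contain a pure double-pole term $-t_n^{(k)}(w_n^{(k)})^{-2}$ with no simple-pole (residue) term. Rescaling $\omega_n^{(p)}$ on the bubble region and passing to the limit, after extracting a subsequence on which $t_n^{(2)}/t_n^{(1)}$ converges, produces a nonzero Abelian differential $\eta$ on $S' \cong \mathbb{P}^1$ whose only singularities are residue-free double poles at $p_1'$ and $p_2'$. In the coordinate $u$ with $p_1' = 0$ and $p_2' = \infty$ such a differential is forced to be $\eta = (au^{-2} + c)\,du$, and a degree count on $\mathbb{P}^1$ (canonical degree $-2$ against a polar divisor of degree $4$) gives exactly two zeros counted with multiplicity, matching the total zero order two carried by $\gamma_n$ in the case $k_1 = k_2 = 0$. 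These zeros solve $cu^2 = -a$, so they are the fixed points $u = \pm\sqrt{-a/c}$ of the pole-exchanging involution $\sigma(u) = -a/(cu)$, under which $\sigma^*\eta = \eta$; the fixed locus of $\sigma$ is the circle $|u| = \sqrt{|a/c|}$, which is precisely the core geodesic of the flat cylinder $|\eta|$, i.e.\ the equator of $S'$. The vanishing of the simple-pole term is exactly what removes the linear coefficient from $cu^2 + \cdots + a$ and thereby pins the zeros onto this symmetric circle, with a double zero occurring precisely when the punctured zero $\xi_n$ has order two rather than splitting into two distinct simple zeros.

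Finally I would transfer this back to finite $n$: the two zeros of $\omega_n^{(p)}$ on the core of $S'$ converge to the fixed points of $\sigma$, and the residue-free structure of the two ends is what supplies the reflection symmetry placing them on the equator; read in the $z$-chart of node $k$ this locates them at $|\zeta_k| \approx |t_n^{(k)}|/c''$, in agreement with the first disc bound. The step I expect to be the main obstacle is making the passage to the model $\eta$ and the assertion ``exactly on the equator'' rigorous for finite $n$ — that is, controlling uniformly on the core annulus the corrections to the leading double-pole-plus-constant form and invoking the end-exchanging involution (or an argument-principle estimate) to conclude that the zeros sit on the core circle rather than merely near it. The residue-free property furnished by Lemma \ref{SphereBubble} is the hypothesis that makes this symmetry available, so the essential care lies in propagating that property intact through the rescaling limit.
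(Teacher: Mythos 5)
Your first step (no zeros in the open plumbing annuli, hence the two disc containments) is sound and close in spirit to the paper, although the paper does not get it by quoting normal forms: on $R_z(t_n^{(k)})$ it writes $H_n = 2a_{10}^{(n)} + \varepsilon_n(\zeta_k, t_n^{(k)}/\zeta_k)$ with $\sup_{R_z(t_n^{(k)})}|\varepsilon_n| \to 0$ and $2a_{10}^{(n)} \to 1$, so non-vanishing follows from a uniform estimate on the Laurent coefficients. That is the safer route, because the coordinate $z_n$ of Lemma \ref{StebelForAnnulus} is constructed by integrating $\omega_n^{(p)}$ and extracting a root, a construction that is a genuine coordinate only where the differential is already non-vanishing; invoking that normal form to \emph{prove} non-vanishing has a circular flavor that the direct estimate avoids.

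The genuine gap is in the ``furthermore'' clause, and you flag it yourself. Your route --- rescale on the bubble, pass to a limit $\eta = (au^{-2}+c)\,du$ on $\mathbb{P}^1$, and note that its two zeros satisfy $|u| = \sqrt{|a/c|}$ --- only shows that the zeros of $H_n(t_n^{(k)}/\zeta_k, \zeta_k)$ \emph{converge} to the equator, not that they lie on it, and the symmetry you propose to close this ($\sigma(u) = -a/(cu)$ with $\sigma^*\eta = \eta$) is a symmetry of the idealized limit only: $\omega_n^{(p)}$ restricted to the bubble has no reason to admit any such involution at finite $n$, so there is no mechanism to pin the zeros onto the circle rather than merely near it. (As a detail, the fixed locus of the holomorphic involution $\sigma$ is the two points $\pm\sqrt{-a/c}$, not the circle; the circle is only invariant.) The paper's argument for this clause is entirely different and much more elementary: in the case $k_1 = k_2 = 0$ the only zeros near the node are the zeros of total order two lying on the pinching curve $\gamma_n$ itself, and in the plumbing coordinates $\gamma_n$ \emph{is} the round circle $\{|\zeta_k| = |t_n^{(k)}|/c''\}$ on the $S_1$ side, i.e. $\{|\zeta_k| = c''\}$ in the sphere chart; after rescaling the sphere coordinate by $1/c''$ this circle is the equator, so the zeros sit on it exactly, for every $n$, by construction of the coordinates rather than by a limiting symmetry. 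To repair your write-up you would replace the limit-model argument by this observation; no amount of uniform control on the corrections to the leading double-pole-plus-constant form will upgrade ``near the equator'' to ``on the equator.''
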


\begin{proof}
By assumption, the curve $\gamma$ is a union of one or two saddle connections containing zeros of total order two and therefore, $\gamma$ lies at the intersection of two expanding annuli, $E_n = \{|t_n^{(k)}|/c'' \leq |\zeta_k| < c'\}$ and $G_n$, where $G_n$ is defined similarly, but the constant $c'$ may be different.  By the construction of Section \ref{PuncSeqSetup}, $\gamma$ contains a puncture, thus both $E_n$ and $G_n$ each split into two pinching regions.  The curve $\gamma$ is given by $\{|\zeta_k| = |t_n^{(k)}|/c''\}$ in $\zeta_k$ coordinates on $S_1$, and with respect to the sphere, $\gamma$ is given by $\{|t_n^{(k)}/\zeta_k| = |t_n^{(k)}|/c''\} = \{|\zeta_k| = c''\}$.  After scaling the differential on $S'$ by multiplication by $1/c''$, the zeros along $\gamma$ lie on the equator of the sphere $S'$.

Next we claim that both $E_n$ and $G_n$ contain a large annulus, that does not contain any zeros of $\omega_n^{(p)}$.  It suffices to prove this claim for $E_n$ because the argument for $G_n$ is identical.  As mentioned above, the expanding annulus $E_n$ is the union of two annuli $R_z(t_n^{(k)})$ and $R_w(t_n^{(k)})$.  Let $\zeta_k$ be a local coordinate for $E_n$.  Fix the coordinate $\zeta_k$ such that the limit as $n$ goes to infinity of $\omega_n^{(p)}$ on $R_z(t_n^{(k)})$ is $d\zeta_k$.  By Lemma \ref{SphereBubble}, it follows that for $w = t_n^{(k)}/\zeta_k$, $\omega''^{(p)} = (w-z_1)(w-z_2)\,dw/w^2$, where $z_1$ and $z_2$ are zeros of the differential $\omega''^{(p)}$ on the sphere.

First, we consider $\omega_n^{(p)}$ on the annulus $R_z(t_n^{(k)})$.  In this case we can write
$$\omega_n^{(p)} = (2a_{10}^{(n)} + \varepsilon_n(\zeta_k, t_n^{(k)}/\zeta_k ))\,d\zeta_k,$$
such that given $\varepsilon > 0$ there exists a sufficiently large value of $n$, such that
$$\sup_{\zeta_k \in R_z(t_n^{(k)})} |\varepsilon_n(\zeta_k, t_n^{(k)}/\zeta_k )| < \varepsilon.$$
Since $\lim_{n \rightarrow \infty} 2a_{10}^{(n)} = 1$, the function $2a_{10}^{(n)} + \varepsilon_n(\zeta_k, t_n^{(k)}/\zeta_k )$ cannot have any zeros in the region $R_z(t_n^{(k)})$.

Secondly, we consider $\omega_n^{(p)}$ on the annulus $R_w(t_n^{(k)})$.  Recall that $R_w(t_n^{(1)}) \cup R_w(t_n^{(2)}) \subset S'$ and that the only zeros of $S'$ lie along the curve $\gamma_n$ which lies along the mutual boundaries of $R_w(t_n^{(1)})$ and $R_w(t_n^{(2)})$.  Since $S'$ does not contain any zeros in the complement of $\gamma$, the claim of the lemma follows.  Furthermore, we can scale the differential by a positive real constant if necessary to get that the zeros lie on the equator.
\end{proof}

We point out to the reader that the differential $\omega''^{(p)}$ must be identically zero on the sphere.  This is forced by, among other things, the fact that the Abelian differentials on degenerate surfaces have at most simple poles.  However, if we take a projective limit of $\omega_n^{(p)}$ restricted to the sphere, then we get the nonzero limit with double poles at the north and south poles of the sphere as described above.  Using the formulas of \cite{WolpertInfinitDeformations}, we show that our choice of coordinates such that $\omega''^{(p)} = d\zeta_k$ in neighborhoods about $p_k$ have direct consequences for the differential $\omega''^{(p)}$ on the sphere.  We begin by fixing a choice of coordinates $\zeta_k$ about $p_k$ on $S_1$, for $k = 1,2$ such that $\omega''^{(p)}(\zeta_k,0) = d\zeta_k$.  By the Laurent expansion for Abelian differentials on degenerating annuli given in \cite{WolpertInfinitDeformations}, we have
$$\omega_n^{(p)}(\zeta_k, t_n^{(k)}/\zeta_k) = \frac{2}{\zeta_k} \sum_{i,j \geq 0} a_{ij}^{(n)}\zeta_k^i \left(\frac{t_n^{(k)}}{\zeta_k}\right)^j\,d\zeta_k$$
on the annulus $R_z(t_n^{(k)})$.  The assumption that this converges to $d\zeta_k$ implies that
$$\lim_{n \rightarrow \infty} a_{i0}^{(n)} = 0$$
for all $i \not= 1$, and $\lim_{n \rightarrow \infty} 2a_{10}^{(n)} = 1$.  This has strong implications for the formulas on the annulus $R_w(t_n^{(k)})$ where the expansion is given by
$$\omega_n^{(p)}(t_n^{(k)}/\zeta_k, \zeta_k) = \frac{-2}{\zeta_k} \sum_{i,j \geq 0} a_{ij}^{(n)} \left(\frac{t_n^{(k)}}{\zeta_k}\right)^i\zeta_k^j\,d\zeta_k.$$
We claim that in order to achieve the desired nonzero Abelian differential on $S'$, we have to consider the differentials $\omega_n^{(p)}(t_n^{(k)}/\zeta_k, \zeta_k)/t_n^{(k)}$ on $R_w(t_n^{(k)})$.  In the equation below define $\varepsilon_n(t_n^{(k)}/\zeta_k, \zeta_k)$ to be the function that allows the equality to hold:
$$\frac{-2}{\zeta_k} \sum_{i,j \geq 0} a_{ij}^{(n)} \left(\frac{t_n^{(k)}}{\zeta_k}\right)^i\zeta_k^j$$
$$= \frac{-2}{\zeta_k} \sum_{j \geq 0} a_{0j}^{(n)} \zeta_k^j + \frac{-2}{\zeta_k} \sum_{j \geq 0} a_{1j}^{(n)} \left(\frac{t_n^{(k)}}{\zeta_k}\right)\zeta_k^j + \varepsilon_n(t_n^{(k)}/\zeta_k, \zeta_k)$$
$$= \frac{-2t_n^{(k)}}{\zeta_k^2} \left(\sum_{j \geq 0} \frac{a_{0j}^{(n)}}{t_n^{(k)}} \zeta_k^{j+1} + \sum_{j \geq 0} a_{1j}^{(n)} \zeta_k^j \right)+ \varepsilon_n(t_n^{(k)}/\zeta_k, \zeta_k)$$
\begin{equation}
\label{SphereExpans}
= \frac{-2t_n^{(k)}}{\zeta_k^2} \left(a_{10}^{(n)} + \left(\frac{a_{00}^{(n)}}{t_n^{(k)}} + a_{11}^{(n)}\right)\zeta_k + \left(\frac{a_{01}^{(n)}}{t_n^{(k)}} + a_{12}^{(n)}\right)\zeta_k^2 + \ldots \right) + \varepsilon_n(t_n^{(k)}/\zeta_k, \zeta_k).
\end{equation}
On the other hand, if $\xi_1$ and $\xi_2$ represent the zeros of the differential $\lim_{n \rightarrow \infty} \omega_n^{(p)}/t_n^{(k)}$ on $S'$, then the differential must be given in local coordinates about the puncture $p_k'$ by
$$\frac{-(\zeta_k - \xi_1)(\zeta_k-\xi_2)}{\zeta_k^2}\,d\zeta_k.$$
Combining these two perspectives yields a few results.  Firstly, the function $(\zeta_k^2/t_n^{(k)})\varepsilon_n(t_n^{(k)}/\zeta_k, \zeta_k)$ converges to zero pointwise for every $\zeta_k \in R_w(t_n^{(k)})$ simply by inspection of the series expansion for the function.  Secondly, all of the higher order coefficients of $\zeta_k^j$, for $j \geq 2$ subsumed by the ellipsis in Equation (\ref{SphereExpans}) must converge to zero by the assumption that there are no terms of order higher than two in the limit.  Finally, we must have
$$\lim_{n\rightarrow \infty}\frac{a_{00}^{(n)}}{t_n^{(k)}} + a_{11}^{(n)} < \infty, \qquad \lim_{n\rightarrow \infty}\frac{a_{01}^{(n)}}{t_n^{(k)}} + a_{12}^{(n)} = 1.$$

\subsection{Estimates for the Derivative of the Period Matrix}

\begin{lemma}
\label{b11PctdSurf}
Assuming the notation established in Sections \ref{DerPerMatSubSect} and \ref{PuncSeqSetup},
$$\lim_{n\rightarrow \infty} \left|\int_{X''^{(p)*}(t_n,\tau_{\infty})} \theta_1(0,\tau_{\infty})^2 \,d\mu_{\omega''^{(p)}}\right| = \infty.$$
\end{lemma}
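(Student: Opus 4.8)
The plan is to identify the integrand with the flat area form of $\omega''^{(p)}$ and then to locate a shrinking region on which that area blows up. By the choice made in Section \ref{PuncSeqSetup} we have $\theta_1(0,\tau_{\infty}) \equiv \omega''^{(p)}$, so writing $\omega''^{(p)} = h\,d\zeta$ in a local coordinate and unpacking Rauch's formula,
$$\theta_1(0,\tau_{\infty})^2\, d\mu_{\omega''^{(p)}} = h^2\,\frac{\bar h}{h}\, d\zeta \wedge d\bar\zeta = |h|^2\, d\zeta \wedge d\bar\zeta,$$
which is exactly the (signed) area form of $\omega''^{(p)}$. Since $d\zeta \wedge d\bar\zeta = -2i\,dx\wedge dy$, this gives
$$\left|\int_{X''^{(p)*}(t_n,\tau_{\infty})} \theta_1(0,\tau_{\infty})^2\, d\mu_{\omega''^{(p)}}\right| = 2\int_{X''^{(p)*}(t_n,\tau_{\infty})} |h|^2\, dx\,dy,$$
a nonnegative real number. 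The statement therefore reduces to showing that the $\omega''^{(p)}$-area of the truncated surface $X''^{(p)*}(t_n,\tau_{\infty})$ tends to infinity, and because $|h|^2 \geq 0$ it suffices to exhibit a single subregion whose area diverges; no cancellation is possible.

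I would then separate the harmless contributions from the divergent one. On the bubbled sphere $S'$ the differential $\omega''^{(p)}$ vanishes identically by Lemma \ref{SphereBubble}, so $S'$ and the abutting annuli $R_w(t_n^{(k)})$ contribute nothing; near the two punctures $p_1,p_2$ leading to $S'$ the differential $\omega''^{(p)} = \omega''$ is regular, since we are in the case $k_1 = k_2 = 0$, so the corresponding annuli $R_z(t_n^{(k)})$ and the fixed compact core $X_{\tau_{\infty}}^*$ contribute only a bounded amount. The divergence must come from the simple poles of $\omega''$: the surface $(X'',\omega'')$ is obtained by degenerating the Veech surface $(X',\omega')$ as in Lemma \ref{TDDerPerRank1Lem}, hence $\omega''$ is not holomorphic and, under the residue normalization, carries a simple pole whose residue $c$ has $|c| = 1$. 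This pole sits at a pair of punctures of the cycle configuration, distinct from the regular sphere-punctures $p_1, p_2$.

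Fixing a coordinate $\zeta$ on the annulus $R_z(t_n^{(k)})$ around such a pole in which $\omega'' = (c/\zeta + g(\zeta))\,d\zeta$ with $g$ holomorphic, I would estimate, after passing to polar coordinates,
$$\int_{R_z(t_n^{(k)})} |h|^2\, dx\,dy = \int_{R_z(t_n^{(k)})} \left| \frac{c}{\zeta} + g(\zeta)\right|^2 dx\,dy = -\pi|c|^2 \log|t_n^{(k)}| + O(1),$$
the leading term arising from $|c|^2\int r^{-1}\,dr$ over the annulus $\{\sqrt{|t_n^{(k)}|}/c' < |\zeta| < c'\}$, while the cross term and $\int|g|^2$ stay bounded for all $n$ exactly as in Lemmas \ref{ZeroInSmallDisks}, \ref{ZeroInSmallDisks2}, and \ref{DPiDivergentTerm}. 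Since the total area dominates this single annular contribution, we conclude
$$2\int_{X''^{(p)*}(t_n,\tau_{\infty})} |h|^2\, dx\,dy \;\geq\; -2\pi|c|^2\log|t_n^{(k)}| + O(1) \;\longrightarrow\; \infty.$$
The only genuine subtlety, and the point I would be most careful about, is the normalization bookkeeping: the integral is finite for each fixed $n$ only because discs of radius $\sim\sqrt{|t_n^{(k)}|}$ have been excised, yet it diverges precisely because $\omega''^{(p)}$ is residue-normalized rather than area-normalized, so its poles are honest simple poles around which the flat area grows like $-\log|t_n^{(k)}|$ as the excised discs shrink.
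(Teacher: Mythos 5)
Your proof is correct and takes essentially the same route as the paper's: the paper's one-line proof invokes the choice $\theta_1(0,\tau_{\infty}) \equiv \omega''^{(p)}$ together with Lemma \ref{BddOutsideDisks} (boundedness on the compact core and the annuli where the integrand is regular) and Lemma \ref{DPiDivergentTerm} (the $\log|t_n^{(k)}|$ divergence on the annuli around the simple poles of $\omega''$), which is precisely your decomposition, with your polar-coordinate estimate reproducing by hand the content of Lemma \ref{DPiDivergentTerm}. Your observation that the integrand is the non-negative flat area form $|h|^2\,dx\,dy$, so that no cancellation between regions is possible, is a clean way of making explicit what the paper leaves implicit in the positivity of the coefficient $c_ic_j\bar c/c = |c|^2$ appearing in Lemma \ref{DPiDivergentTerm}.
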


\begin{proof}
This follows from our choice of $\theta_1(0,\tau_{\infty})$, and Lemmas \ref{BddOutsideDisks} and \ref{DPiDivergentTerm}.
\end{proof}

\begin{lemma}
\label{b12PctdSurf}
Assuming the notation established in Sections \ref{DerPerMatSubSect} and \ref{PuncSeqSetup}, there exists a constant $C$ such that, for all $n \geq N$,
$$\left|\int_{X''^{(p)*}(t_n,\tau_{\infty})} \theta_1(0,\tau_{\infty})\theta_2(0,\tau_{\infty}) \frac{\overline{\omega''^{(p)}(0,\tau_{\infty})}}{\omega''^{(p)}(0,\tau_{\infty})}\right| < C.$$
\end{lemma}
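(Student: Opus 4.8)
The plan is to decompose the domain $X''^{(p)*}(t_n,\tau_\infty)$ into the compact core $X_{\tau_\infty}^*$ together with the finitely many plumbing annuli $R_z(t_n^{(k)})$, $R_w(t_n^{(k)})$ surrounding the various pairs of punctures, and to bound the contribution of each piece uniformly in $n$. The structural fact that makes this work is the choice of basis from Section \ref{PuncSeqSetup}: $\theta_1(0,\tau_\infty) \equiv \omega''^{(p)}(0,\tau_\infty)$ has simple poles exactly at the pairs of punctures produced by degenerating the underlying Veech surface (the poles of $\omega''$ described by Lemma \ref{TDDerPerRank1Lem}), whereas $\theta_2(0,\tau_\infty)$ has simple poles exactly at the two pairs $(p_1,p_1')$, $(p_2,p_2')$ created by the sphere bubble of Lemma \ref{SphereBubble}. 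Since $p_1,p_2$ are regular points of $\omega''^{(p)}$ (we are in the case $k_1=k_2=0$), these two sets of punctures are disjoint, so at every pair of punctures \emph{at most one} of $\theta_1$, $\theta_2$ carries a simple pole. This is precisely what distinguishes $b_{12}$ from the divergent diagonal term $b_{11}$ of Lemma \ref{b11PctdSurf}, where both factors share poles and Lemma \ref{DPiDivergentTerm} forces divergence.

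With this in hand most of the estimate is immediate. First I would bound the integral over the compact core by Lemma \ref{BddOutsideDisks}, using that $\theta_1$ and $\theta_2$ are both holomorphic on $\overline{X_{\tau_\infty}^*}$ and that $\lvert \overline{\omega''^{(p)}}/\omega''^{(p)}\rvert=1$ off the isolated zeros of $\omega''^{(p)}$. Next, on each annulus $R_z(t_n^{(k)})$ or $R_w(t_n^{(k)})$ lying in one of the Veech parts $S_1,\dots,S_m$, the limiting differential $\omega''^{(p)}$ is not identically zero and hence has a Strebel normal form $c\,\zeta_k^{K}\,d\zeta_k$; since at least one of $\theta_1,\theta_2$ is holomorphic there, Lemma \ref{BddInDisks} yields a uniform bound. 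This disposes of the compact core, of both annuli at each Veech puncture, and of the annuli $R_z(t_n^{(1)}),R_z(t_n^{(2)})\subset S_1$ at the sphere punctures.

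The main obstacle is the pair of annuli $R_w(t_n^{(1)}),R_w(t_n^{(2)})$ lying on the sphere $S'$: there $\omega''^{(p)}\equiv 0$, so the Beltrami form $\overline{\omega''^{(p)}}/\omega''^{(p)}$ is undefined and Lemma \ref{BddInDisks} cannot be invoked directly. The resolution is once again to exploit $\theta_1=\omega''^{(p)}$: cancelling $\omega''^{(p)}$ turns the integrand $\theta_1\theta_2\,\overline{\omega''^{(p)}}/\omega''^{(p)}$ into $\theta_2\,\overline{\omega''^{(p)}}$, a genuine $2$-form which is defined wherever $\omega''^{(p)}\neq 0$, extends across the isolated zeros of $\omega''^{(p)}$ on $S_1,\dots,S_m$, and vanishes identically on $S'$ because $\overline{\omega''^{(p)}}\equiv 0$ there. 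Hence the sphere annuli contribute nothing, and summing the finitely many uniformly bounded pieces produces the constant $C$. The one point requiring care is to justify this cancellation as an identity of $2$-forms, valid off the zero set of $\omega''^{(p)}$ and extended by continuity, so that replacing $\theta_1\theta_2\,\overline{\omega''^{(p)}}/\omega''^{(p)}$ by $\theta_2\,\overline{\omega''^{(p)}}$ is legitimate despite $\omega''^{(p)}$ vanishing on the entire sphere component.
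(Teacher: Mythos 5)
Your proof is correct and takes essentially the same route as the paper's: the paper likewise reduces the integral to $S_1$ by observing that $S_1$ is the only part of $X''^{(p)}$ on which both $\theta_1(0,\tau_{\infty})$ and $\theta_2(0,\tau_{\infty})$ are not identically zero, and then bounds the $S_1$ contribution by Lemmas \ref{BddOutsideDisks} and \ref{BddInDisks}, exactly as you do. Your explicit cancellation $\theta_1\theta_2\,\overline{\omega''^{(p)}}/\omega''^{(p)} = \theta_2\,\overline{\omega''^{(p)}}$ on the sphere annuli is simply a more careful justification of that same vanishing step, which the paper leaves implicit.
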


\begin{proof}
By definition of $\theta_1(0,\tau_{\infty})$ and $\theta_2(0,\tau_{\infty})$, the only part of $X''^{(p)}$ on which both differentials are not identically zero is $S_1$.  Therefore, 
$$\left|\int_{X''^{(p)*}(t_n,\tau_{\infty})} \theta_1(0,\tau_{\infty})\theta_2(0,\tau_{\infty}) \frac{\overline{\omega''^{(p)}(0,\tau_{\infty})}}{\omega''^{(p)}(0,\tau_{\infty})}\right|$$
$$ = \left|\int_{S_1^*(t_n,\tau_{\infty})} \theta_1(0,\tau_{\infty})\theta_2(0,\tau_{\infty}) \frac{\overline{\omega''^{(p)}(0,\tau_{\infty})}}{\omega''^{(p)}(0,\tau_{\infty})}\right| < C.$$
The last inequality follows from Lemmas \ref{BddOutsideDisks} and \ref{BddInDisks}.
\end{proof}

\begin{lemma}
\label{TruncSeries}
Let $\zeta_k$ be local coordinates for a neighborhood about the punctures $p_k$, for $k = 1,2$.  Let $t_n^{(k)}$ be the pinching parameter for the annulus $R_z(t_n^{(k)})$.  Let
$$\omega_n^{(p)} = \frac{2}{\zeta_k} \sum_{i,j \geq 0}a^{(n)}_{ij}\zeta_k^i(t_n^{(k)}/\zeta_k)^j\,d\zeta_k = H_n(\zeta_k, t_n^{(k)}/\zeta_k)\,d\zeta_k$$
be the differential defined above.  Let
$$h_n(\zeta_k, t_n^{(k)}/\zeta_k) = \frac{2}{\zeta_k}\left(a^{(n)}_{00} + a^{(n)}_{10}\zeta_k + a^{(n)}_{01}(t_n^{(k)}/\zeta_k)\right)$$
and $\theta_2(0, \tau_{\infty}, \zeta_k, 0) = F(\zeta_k)\,d\zeta_k$.  If $\omega_n^{(p)}$ does not have any zeros in the annulus $R_z(t_n^{(k)})$, then
$$\lim_{n \rightarrow \infty} \left| \int_{R_z(t_n^{(k)})} \theta_2(0, \tau_{\infty}, \zeta_k, 0)^2 \frac{\overline{\omega_n^{(p)}}}{\omega_n^{(p)}} - \int_{R_z(t_n^{(k)})} F(\zeta_k)^2 \frac{\overline{h_n(\zeta_k, t_n^{(k)}/\zeta_k)}}{h_n(\zeta_k, t_n^{(k)}/\zeta_k)} \,d\zeta_k \wedge d\bar\zeta_k \right| = 0.$$
\end{lemma}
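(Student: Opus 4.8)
The plan is to work in the single local coordinate $\zeta := \zeta_k$ on the annulus $R_z(t_n^{(k)})$, abbreviate $t_n := t_n^{(k)}$, and compare the two Beltrami coefficients $\overline{\omega_n^{(p)}}/\omega_n^{(p)} = \overline{H_n}/H_n$ and $\overline{h_n}/h_n$ pointwise. Writing $\theta_2(0,\tau_\infty,\zeta,0) = F(\zeta)\,d\zeta$, the differential $\theta_2(0,\tau_\infty)$ has a simple pole at $p_k$, so $F(\zeta) = c_2/\zeta + (\text{holomorphic})$ and hence $|F(\zeta)|^2 \leq C_0/|\zeta|^2$ on $R_z(t_n^{(k)})$ for some constant $C_0$. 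In this coordinate the two integrands are $F^2(\overline{H_n}/H_n)\,d\zeta\wedge d\bar\zeta$ and $F^2(\overline{h_n}/h_n)\,d\zeta\wedge d\bar\zeta$, so it suffices to prove $\int_{R_z(t_n^{(k)})}|F|^2\,\bigl|\overline{H_n}/H_n - \overline{h_n}/h_n\bigr|\,|d\zeta\wedge d\bar\zeta|\to 0$.

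Set $r_n := H_n - h_n$. A direct computation gives the identity $\overline{H_n}/H_n - \overline{h_n}/h_n = 2i\,\mathrm{Im}(\overline{r_n}h_n)/(H_n h_n)$, whence $\bigl|\overline{H_n}/H_n - \overline{h_n}/h_n\bigr|\leq 2|r_n|/|H_n|$. The next step is a uniform lower bound $|H_n|\geq 1/2$ on $R_z(t_n^{(k)})$ for all large $n$. Using the coefficient asymptotics recorded in Section \ref{PuncSeqSetup}, namely $2a_{10}^{(n)}\to 1$, $a_{i0}^{(n)}\to 0$ for $i\neq 1$, $a_{00}^{(n)}=O(t_n)$ and $a_{01}^{(n)}=O(t_n)$, I would check term by term that $h_n\to 1$ uniformly on $R_z(t_n^{(k)})$ (the terms $2a_{00}^{(n)}/\zeta$ and $2a_{01}^{(n)}t_n/\zeta^2$ are $O(\sqrt{|t_n|})$ and $O(|t_n|)$ there) and that $\|r_n\|_{L^\infty(R_z)}\to 0$ (carried out next). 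Together with the hypothesis that $H_n\,d\zeta$ has no zeros on $R_z(t_n^{(k)})$ (guaranteed in our situation by Lemma \ref{ZerosLessThant}), this yields $|H_n|\geq |h_n|-|r_n|\geq 1/2$ for large $n$, reducing the claim to $\int_{R_z(t_n^{(k)})}|F|^2\,|r_n|\,|d\zeta\wedge d\bar\zeta|\to 0$.

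To estimate $r_n$, write $g_n(\zeta,w)=\sum_{i,j\geq 0}a_{ij}^{(n)}\zeta^i w^j$ and $\tilde g_n(\zeta,w)=a_{00}^{(n)}+a_{10}^{(n)}\zeta+a_{01}^{(n)}w$, so that $H_n=2g_n/\zeta$, $h_n=2\tilde g_n/\zeta$, and $r_n=2(g_n-\tilde g_n)/\zeta$ with $w=t_n/\zeta$. I would split $g_n-\tilde g_n$ into the pure-$\zeta$ part $\sum_{i\geq 2}a_{i0}^{(n)}\zeta^i$ and the part $w\,Q_n(\zeta,w)$ collecting all $j\geq 1$ terms except $a_{01}^{(n)}w$, where $Q_n(0,0)=a_{01}^{(n)}$. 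The pure-$\zeta$ part contributes $2\zeta\sum_{i\geq 2}a_{i0}^{(n)}\zeta^{i-2}$, bounded by $2|\zeta|\varepsilon_n$ with $\varepsilon_n\to 0$ since $g_n(\cdot,0)\to\zeta/2$ uniformly on $\{|\zeta|\leq c'\}$; the other part equals $(2w/\zeta)(Q_n-a_{01}^{(n)})$ with $|Q_n-a_{01}^{(n)}|\leq C_1(|\zeta|+|w|)$, and since $|w|=|t_n|/|\zeta|$ and $|t_n|/|\zeta|^2\leq (c'')^2$ on $R_z$, this is bounded by $C_2|t_n|/|\zeta|$. Integrating against $|F|^2\leq C_0/|\zeta|^2$ in polar coordinates, and invoking the uniform-in-$n$ Cauchy (band-bounded) bounds on the $a_{ij}^{(n)}$ from \cite{WolpertAnnuli} to control the tails, the pure-$\zeta$ contribution is $\lesssim \varepsilon_n\int_{R_z}|\zeta|^{-1}\,|d\zeta\wedge d\bar\zeta| = O(\varepsilon_n)\to 0$, while the $w$-contribution is $\lesssim |t_n|\int_{R_z}|\zeta|^{-3}\,|d\zeta\wedge d\bar\zeta| = |t_n|\cdot O(1/\sqrt{|t_n|}) = O(\sqrt{|t_n|})\to 0$, the radial integral of $r^{-2}$ running from $\sqrt{|t_n|}/c''$. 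This gives the desired limit.

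I expect the main obstacle to lie near the inner boundary $|\zeta|\sim\sqrt{|t_n|}$, where $|F|^2\sim 1/|\zeta|^2$ is largest: the crude estimate $\bigl|\overline{H_n}/H_n-\overline{h_n}/h_n\bigr|\leq 2$ only produces a divergent $|\log|t_n||$, so genuine smallness of $r_n$ relative to $H_n$ is indispensable. The choice of $h_n$ — retaining exactly $a_{00}$, $a_{10}\zeta$ and $a_{01}w$ — is tuned to this: the constant $2a_{10}^{(n)}\to 1$ is what keeps $|H_n|$ away from $0$, and the $a_{01}w$ term is the borderline one, since it produces $t_n/\zeta^2$, which without the estimate $a_{01}^{(n)}=O(t_n)$ would contribute at order $1$. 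Thus the delicate points are the uniform lower bound on $|H_n|$ and the uniform smallness of $r_n$ across all of $R_z(t_n^{(k)})$; both depend essentially on the asymptotics of Section \ref{PuncSeqSetup} and on uniform Cauchy estimates for the Laurent coefficients, and I anticipate these uniform estimates — rather than the polar-coordinate integrations — to require the most care.
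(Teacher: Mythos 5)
Your proposal is correct and follows essentially the same route as the paper's proof: both reduce the difference of Beltrami quotients to a bound of the form $2|r_n|/|H_n|$ (your identity via $\mathrm{Im}(\overline{r_n}h_n)$ versus the paper's two-term splitting $\frac{\varepsilon_n}{H_n}\frac{\overline{\varepsilon_n}}{\varepsilon_n} - \frac{\varepsilon_n}{H_n}\frac{\overline{h_n}}{h_n}$ is a cosmetic difference), obtain the uniform lower bound on $|H_n|$ from the zero-freeness of Lemma \ref{ZerosLessThant} together with the coefficient asymptotics of Section \ref{PuncSeqSetup}, and then split the tail $r_n = H_n - h_n$ into a pure-$\zeta$ part and a $t_n$-dependent part before integrating against $|F|^2 \lesssim |\zeta|^{-2}$ in polar coordinates. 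Your bound $C_2|t_n|/|\zeta|$ on the second part, giving $O(\sqrt{|t_n|})$, matches the paper's decomposition $\varepsilon_n(\zeta_k,0) + \sqrt{|t_n^{(k)}|}\,\tilde\varepsilon_n$, which gives $O(\sqrt{|t_n^{(k)}|}\log|t_n^{(k)}|)$; both suffice.
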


\begin{proof}
Fix coordinates $\zeta_k$ such that $\omega''^{(p)} = d\zeta_k$ on $R_z(0)$.  Let
$$\varepsilon_n(\zeta_k, t_n^{(k)}/\zeta_k) = H_n(\zeta_k, t_n^{(k)}/\zeta_k) - h_n(\zeta_k, t_n^{(k)}/\zeta_k)$$
$$ = \frac{2}{\zeta_k}\left(a^{(n)}_{20}\zeta_k^2 + a^{(n)}_{11}t_n^{(k)} + a^{(n)}_{02}(t_n^{(k)}/\zeta_k)^2 + \ldots\right)\,d\zeta_k.$$
We want to show that
$$\lim_{t_n^{(k)} \rightarrow 0} \int_{R_z(t_n^{(k)})} F(\zeta_k)^2 \left(\frac{\overline{H_n(\zeta_k, t_n^{(k)}/\zeta_k) }}{H_n(\zeta_k, t_n^{(k)}/\zeta_k)} - \frac{\overline{h_n(\zeta_k, t_n^{(k)}/\zeta_k)}}{h_n(\zeta_k, t_n^{(k)}/\zeta_k)}\right)\,d\zeta_k \wedge d\bar \zeta_k = 0.$$
We perform some elementary algebra to get
$$\frac{\overline{h_n(\zeta_k, t_n^{(k)}/\zeta_k) +\varepsilon_n(\zeta_k, t_n^{(k)}/\zeta_k)}}{h_n(\zeta_k, t_n^{(k)}/\zeta_k) +\varepsilon_n(\zeta_k, t_n^{(k)}/\zeta_k)} - \frac{\overline{h_n(\zeta_k, t_n^{(k)}/\zeta_k)}}{h_n(\zeta_k, t_n^{(k)}/\zeta_k)}$$
$$ = \frac{\overline{\varepsilon_n(\zeta_k, t_n^{(k)}/\zeta_k)}h_n(\zeta_k, t_n^{(k)}/\zeta_k) - \varepsilon_n(\zeta_k, t_n^{(k)}/\zeta_k)\overline{h_n(\zeta_k, t_n^{(k)}/\zeta_k)}}{h_n(\zeta_k, t_n^{(k)}/\zeta_k)(h_n(\zeta_k, t_n^{(k)}/\zeta_k) +\varepsilon_n(\zeta_k, t_n^{(k)}/\zeta_k))}$$
$$= \frac{\varepsilon_n(\zeta_k, t_n^{(k)}/\zeta_k)}{h_n(\zeta_k, t_n^{(k)}/\zeta_k) +\varepsilon_n(\zeta_k, t_n^{(k)}/\zeta_k)}\frac{\overline{\varepsilon_n(\zeta_k, t_n^{(k)}/\zeta_k)}}{\varepsilon_n(\zeta_k, t_n^{(k)}/\zeta_k)}$$
$$\qquad - \frac{\varepsilon_n(\zeta_k, t_n^{(k)}/\zeta_k)}{h_n(\zeta_k, t_n^{(k)}/\zeta_k) +\varepsilon_n(\zeta_k, t_n^{(k)}/\zeta_k)}\frac{\overline{h_n(\zeta_k, t_n^{(k)}/\zeta_k)}}{h_n(\zeta_k, t_n^{(k)}/\zeta_k)}.$$

First, we consider the function 
$$\frac{\varepsilon_n(\zeta_k, t_n^{(k)}/\zeta_k)}{h_n(\zeta_k, t_n^{(k)}/\zeta_k) +\varepsilon_n(\zeta_k, t_n^{(k)}/\zeta_k)} = \frac{\varepsilon_n(\zeta_k, t_n^{(k)}/\zeta_k)}{H_n(\zeta_k, t_n^{(k)}/\zeta_k)}.$$
By Lemma \ref{ZerosLessThant}, all of the zeros of $H_n(\zeta_k, t_n^{(k)}/\zeta_k)$ in $\{|\zeta_k| < c'\}$ lie in the disc $\{|\zeta_k| \leq |t_n^{(k)}|/c''\}$.  Hence,
$$\inf_{\zeta_k \in R_z(t_n^{(k)})} |H_n(\zeta_k, t_n^{(k)}/\zeta_k)| = C_n > 0.$$
Recall that coordinates were fixed so that for all $\zeta_k \in R_z(t_n^{(k)})$, $H_n(\zeta_k, t_n^{(k)}/\zeta_k)$ converges to one.  Hence, for any sequence $\{z_n\}_n$ converging in $R_z(0)$ such that $z_n \in R_z(t_n^{(k)})$ we have
$$|H_n(z_n, t_n^{(k)}/z_n) - 1| = \left| 2a_{10}^{(n)} -1 + \frac{2a_{10}^{(n)}}{z_n} + \frac{2a_{01}^{(n)}t_n^{(k)}}{z_n} + \varepsilon_n(z_n, t_n^{(k)}/z_n) \right|$$
$$\leq | 2a_{10}^{(n)} -1| + \left|\frac{2a_{10}^{(n)}}{\sqrt{|t_n^{(k)}|}}\right| + \left|\frac{2a_{01}^{(n)}t_n^{(k)}}{\sqrt{|t_n^{(k)}|}}\right| + |\varepsilon_n(z_n, t_n^{(k)}/z_n)|$$
Since every term converges to zero, $\inf_n C_n \not= 0$ and there exists a constant $C_1'$, not depending on $n$ or $\zeta_k$, such that
$$1/|H_n(\zeta_k, t_n^{(k)}/\zeta_k)| < C_1'.$$

Now we consider the function $\varepsilon_n(\zeta_k, t_n^{(k)}/\zeta_k)$.  Let
$$\varepsilon_n(\zeta_k, t_n^{(k)}/\zeta_k) = \varepsilon_n(\zeta_k,0) + \sqrt{|t_n^{(k)}|}\tilde \varepsilon_n(\zeta_k, t_n^{(k)}/\zeta_k),$$
where $\tilde \varepsilon_n(\zeta_k, t_n^{(k)}/\zeta_k)$ is defined to be exactly the function that allows the equality to hold, and $\varepsilon_n(\zeta_k,0) = \sum_{i \geq 1} a_{i0}^{(n)}\zeta_k^i$.  Since $\omega''^{(p)} = d\zeta_k$, $\lim_{n \rightarrow \infty} a_{i0}^{(n)} = 0$, for all $i \geq 2$.  Furthermore, $|\tilde \varepsilon_n(\zeta_k, t_n^{(k)}/\zeta_k)| < C_2'$, for all $n$ and $\zeta_k \in R_z(t_n^{(k)})$, by inspection of the series expansion.

Let $F(\zeta_k) = g(\zeta_k)/\zeta_k$, where the function $g(\zeta_k)$ is defined exactly by this equation, and it is holomorphic, thus it is bounded on $R_z(t_n^{(k)})$.  Let $|g(\zeta_k)| < C_1''$, and let $\zeta_k = re^{i\phi}$.  We compute the following and show that it converges to zero:
$$\left| \int_{R_z(t_n^{(k)})} F(\zeta_k)^2 \frac{\varepsilon_n(\zeta_k, t_n^{(k)}/\zeta_k)}{H_n(\zeta_k, t_n^{(k)}/\zeta_k)}\frac{\overline{\varepsilon_n(\zeta_k, t_n^{(k)}/\zeta_k)}}{\varepsilon_n(\zeta_k, t_n^{(k)}/\zeta_k)}\,d\zeta_k \wedge d\bar\zeta_k \right|$$
$$ \leq \int_{R_z(t_n^{(k)})} \left| \left(\frac{g(\zeta_k)}{\zeta_k}\right)^2 \frac{\varepsilon_n(\zeta_k, t_n^{(k)}/\zeta_k)}{H_n(\zeta_k, t_n^{(k)}/\zeta_k)}\right| \,d\zeta_k \wedge d\bar\zeta_k $$
$$ \leq C_1'C_1''^2\int_{R_z(t_n^{(k)})} \left| \frac{1}{\zeta_k^2}\left(\varepsilon_n(\zeta_k,0) + \sqrt{|t_n^{(k)}|}\tilde \varepsilon_n(\zeta_k, t_n^{(k)}/\zeta_k)\right)\right| \,d\zeta_k \wedge d\bar\zeta_k $$
$$ \leq 2\pi C_1' C_1''^2\left(\int_{\sqrt{|t_n^{(k)}|}}^{c'} \left|\frac{\varepsilon_n(\zeta_k,0)}{r^2}\right|r\,dr + \int_{\sqrt{|t_n^{(k)}|}}^{c'} \left|\frac{\sqrt{|t_n^{(k)}|}\tilde \varepsilon_n(\zeta_k, t_n^{(k)}/\zeta_k)}{r^2}\right|r\,dr\right).$$
Consider the following integral and note that $\varepsilon_n(0,0) = 0$ by definition, so $\zeta_k$ can be factored out leaving a holomorphic function $\hat\varepsilon_n(\zeta_k)$, which also converges to zero because every coefficient in its series converges to zero.  Hence,
$$\lim_{n \rightarrow \infty} \int_{\sqrt{|t_n^{(k)}|}}^{c'} \left|\frac{\varepsilon_n(\zeta_k,0)}{r}\right|\,dr = \lim_{n \rightarrow \infty} \int_{\sqrt{|t_n^{(k)}|}}^{c'} \left|\hat\varepsilon_n(\zeta_k)\right|\,dr = 0.$$
Finally, the integral
$$\int_{\sqrt{|t_n^{(k)}|}}^{c'} \left|\frac{\sqrt{|t_n^{(k)}|}\tilde \varepsilon_n(\zeta_k, t_n^{(k)}/\zeta_k)}{r}\right|\,dr$$
$$\leq C_2'\sqrt{|t_n^{(k)}|} \int_{\sqrt{|t_n^{(k)}|}}^{c'}\frac{dr}{r} = C_2'\sqrt{|t_n^{(k)}|}\left(\log(c') - (1/2)\log(|t_n^{(k)}|)\right)$$
converges to zero with $|t_n^{(k)}|$.  To complete the proof of this lemma, note that this argument can be repeated for the term
$$\left| \int_{R_z(t_n^{(k)})} F(\zeta_k)^2 \frac{\varepsilon_n(\zeta_k, t_n^{(k)}/\zeta_k)}{H_n(\zeta_k, t_n^{(k)}/\zeta_k)} \frac{\overline{h_n(\zeta_k, t_n^{(k)}/\zeta_k)}}{h_n(\zeta_k, t_n^{(k)}/\zeta_k)} \,d\zeta_k \wedge d\bar \zeta_k\right|.$$
\end{proof}

We would like to prove Lemma \ref{TruncSeries} for the annulus $R_w(t_n^{(k)})$ as well.  However, this is not trivial because of the different nature of the differential in this region.  In light of the observations made in Section \ref{PuncSeqSetup}, we can prove a lemma analogous to Lemma \ref{TruncSeries} for the region $R_w(t_n^{(k)}) \subset S'$.

\begin{lemma}
\label{TruncSeriesSphere}
Let $\zeta_k$ be local coordinates for a neighborhood about the punctures $p_k'$, for $k = 1,2$.  Let $t_n^{(k)}$ be the pinching parameter for the annulus $R_w(t_n^{(k)})$.  Let
$$\omega_n^{(p)} = \frac{2}{\zeta_k} \sum_{i,j \geq 0}a^{(n)}_{ij}(t_n^{(k)}/\zeta_k)^i\zeta_k^j\,d\zeta_k = H_n(t_n^{(k)}/\zeta_k, \zeta_k)\,d\zeta_k$$
be the differential defined above.  Let
$$h_n(t_n^{(k)}/\zeta_k, \zeta_k) = \frac{2}{\zeta_k}\left(a^{(n)}_{00} + a^{(n)}_{10}(t_n^{(k)}/\zeta_k) + a^{(n)}_{01}\zeta_k + a^{(n)}_{11}t_n^{(k)} + a^{(n)}_{12}t_n^{(k)}\zeta_k\right)$$
and $\theta_2(0, \tau_{\infty}, 0, \zeta_k) = \pm d\zeta_k/\zeta_k$.  If $\omega_n^{(p)}$ does not have any zeros in the annulus $R_w(t_n^{(k)})$, then
$$\lim_{n \rightarrow \infty} \left| \int_{R_w(t_n^{(k)})} \theta_2(0, \tau_{\infty}, 0, \zeta_k)^2 \frac{\overline{\omega_n^{(p)}}}{\omega_n^{(p)}} - \int_{R_w(t_n^{(k)})} \frac{1}{\zeta_k^2} \frac{\overline{h_n(t_n^{(k)}/\zeta_k, \zeta_k)}}{h_n(t_n^{(k)}/\zeta_k, \zeta_k)} \,d\zeta_k \wedge d\bar\zeta_k \right| = 0.$$
\end{lemma}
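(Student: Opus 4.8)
The plan is to adapt the argument of Lemma~\ref{TruncSeries}, the essential new feature being that on $R_w(t_n^{(k)})$ the coefficient $H_n(t_n^{(k)}/\zeta_k,\zeta_k)$ tends to zero (it is of order $t_n^{(k)}$, since $\omega''^{(p)}$ vanishes identically on $S'$), so one cannot bound $1/|H_n|$ by a constant as in Lemma~\ref{TruncSeries}. First I would record the algebraic identity used there: writing $\varepsilon_n = H_n - h_n$ for the (unscaled) tail and using $H_n = h_n + \varepsilon_n$,
$$\frac{\overline{H_n}}{H_n} - \frac{\overline{h_n}}{h_n} = \frac{\varepsilon_n}{H_n}\frac{\overline{\varepsilon_n}}{\varepsilon_n} - \frac{\varepsilon_n}{H_n}\frac{\overline{h_n}}{h_n},$$
so that, since the two cofactors are unimodular, the integrand of the displayed difference is bounded by $2|\varepsilon_n/H_n|/|\zeta_k|^2$. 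The quotient $\varepsilon_n/H_n$ is scale free, so it equals $\tilde\varepsilon_n/\tilde H_n$ where $\tilde H_n = H_n/t_n^{(k)}$ and $\tilde h_n = h_n/t_n^{(k)}$; by the computation of Section~\ref{PuncSeqSetup} (Equation~(\ref{SphereExpans}) and the limits $\lim 2a_{10}^{(n)} = 1$, $\lim(a_{00}^{(n)}/t_n^{(k)} + a_{11}^{(n)}) < \infty$, $\lim(a_{01}^{(n)}/t_n^{(k)} + a_{12}^{(n)}) = 1$, and $\lim(a_{0j}^{(n)}/t_n^{(k)} + a_{1,j+1}^{(n)}) = 0$ for $j \ge 2$) both $\tilde H_n$ and $\tilde h_n$ converge on the punctured disc to the same rational function $L$ with a double pole at $\zeta_k = 0$ whose leading coefficient is nonzero.

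The core of the estimate is a decomposition of $\tilde\varepsilon_n$ into two pieces matching these relations. Grouping the $i=0$ and $i=1$ rows of the Laurent series by powers of $\zeta_k$, the part of $\tilde H_n$ not retained in $\tilde h_n$ is $-2\sum_{j\ge 3}(a_{0,j-1}^{(n)}/t_n^{(k)} + a_{1,j}^{(n)})\zeta_k^{j-2}$, a holomorphic function vanishing at $\zeta_k = 0$ whose coefficients all tend to zero; the remaining part consists of the rows $i \ge 2$ divided by $t_n^{(k)}$, each monomial of which carries a factor $(t_n^{(k)})^{\,i-1}$ with $i-1 \ge 1$. I would then split $R_w(t_n^{(k)})$ into an outer region $\{\rho \le |\zeta_k| < c''\}$ for a fixed small $\rho > 0$ and an inner region $\{\sqrt{|t_n^{(k)}|}/c' < |\zeta_k| < \rho\}$. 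On the outer region the hypothesis that $\omega_n^{(p)}$ has no zeros in $R_w(t_n^{(k)})$ (together with Lemma~\ref{ZerosLessThant}, which places the zeros on the equator and in the removed central disc) keeps both Beltrami quotients unimodular and convergent to $\overline{L}/L$ off the two equator zeros; since the weight $1/|\zeta_k|^2 \le 1/\rho^2$ is bounded and the difference is dominated by $2/\rho^2$ and tends to zero almost everywhere, the dominated convergence theorem disposes of this piece. On the inner region the dominant monomial of $\tilde H_n$, namely $-2a_{10}^{(n)}/\zeta_k^2$ with $\lim 2a_{10}^{(n)} = 1$, gives the lower bound $|\tilde H_n| \ge C/|\zeta_k|^2$ for large $n$, so that dividing the two pieces of $\tilde\varepsilon_n$ by $\tilde H_n$ gains either coefficients tending to zero or a positive power of $\sqrt{|t_n^{(k)}|}$; the resulting contribution is $O(\sqrt{|t_n^{(k)}|}\,\log|t_n^{(k)}|)$, exactly as in Lemma~\ref{TruncSeries}, and tends to zero.

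The main obstacle is the inner region, and specifically the fact that the individual tail monomials of $\varepsilon_n$ are \emph{not} small relative to $H_n$ term by term; only after regrouping the $i=0$ and $i=1$ contributions by powers of $\zeta_k$ do the combined coefficients $a_{0,j}^{(n)}/t_n^{(k)} + a_{1,j+1}^{(n)}$ appear, and it is precisely the vanishing of these combinations in the limit --- which is exactly why the truncation $h_n$ is defined to retain the five specific monomials $a_{00}$, $a_{10}(t_n^{(k)}/\zeta_k)$, $a_{01}\zeta_k$, $a_{11}t_n^{(k)}$, $a_{12}t_n^{(k)}\zeta_k$ --- that produces the needed cancellation. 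Verifying that this regrouping, combined with the lower bound $|\tilde H_n| \gtrsim |\zeta_k|^{-2}$ near the puncture, beats the logarithmic divergence of $\int 1/|\zeta_k|$ is the crux of the argument; the behaviour near the equator zeros, by contrast, is harmless because it occurs on the bounded-weight outer region where the limiting integrand stays bounded.
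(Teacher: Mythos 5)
Your proposal is correct, and it is more careful than the paper's own proof. The paper makes the same opening moves---the algebraic identity for the difference of the Beltrami quotients, the definition $\varepsilon_n = H_n - h_n$, and factoring $t_n^{(k)}$ out of numerator and denominator---but then only establishes the \emph{pointwise} claim that $\varepsilon_n/H_n \to 0$ on $R_w(t_n^{(k)})$ (phrased as divergence of $|h_n/\varepsilon_n|$: the rescaled numerator has a nonzero limit since the equator zeros lie outside $|\zeta_k| < c''$, while every term of the rescaled denominator tends to zero), and then simply asserts that the lemma follows. It never addresses what you correctly identify as the crux: the weight $1/|\zeta_k|^2$ has logarithmically divergent integral over the shrinking annuli, so pointwise convergence of the integrand does not by itself pass to the integrals; moreover, the rescaled tail $\varepsilon_n/t_n^{(k)}$ is \emph{not} uniformly small near the inner boundary $|\zeta_k| \sim \sqrt{|t_n^{(k)}|}$ (the rows $i \geq 2$ can be of order $|t_n^{(k)}|^{-1/2}$ there), so it must be played against the largeness of $H_n/t_n^{(k)}$ in that region. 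Your inner/outer decomposition---dominated convergence on the fixed outer annulus, where the weight is bounded and the limit $L$ is zero-free, together with the lower bound $|H_n/t_n^{(k)}| \geq C/|\zeta_k|^2$, the regrouped coefficients $(a_{0,j-1}^{(n)}/t_n^{(k)} + a_{1j}^{(n)}) \to 0$, and the factors $(t_n^{(k)})^{i-1}$ from the rows $i \geq 2$ on the inner annulus---supplies exactly this missing quantitative step. In short, your route buys rigor where the paper's buys brevity. Two minor remarks: the inner contribution comes out as $O(\delta_n + \sqrt{|t_n^{(k)}|})$, where $\delta_n$ is the decay rate of the regrouped coefficients, rather than literally $O(\sqrt{|t_n^{(k)}|}\log|t_n^{(k)}|)$ (harmless, since it still vanishes); and summing the termwise-vanishing coefficients over $j$ requires uniform (Cauchy-estimate) control of the $a_{ij}^{(n)}$, which follows from convergence of the underlying holomorphic data---you use this implicitly, but so does the paper.
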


\begin{proof}
The idea of this proof is naturally to follow the reasoning of the proof of Lemma \ref{TruncSeries} as closely as possible.  We start by fixing coordinates in the same manner.  Let
$$\varepsilon_n(t_n^{(k)}/\zeta_k, \zeta_k) = H_n(t_n^{(k)}/\zeta_k, \zeta_k) - h_n(t_n^{(k)}/\zeta_k, \zeta_k).$$
The same algebraic manipulation used before holds here and we claim that for all $\zeta_k \in R_w(t_n^{(k)})$,
$$\lim_{n \rightarrow \infty} \frac{\varepsilon_n(t_n^{(k)}/\zeta_k, \zeta_k)}{H_n(t_n^{(k)}/\zeta_k, \zeta_k)} = 0.$$
In fact, it will be easier to see that the reciprocal diverges to infinity.  This is equivalent to showing that
$$\left|\frac{h_n(t_n^{(k)}/\zeta_k, \zeta_k)}{\varepsilon_n(t_n^{(k)}/\zeta_k, \zeta_k)}\right|$$
diverges.  After factoring $t_n^{(k)}$ out of the numerator and denominator, we see that the limit of the numerator is nonzero for all $\zeta_k \in R_w(t_n^{(k)})$.  Furthermore, the denominator consists entirely of terms discussed above that all converge to zero.  This shows that the quantity diverges and the lemma follows.
\end{proof}

Since the proof of Lemma \ref{Forni42Seqs} always guarantees convergence if we take a limit of the basis differentials $\theta_i(t_n, \tau_n)$ in Rauch's formula, while fixing $\omega_n$, it suffices to evaluate the derivative of the period matrix for the basis differential $\theta_2(0,\tau_{\infty})$.  This differential is identically zero on $S_j$ for $j \geq 2$, so it suffices to calculate Rauch's formula on the remaining parts, namely, $S_1$ and the sphere $S'$.

\begin{lemma}
\label{Theta2ZeroOnS1}
Let $S_1^*(t_n) := X''^{(p)*}(t_n) \cap S_1$.  Given $\varepsilon$, there exists $N \geq 0$ such that for all $n \geq N$,
$$\left|\int_{S_1^*(t_n)} \theta_2(0, \tau_{\infty})^2 \frac{\overline{\omega_n^{(p)}}}{\omega_n^{(p)}}\right| < \varepsilon.$$
\end{lemma}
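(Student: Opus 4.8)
The plan is to estimate the integral over $S_1^*(t_n)$ piece by piece, exploiting the complementary pole structures of $\omega''$ and $\theta_2(0,\tau_\infty)$ on $S_1$. Recall that on $S_1$ the differential $\omega''=\omega''^{(p)}$ is holomorphic and nonzero at $p_1,p_2$ (these are the regular points at which the sphere bubbled off, so $k_1=k_2=0$) and has simple poles exactly at the structural punctures joining $S_1$ to the remaining parts of $X''$, whereas $\theta_2(0,\tau_\infty)$ is holomorphic across the structural punctures and has a simple pole of residue $\pm1$ at each of $p_1,p_2$. First I would decompose $S_1^*(t_n)=K_n\cup\bigcup_k R_z(t_n^{(k)})$, where $K_n$ is the complement in $S_1$ of the plumbing annuli around all of the nodes and the union runs over the annuli around the structural punctures and around $p_1,p_2$.

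For the annuli around the structural punctures, $\theta_2(0,\tau_\infty)$ is holomorphic (so the relevant coefficient $f$ vanishes at the puncture) while $\omega''$ has a simple pole, so that $A_\infty(0,\tau_\infty,0,0)\ne0$; the second part of Lemma \ref{BddInDisks} then gives that each of these contributions tends to zero. For the annuli $R_z(t_n^{(k)})$ around $p_1$ and $p_2$, the differential $\omega''=d\zeta_k$ is holomorphic and nonzero, so by Lemma \ref{ZerosLessThant} the function $\omega_n^{(p)}$ has no zeros there, and Lemma \ref{TruncSeries} lets me replace $\overline{\omega_n^{(p)}}/\omega_n^{(p)}$ by the truncated quotient $\overline{h_n}/h_n$ at the cost of an error going to zero. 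Writing $\theta_2(0,\tau_\infty)=F(\zeta_k)\,d\zeta_k$ with $F(\zeta_k)=c_k/\zeta_k+(\text{holomorphic})$ and using $h_n=\tfrac{2}{\zeta_k}\big(a_{00}^{(n)}+a_{10}^{(n)}\zeta_k+a_{01}^{(n)}t_n^{(k)}/\zeta_k\big)$ with $a_{00}^{(n)},a_{01}^{(n)}\to0$ and $2a_{10}^{(n)}\to1$, the integrand splits into terms of the shape $z^N\bar z^K$; all the off-diagonal terms ($N\ne K$) integrate to zero over the annulus by Lemmas \ref{ZeroInSmallDisks} and \ref{ZeroInSmallDisks2}, and in particular the would-be logarithmically divergent contribution from the double pole $c_k^2/\zeta_k^2$ of $F^2$ is killed by the angular integration.

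It remains to treat $K_n$. There the differentials converge uniformly, $\omega_n^{(p)}\to\omega''$ with $\omega''\ne0$, so this contribution has the same limit as the truncated integral of the fixed integrand $\theta_2(0,\tau_\infty)^2\,\overline{\omega''}/\omega''$. To evaluate that limit I would invoke Lemma \ref{DegVeechUnionPctdDiscs}: the part $S_1$ is a union of punctured discs, hence has genus zero, and on each disc I may take the Strebel coordinate of Lemma \ref{StebelForAnnulus} in which $\omega''=z^{K}\,dz$. Expanding $\theta_2(0,\tau_\infty)^2$ as a Laurent series in this coordinate, Lemmas \ref{ZeroInSmallDisks} and \ref{ZeroInSmallDisks2} leave only the single diagonal term on each disc; at a structural puncture $K=-1$ and $\theta_2(0,\tau_\infty)$ is holomorphic, so the surviving coefficient is that of $z^{-2}$ in a holomorphic square, namely zero, while the two surviving contributions at $p_1,p_2$ cancel, the residues of $\theta_2(0,\tau_\infty)$ there being $+1$ and $-1$. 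Hence the limit of $\int_{S_1^*(t_n)}\theta_2(0,\tau_\infty)^2\,\overline{\omega_n^{(p)}}/\omega_n^{(p)}$ is zero.

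The main obstacle is the estimate on the annuli $R_z(t_n^{(k)})$ around $p_1,p_2$ together with the cancellation on $S_1$: because $\theta_2(0,\tau_\infty)^2$ has a genuine double pole there, the integral could a priori diverge like $\log|t_n^{(k)}|$ exactly as in Lemma \ref{DPiDivergentTerm}, and the whole point is that $\omega''$ is holomorphic and nonzero at $p_1,p_2$, so the coefficient of the divergent term is the angular integral of $c_k^2/\zeta_k^2$, which vanishes. Making both this non-divergence and the cancellation of the two residual diagonal terms precise---that is, verifying that no diagonal ($N=K$) term survives except the two that cancel---is the delicate part of the argument, and it is precisely what Lemma \ref{TruncSeries} and the residue normalization of Lemma \ref{AbDiffBasis} are designed to supply.
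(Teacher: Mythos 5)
Your overall architecture is the same as the paper's: split $S_1^*(t_n)$ into a piece away from the nodes plus plumbing annuli, dispose of the structural-puncture annuli using the holomorphy of $\theta_2(0,\tau_\infty)$ there (the paper folds these into an exact contour computation on the punctured discs of Lemma \ref{DegVeechUnionPctdDiscs}; your route through the second part of Lemma \ref{BddInDisks}, bridged to $\mu_{\omega_n^{(p)}}$ by case (1) of Lemma \ref{Forni42Seqs}, is an acceptable variant), and isolate the real difficulty in the annuli about $p_1,p_2$, where you correctly call on Lemmas \ref{ZerosLessThant} and \ref{TruncSeries}. But at exactly that point there is a genuine gap. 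After truncation the Beltrami coefficient is $\overline{h_n}/h_n$ with $h_n=\tfrac{2}{\zeta_k^2}\bigl(a_{00}^{(n)}\zeta_k+a_{10}^{(n)}\zeta_k^2+a_{01}^{(n)}t_n^{(k)}\bigr)$; this is a ratio of trinomials, not a finite sum of monomials, so the integrand does not ``split into terms of the shape $z^N\bar z^K$'' and Lemmas \ref{ZeroInSmallDisks} and \ref{ZeroInSmallDisks2} cannot be applied termwise. The paper instead substitutes $\bar\zeta_k=r^2/\zeta_k$, evaluates the inner contour integral by residues, and finds that the double-pole term contributes exactly
$$2\pi\sqrt{-1}\,\frac{\overline{a_{01}^{(n)}t_n^{(k)}}}{a_{10}^{(n)}}\left(\frac{1}{c'^2}-\frac{1}{|t_n^{(k)}|}\right),$$
which contains a genuinely divergent factor $1/|t_n^{(k)}|$. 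Angular integration does not kill this term; it vanishes only because $a_{01}^{(n)}\to 0$, a decay extracted in Section \ref{PuncSeqSetup} from the requirement that the rescaled differential $\omega_n^{(p)}/t_n^{(k)}$ converge on the bubbled sphere (finiteness of $\lim_n a_{01}^{(n)}/t_n^{(k)}+a_{12}^{(n)}$). Your proposal never invokes this input, and without it the step fails: the cross terms of $1/\zeta_k^2$ against the $a_{00}^{(n)}/\zeta_k$ and $a_{01}^{(n)}t_n^{(k)}/\zeta_k^2$ parts of $h_n$ are precisely of diagonal type, and they are the dangerous ones.

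Second, your closing cancellation mechanism is unjustified and is not what the paper uses. You assert that the two surviving diagonal contributions at $p_1$ and $p_2$ cancel because the residues of $\theta_2(0,\tau_\infty)$ are $+1$ and $-1$. Writing $\theta_2(0,\tau_\infty)=(\pm 1/\zeta_k+g^{(k)}(\zeta_k))\,d\zeta_k$ in the coordinates where $\omega''^{(p)}=d\zeta_k$, the coefficient that survives angular integration at $p_k$ has the form $\pm 2g_1^{(k)}+(g_0^{(k)})^2$, where $g^{(k)}=g_0^{(k)}+g_1^{(k)}\zeta_k+\cdots$; the squared term does not change sign with the residue, so no a priori cancellation between the two punctures occurs. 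In the paper's proof no cross-puncture cancellation is needed: each disc's contribution is shown to vanish separately, the divergent piece via $a_{01}^{(n)}\to 0$ as above and the remaining piece via the dominated convergence theorem together with a contour-integral identity. To repair your argument you would need either to import the coefficient decay from Section \ref{PuncSeqSetup} and carry out the residue computation as in Lemma \ref{DPiDivergentTerm}, or to supply an independent proof of the cancellation you assert; as written, the key analytic input is missing.
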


\begin{proof}
We begin by noting that $\omega''^{(p)}$ has two simple poles on $S_1$, and by Lemma \ref{DegVeechUnionPctdDiscs}, $S_1$ can be written as the union of two punctured discs $U_1'$ and $U_2'$.  We remove two small disjoint discs, denoted $D_1$ and $D_2$, in a neighborhood of $p_1, p_2 \in \partial U_1' = \partial U_2'$.  Let $U_i = U_i' \setminus (D_1 \cup D_2)$, for $i = 1,2$.  Then $S_1$ is the union of the four sets $\{U_1, U_2, D_1, D_2\}$ and the pairwise intersection of any two of these sets has measure zero.

By Lemma \ref{Forni42Seqs}, the derivative of the period matrix converges on $U_i$, for $i = 1,2$.  Choose local coordinates on $U_i$ so that $\omega''^{(p)}(0,\tau_{\infty},\zeta_k,0) = \pm\,d\zeta_k/\zeta_k$.  Furthermore, $\theta_2(0, \tau_{\infty}, \zeta_k, 0) = h(\zeta_k)\,d\zeta_k$ is holomorphic in $\zeta_k$ because it has no poles on $U_i$.  We get
$$\int_{U_i}\theta_2(0, \tau_{\infty})^2 \frac{\overline{\omega''^{(p)}}}{\omega''^{(p)}} = \int_{U_i} h(\zeta_k)^2\frac{\zeta_k}{\bar \zeta_k}\, d\zeta_k \wedge d\bar \zeta_k.$$
By a change of variables $d\zeta_k \wedge d\bar \zeta_k = (2r/\zeta_k)d\zeta_k \wedge dr$ and $\bar \zeta_k = r^2/\zeta_k$, this integral reduces to
$$\int \frac{2}{r} \int_{\gamma} \zeta_k h(\zeta_k)^2\, d\zeta_k \wedge dr,$$
where $\gamma$ is a small curve around the pole of $\omega''^{(p)}$ in $U_i$.  Since the integrand is holomorphic in the interior of $\gamma$, the middle integral is zero, thus
$$\int_{U_i}\theta_2(0, \tau_{\infty})^2 \frac{\overline{\omega''^{(p)}}}{\omega''^{(p)}} = 0.$$

Next, we consider the region $D_k$, for $k = 1,2$, which we write in the usual way as $R_z(t_n^{(k)})$ to indicate that we have removed a disc of radius $\sqrt{|t_n^{(k)}|}$ from the middle of $D_k$.  We choose the usual coordinates so that $\omega''^{(p)} = d\zeta_k$.  In these coordinates we can write $\theta_2(0, \tau_{\infty}, \zeta_k, 0) = (1/\zeta_k + g(\zeta_k))\,d\zeta_k$, where $g(\zeta_k)$ is holomorphic.  By Lemma \ref{TruncSeries}, we can truncate the series to three terms which implies that given $\varepsilon > 0$, there exists $N$ such that for all $n \geq N$,
$$\left| \int_{R_z(t_n^{(k)})}\theta_2(0, \tau_{\infty})^2 \frac{\overline{\omega_n^{(p)}}}{\omega_n^{(p)}}\right|$$
$$ < \left|\int_{R_z(t_n^{(k)})} \left(\frac{1}{\zeta_k} + g(\zeta_k)\right)^2 \frac{\zeta_k^2}{\overline{\zeta_k}^2}\frac{\overline{a_{00}^{(n)}\zeta_k+a_{10}^{(n)}\zeta_k^2 + a_{01}^{(n)}t_n^{(k)}}}{a_{00}^{(n)}\zeta_k+a_{10}^{(n)}\zeta_k^2 + a_{01}^{(n)}t_n^{(k)}}\, d\zeta_k \wedge d\overline{\zeta_k}\right| + \varepsilon.$$
Let
$$\left(\frac{1}{\zeta_k} + g(\zeta_k)\right)^2 = \frac{1}{\zeta_k^2} + \frac{\tilde g(\zeta_k)}{\zeta_k}.$$
This yields 
$$\left|\int_{R_z(t_n^{(k)})} \left(\frac{1}{\zeta_k^2} + \frac{\tilde g(\zeta_k)}{\zeta_k}\right) \frac{\zeta_k^2}{\overline{\zeta_k}^2}\frac{\overline{a_{00}^{(n)}\zeta_k+a_{10}^{(n)}\zeta_k^2 + a_{01}^{(n)}t_n^{(k)}}}{a_{00}^{(n)}\zeta_k+a_{10}^{(n)}\zeta_k^2 + a_{01}^{(n)}t_n^{(k)}}\, d\zeta_k \wedge d\overline{\zeta_k}\right|$$
$$\leq \left|\int_{R_z(t_n^{(k)})} \frac{1}{\zeta_k^2} \frac{\zeta_k^2}{\overline{\zeta_k}^2}\frac{\overline{a_{00}^{(n)}\zeta_k+a_{10}^{(n)}\zeta_k^2 + a_{01}^{(n)}t_n^{(k)}}}{a_{00}^{(n)}\zeta_k+a_{10}^{(n)}\zeta_k^2 + a_{01}^{(n)}t_n^{(k)}}\, d\zeta_k \wedge d\overline{\zeta_k}\right|$$
$$+ \left|\int_{R_z(t_n^{(k)})} \frac{\tilde g(\zeta_k)}{\zeta_k} \frac{\zeta_k^2}{\overline{\zeta_k}^2}\frac{\overline{a_{00}^{(n)}\zeta_k+a_{10}^{(n)}\zeta_k^2 + a_{01}^{(n)}t_n^{(k)}}}{a_{00}^{(n)}\zeta_k+a_{10}^{(n)}\zeta_k^2 + a_{01}^{(n)}t_n^{(k)}}\, d\zeta_k \wedge d\overline{\zeta_k}\right|.$$
Note that in the second term the integrand is an integrable function so by the dominated convergence theorem, we can let $n$ tend to infinity and use this as an approximation for sufficiently large $n$.  Therefore, we can approximate this second term by 
$$\left|\int_{R_z(t_n^{(k)})} \frac{\tilde g(\zeta_k)}{\zeta_k} \, d\zeta_k \wedge d\overline{\zeta_k}\right| = 0,$$
and the equality follows because the integrand is a meromorphic function.  Therefore, it suffices to concentrate on the first term and absorb the second term into the $\varepsilon$ already in the bound above.

Using the same change of variables as before and letting $\gamma$ denote a curve in $R_z(t_n^{(k)})$ homotopic to a boundary curve of $R_z(t_n^{(k)})$ yields
$$\int_{\sqrt{|t_n^{(k)}|}}^{c'} 2r \int_{\gamma}\frac{\zeta_k^2}{r^4} \frac{1}{\zeta_k}\frac{\overline{a_{00}^{(n)}}(r^2/\zeta_k)+\overline{a_{10}^{(n)}}(r^4/\zeta_k^2) + \overline{a_{01}^{(n)}t_n^{(k)}}}{a_{00}^{(n)}\zeta_k+a_{10}^{(n)}\zeta_k^2 + a_{01}^{(n)}t_n^{(k)}}\, d\zeta_k \wedge dr$$
$$=\int_{\sqrt{|t_n^{(k)}|}}^{c'} \frac{2}{r^3} \int_{\gamma} \frac{1}{\zeta_k}\frac{\overline{a_{00}^{(n)}}r^2\zeta_k+\overline{a_{10}^{(n)}}r^4 + \overline{a_{01}^{(n)}t_n^{(k)}}\zeta_k^2}{a_{00}^{(n)}\zeta_k+a_{10}^{(n)}\zeta_k^2 + a_{01}^{(n)}t_n^{(k)}}\, d\zeta_k \wedge dr.$$
To simplify the computation, we note that the sum of the residues of an Abelian differential on the Riemann sphere is zero and so it suffices to calculate the residue at the pole at infinity.  We perform the usual substitution $\zeta_k \mapsto 1/\zeta_k$ and let $\gamma'$ denote a small curve around the point at infinity.  We consider the integral
$$-\int_{\gamma'}\frac{1}{\zeta_k}\frac{\overline{a_{00}^{(n)}}r^2/\zeta_k+\overline{a_{10}^{(n)}}r^4 + \overline{a_{01}^{(n)}t_n^{(k)}}(1/\zeta_k^2)}{a_{00}^{(n)}(1/\zeta_k)+a_{10}^{(n)}(1/\zeta_k^2) + a_{01}^{(n)}t_n^{(k)}}\, d\zeta_k$$
$$= -\int_{\gamma'}\frac{1}{\zeta_k}\frac{\overline{a_{00}^{(n)}}r^2\zeta_k+\overline{a_{10}^{(n)}}r^4\zeta_k^2 + \overline{a_{01}^{(n)}t_n^{(k)}}}{a_{00}^{(n)}(\zeta_k)+a_{10}^{(n)} + a_{01}^{(n)}t_n^{(k)}\zeta_k^2}\, d\zeta_k = -2\pi\sqrt{-1}\frac{\overline{a_{01}^{(n)}t_n^{(k)}}}{a_{10}^{(n)}}.$$
Returning to the original integral under consideration, we have
$$\int_{\sqrt{|t_n^{(k)}|}}^{c'} \frac{2}{r^3} \left(-2\pi\sqrt{-1}\frac{\overline{a_{01}^{(n)}t_n^{(k)}}}{a_{10}^{(n)}}\right)dr = 2\pi\sqrt{-1}\frac{\overline{a_{01}^{(n)}t_n^{(k)}}}{a_{10}^{(n)}}\left(\frac{1}{c'^2} - \frac{1}{|t_n^{(k)}|}\right).$$
By the observation that the quantities $a_{01}^{(n)}/t_n^{(k)} + a_{12}^{(n)}$ and $a_{12}^{(n)}$ converge to a finite limit as $n$ tends to infinity, we have $\lim_{n \rightarrow \infty} a_{01}^{(n)} = 0$.  Since $a_{10}^{(n)}$ converges to $1/2$ and $a_{01}^{(n)}$ converges to zero, both
$$2\pi\sqrt{-1}\frac{\overline{a_{01}^{(n)}t_n^{(k)}}}{a_{10}^{(n)}c'^2} \quad \text{and} \quad -2\pi\sqrt{-1}\frac{\overline{a_{01}^{(n)}}}{a_{10}^{(n)}}\left(\frac{t_n^{(k)}}{|t_n^{(k)}|}\right)$$
converge to zero as $n$ tends to infinity.  Therefore, the integral converges to zero on all four regions into which $S_1$ was subdivided.
\end{proof}

\begin{lemma}
\label{Theta2NotZeroOnSphere}
Let $S'^*(t_n) := X''^{(p)*}(t_n) \cap S'$.  Given $\varepsilon > 0$, there exists $N \geq 0$ such that for all $n \geq N$,
$$\left|\int_{S'^*(t_n^{(k)})} \theta_2(0, \tau_{\infty})^2 \frac{\overline{\omega_n^{(p)}}}{\omega_n^{(p)}}\right| > 4\pi - \varepsilon.$$
\end{lemma}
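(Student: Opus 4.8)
The plan is to reduce the integral over $S'^*(t_n)$ to an explicit residue computation on the sphere, paralleling the structure of the proof of Lemma \ref{Theta2ZeroOnS1} but now producing a \emph{nonzero} answer. By Lemma \ref{SphereBubble} we have $S'^*(t_n) = R_w(t_n^{(1)}) \cup R_w(t_n^{(2)})$, and since $\theta_2(0,\tau_{\infty}) \equiv 0$ off $S'$, the integral is the sum of the two annular integrals. On each $R_w(t_n^{(k)})$ we have $\theta_2(0,\tau_{\infty},0,\zeta_k) = \pm d\zeta_k/\zeta_k$, so Lemma \ref{TruncSeriesSphere} lets me replace $\omega_n^{(p)}$ by its truncation $h_n(t_n^{(k)}/\zeta_k,\zeta_k)$ up to an error tending to $0$. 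Hence it suffices to evaluate $\lim_{n} \sum_{k} \int_{R_w(t_n^{(k)})} \zeta_k^{-2}\,\overline{h_n}/h_n\, d\zeta_k\wedge d\bar\zeta_k$. I would then run the same change of variables as in Lemma \ref{Theta2ZeroOnS1} (namely $\bar\zeta_k = r^2/\zeta_k$ and $d\zeta_k\wedge d\bar\zeta_k = (2r/\zeta_k)\,d\zeta_k\wedge dr$), turning the inner integral into a contour integral around $\{|\zeta_k| = r\}$ for $\sqrt{|t_n^{(k)}|} < r < c''$. By Lemma \ref{ZerosLessThant} the only enclosed singularity is at $\zeta_k = 0$, so this inner integral is $2\pi\sqrt{-1}$ times a residue expressible through the coefficients $a_{ij}^{(n)}$, and I would integrate the result in $r$ and substitute the asymptotics of Section \ref{PuncSeqSetup} ($2a_{10}^{(n)}\to 1$, $a_{01}^{(n)}/t_n^{(k)}+a_{12}^{(n)}\to 1$, and $a_{00}^{(n)}/t_n^{(k)}+a_{11}^{(n)}$ bounded).

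The cleanest way to identify the value of the limit, which I would use to organize the bookkeeping, is the global one. The scaled differentials $\omega_n^{(p)}/t_n^{(k)}$ converge on $S'$ to a meromorphic differential $\omega_{S'} = P(\zeta)\,\zeta^{-2}\,d\zeta$ with $P(\zeta) = (\zeta-\xi_1)(\zeta-\xi_2)$, having double poles at the two punctures $p_1',p_2'$ (placed at $0$ and $\infty$) and zeros $\xi_1,\xi_2$ on the equator, while $\theta_2 = d\zeta/\zeta$. Since $\overline{\omega}/\omega$ is insensitive to positive scaling and has modulus one, the integral converges in absolute value to $\bigl|\int_{S'} \theta_2^2\,\overline{\omega_{S'}}/\omega_{S'}\bigr|$, whose integrand in the coordinate $\zeta$ is $\overline{P(\zeta)}\,\bar\zeta^{-2}/P(\zeta)$. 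The decisive input is that Lemma \ref{SphereBubble} gives $\omega_n^{(p)} = -(t_n^{(k)}/w_n^{(k)})\,dw_n^{(k)}/w_n^{(k)}$ on these annuli, i.e. a pure double pole with \emph{vanishing residue}; consequently $\omega_{S'}$ has zero residue at $p_1'$ and $p_2'$. Computing the residue of $P(\zeta)\zeta^{-2}$ at $0$ gives $-(\xi_1+\xi_2)$, so the vanishing forces $\xi_1 + \xi_2 = 0$, that is, the two zeros are antipodal on the equator.

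To finish, I would evaluate $\int_{\mathbb{C}} \overline{P(\zeta)}\,\bar\zeta^{-2}/P(\zeta)\, d\zeta\wedge d\bar\zeta$ by Stokes: writing the antiholomorphic factor as $\partial_{\bar\zeta}\Psi$ for the explicit primitive $\Psi(\bar\zeta) = \bar\zeta - (\bar\xi_1+\bar\xi_2)\log\bar\zeta - \bar\xi_1\bar\xi_2/\bar\zeta$, the integral collapses to boundary contributions. The small circle at $0$ and the large circle at $\infty$ contribute $0$ (each surviving angular term carries a vanishing power of the radius), and the circles around $\xi_1,\xi_2$ give $2\pi\sqrt{-1}\,(\Psi(\bar\xi_1)-\Psi(\bar\xi_2))/(\xi_1-\xi_2)$. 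Imposing $\xi_2 = -\xi_1$ with $|\xi_1| = 1$ reduces this to $4\pi\sqrt{-1}$, so the limit of the absolute value is exactly $4\pi$, which yields the stated bound $> 4\pi - \varepsilon$ for all large $n$.

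The step I expect to be the main obstacle is the phase alignment of the two annular contributions. Evaluated separately, each annulus $R_w(t_n^{(k)})$ carries a unit factor built from $\overline{t_n^{(k)}}/t_n^{(k)}$, and a priori the two pieces could partially cancel; the bound $4\pi - \varepsilon$ requires them to add coherently. This is exactly what the vanishing-residue condition $\xi_1+\xi_2 = 0$ secures: it identifies the two hemisphere contributions as $2\pi\sqrt{-1}$ with a common phase (equivalently, it reflects that $t_n^{(1)}$ and $t_n^{(2)}$ are asymptotically comparable, so that a single scaling puts $\omega_n^{(p)}$ into the global form $\omega_{S'}$ on all of $S'$). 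Handling this matching carefully—either through the global Stokes argument above or by aligning the phases of the two residue computations—is the crux; the remaining estimates are routine given Lemmas \ref{TruncSeriesSphere}, \ref{ZerosLessThant}, and \ref{BddOutsideDisks} together with the coefficient asymptotics recorded in Section \ref{PuncSeqSetup}.
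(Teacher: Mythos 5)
Your proposal follows the paper's skeleton up to the decisive step: you make the same decomposition of $S'^*(t_n)$ into the two annuli about the punctures (the paper's hemispheres $U_0(t_n^{(k)})$ and $U_{\infty}(t_n^{(k)})$), you invoke Lemma \ref{TruncSeriesSphere} in the same way, and you reduce to a computation with the quadratic $P(\zeta_k) = (\zeta_k - \xi_1)(\zeta_k - \xi_2)$ whose zeros satisfy $|\xi_i| \to 1$. The genuine gap is exactly the step you flag as the crux: the claim that the scaled limit on $S'$ has \emph{vanishing residue} at its two double poles, forcing $\xi_1 + \xi_2 = 0$. This does not follow from Lemma \ref{SphereBubble}. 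That formula is an adapted-coordinate, leading-order expression on each annulus separately; the coordinate-free content of ``vanishing residue'' is that the period $\oint_{\gamma} \omega_n^{(p)}$ around the core of $R_w(t_n^{(k)})$ is $o(|t_n^{(k)}|)$, and that period equals the flat holonomy of the pinching curve $\gamma_n$, i.e. the sum of the holonomy vectors of the one or two saddle connections composing it. This is $O(|t_n^{(k)}|)$ but has no reason to be $o(|t_n^{(k)}|)$: the paper's own coefficient asymptotics in Section \ref{PuncSeqSetup} assert only that $a_{00}^{(n)}/t_n^{(k)} + a_{11}^{(n)}$ has a \emph{finite} limit, and matching Equation (\ref{SphereExpans}) against $-(\zeta_k-\xi_1)(\zeta_k-\xi_2)\zeta_k^{-2}$ shows this limit is, up to the factor $-2$, precisely $\xi_1 + \xi_2$; it is never claimed to vanish. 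Concretely, in the case the paper explicitly allows where $\gamma_n$ is a single closed saddle connection through a double zero (Lemma \ref{ZerosLessThant}: ``simple zeros (or double zero)''), the scaled limit is $-(\zeta_k - \xi)^2\zeta_k^{-2}\,d\zeta_k$ with $|\xi| = 1$, whose residue at $\zeta_k = 0$ is $2\xi \neq 0$, so your antipodal condition fails outright. Since your Stokes evaluation uses $\xi_1 + \xi_2 = 0$ twice — to delete the multivalued $\log\bar\zeta$ term from the primitive $\Psi$ and to arrive at the value $4\pi$ — the argument as written breaks in the general case. (A small separate slip: $\theta_2(0,\tau_\infty)$ is not identically zero off $S'$ — it is nonzero on $S_1$ — but this is harmless here since the integral is over $S'^*$ only.)

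The irony is that the phase-alignment problem you introduce this condition to solve is not actually present. The paper evaluates each hemisphere directly (residue at $\zeta_k = 0$ of the angular integral, then integration in $r$) and obtains $2\pi\sqrt{-1}\,\bigl((1-\delta_n)^2 - |t_n^{(k)}|\bigr)/(\xi_1\xi_2)$ for $U_0$ and $2\pi\sqrt{-1}\,\overline{\xi_1\xi_2}\,\bigl((1-\delta_n)^2 - |t_n^{(k)}|\bigr)$ for $U_{\infty}$. Because $1/(\xi_1\xi_2) = \overline{\xi_1\xi_2}/|\xi_1\xi_2|^2$ is a \emph{positive real} multiple of $\overline{\xi_1\xi_2}$, the two contributions always share the same argument, whatever the individual arguments of $\xi_1,\xi_2$ are; the modulus of their sum is $2\pi\bigl((1-\delta_n)^2 - |t_n^{(k)}|\bigr)\bigl(|\xi_1\xi_2| + 1/|\xi_1\xi_2|\bigr) \to 4\pi$, using only $|\xi_1\xi_2| \to 1$. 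No constraint on $\xi_1+\xi_2$ (equivalently, on the residues) is needed. To repair your write-up, either adopt this per-hemisphere residue computation, or keep the global Stokes argument but retain the $\log\bar\zeta$ term in $\Psi$ and track its branch-cut contributions; the former is the shorter route.
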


\begin{proof}
Since $S'^*(t_n^{(k)})$ is a sphere with holes around the two poles of $\omega''^{(p)}$, it can be written as a union of two charts $U_0(t_n^{(k)}) = \{\zeta_k \in S'^*(t_n^{(k)}) | |\zeta_k| \leq 1\}$ and $U_{\infty}(t_n^{(k)}) = \{\zeta_k \in S'^*(t_n^{(k)}) | |\zeta_k| > 1\}$.  By Lemma \ref{TruncSeriesSphere}, and using the notation of that lemma, we can approximate the integral using the differential $\tilde \omega_n^{(p)} = h_n(t_n^{(k)}/\zeta_k, \zeta_k)\,d\zeta_k$.
If we factor out $1/\zeta_k^2$ from the right hand side, we get a quadratic polynomial as the numerator of the resulting fraction.  Let $\xi_1(t_n^{(k)})$ and $\xi_2(t_n^{(k)})$ denote the two, not necessarily distinct, zeros of this quadratic polynomial, which clearly depend on $n$.  As in Section \ref{PuncSeqSetup}, this truncated differential can be written as
$$\tilde \omega_n^{(p)}(0, \tau_{\infty}, t_n^{(k)}/\zeta_k, \zeta_k) = \frac{(\zeta_k-\xi_1(t_n^{(k)}))(\zeta_k-\xi_2(t_n^{(k)}))}{\zeta_k^2}\,d\zeta_k.$$
Since the zeros in the limit satisfy the equation $|\xi_1(0)| = |\xi_2(0)| = 1$, we have
$$\lim_{n \rightarrow \infty} |\xi_1(t_n^{(k)})| = \lim_{n \rightarrow \infty} |\xi_2(t_n^{(k)})| = 1.$$
Let $1-\delta_n = \min\{1, |\xi_1(t_n^{(k)})|, |\xi_2(t_n^{(k)})|\}$.  Note that $\tilde \omega_n^{(p)}$ is holomorphic in the region $B_n = \{1-\delta_n \leq |\zeta_k| \leq 1+\delta_n\}$ and converges to the holomorphic differential $\omega''^{(p)}$.  Since the region $B_n$ is compact with area vanishing as $n$ tends to infinity, the integral on this region converges to zero and it can be ignored in the remainder of the calculations below.

We compute the derivative of the period matrix on the two charts specified above.  As we noted in Section \ref{PuncSeqSetup}, we must have $\theta_2(0, \tau_{\infty}, 0, \zeta_k) = \pm\,d\zeta_k/\zeta_k$ because the differential on the sphere is determined up to a constant by the fact that there are exactly two simple poles.  This allows us to write the integral explicitly as
$$\int_{U_0(t_n^{(k)})} \theta_2(0, \tau_{\infty}, 0, \zeta_k)^2 \frac{\overline{\tilde \omega_n^{(p)}}}{\tilde \omega_n^{(p)}}$$
$$ = \int_{U_0(t_n^{(k)})} \left(\frac{1}{\zeta_k}\right)^2 \frac{\zeta_k^2\overline{(\zeta_k-\xi_1(t_n^{(k)}))(\zeta_k-\xi_2(t_n^{(k)}))}}{\overline{\zeta_k}^2(\zeta_k-\xi_1(t_n^{(k)}))(\zeta_k-\xi_2(t_n^{(k)}))}\, d\zeta_k \wedge d\bar \zeta_k$$
Letting $\zeta_k = re^{i\theta}$, implies $\bar \zeta_k = r^2/\zeta_k$ and 
$$d\zeta_k \wedge d\bar \zeta_k = \frac{2r}{\zeta_k}\,d\zeta_k \wedge dr.$$
The integral simplifies to
$$\int_{\sqrt{|t_n^{(k)}|}}^{1-\delta_n} \frac{2}{r^3}\int_{|\zeta_k| = r} \frac{(r^2 - \overline{\xi_1(t_n^{(k)})}\zeta_k)(r^2 - \overline{\xi_2(t_n^{(k)})}\zeta_k)}{\zeta_k(\zeta_k-\xi_1(t_n^{(k)}))(\zeta_k-\xi_2(t_n^{(k)}))} \,d\zeta_k \wedge dr.$$
Since the zeros $\xi_1(t_n^{(k)})$ and $\xi_2(t_n^{(k)})$ lie outside the outer boundary of the region of integration, the interior integral is equal to the residue of the integrand at zero, which yields
$$\int_{\sqrt{|t_n^{(k)}|}}^{1-\delta_n} \frac{2}{r^3} \frac{2\pi\sqrt{-1}r^4}{\xi_1(t_n^{(k)})\xi_2(t_n^{(k)})} \,dr = 2\pi\sqrt{-1}\frac{(1 -\delta_n)^2- |t_n^{(k)}|}{\xi_1(t_n^{(k)})\xi_2(t_n^{(k)})}.$$

Next we compute the integral on the chart $U_{\infty}(t_n^{(k)})$.  This can be done by the usual change of coordinates on the sphere $\zeta_k \mapsto 1/\zeta_k$.  After simplifying, we get
$$\int_{U_{\infty}(t_n^{(k)})} \theta_2(0, \tau_{\infty}, 0, \zeta_k)^2 \frac{\overline{\omega_n^{(p)}}}{\omega_n^{(p)}}$$
$$= \int_{U_0(t_n^{(k)})} \overline{\zeta_k}^2 \frac{\overline{(1/\zeta_k-\xi_1(t_n^{(k)}))(1/\zeta_k-\xi_2(t_n^{(k)}))}}{(1/\zeta_k-\xi_1(t_n^{(k)}))(1/\zeta_k-\xi_2(t_n^{(k)}))} \frac{d\zeta_k \wedge d\bar \zeta_k}{|\zeta_k|^4}.$$
We make the same substitution as before so that the differential becomes $d\zeta_k \wedge dr$, and this yields
$$\int_{\sqrt{|t_n^{(k)}|}}^{1-\delta_n} \frac{2}{r^3}\int_{|\zeta_k| = r} \frac{1}{\zeta_k}\frac{(\zeta_k - \overline{\xi_1(t_n^{(k)})}r^2)(\zeta_k - \overline{\xi_2(t_n^{(k)})}r^2)}{(1-\zeta_k\xi_1(t_n^{(k)}))(1-\zeta_k\xi_2(t_n^{(k)}))} \,d\zeta_k \wedge dr.$$
As above, the zeros $\xi_1(t_n^{(k)})$ and $\xi_2(t_n^{(k)})$ lie outside the outer boundary of the region of integration, so the interior integral is equal to the residue of the integrand at zero, which gives us
$$\int_{\sqrt{|t_n^{(k)}|}}^{1-\delta_n} \frac{2}{r^3}2\pi\sqrt{-1}\left(\overline{\xi_1(t_n^{(k)})}r^2\right)\left(\overline{\xi_2(t_n^{(k)})}r^2\right)dr$$
$$ = 2\pi\sqrt{-1}\left(\overline{\xi_1(t_n^{(k)})\xi_2(t_n^{(k)})}\right)((1 -\delta_n)^2 - |t_n^{(k)}|).$$
Therefore,
$$\left|\int_{S'^*(t_n^{(k)})} \theta_2(0, \tau_{\infty}, 0, \zeta_k)^2 \frac{\overline{\omega_n^{(p)}}}{\omega_n^{(p)}}\right|$$
$$ = 2\pi\left|(1 -\delta_n)^2 - |t_n^{(k)}|\right|\left|\frac{1}{\xi_1(t_n^{(k)})\xi_2(t_n^{(k)})} + \overline{\xi_1(t_n^{(k)})\xi_2(t_n^{(k)})}\right|$$
$$= 4\pi\left|(1 -\delta_n)^2 - |t_n^{(k)}|\right|.$$
\end{proof}

\subsection{Main Theorem and Surfaces of High Genus}
\label{RegPtsMainThm}

\begin{theorem}
\label{NoPctsAtCCCPts}
Let $D$ be a Teichm\"uller disc contained in \RankOne .  Let $(X',\omega')$ be a degenerate (punctured) surface carrying a holomorphic Abelian differential in the closure of $D$ and let $(X',\omega')$ be a Veech surface.  If $(p,p')$ are a pair of punctures on $X'$, then $\omega'$ has a zero at $p$ or $p'$.
\end{theorem}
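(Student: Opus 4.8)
The plan is to argue by contradiction: suppose $(p,p')$ is a pair of punctures at which $\omega'$ is regular (holomorphic and nonzero) at \emph{both} endpoints. This is exactly the case $k_1=k_2=0$ singled out in Section \ref{PuncSeqSetup}, so the whole apparatus built there is available. First I would use Theorem \ref{CPImpVeech} to realize $(X',\omega')$ as a limit of a sequence $\{(X_n,\omega_n)\}$ in $D$, with $(p,p')$ obtained by pinching a curve $\gamma_n$; by Lemma \ref{OnePartHoloNoVanCyl} this $\gamma_n$ is a union of saddle connections, and since both endpoints are regular the zeros along $\gamma_n$ have total order two. Puncturing a zero $\xi_1$ on $\gamma_n$ yields the sequence $\{(X_n^{(p)},\omega_n^{(p)})\}$ in $\mathcal{M}_{g,1}$, whose limit splits off a sphere $S'$ by Lemma \ref{SphereBubble}; degenerating the underlying Veech surface then produces $(X''^{(p)},\omega''^{(p)})$ with parts $S_1\sqcup\cdots\sqcup S_m\sqcup S'$.

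Next I would use the basis $\{\theta_1,\theta_2,\ldots\}$ fixed in Section \ref{PuncSeqSetup}, where $\theta_1\equiv\omega''^{(p)}$ and $\theta_2$ has its only poles at the two pairs $(p_j,p_j')$ joining $S_1$ to $S'$, and examine the $2\times2$ minor $(b^{(n)}_{ij})_{1\le i,j\le 2}$ of $d\Pi(X_n^{(p)})/d\mu_{\omega_n^{(p)}}$. Each entry is already controlled: $|b^{(n)}_{11}|\to\infty$ by Lemma \ref{b11PctdSurf} (the poles of $\omega''^{(p)}$ along the cycle of $X''$ force a logarithmic divergence through Lemma \ref{DPiDivergentTerm}); $b^{(n)}_{12}$ stays bounded by Lemma \ref{b12PctdSurf}, because on $S_1$ the poles of $\theta_1$ and $\theta_2$ sit at disjoint punctures and $\theta_1\equiv0$ on $S'$; and $|b^{(n)}_{22}|\to 4\pi\neq 0$, since its $S_1$-contribution vanishes by Lemma \ref{Theta2ZeroOnS1} while its $S'$-contribution tends to $4\pi$ by Lemma \ref{Theta2NotZeroOnSphere}. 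That these minor entries genuinely approximate the period-matrix derivative is guaranteed by Lemma \ref{Forni42Seqs}, applied with $\omega_n$ held fixed and only the basis differentials degenerating.

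The contradiction then follows from a rank count. Puncturing at a regular point does not alter the space of holomorphic differentials (Riemann--Roch, as recorded in Section \ref{PuncSeqSetup}), so the matrix $d\Pi(X_n^{(p)})/d\mu_{\omega_n^{(p)}}$ equals that of $(X_n,\omega_n)\in D\subset \mathcal{D}_g(1)$ and therefore has rank one for every $n$. Hence its $2\times2$ minor has vanishing determinant, i.e. $b^{(n)}_{11}b^{(n)}_{22}=(b^{(n)}_{12})^2$ for all $n$. But the left side satisfies $|b^{(n)}_{11}b^{(n)}_{22}|\to\infty$ while the right side stays bounded, which is impossible for large $n$. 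This rules out the assumption, so $\omega'$ must vanish at $p$ or at $p'$.

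I expect the conceptual crux to be this last rank observation — the bridge that lets a computation carried out in $\mathcal{M}_{g,1}$ (after artificially bubbling a sphere) constrain the rank of the period-matrix derivative of the \emph{unpunctured} surface in $\mathcal{D}_g(1)$; it is precisely the remark after Section \ref{PuncSeqSetup} that legitimizes this. The genuinely technical labor is already discharged in Lemmas \ref{b11PctdSurf}--\ref{Theta2NotZeroOnSphere}, so the only care needed here is to confirm that the three estimates use a single, consistent normalization of $\omega_n^{(p)}$ (the sphere-bubble scaling giving the clean value $4\pi$ must be the same one producing the blow-up of $b_{11}$ and the boundedness of $b_{12}$), after which the theorem is immediate.
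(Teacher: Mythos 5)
Your proposal is correct and follows essentially the same route as the paper: the same punctured-sequence setup from Section \ref{PuncSeqSetup}, the same $2\times 2$ minor built from $\theta_1\equiv\omega''^{(p)}$ and $\theta_2$, the same appeals to Lemmas \ref{b11PctdSurf}, \ref{b12PctdSurf}, \ref{Theta2ZeroOnS1}, and \ref{Theta2NotZeroOnSphere}, and the same identification $d\Pi(X_n^{(p)})/d\mu_{\omega_n^{(p)}} = d\Pi(X_n)/d\mu_{\omega_n}$ to transfer the rank-one constraint to the punctured picture. Your phrasing of the contradiction via the vanishing-determinant identity $b^{(n)}_{11}b^{(n)}_{22}=(b^{(n)}_{12})^2$ is just the contrapositive of the paper's "the minor has full rank," and your normalization worry is harmless since the Beltrami coefficient $\bar\omega/\omega$ is invariant under rescaling of $\omega_n^{(p)}$.
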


\begin{proof}
By contradiction, assume that both $p$ and $p'$ lie at regular points of $(X', \omega')$.  By Lemma \ref{PctsAreCCConj}, $p' \in C_p(X)$.  We follow the notation established in Section \ref{PuncSeqSetup}.  Since $\omega_k^{(p)} = \omega_n a.e.$, $d\Pi(X_n^{(p)})/d\mu_{\omega_n^{(p)}} = d\Pi(X_n)/d\mu_{\omega_n}$, for all $n$.

We estimate the derivative of the period matrix near $(X''^{(p)}, \omega''^{(p)})$ to achieve a contradiction by finding a $2 \times 2$ minor of full rank.  Let $\theta_1(0, \tau_{\infty})$ and $\theta_2(0, \tau_{\infty})$ be the elements of the basis of Abelian differentials on $X_n^{(p)}$ described above.  Consider the $2 \times 2$ minor of the derivative of the period matrix
\[ \left( \begin{array}{cc}
\int_{X''^{(p)*}(t_n)} \theta_1(0, \tau_{\infty})^2 d\mu_{\omega''^{(p)}} & \int_{X''^{(p)*}(t_n)} \theta_1(0, \tau_{\infty}) \theta_2(0, \tau_{\infty}) d\mu_{\omega''^{(p)}} \\
\int_{X''^{(p)*}(t_n)} \theta_2(0, \tau_{\infty}) \theta_1(0, \tau_{\infty}) d\mu_{\omega''^{(p)}} & \int_{X''^{(p)*}(t_n)} \theta_2(0, \tau_{\infty})^2 d\mu_{\omega''^{(p)}} \end{array} \right) . \]
By Lemma \ref{b11PctdSurf}, the $(1,1)$ component diverges to infinity.  The matrix is symmetric and by Lemma \ref{b12PctdSurf}, the diagonal terms are bounded.  Since the $(2,2)$ component is nonzero by Lemmas \ref{Theta2ZeroOnS1} and \ref{Theta2NotZeroOnSphere}, the determinant of this minor is nonzero, thus it has full rank.  This implies that the derivative of the period matrix must have rank at least two and the contradiction completes the proof of the theorem.
\end{proof}

The main corollary of this theorem is a new proof that for sufficiently large genus, there are no \splin -invariant ergodic measures with completely degenerate Kontsevich-Zorich spectrum.  Though the bound produced here is weaker than that of \cite{EskinKontsevichZorich2}, where it is proven that there are no regular \splin -invariant suborbifolds supporting such a measure for $g \geq 7$, the result below does not rely on the work of \cite{EskinMirzakhaniInvariantMeas} and the conjecture that every \splin -invariant suborbifold is regular \cite{EskinKontsevichZorich2}[Section 1.5].

\begin{corollary}
\label{NoTeichDiskSuffHighGenus}
For $g \geq 13$, there are no Teichm\"uller discs contained in \RankOne .
\end{corollary}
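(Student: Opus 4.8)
The plan is to argue by contradiction, combining Theorem \ref{CPImpVeech} with the genus accounting for degenerate surfaces and the puncture restrictions established in this section. Suppose $D \subset$ \RankOne{} is a Teichm\"uller disc with $g \geq 13$. By Theorem \ref{CPImpVeech} there is a Veech surface $(X',\omega')$ in $\overline{\mathcal{M}_g}$ whose Teichm\"uller disc lies in $\overline{\mathcal{D}_g(1)}$, with every surface of that disc a limit of surfaces in $D$. Since $\omega'$ is holomorphic, Lemma \ref{TDDerPerRank1Lem} forces $X'$ to have exactly one part, say of geometric genus $g'$, carrying $n$ pairs of punctures obtained by pinching. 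The arithmetic-genus relation for a one-component nodal curve gives $g = g' + n$, so it suffices to bound $g'$ and $n$.

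First I would pin down $g'$. Forgetting the marked points produces a closed genus-$g'$ Veech surface carrying $\omega'$; by Lemma \ref{RankkLocusProp1} its period-matrix derivative has rank at most one, so this surface lies in $\mathcal{D}_{g'}(1)$ and generates a Teichm\"uller curve there. Proposition \ref{Genus2Cor} together with Lemma \ref{NonZeroPart} rules out $g' \leq 2$, so Theorem \ref{MollerClass} leaves only $g' \in \{3,4\}$ (the Eierlegende Wollmilchsau and the Ornithorynque) or $g' = 5$ with $(X',\omega')$ lying in one of M\"oller's candidate strata. By Lemma \ref{VeechRankOneImpSqTil} each such surface is square-tiled, so Corollary \ref{PctsLieOverSamePt} applies to every pair of punctures.

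Next comes the puncture count, which drives the numerology. Theorem \ref{NoPctsAtCCCPts} says that for each pair $(p_i,p_i')$ at least one endpoint is a zero of $\omega'$; choosing such an endpoint $q_i$ for each $i$ produces $n$ distinct points, all zeros, so $n \leq \#\{\text{distinct zeros of }\omega'\}$. For $g' = 3$ and $g' = 4$, Lemmas \ref{Gen3NoPctsAtZeros} and \ref{Gen4NoPctsAtZeros} assert that no puncture can sit at a zero, which together with Theorem \ref{NoPctsAtCCCPts} forces $n = 0$ and hence $g = g' \leq 4$, contradicting $g \geq 13$. For $g' = 5$ one has $2g'-2 = 8$, and inspecting M\"oller's table (Theorem \ref{MollerClass}) shows that the admissible strata contain at most $7$ distinct zeros; thus $n \leq 7$ and $g = 5 + n \leq 12$, again contradicting $g \geq 13$. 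In every case we reach a contradiction, so no such disc exists for $g \geq 13$.

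The step I expect to be the main obstacle is the genus-five count: unlike genera three and four, I cannot eliminate punctures outright, so I must control precisely how many pairs of punctures a degenerate genus-five Veech surface can carry. This rests on combining Theorem \ref{NoPctsAtCCCPts} (each pair consumes a distinct zero) with the exact list of admissible strata from M\"oller's classification, and the bound is tight enough that the borderline value $g = 13$ hinges on the genus-five candidates avoiding the principal stratum $\mathcal{H}(1^8)$.
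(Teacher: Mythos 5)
Your proposal is correct in substance and follows essentially the same route as the paper's own proof: Theorem \ref{CPImpVeech} produces a degenerate Veech surface $(X',\omega')$, M\"oller's classification (Theorem \ref{MollerClass}) confines its genus $g'$ to at most five and its number of distinct zeros in genus five to at most seven, Theorem \ref{NoPctsAtCCCPts} forces each pair of punctures to consume a distinct zero of $\omega'$, and the count $g = g' + n \leq 5 + 7 = 12$ finishes. The paper's version is terser and does not even need Lemmas \ref{Gen3NoPctsAtZeros} and \ref{Gen4NoPctsAtZeros}: for $g' = 3$ or $4$ the crude count $g' + \sharp\{\text{distinct zeros}\}$ equals $3+4 = 7$ or $4+3 = 7$, both already below $13$, so the worst case is genus five in either treatment.

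One step, as you justify it, would fail: the claim that Proposition \ref{Genus2Cor} together with Lemma \ref{NonZeroPart} rules out $g' \leq 2$ does not handle $g' = 1$. Lemma \ref{NonZeroPart} only guarantees $\omega' \not\equiv 0$, and a punctured torus carries nonzero holomorphic differentials, while Proposition \ref{Genus2Cor} and Theorem \ref{MollerClass} (stated for $g \geq 2$) are silent about genus one, so the punctured torus escapes your net. Excluding it is exactly the content of Lemma \ref{NoDisksInD11}, which is what the paper invokes for this purpose (inside the proof of Theorem \ref{CPImpVeech} and again in Lemma \ref{NoGen5Deg}); citing it repairs your argument with no other change.
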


\begin{proof}
By \cite{MollerShimuraTeich} (see Theorem \ref{MollerClass}), there are no Veech surfaces in \RankOne , for $g \geq 6$.  Moreover, M\"oller proved that a Veech surface generating a Teichm\"uller disc in \RankOne ~has at most seven distinct zeros in genus five, \cite{MollerShimuraTeich}[Corollary 5.15].  By Theorem \ref{NoPctsAtCCCPts}, at least one puncture in each pair of punctures must lie at a zero.  In the worst case, there could be a surface in genus $5+7=12$, generating a Teichm\"uller disc in \RankOne , that degenerates to a Veech surface $(X',\omega')$ in genus five with exactly seven pairs of punctures, such that one puncture in each pair of punctures lies at a zero of $\omega'$.
\end{proof}

\begin{lemma}
\label{NoDegToGen3}
Given a surface $(X,\omega)$ generating a Teichm\"uller disc $D$ in \RankOne , for $g \geq 4$, there does not exist a sequence of surfaces in $D$ converging to the surface $(M_3', \omega_{M_3'})$ with punctures.
\end{lemma}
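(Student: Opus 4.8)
The plan is to force a contradiction between two earlier constraints on where the punctures of $(M_3',\omega_{M_3}')$ may lie. Suppose, for contradiction, that a sequence of surfaces in $D$ converges to $(M_3',\omega_{M_3}')$. I will show that this limit carries at least one pair of punctures, and that any such pair is required simultaneously to meet and to avoid the zero set $\{v_1,v_2,v_3,v_4\}$ of $\omega_{M_3}$.

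First I would observe that there is at least one pair of punctures. The part $M_3$ has geometric genus three, while every surface in $D$ has genus $g \geq 4$; since $(M_3',\omega_{M_3}')$ is a single smooth part, all of its nodes are self-nodes identifying points of $M_3$ in pairs, and additivity of arithmetic genus under the Deligne--Mumford compactification shows the number of such pairs is exactly $g - 3 \geq 1$. Fix one pair $(p,p')$. Next I would verify the hypotheses of Theorem \ref{NoPctsAtCCCPts}: the differential $\omega_{M_3} = dz/w^2$ lies in $\mathcal{H}(1,1,1,1)$ and so is holomorphic on all of $M_3$ (in particular at the marked points), and $M_3$ is a Veech surface, so $(M_3',\omega_{M_3}')$ is a punctured Veech surface carrying a holomorphic Abelian differential in the closure of $D$. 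Hence Theorem \ref{NoPctsAtCCCPts} applies and gives $\{p,p'\} \cap \{v_1,v_2,v_3,v_4\} \neq \emptyset$.

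On the other hand, Lemma \ref{Gen3NoPctsAtZeros} applies verbatim to this situation and yields $\{p,p'\} \cap \{v_1,v_2,v_3,v_4\} = \emptyset$. These two conclusions are incompatible, which is the sought contradiction and proves the lemma. I expect the only point requiring care to be the confirmation that $(M_3',\omega_{M_3}')$ meets the hypotheses of Theorem \ref{NoPctsAtCCCPts} --- namely that its differential is holomorphic and that it counts as a punctured Veech surface in the closure of $D$ --- since after this verification the clash between Theorem \ref{NoPctsAtCCCPts} and Lemma \ref{Gen3NoPctsAtZeros} is immediate.
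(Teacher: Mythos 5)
Your proposal is correct and follows essentially the same route as the paper: the paper's proof likewise takes a pair of punctures $(p,p')$ on $(M_3',\omega_{M_3}')$ and derives the contradiction that Theorem \ref{NoPctsAtCCCPts} forces one of $p,p'$ to lie at a zero while Lemma \ref{Gen3NoPctsAtZeros} forbids either from lying at a zero. Your additional genus-count verification that punctures exist and your check of the hypotheses of Theorem \ref{NoPctsAtCCCPts} are sound but not needed beyond what the paper's one-line argument already uses.
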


\begin{proof}
Let $(p,p')$ be a pair of punctures on $M_3'$.  Both $p$ and $p'$ cannot lie at regular points by Theorem \ref{NoPctsAtCCCPts} and neither $p$ nor $p'$ can lie at a zero of $\omega_{M_3'}$ by Lemma \ref{Gen3NoPctsAtZeros}.
\end{proof}

\begin{lemma}
\label{NoGen5Deg}
Let $(X,\omega)$ generate a Teichm\"uller disc $D$ contained in \RankOne , for $g \geq 5$.  If $(X', \omega')$ is a Veech surface, which is a limit of a sequence of surfaces in $D$, then $X'$ has genus five.
\end{lemma}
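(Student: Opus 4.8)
The plan is to use Möller's classification to pin down the possible genera of the limiting Veech surface to $\{3,4,5\}$, and then to eliminate genus three and genus four separately using the puncture-configuration results already established. Since the remaining cases are excluded, genus five is forced.

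First I would establish the range of possible genera. Since $(X',\omega')$ is a Veech surface arising as a limit of surfaces in $D \subset$ \RankOne, its Teichm\"uller disc lies in $\overline{\mathcal{D}_g(1)}$ by Lemma \ref{TDAlsoInDg1}, so the underlying (unpunctured) Veech surface generates a Teichm\"uller curve contained in the rank one locus. By Theorem \ref{MollerClass}, there are no such Veech surfaces of genus $g' \geq 6$, and the only examples in genus three and four are $(M_3,\omega_{M_3})$ and $(M_4,\omega_{M_4})$, respectively. The surface cannot have genus zero, since it carries a nonzero holomorphic Abelian differential by Lemma \ref{NonZeroPart}; genus one is excluded by Lemma \ref{NoDisksInD11}; and genus two is excluded by Proposition \ref{Genus2Cor}. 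Hence the underlying genus $g'$ satisfies $g' \in \{3,4,5\}$. Because the Veech surface produced by the construction of Theorem \ref{CPImpVeech} has a single part (it is obtained via Lemmas \ref{CPZeroConv} and \ref{CPPIZeroConv}, which yield one-part surfaces), it is connected; and since $g \geq 5 > g'$ whenever $g' \in \{3,4\}$, the degeneration must pinch at least one non-separating curve, so $(X',\omega')$ carries at least one pair of punctures in those two cases.

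Next I would eliminate genus three and genus four. If $g' = 3$, the limiting surface is the punctured Eierlegende Wollmilchsau $(M_3',\omega_{M_3}')$; but $g \geq 4$, so Lemma \ref{NoDegToGen3} states directly that no sequence of surfaces in $D$ can converge to $(M_3',\omega_{M_3}')$, a contradiction. If $g' = 4$, the limiting surface is the punctured Ornithorynque $(M_4',\omega_{M_4}')$, which by the previous paragraph carries at least one pair of punctures $(p,p')$. By Theorem \ref{NoPctsAtCCCPts}, at least one of $p,p'$ must lie at a zero of $\omega_{M_4}$; since $(M_4,\omega_{M_4}) \in \mathcal{H}(2,2,2)$, the set of zeros is exactly $\{v_1,v_2,v_3\}$. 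However, Lemma \ref{Gen4NoPctsAtZeros} asserts $\{p,p'\} \cap \{v_1,v_2,v_3\} = \emptyset$, which is the desired contradiction. With genus three and four excluded, the only remaining possibility is $g' = 5$.

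The only genuinely delicate points are bookkeeping: confirming that reducing the genus forces at least one pair of punctures to appear (so that Theorem \ref{NoPctsAtCCCPts} and Lemmas \ref{NoDegToGen3} and \ref{Gen4NoPctsAtZeros} have content), and confirming that the limiting Veech surface is connected with a single part so that Möller's classification applies to its underlying surface. Both follow from the way the Veech surface is produced in Theorem \ref{CPImpVeech}, so I do not expect a serious obstacle once these observations are recorded.
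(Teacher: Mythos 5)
Your proposal is correct and follows essentially the same route as the paper's proof: Möller's classification (Theorem \ref{MollerClass}) restricts the genus to $\{3,4,5\}$ after excluding genus $0,1,2$ via Lemma \ref{NonZeroPart}, Lemma \ref{NoDisksInD11}, and Proposition \ref{Genus2Cor}, genus three is ruled out by Lemma \ref{NoDegToGen3}, and genus four is ruled out by combining Theorem \ref{NoPctsAtCCCPts} with Lemma \ref{Gen4NoPctsAtZeros}. The only difference is that you explicitly record two steps the paper leaves implicit (that Lemma \ref{TDAlsoInDg1} makes Möller's classification applicable, and that $g \geq 5 > g'$ forces at least one pair of punctures to exist), which is sound bookkeeping rather than a different argument.
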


\begin{proof}
By \cite{MollerShimuraTeich} (see Theorem \ref{MollerClass}), the only Veech surfaces generating a Teichm\"uller disc in \RankOne ~have genus three, four, or possibly five.  The differential $\omega'$ is holomorphic by Theorem \ref{CPImpVeech} and not identically zero by Lemma \ref{NonZeroPart}, so $X'$ has positive genus $g'$.  Lemma \ref{NoDisksInD11} implies $g' \not= 1$, Proposition \ref{Genus2Cor} implies $g' \not= 2$, and Lemma \ref{NoDegToGen3} implies $g' \not= 3$.  If $(X', \omega') = (M_4', \omega_{M_4'})$, then there would be a pair of punctures $(p, p')$ on $(M_4', \omega_{M_4'})$.  However, both $p$ and $p'$ cannot lie at regular points by Theorem \ref{NoPctsAtCCCPts} and neither $p$ nor $p'$ can lie at a zero of $\omega_{M_4'}$ by Lemma \ref{Gen4NoPctsAtZeros}.
\end{proof}

Though M\"oller did not prove that there are no Veech surfaces generating Teichm\"uller discs in $\mathcal{D}_5(1)$, he did an extensive computer search that did not find any such surface.  This search strongly convinced him, as it does the author of this paper, that the following conjecture is true.

\begin{conjecture}[M\"oller]
There are no Teichm\"uller curves in $\mathcal{D}_5(1)$.
\end{conjecture}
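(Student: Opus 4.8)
The plan is to settle M\"oller's conjecture by eliminating, one at a time, the finitely many candidate strata in genus five that survive M\"oller's reduction, using the structural results established above. By Theorem \ref{MollerClass}, any Teichm\"uller curve contained in $\mathcal{D}_5(1)$ must lie in one of the finitely many strata of $\mathcal{M}_5$ tabulated in \cite{MollerShimuraTeich}[Corollary 5.15], and by Lemma \ref{VeechRankOneImpSqTil} its generating surface $(X,\omega)$ must be a square-tiled covering $\pi: X \rightarrow \mathbb{T}^2$. The first step is therefore to fix a stratum $\mathcal{H}(\kappa)$ from M\"oller's list and to record the combinatorial data of an arbitrary square-tiled representative: the number $N$ of squares, the horizontal and vertical gluing permutations $(h,v) \in S_N \times S_N$, and the cone-angle conditions forced by $\kappa$.

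The second step is to translate the rank-one condition into a constraint on this combinatorial data. A completely degenerate KZ-spectrum imposes strong representation-theoretic restrictions on the action of the deck group on $H^1(X,\mathbb{C})$; in the cyclic case this is precisely the mechanism exploited by Forni, Matheus, and Zorich in \cite{ForniMatheusZorichSqTiled}. The key intermediate goal is to prove that every candidate in M\"oller's list is in fact a square-tiled \emph{cyclic} cover, after which \cite{ForniMatheusZorichSqTiled} applies directly and forces the candidate to be one of the two known examples, contradicting its being in genus five. Where the cyclic structure cannot be established a priori, one would instead intersect the Teichm\"uller-curve condition with the Shimura-curve condition that M\"oller proved such curves must also satisfy \cite{MollerShimuraTeich}; a curve meeting both conditions is extremely rigid, and the intersection of the two algebraic conditions should be checkable stratum by stratum.

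The main obstacle is the absence of an a priori bound on the number of squares $N$. M\"oller's table specifies strata but not the degree of the tiling, so the combinatorial search over $(h,v)$ is infinite until such a bound is produced; this is exactly why the statement remains a conjecture, M\"oller's computer search (cited in the paragraph preceding the conjecture) having operated under an implicit degree cutoff. The hardest part of a genuine proof is thus to derive an \emph{effective} upper bound on $N$ for square-tiled surfaces whose Teichm\"uller disc lies in $\mathcal{D}_5(1)$ — for example by bounding the index of $\text{SL}(X,\omega)$ in $\text{SL}_2(\mathbb{Z})$ via the Shimura-curve constraint, or by reducing the rank-one condition to a finite Diophantine system in $(h,v)$. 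Once $N$ is bounded, the remaining verification becomes a finite (if large) computation of the type already performed numerically, and the conjecture would follow by ruling out each surviving candidate.
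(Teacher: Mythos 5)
There is a fundamental mismatch here: the statement you are trying to prove is stated in the paper as an \emph{open conjecture}, not a theorem. The paper offers no proof of it whatsoever --- it records that M\"oller's extensive computer search found no genus-five examples, and then uses the conjecture purely as a \emph{hypothesis} for the Conditional Theorem in Section \ref{RegPtsMainThm}. So your attempt cannot be measured against a proof in the paper; it must stand on its own. It does not, and in fact your own text concedes this. What you have written is a research plan whose central step is explicitly left undone: you acknowledge that without an effective upper bound on the number of squares $N$ of a candidate square-tiled surface, the search over gluing permutations $(h,v)$ is infinite, and you then only gesture at two possible ways such a bound ``should'' be obtained (via the index of $\text{SL}(X,\omega)$ in $\text{SL}_2(\mathbb{Z})$, or via a ``finite Diophantine system'') without deriving either. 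A proof proposal whose hardest part is labeled as the thing one would still have to do is not a proof; it is a restatement of why the statement remains a conjecture.

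There is a second concrete gap in the intermediate reduction. You propose to show that every candidate in M\"oller's list from \cite{MollerShimuraTeich}[Corollary 5.15] is a square-tiled \emph{cyclic} cover, so that the classification of \cite{ForniMatheusZorichSqTiled} ``applies directly.'' But Lemma \ref{VeechRankOneImpSqTil} only gives that a Veech surface generating a Teichm\"uller curve in $\mathcal{D}_g(1)$ is a square-tiled covering, i.e.\ its affine group is commensurable to $\text{SL}_2(\mathbb{Z})$; nothing in the paper, in M\"oller's work as cited, or in your argument forces the deck group of such a covering to be cyclic. The result of \cite{ForniMatheusZorichSqTiled} classifies completely degenerate spectra only within the family of square-tiled cyclic covers, so it cannot be invoked for a general square-tiled candidate. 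Without either (a) a proof of cyclicity, or (b) an effective degree bound plus the completed finite enumeration, the chain of reductions never closes, and the conjecture is exactly as open at the end of your argument as at the beginning.
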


\begin{condthm}
\label{CondThmGen5}
There are no Teichm\"uller discs contained in \RankOne , for $g \geq 5$.
\end{condthm}

\begin{proof}
Combine Lemma \ref{NoGen5Deg} with M\"oller's conjecture.
\end{proof}

\begin{remark}
In the unlikely case that M\"oller's conjecture is false and there does in fact exist one or more genus five examples, the author strongly believes that the existence of Teichm\"uller discs in genus six and above can still be ruled out by writing down all of the genus five examples and using a proof as in Lemmas \ref{Gen3NoPctsAtZeros} and \ref{Gen4NoPctsAtZeros} to say that no puncture lies at a zero.  Then Theorem \ref{NoPctsAtCCCPts} would conclude the non-existence of such discs for $g \geq 6$.
\end{remark}

\begin{remark}
The author would like to point out that the techniques used in this paper can be used to give simple proofs that many of the strata in the moduli spaces $\mathcal{M}_g$, for $5 \leq g \leq 12$, do not contain Teichm\"uller discs in \RankOne .  However, the author sees little value in listing all such strata when every stratum can be excluded for $g \geq 5$ by proving M\"oller's conjecture.
\end{remark}

\section{The Teichm\"uller Disc in $\mathcal{D}_4(1)$}

The goal of this section is to prove Theorem \ref{Gen4Class}, which says that the Ornithorynque $(M_4, \omega_{M_4})$, discovered by \cite{ForniMatheus}, and depicted in Figure \ref{Gen4Ex}, generates the only Teichm\"uller disc in $\mathcal{D}_4(1)$.  Throughout this section we adopt the standard shorthand for strata, e.g. $\mathcal{H}(1^4, 2) := \mathcal{H}(1,1,1,1,2)$.

\begin{lemma}
\label{Gen4CannotDeg}
If $(X,\omega)$ is not a Veech surface, $(X,\omega)$ generates a Teichm\"uller disc $D$ in $\mathcal{D}_4(1)$, and a sequence of surfaces in $D$ converges to a Veech surface $(X', \omega')$, then $X'$ has genus four.
\end{lemma}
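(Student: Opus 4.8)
The plan is to combine M\"oller's genus restriction with the exclusion lemmas already established, exactly in the spirit of Lemma \ref{NoGen5Deg}, specialized to the case $g=4$. First I would record that since $(X',\omega')$ is a Veech surface arising as a limit of surfaces in $D\subset\mathcal{D}_4(1)$, the differential $\omega'$ is holomorphic by Theorem \ref{CPImpVeech} and not identically zero by Lemma \ref{NonZeroPart}; hence $X'$ has positive genus $g'$. Because passing to a limit under the Deligne--Mumford compactification only pinches curves and therefore cannot raise the genus, I obtain the upper bound $g'\leq 4$.

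Next I would invoke M\"oller's classification (Theorem \ref{MollerClass}): a Veech surface whose Teichm\"uller disc lies in \RankOne ~must have genus three, four, or possibly five. Intersecting this with $g'\leq 4$ leaves only $g'\in\{3,4\}$; in particular the cases $g'=1$ and $g'=2$ are excluded, independently confirmed by Lemma \ref{NoDisksInD11} and Proposition \ref{Genus2Cor}. It then remains only to eliminate $g'=3$.

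For that final step, Theorem \ref{MollerClass} tells us the unique genus three Veech surface generating a disc in \RankOne ~is the Eierlegende Wollmilchsau $(M_3,\omega_{M_3})$. Since $(X',\omega')$ arises from degenerating a genus four surface, if $g'=3$ then $(X',\omega')$ must be $(M_3',\omega_{M_3'})$ carrying one or more pairs of punctures. But Lemma \ref{NoDegToGen3}, whose hypothesis $g\geq 4$ is satisfied here since $g=4$, asserts precisely that no sequence of surfaces in $D$ converges to the punctured $(M_3',\omega_{M_3'})$. This contradiction excludes $g'=3$ and forces $g'=4$.

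The only genuine content hides in the $g'=3$ exclusion, which is where I expect the main (already-resolved) obstacle to lie: it rests on Lemma \ref{NoDegToGen3}, and through it on Theorem \ref{NoPctsAtCCCPts} (a pair of punctures cannot sit at two regular points) together with Lemma \ref{Gen3NoPctsAtZeros} (no puncture of a degenerate $M_3'$ lies at a zero). The remainder is bookkeeping with the genus bound and M\"oller's list, so in writing the full proof I would take care only to justify cleanly that the limit carries a holomorphic, not identically zero differential of positive genus and that degeneration enforces $g'\leq 4$.
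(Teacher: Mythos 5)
Your proposal is correct and takes essentially the same approach as the paper: the sphere is ruled out by Theorem \ref{CPImpVeech} and Lemma \ref{NonZeroPart}, and genera one, two, and three are ruled out by Lemma \ref{NoDisksInD11}, Proposition \ref{Genus2Cor}, and Lemma \ref{NoDegToGen3}, respectively. The only difference is that you additionally invoke Theorem \ref{MollerClass} to make explicit that a genus-three limit would have to be the punctured Eierlegende Wollmilchsau before applying Lemma \ref{NoDegToGen3}, a step the paper leaves implicit.
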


\begin{proof}
The surface $X'$ cannot have positive genus less than four by Lemmas \ref{NoDisksInD11} and \ref{NoDegToGen3}, and Proposition \ref{Genus2Cor}.  Recall that $X'$ cannot be a sphere either because $\omega'$ is holomorphic by Theorem \ref{CPImpVeech} and $\omega'$ is nonzero by Lemma \ref{NonZeroPart}.
\end{proof}

\begin{lemma}
\label{MaxCyls}
If $(X,\omega)$ generates a Teichm\"uller disc in $\mathcal{D}_4(1)$, then $(X,\omega)$ decomposes into at most three cylinders.
\end{lemma}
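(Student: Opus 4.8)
The plan is to work in an arbitrary periodic direction, degenerate the surface by pinching all of its cylinder core curves, and then bound the number of cylinders using the genera of the resulting parts. Since $D \subset \mathcal{D}_4(1)$, Theorem \ref{RankOneImpCP} guarantees that $(X,\omega)$ is completely periodic, so every cylinder decomposition arises in some periodic direction, and it suffices to show that in each such direction the number of cylinders $k$ satisfies $k \le 3$.

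First I would fix a periodic direction with cylinders $C_1, \ldots, C_k$ and flow by $G_t$, passing to a limit $(X', \omega')$ in the closure of $D$. As in the proof of Corollary \ref{RankOneCylConfig}, \cite{MasurThesis}[Theorem 3] shows that this limit is obtained by pinching the core curve of each $C_i$; thus $X'$ has exactly $k$ nodes, and $\omega'$ has a pair of simple poles at each of the $k$ resulting pairs of punctures. Corollary \ref{RankOneCylConfig} (via Lemma \ref{GPisCycle}) then identifies the connectivity graph $G(X',\omega') = G^P(X',\omega')$ as a single cycle. A cycle on $k$ edges has $k$ vertices, each of degree two, so $X'$ has exactly $k$ parts $S_1, \ldots, S_k$ and $\omega'$ has exactly two simple poles on each $S_i$.

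Next I would combine two elementary counts. Writing $g_i$ for the genus of $S_i$, the Chern formula applied to $\omega'|_{S_i}$ (which has exactly two simple poles) shows that the total order of zeros of $\omega'$ on $S_i$ equals $2g_i$. In particular, if some $g_i = 0$ then $S_i$ carries no zeros, i.e. it is a twice-punctured sphere carrying $\pm\,dz/z$; this is inadmissible under the Deligne-Mumford compactification, exactly the obstruction exploited in the proof of Proposition \ref{H2gm2DerPerRank1}. Hence $g_i \ge 1$ for every $i$. On the other hand, since the connectivity graph is a cycle, its first Betti number is one, and the genus formula for the stable curve $X'$ gives $g = \sum_{i=1}^{k} g_i + 1$, so $\sum_{i=1}^k g_i = g - 1 = 3$. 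Combining the two facts yields $k \le \sum_{i=1}^k g_i = 3$, as desired.

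The only step needing genuine care is the bookkeeping of the second paragraph: verifying that pinching all $k$ cores really yields a stable curve whose dual graph is the cycle $G(X',\omega')$ with every vertex of degree exactly two, so that each part has precisely two poles, and that the zeros of $\omega$ pass to zeros of $\omega'$ without collapsing onto the nodes. Given the cyclic configuration supplied by Corollary \ref{RankOneCylConfig} this is routine, but it is the point at which one must be precise about how the boundary saddle connections of consecutive cylinders glue to form each $S_i$.
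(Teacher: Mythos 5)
Your proof is correct and is in essence the paper's own argument made explicit: the paper's one-line proof --- each cylinder top carries a positive, even total order of zeros, while the total order of zeros is six --- is exactly the count you derive from the degenerate cyclic configuration, since a part with two simple poles and genus $g_i$ carries zeros of total order $2g_i$ by the Chern formula and cannot be a twice-punctured sphere. Your bookkeeping via the stable-curve genus formula, $\sum_i g_i = g - 1 = 3$ with each $g_i \ge 1$, is arithmetically the same inequality as the paper's zero count $\sum_i 2g_i = 2g - 2 = 6$, so the two proofs coincide.
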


\begin{proof}
The top of every cylinder must have a positive, even number of zeros counted with multiplicity, and the total order of the zeros of $\omega$ is six.
\end{proof}

Ideally, we would like to use the same exact proof as Theorem \ref{Genus3Classification} to show that the genus four surface $(M_4, \omega_{M_4})$ generates the only Teichm\"uller disc in $\mathcal{D}_4(1)$.  However, this is not possible because $(M_4, \omega_{M_4})$ does not lie in the principal stratum as the genus three example does.  A priori, it is possible for zeros to converge under the conditions of Theorem \ref{CPImpVeech} without reaching a contradiction.  On the other hand, this technique can prove the result in most of the strata of $\mathcal{M}_4$.

\begin{lemma}
\label{NoDisksSomeGen4Strata}
There are no Teichm\"uller discs in $\mathcal{D}_4(1)$ except possibly in the strata $\mathcal{H}(2^3)$, $\mathcal{H}(1^2,2^2)$, $\mathcal{H}(1^4,2)$, and $\mathcal{H}(1^6)$.  Furthermore, $(M_4, \omega_{M_4})$ generates the only Teichm\"uller disc in $\mathcal{H}(2^3) \cap \mathcal{D}_4(1)$.
\end{lemma}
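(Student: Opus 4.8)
The plan is to reduce the statement, stratum by stratum, to M\"oller's classification of Teichm\"uller curves in \RankOne. The strata of $\mathcal{M}_4$ correspond to the partitions of $2g-2 = 6$: namely $\mathcal{H}(6)$, $\mathcal{H}(5,1)$, $\mathcal{H}(4,2)$, $\mathcal{H}(4,1,1)$, $\mathcal{H}(3,3)$, $\mathcal{H}(3,2,1)$, $\mathcal{H}(3,1^3)$, $\mathcal{H}(2^3)$, $\mathcal{H}(1^2,2^2)$, $\mathcal{H}(1^4,2)$, and $\mathcal{H}(1^6)$. The four exceptional strata are exactly those in which every zero has order at most two, equivalently those whose partition refines $(2,2,2)$; my goal is to show a disc forces this refinement condition, and separately to pin down $\mathcal{H}(2^3)$.

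The core step I would carry out is the following. Suppose $(X,\omega)$ generates a disc $D \subset \mathcal{H}(\kappa) \cap \mathcal{D}_4(1)$. If $(X,\omega)$ is a Veech surface, then by M\"oller's Theorem \ref{MollerClass} together with Proposition \ref{FMGen4} it must be $(M_4,\omega_{M_4}) \in \mathcal{H}(2^3)$, since $M_4$ generates the only Teichm\"uller curve in $\mathcal{D}_4(1)$. If $(X,\omega)$ is not a Veech surface, I would invoke Theorem \ref{CPImpVeech} to produce a Veech surface $(X',\omega')$ that is a limit of surfaces in $D$ and generates a disc in $\overline{\mathcal{D}_4(1)}$; by Lemma \ref{Gen4CannotDeg} the surface $X'$ has genus four, so it is a genuine (non-degenerate) genus-four Veech surface, and by M\"oller's theorem it is again $(M_4,\omega_{M_4}) \in \mathcal{H}(2^3)$. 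The next step is a genus-bookkeeping analysis of the degeneration of Theorem \ref{CPImpVeech}: each of its steps either collides two or more distinct zeros into one, preserving the genus, or contracts a loop saddle connection, dropping the genus. Since the genus is non-increasing along the sequence while both the initial surface and the terminal surface $M_4$ have genus four, no genus-dropping step can occur, and every step merely merges zeros. Under such a merge the orders add, so the three order-two zeros of $\omega_{M_4}$ arise from disjoint groups of zeros of $\omega$ each of total order two; hence $\kappa$ partitions into three blocks each summing to $2$, forcing every zero of $\omega$ to have order at most two.

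I would then read off the conclusion: the partitions of $6$ with all parts at most two are precisely $(2^3)$, $(1^2,2^2)$, $(1^4,2)$, and $(1^6)$, so every other stratum of $\mathcal{M}_4$ contains a zero of order at least three and admits no disc in $\mathcal{D}_4(1)$. (As a consistency check, $\mathcal{H}(6)$, $\mathcal{H}(5,1)$, and $\mathcal{H}(3,3)$ are already excluded by Propositions \ref{H2gm2DerPerRank1} and \ref{HnmDerPerRank1}, and the argument above additionally disposes of $\mathcal{H}(4,2)$, $\mathcal{H}(4,1,1)$, $\mathcal{H}(3,2,1)$, and $\mathcal{H}(3,1^3)$.) For the final clause, suppose a disc $D \subset \mathcal{H}(2^3) \cap \mathcal{D}_4(1)$ is not generated by $(M_4,\omega_{M_4})$; its generator is then not a Veech surface, since $M_4$ gives the only Teichm\"uller curve in $\mathcal{D}_4(1)$. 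Running Theorem \ref{CPImpVeech} and Lemma \ref{Gen4CannotDeg} as above yields a genus-four Veech surface reached purely by zero collisions, and since $(X,\omega)$ is not Veech at least one collision occurs, so the limiting differential has strictly fewer than three distinct zeros. It would then lie in $\mathcal{H}(6)$, $\mathcal{H}(5,1)$, $\mathcal{H}(4,2)$, or $\mathcal{H}(3,3)$, none of which contains a Veech surface generating a disc in $\mathcal{D}_4(1)$ by M\"oller's theorem, a contradiction; hence $(M_4,\omega_{M_4})$ generates the only disc in $\mathcal{H}(2^3) \cap \mathcal{D}_4(1)$.

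I expect the main obstacle to be the genus bookkeeping: one must verify carefully that the terminal Veech surface supplied by Theorem \ref{CPImpVeech} is an honest genus-four surface (so that both Lemma \ref{Gen4CannotDeg} and M\"oller's classification apply) and that preservation of genus along the whole sequence forces every degeneration step to be a collision of zeros rather than a pinching of a curve. It is precisely this observation that converts M\"oller's rigidity into the combinatorial statement that $\kappa$ refines $(2,2,2)$, and hence into the list of admissible strata.
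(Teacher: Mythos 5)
Your proposal is correct and follows essentially the same route as the paper: Möller's classification plus Theorem \ref{CPImpVeech} and Lemma \ref{Gen4CannotDeg} to force the limiting Veech surface to be $(M_4,\omega_{M_4})$, so that only zero collisions (never pinchings) can occur, which restricts $\kappa$ to partitions of $6$ refining $(2,2,2)$. The only difference is one of exposition: the paper states the key step tersely ("it is impossible to collapse zeros in any strata other than\ldots"), whereas you spell out the genus bookkeeping and the refinement-of-partitions argument explicitly, which is a faithful elaboration rather than a different proof.
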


\begin{proof}
By \cite{MollerShimuraTeich} (see Theorem \ref{MollerClass}), $(M_4, \omega_{M_4})$ generates the only Teichm\"uller curve in $\mathcal{D}_4(1)$.  Hence, any other Teichm\"uller disc $D$ must be generated by a surface $(X,\omega)$, which is completely periodic by Theorem \ref{RankOneImpCP}, but not Veech.  By Theorem \ref{CPImpVeech}, there exists a sequence of surfaces in $D$ converging to a Veech surface $(X', \omega')$ in $\overline{\mathcal{D}_4(1)}$.  The surface $X'$ cannot have genus less than four by Lemma \ref{Gen4CannotDeg}.  Moreover, it is impossible to collapse zeros in any strata other than $\mathcal{H}(1^2,2^2)$, $\mathcal{H}(1^4,2)$, and $\mathcal{H}(1^6)$, which are excluded in the statement of the lemma, and converge to the Veech surface in $\mathcal{H}(2^3)$.

Since any other Teichm\"uller disc in $\mathcal{H}(2^3) \cap \mathcal{D}_4(1)$ must be generated by a non-Veech surface $(X,\omega)$, the zeros of $(X,\omega)$ can be collapsed to reach a contradiction.
\end{proof}

Lemma \ref{NoDisksSomeGen4Strata} says that the classification problem is complete in genus four except for three strata.  The remainder of this section is dedicated to addressing those strata.  The strategy is similar to the one used to prove Theorem \ref{CPImpVeech}.

\begin{lemma}
\label{H2211TopDichot}
If $(X,\omega)$ generates a Teichm\"uller disc $D$ in $\mathcal{H}(1^2,2^2) \cap \mathcal{D}_4(1)$, then $(X,\omega)$ satisfies topological dichotomy.
\end{lemma}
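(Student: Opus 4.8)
The plan is to argue by contradiction, degenerating $(X,\omega)$ and ruling out every possible limit. If $(X,\omega)$ is a Veech surface it is uniformly completely periodic by the Smillie--Weiss theorem recorded after Definition \ref{UnifCP}, hence satisfies topological dichotomy; so I may assume $(X,\omega)$ is not Veech. By Theorem \ref{RankOneImpCP} it is completely periodic. Suppose it fails topological dichotomy. Then Lemma \ref{CPZeroConv} produces a sequence in $D$ converging to a one-part surface $(X',\omega')$ with $\omega'$ holomorphic, along which a saddle connection of $(X,\omega)$ contracts to a point; by Lemma \ref{TDAlsoInDg1} the disc generated by $(X',\omega')$ again lies in $\overline{\mathcal{D}_4(1)}$, so $(X',\omega')$ is itself a rank-one surface. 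The collapse of the saddle connection either pinches a nonseparating curve, lowering the genus, or collides two distinct zeros, preserving it.

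First I would eliminate the genus-lowering case. The limit is then a one-part holomorphic rank-one surface of genus $g'\in\{1,2,3\}$, the sphere being excluded because it carries no nonzero holomorphic differential (Lemma \ref{NonZeroPart}). But $g'=1$ is impossible by Lemma \ref{NoDisksInD11}, $g'=2$ by Proposition \ref{Genus2Cor}, and $g'=3$ would realize a punctured Eierlegende Wollmilchsau by Theorem \ref{Genus3Classification}, contradicting Lemma \ref{NoDegToGen3}. Hence the collapse must collide two zeros and the genus stays four. In that case $(X',\omega')$ has three distinct zeros, so according to which zeros of the partition $\{1,1,2,2\}$ merge it lies in $\mathcal{H}(2^3)$, $\mathcal{H}(1,2,3)$, or $\mathcal{H}(1,1,4)$. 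Lemma \ref{NoDisksSomeGen4Strata} forbids rank-one discs in $\mathcal{H}(1,2,3)$ and $\mathcal{H}(1,1,4)$, so those collisions cannot occur, and the only surviving possibility is that the two simple zeros merge, placing $(X',\omega')$ in $\mathcal{H}(2^3)\cap\overline{\mathcal{D}_4(1)}$.

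By Lemma \ref{NoDisksSomeGen4Strata} this last possibility forces $(X',\omega')$ to generate the Teichm\"uller curve of the Ornithorynque, i.e.\ $(X',\omega')\cong(M_4,\omega_{M_4})$ up to the \splin-action. This is the heart of the matter and the step I expect to be the main obstacle: the collision is an \emph{interior} degeneration in $\mathcal{M}_4$ that produces no pair of punctures, so neither the puncture results (Theorem \ref{NoPctsAtCCCPts}, Lemma \ref{Gen4NoPctsAtZeros}) nor the boundary estimates of Section \ref{DerPerMatSubSect} apply directly; and Veech dichotomy gives no contradiction either, since the vertical direction, having no closed regular trajectory on $(X,\omega)$, limits to a consistently non-periodic (uniquely ergodic) direction on $(M_4,\omega_{M_4})$.

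To close this case the plan is to use a spin-structure obstruction. Since $\mathcal{H}(1^2,2^2)$ contains zeros of odd order it carries no spin invariant and is connected, whereas $\mathcal{H}(2^3)$ splits into an even and an odd component with $(M_4,\omega_{M_4})$ in the even one; crucially, Lemma \ref{NoDisksSomeGen4Strata} names $(M_4,\omega_{M_4})$ as the \emph{only} rank-one disc in all of $\mathcal{H}(2^3)\cap\mathcal{D}_4(1)$, so any rank-one limit landing in the odd component is already contradictory. I would therefore analyze the parity of the Arf invariant produced by collapsing a saddle connection joining the two simple zeros of a completely periodic rank-one surface, using the cyclic cylinder configuration of Corollary \ref{RankOneCylConfig} together with the bound of at most three cylinders from Lemma \ref{MaxCyls} to pin down the local index data at the collision, and aim to show that the degeneration of $\mathcal{H}(1^2,2^2)$ obtained by merging its two simple zeros meets $\mathcal{H}^{odd}(2^3)$ rather than $\mathcal{H}^{even}(2^3)$. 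Verifying this parity statement is precisely the delicate point; granting it, the limit $(X',\omega')$ cannot be $(M_4,\omega_{M_4})$, every case is excluded, and the contradiction establishes that $(X,\omega)$ satisfies topological dichotomy.
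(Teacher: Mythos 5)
Your reduction is sound as far as it goes, and it matches the first half of the paper's proof: the collapse cannot lower the genus (Lemma \ref{Gen4CannotDeg}), and any collision involving a double zero lands in a stratum containing a zero of order greater than two, which Lemma \ref{NoDisksSomeGen4Strata} forbids. But the case you yourself flag as the main obstacle --- a non-periodic saddle connection joining the two simple zeros, with limit in $\mathcal{H}(2^3)$ --- is a genuine gap, and the spin-parity mechanism you propose cannot close it. Parity is invisible to this kind of stratum adjacency: given \emph{any} surface in either component of $\mathcal{H}(2^3)$, including $(M_4,\omega_{M_4})$ itself, one can break a double zero into two simple zeros by a local surgery supported in a small disc, producing surfaces in $\mathcal{H}(1^2,2^2)$ that converge back to the original surface. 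Hence $\mathcal{H}^{even}(2^3)$, and in particular the Ornithorynque, does lie in the closure of the locus obtained by merging the two simple zeros, so no Arf-invariant computation of the collision alone can rule out the even component. Any obstruction would have to exploit that the approximating sequence lies in $\mathcal{D}_4(1)$, and your proposal supplies no mechanism for that.

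The paper avoids this case entirely by a direct geometric argument, for which you already have all the ingredients. By Lemma \ref{TDDerPerRank1Lem}, every part of a degenerate limit of a rank-one disc carries exactly two simple poles, so the total order of the zeros lying on each boundary circle of \emph{every} cylinder decomposition of $(X,\omega)$ is even; in $\mathcal{H}(1^2,2^2)$ this forces the two simple zeros onto the same boundary circle in every periodic direction. By Corollary \ref{RankOneCylConfig} the cylinders are arranged cyclically with equal circumferences, so a saddle connection $\sigma$ between the two simple zeros that does \emph{not} lie in the periodic direction must leave that circle and traverse the full height of every cylinder before returning to it; normalizing the area and the circumferences to one, $\sigma$ has length at least one, whereas $\sigma$ can be taken (after contracting it with the Teichm\"uller geodesic flow, which is where failure of topological dichotomy is used) to have length $\varepsilon < 1$. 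This contradiction proves the lemma without ever identifying a limit in $\mathcal{H}(2^3)$, which is precisely the step your route cannot complete.
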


\begin{proof}
By Theorem \ref{RankOneImpCP}, $(X,\omega)$ is completely periodic.  We show that every saddle connection between two zeros must lie in a periodic foliation.  First, consider any saddle connection $\sigma$ from a double zero, denoted by $z$, to any other zero, denoted $z'$.  If $\sigma$ does not lie in a periodic foliation, then we can act on it by the Teichm\"uller geodesic flow so that it contracts at the maximal rate and choose a subsequence of times $\{t_n\}_n$ as in the proof of Lemma \ref{CPZeroConv} such that $G_{t_n} \cdot (X, \omega)$ converges to a surface $(X', \omega')$, where $\omega'$ is holomorphic.  The surface $X'$ cannot degenerate to a lower genus surface by Lemma \ref{Gen4CannotDeg}, so $\sigma$ must degenerate to a point resulting in a zero of order strictly greater than two.  However, there are no such Teichm\"uller discs in a stratum with a zero of order strictly greater than two by Lemma \ref{NoDisksSomeGen4Strata}.  This contradiction implies the saddle connection $\sigma$, which does \emph{not} lie in a periodic foliation, can only lie between the two simple zeros denoted by $z_1$ and $z_2$.

Without loss of generality, assume $\sigma$ has length $\varepsilon > 0$ and $(X,\omega)$ has a cylinder decomposition consisting of cylinders with unit circumference.  By Lemma \ref{MaxCyls}, $(X,\omega)$ is a union of one to three cylinders.  Degenerating the cylinders under the Teichm\"uller geodesic flow results in a surface as described in Lemma \ref{TDDerPerRank1Lem}.  This implies that the total order of the zeros on the top (and bottom) of every cylinder in the cylinder decomposition must be even because every part of the degenerate surface has two poles.  In the stratum $\mathcal{H}(1^2,2^2)$, this forces the two simple zeros to lie on the top of the same cylinder in every cylinder decomposition of $(X,\omega)$.  As usual, assume the area of the surface is one and the lengths of the waists of the cylinders are also one, so that the total heights of the cylinders is one.  Since $\sigma$ does not lie in a periodic foliation, it must leave $z_1$ and travel up the entire height of all the cylinders before reaching $z_2$.  However, this implies that $\sigma$ has length at least $1 > \varepsilon$ and this contradiction implies that all saddle connections of $(X,\omega)$ must lie in a periodic foliation.
\end{proof}

\begin{lemma}
\label{NoTDInH2211D41}
There are no Teichm\"uller discs contained in $\mathcal{H}(1^2,2^2) \cap \mathcal{D}_4(1)$.
\end{lemma}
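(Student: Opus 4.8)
The plan is to argue by contradiction. Suppose $(X,\omega)$ generates a disc $D \subset \mathcal{H}(1^2,2^2) \cap \mathcal{D}_4(1)$. By Theorem \ref{MollerClass} the only Veech surface generating a disc in $\mathcal{D}_4(1)$ lies in $\mathcal{H}(2^3)$, namely $(M_4,\omega_{M_4})$, so $(X,\omega)$ is not a Veech surface. By Theorem \ref{RankOneImpCP} it is completely periodic, by Lemma \ref{H2211TopDichot} it satisfies topological dichotomy, and hence by \cite{SmillieWeissCharLattice}[Theorem 1.3] it is not uniformly completely periodic. Lemma \ref{CPPIZeroConv} then produces a sequence in $D$ converging to a one-part surface $(X',\omega')$ with $\omega'$ holomorphic, along which a saddle connection collapses to a point, and Lemma \ref{Gen4CannotDeg} guarantees $X'$ still has genus four. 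Thus the degeneration merges two zeros of $\omega$ inside $\mathcal{M}_4$.

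First I would dispose of every merge involving a double zero. Collapsing a saddle connection from a double zero to a simple zero creates a zero of order three, placing $(X',\omega')$ in $\mathcal{H}(1,2,3) \cap \mathcal{D}_4(1)$, while collapsing one joining the two double zeros creates a zero of order four, placing it in $\mathcal{H}(1,1,4) \cap \mathcal{D}_4(1)$; both strata are excluded by Lemma \ref{NoDisksSomeGen4Strata}, an immediate contradiction. The only surviving possibility is that the collapsing saddle connection joins the two simple zeros, merging them into a double zero, so that $(X',\omega') \in \mathcal{H}(2^3) \cap \mathcal{D}_4(1)$. By Lemma \ref{NoDisksSomeGen4Strata} this forces $(X',\omega') = (M_4,\omega_{M_4})$, and the merged double zero is necessarily one of the zeros $v_1,v_2,v_3$ of the Ornithorynque.

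The remaining case — two simple zeros colliding at a double zero of $M_4$ while the approximating surfaces stay in $\mathcal{D}_4(1)$ — is the main obstacle. This collision is \emph{genus-preserving}: no Deligne--Mumford node and no pair of regular-point punctures is created, so neither Theorem \ref{NoPctsAtCCCPts} nor Lemma \ref{Gen4NoPctsAtZeros} applies directly, and it is precisely the configuration left untreated in Section \ref{PctsAtRegPtsSect}. My plan is to mimic that section by passing to $\mathcal{M}_{4,1}$: puncture one of the colliding simple zeros and, via a construction in the spirit of Lemma \ref{SphereBubble}, bubble off a sphere $S'$ carrying both simple zeros, attached by a single node to the part of a degeneration of $M_4$ on which the merged zero sits. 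I would then estimate the $2\times 2$ minor of the derivative of the period matrix spanned by the differential $\omega''$ of the degenerated Veech surface and a second differential with a pole at that node: Lemma \ref{DPiDivergentTerm} should force the $(1,1)$ entry to diverge, Lemmas \ref{BddOutsideDisks} and \ref{BddInDisks} bound the cross terms, and a residue computation on $S'$ should keep the $(2,2)$ entry bounded away from zero, yielding a rank-two minor and contradicting membership in $\mathcal{D}_4(1)$. The genuine difficulty, and what separates this from Section \ref{PctsAtRegPtsSect}, is that here the sphere attaches at a zero of order two rather than at a regular point: the local model is $\zeta^2\,d\zeta$, producing an order-four pole on $S'$, so the truncation and convergence estimates of Lemmas \ref{TruncSeries} through \ref{Theta2NotZeroOnSphere} must be re-derived for this higher-order local model rather than quoted verbatim.
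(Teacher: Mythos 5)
Your reduction is sound as far as it goes: the setup (not Veech, completely periodic, topological dichotomy via Lemma \ref{H2211TopDichot}), genus preservation via Lemma \ref{Gen4CannotDeg}, and the elimination of any collision involving a double zero via Lemma \ref{NoDisksSomeGen4Strata} all match the paper. The gap is that the case you correctly single out as the main obstacle --- two simple zeros colliding to form $(M_4,\omega_{M_4})$ --- is left as a plan, and the plan cannot work inside the paper's framework. When two simple zeros merge, no essential curve is pinched: the limit is a smooth genus-four surface, so the sequence converges in the interior of $\mathcal{M}_4$ (and still in the interior of $\overline{\mathcal{M}}_{4,1}$ after you puncture one of the zeros, since a marked point converging to an interior point forces no degeneration). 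Lemma \ref{SphereBubble} needs a collapsing closed curve which is a union of saddle connections through the puncture, whose two halves become the two nodes of the bubbled pair of pants; here there is nothing to pinch, so no sphere bubbles off. Even if you force a bubble by marking \emph{both} colliding zeros, the resulting sphere is attached to the rest of the surface by a single \emph{separating} node, and at a separating node every limiting Abelian differential is holomorphic (its residue is the period of the vanishing cycle, which is null-homologous); in particular no basis differential $\theta_2$ with ``a pole at that node'' can exist, so neither the divergence mechanism of Lemma \ref{DPiDivergentTerm} nor the nonvanishing $(2,2)$-entry computation of Lemma \ref{Theta2NotZeroOnSphere} has an analogue, and your proposed $2\times 2$ minor cannot even be written down. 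The order-four-pole local model you mention is a rescaled projective limit, not a limit of basis differentials in the paper's compactification, so ``re-deriving the estimates'' is not a technical chore but a change of framework.

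The paper sidesteps the collision entirely, staying in flat geometry on $(X,\omega)$ itself. Put the two simple zeros $z_1,z_2$ at distance $\varepsilon$ on the bottom of a cylinder $C_1$ in a unit-circumference decomposition, and take a saddle connection $\sigma_1$ of length at most two from $z_1$ across $C_1$ to a \emph{double} zero on its top. Topological dichotomy makes the direction of $\sigma_1$ periodic; the parity constraint on zeros along cylinder boundaries forces $z_2$ onto the top of the same cylinder of the new decomposition $\mathcal{C}'$, and since the short saddle connection $\sigma$ is transverse to this direction, $\mathcal{C}'$ has total height at most $\varepsilon$, hence circumferences at least $1/\varepsilon$. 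Renormalizing $\mathcal{C}'$ by the Teichm\"uller geodesic flow contracts $\sigma_1$ to length at most $2\varepsilon$, so letting $\varepsilon\to 0$ collapses a simple zero onto a double zero; by Lemma \ref{Gen4CannotDeg} the genus survives, producing a zero of order three and a contradiction with Lemma \ref{NoDisksSomeGen4Strata}. In other words, the paper converts the shortness of the simple--simple saddle connection into the shortness of a simple--double one, landing in a forbidden stratum --- precisely the step your proposal is missing.
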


\begin{proof}
We prove this lemma by showing that if $(X,\omega) \in \mathcal{H}(1^2,2^2)$ generates a Teichm\"uller disc in $\mathcal{D}_4(1)$, then $(X,\omega)$ is uniformly completely periodic.  By \cite{SmillieWeissCharLattice}, $(X,\omega)$ is a Veech surface and by \cite{MollerShimuraTeich}, there are no Veech surfaces in $\mathcal{H}(1^2,2^2)$ that generate a Teichm\"uller disc in $\mathcal{D}_4(1)$.  This contradiction will imply the lemma.

By contradiction, assume that there exists a surface $(X,\omega) \in \mathcal{H}(1^2,2^2)$ generating a Teichm\"uller disc in $\mathcal{D}_4(1)$.  By \cite{MollerShimuraTeich}, $(X,\omega)$ is not a Veech surface and by Lemma \ref{H2211TopDichot}, $(X,\omega)$ satisfies topological dichotomy.  As in the proof of the previous lemma, the only two zeros that are permitted to converge in the context of Theorem \ref{CPImpVeech} are the simple zeros $z_1$ and $z_2$.  Without loss of generality, let $\sigma$ be a saddle connection between $z_1$ and $z_2$ of length $\varepsilon > 0$.  Consider a cylinder decomposition of $(X,\omega)$, $C_1, \ldots, C_n$, with $1 \leq n \leq 3$, such that $z_1$ and $z_2$ lie on the bottom of $C_1$.  Let $z_3$ be a double zero on the top of $C_1$.  Consider the saddle connection $\sigma_1$ from $z_1$ to $z_3$.  We can take $\sigma_1$ to have length less than two because the total height of all of the cylinders is one and $\sigma_1$ connects the top and bottom of a single cylinder.  Then $\sigma_1$ lies on the top of a cylinder $C_1'$ in a different cylinder decomposition $\mathcal{C}'$ of $(X,\omega)$ because $(X,\omega)$ satisfies topological dichotomy by Lemma \ref{H2211TopDichot}.  Since the total order of the zeros on the top of every cylinder must be even, $z_2$ must also lie along the top of $C_1'$.  Furthermore, $\mathcal{C}'$ consists of at most two cylinders because the total order of the zeros along the top of one of the cylinders is four.  This implies that the total height of the cylinders in the decomposition $\mathcal{C}'$ is at most $\varepsilon$ because $\sigma$ is transverse to $\sigma_1$ and $\sigma$ must join the top of $C_2$ to the bottom of $C_1$.  The total area of the cylinders is still one, so the waist length of the cylinders in $\mathcal{C}'$ must be at least $1/\varepsilon$.  Act by the Teichm\"uller geodesic flow so that the waist of the cylinders in $\mathcal{C}'$ is reduced to one and the total height of the cylinders is expanded to one.  In the process of the expansion and contraction, the saddle connection $\sigma_1$ of length at most two is contracted to length at most $2\varepsilon$.  Since this argument holds for all $\varepsilon > 0$, $\sigma_1$ can be contracted to a point resulting in a zero of order three.  By Lemma \ref{Gen4CannotDeg}, the surface will not degenerate and we get a surface generating a Teichm\"uller disc in a stratum that does not contain a Teichm\"uller disc.  This contradiction completes the proof.
\end{proof}

\begin{lemma}
\label{H21111TopDichot}
If $(X,\omega)$ generates a Teichm\"uller disc $D$ in $\mathcal{H}(1^4,2) \cap \mathcal{D}_4(1)$, then $(X,\omega)$ satisfies topological dichotomy.
\end{lemma}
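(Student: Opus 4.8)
The plan is to mimic the proof of Lemma \ref{H2211TopDichot}, replacing its final length estimate by an appeal to the now-available Lemma \ref{NoTDInH2211D41}. Assume for contradiction that $(X,\omega) \in \mathcal{H}(1^4,2) \cap \mathcal{D}_4(1)$ fails topological dichotomy. By Theorem \ref{RankOneImpCP} the surface is completely periodic, so failure of topological dichotomy means there is a saddle connection $\sigma$ lying in a non-periodic direction; I normalize so that $\sigma$ is vertical and the surface has unit area. By Lemma \ref{CPZeroConv}, contracting this direction under the Teichm\"uller geodesic flow produces a sequence in $D$ converging to a limit $(X',\omega')$ with $\omega'$ holomorphic, $X'$ consisting of a single part, and a saddle connection of $(X,\omega)$ contracting to a point. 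By Lemma \ref{Gen4CannotDeg} the genus cannot drop, and by Lemma \ref{TDDerPerRank1Lem} a one-part holomorphic boundary surface carries no nodes; hence no closed curve is pinched and the contraction simply merges two distinct zeros of $\omega$ into a single higher-order zero. By Lemma \ref{TDAlsoInDg1} the Teichm\"uller disc generated by $(X',\omega')$ lies in $\overline{\mathcal{D}_4(1)}$, so $(X',\omega')$ is a genuine point of $\mathcal{D}_4(1)$ sitting in a more degenerate stratum of $\mathcal{M}_4$.

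Next I would run the case analysis on the endpoints of the collapsing saddle connection. If one endpoint is the double zero, the merge produces a zero of order at least $2+1=3$, so $(X',\omega')$ lies in a stratum containing a zero of order greater than two; by Lemma \ref{NoDisksSomeGen4Strata} no such stratum supports a Teichm\"uller disc in $\mathcal{D}_4(1)$, a contradiction. If both endpoints are simple zeros, the merge produces a double zero, leaving $(X',\omega')$ in $\mathcal{H}(2,2,1,1)=\mathcal{H}(1^2,2^2)$; this contradicts Lemma \ref{NoTDInH2211D41}. Since these exhaust the possibilities for a saddle connection between two distinct zeros, no non-periodic saddle connection can exist, and $(X,\omega)$ satisfies topological dichotomy.

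The step I expect to be the main obstacle is verifying that the contraction collapses exactly one saddle connection, i.e.\ merges exactly one pair of zeros. A simultaneous collapse of two disjoint parallel simple-to-simple saddle connections would instead land $(X',\omega')$ in the admissible stratum $\mathcal{H}(2^3)$, whose only Teichm\"uller disc is that of the Ornithorynque, and no immediate contradiction would follow. To rule this out I would choose $\sigma$ to be a shortest saddle connection in its (fixed) non-periodic direction; since all saddle connections in a fixed direction contract at the common rate $e^{-t}$ under $G_t$, a strictly shortest one collapses alone, placing the limit in $\mathcal{H}(1^2,2^2)$ or $\mathcal{H}(1^3,3)$ as above. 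For the boundary case of ties I would invoke the constraint, coming from Lemma \ref{TDDerPerRank1Lem}, that the total order of zeros on the top of each cylinder of any periodic decomposition is even, together with Lemma \ref{MaxCyls} bounding the number of cylinders, to control which zeros can be joined by equally short parallel saddle connections, exactly as the length estimate functions in Lemma \ref{H2211TopDichot}. Finally, loops (saddle connections from a zero to itself) are excluded because contracting a loop pinches a closed curve and hence introduces a node, contradicting the one-part holomorphic limit guaranteed by Lemmas \ref{CPZeroConv} and \ref{TDDerPerRank1Lem}.
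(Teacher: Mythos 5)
Your overall strategy (contract a non-periodic saddle connection, force the limit into a forbidden stratum, and quote Lemma~\ref{NoDisksSomeGen4Strata} or Lemma~\ref{NoTDInH2211D41}) starts out the same way as the paper: saddle connections emanating from the double zero are handled exactly as you handle them, and your use of Lemma~\ref{NoTDInH2211D41} is legitimate since it precedes this lemma. The genuine gap is in the case you yourself flag as the main obstacle, and neither of your proposed repairs closes it. The mechanism ``a strictly shortest saddle connection collapses alone'' is false: under $G_t$ \emph{every} vertical saddle connection contracts at the same rate $e^{-t}$, so all of them have flat length tending to zero, and in the limit every pair of zeros joined by a vertical saddle connection merges (absent pinching). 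Choosing $\sigma$ strictly shortest in its direction does nothing to prevent a disjoint vertical saddle connection joining the other two simple zeros from collapsing simultaneously, which lands the limit in $\mathcal{H}(2^3)$ --- precisely the admissible stratum of the Ornithorynque --- where neither Lemma~\ref{NoTDInH2211D41} nor Lemma~\ref{NoDisksSomeGen4Strata} yields any contradiction. Your fallback for ``ties'' (parity of zero orders on cylinder tops plus Lemma~\ref{MaxCyls}) is not an argument: it never identifies what contradiction is reached when the limit is $(M_4,\omega_{M_4})$.

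That is exactly where the paper does its real work. It fixes an auxiliary \emph{periodic} direction with unit-circumference cylinders, analyzes where the double zero and the simple zeros sit on the cylinder boundaries relative to the short saddle connection $\sigma$, and shows that a degeneration into $\mathcal{H}(2^3)$ would force the limit to arise with a one-cylinder decomposition (or with a cylinder whose height vanishes), contradicting the fact that the Ornithorynque decomposes into exactly \emph{two} cylinders in every periodic direction; the configurations that instead produce a zero of order at least three are killed by Lemma~\ref{NoDisksSomeGen4Strata}, as in your proposal. Some comparison with the explicit cylinder structure of $(M_4,\omega_{M_4})$ is unavoidable here, and it is absent from your write-up. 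A smaller inaccuracy: Lemma~\ref{TDDerPerRank1Lem}(1) allows a one-part limit with up to two simple poles and says nothing about forbidding nodes at which $\omega'$ is holomorphic on a single part, so your exclusion of pinching and of loops cannot be deduced from it; the exclusion of genus drop must come, as in the paper, from Lemma~\ref{Gen4CannotDeg} (i.e.\ from Lemmas~\ref{NoDisksInD11}, \ref{NoDegToGen3} and Proposition~\ref{Genus2Cor}), which you do also cite, so that part is repairable.
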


\begin{proof}
By Theorem \ref{RankOneImpCP}, $(X,\omega)$ is completely periodic.  We show that every saddle connection between two zeros must lie in a periodic foliation.  First, consider any saddle connection $\sigma$ from the double zero, denoted by $z$, to any other zero, denoted by $z'$.  By contradiction, if $\sigma$ does not lie in a periodic foliation, then we can act on it by the Teichm\"uller geodesic flow so that it contracts at the maximal rate and choose a subsequence of times $\{t_n\}_n$ as in the proof of Lemma \ref{CPZeroConv} such that $G_{t_n} \cdot (X, \omega)$ converges to a surface $(X', \omega')$, where $\omega'$ is holomorphic.  The surface cannot degenerate to a lower genus surface by Lemma \ref{Gen4CannotDeg}, so $\sigma$ must degenerate to a point resulting in a zero of order strictly greater than two.  However, there are no such Teichm\"uller discs in a stratum with a zero of order strictly greater than two by Lemma \ref{NoDisksSomeGen4Strata}.  This contradiction implies a saddle connection $\sigma$, which does \emph{not} lie in a periodic foliation, can only lie between two of the simple zeros.

Let $z_1, \ldots, z_4$ denote the simple zeros of $\omega$ and let $z_5$ denote the double zero.  By contradiction, let $\sigma$ denote the saddle connection that does not lie in a periodic foliation.  In light of the argument above, let $\sigma$ be a saddle connection from $z_1$ to $z_3$ and let it have length $\varepsilon$, while there is a cylinder decomposition $\mathcal{C}$ such that the cylinders have circumference one.  The zeros $z_1$ and $z_5$ cannot lie on the top or bottom of the same cylinder in $\mathcal{C}$ because this would imply $\mathcal{C}$ consists of two cylinders, one of which has height less than $\varepsilon$.  As $\varepsilon$ tends to zero, the resulting sequence of surfaces would have to converge to $(M_4, \omega_{M_4})$ because this generates the only Teichm\"uller disc in $\mathcal{D}_4(1)$ in a lower stratum.  The cylinder decomposition would only consists of one cylinder, which contradicts the cylinder decomposition of $(M_4, \omega_{M_4})$.  With respect to $\mathcal{C}$, and without loss of generality, let $z_1$ and $z_5$ be on the top and bottom of a cylinder.  Consider the shortest saddle connection $\sigma_1$ from $z_1$ to $z_5$, which has length less than two.  Then $\sigma_1$ lies in a periodic foliation by the argument above, and in particular, it is not parallel to $\sigma$.  Since the total order of the zeros on the bottom of a cylinder must be even, the leaf of the periodic foliation containing $\sigma_1$ must contain at least one other simple zero.  We show that this will lead to a contradiction.

Let $C_1$ denote the cylinder with the saddle connection $\sigma_1$ on its bottom.  Then the bottom of $C_1$ must also contain either $z_2$, $z_3$, or $z_4$.  We only consider $z_2$ and $z_3$ here because the argument for $z_4$ will be identical to the argument for $z_2$.  First assume that the bottom of $C_1$ contains the zero $z_3$.  Then the saddle connection $\sigma$ cannot be a subset of the bottom of $C_1$ because it does not lie in a period foliation, so it must traverse the heights of every cylinder in the cylinder decomposition before it reaches $z_3$.  This implies that the total height of the cylinders is less than $\varepsilon$.  By contracting the waist of the cylinders in this direction to unit length, $\sigma_1$ contracts to a saddle connection of length $2\varepsilon$.  Since this argument holds for all $\varepsilon$, we converge to a degenerate surface with a zero of order at least three and reach a contradiction with Lemma \ref{NoDisksSomeGen4Strata}.

Next we assume that the bottom of $C_1$ contains the zeros $z_1$, $z_5$, and $z_2$.  In this case, it is clear that the surface decomposes into at most two cylinders.  Furthermore, $(X,\omega)$ cannot consist of exactly one cylinder because $z_3$ would lie on its top and that would imply that the height of $C_1$ is $\varepsilon$ while its circumference is one, which would contradict that the area of the surface is one.  Though there are two cylinders, this argument shows that one of them, say $C_2$ has height $\varepsilon$ because both cylinders have $z_1$ and $z_3$ on different sides and since the distance between them is $\varepsilon$, the height of the cylinder must be less than $\varepsilon$.  Lemma \ref{NoTDInH2211D41} implies that both $z_1$ and $z_3$ must converge to $z_2$ and $z_4$, simultaneously and respectively, (though we make no claims about the rates at which this happens) because otherwise we would have a contradiction with Lemma \ref{NoTDInH2211D41}.  However, as we consider the sequence of surfaces resulting from letting $\varepsilon$ vary over a sequence decreasing to zero, we get that the cylinder $C_2$ must vanish in the limit so that $(X', \omega')$ lies in $\mathcal{H}(2^3)$ and consists of one cylinder.  However, this directly contradicts the fact that $(M_4, \omega_{M_4})$ decomposes into exactly two cylinders in every direction.  This contradiction that implies that any Teichm\"uller disc satisfying the assumptions of this lemma is generated by a surface satisfying topological dichotomy.
\end{proof}

\begin{lemma}
\label{NoTDInH21111D41}
There are no Teichm\"uller discs contained in $\mathcal{H}(1^4,2) \cap \mathcal{D}_4(1)$.
\end{lemma}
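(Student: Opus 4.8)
The plan is to follow the template of Lemma \ref{NoTDInH2211D41} almost verbatim: argue by contradiction, reduce everything to showing that a hypothetical generator is uniformly completely periodic, and then invoke \cite{SmillieWeissCharLattice} to conclude it is a Veech surface, contradicting \cite{MollerShimuraTeich}, which provides no Veech surface in $\mathcal{H}(1^4,2)$ generating a Teichm\"uller disc in $\mathcal{D}_4(1)$. So I would suppose $(X,\omega) \in \mathcal{H}(1^4,2)$ generates a disc $D \subset \mathcal{D}_4(1)$. By \cite{MollerShimuraTeich} it is not Veech, by Theorem \ref{RankOneImpCP} it is completely periodic, and by Lemma \ref{H21111TopDichot} it satisfies topological dichotomy. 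It then remains only to rule out the failure of the uniform length bound of Definition \ref{UnifCP}.

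Next I would set up the short saddle connection exactly as in Lemma \ref{CPPIZeroConv}. If $(X,\omega)$ were not uniformly completely periodic, then for a divergent sequence $\{s_j\}$ there are directions $\theta_j$ in which the ratio of the longest to the shortest saddle connection exceeds $s_j$. Normalizing via Corollary \ref{RankOneCylConfig} so that the cylinder decomposition in direction $\theta_j$ has unit circumference and unit area, every saddle connection has length at most one, so there is a saddle connection $\sigma$ of length $\varepsilon < 1/s_j \to 0$. Writing $z_1,\dots,z_4$ for the simple zeros and $z_5$ for the double zero, the opening argument of Lemma \ref{H21111TopDichot} shows $\sigma$ cannot have an endpoint at $z_5$, since contracting such a connection would produce a zero of order at least three, contradicting Lemma \ref{NoDisksSomeGen4Strata} together with Lemma \ref{Gen4CannotDeg}. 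Hence $\sigma$ joins two simple zeros.

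Then I would reuse the transverse-cylinder mechanism. The parity constraint from Lemma \ref{TDDerPerRank1Lem} — each part of the degenerating surface carries exactly two poles, so the total order of the zeros on the top (and bottom) of every cylinder is even — restricts which zeros may share a cylinder boundary; this is exactly the bookkeeping already performed in Lemma \ref{H21111TopDichot}. Using it, I would locate a saddle connection $\sigma_1$ transverse to $\sigma$ whose length is bounded by twice the total height of the cylinders, hence by a constant (the total height being one), typically a connection from a simple zero to $z_5$ crossing a single cylinder. Since $\sigma$ is transverse to $\sigma_1$ and must traverse the full stack of cylinder heights in its own periodic direction, the cylinders in the decomposition containing $\sigma_1$ have total height at most $\varepsilon$ and thus waist at least $1/\varepsilon$. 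Acting by $G_t$ to restore unit circumference in that decomposition contracts $\sigma_1$ to length $O(\varepsilon)$. Letting $\varepsilon \to 0$ forces $\sigma_1$ to collapse, producing either a zero of order at least three (contradicting Lemma \ref{NoDisksSomeGen4Strata}) or convergence into $\mathcal{H}(2^3)$ toward $(M_4,\omega_{M_4})$ with a single-cylinder decomposition, which contradicts the fact that $(M_4,\omega_{M_4})$ decomposes into exactly two cylinders in every periodic direction.

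The main obstacle is the combinatorial case analysis: with four simple zeros there are many configurations for which pair of simple zeros $\sigma$ joins and for how $z_5$ and the remaining simple zeros distribute over cylinder tops and bottoms, and each configuration must be driven either to a higher-order zero or to an incompatible limiting cylinder count. Some cases, as at the end of Lemma \ref{H21111TopDichot}, will additionally require invoking Lemma \ref{NoTDInH2211D41} to control the simultaneous collapse of two pairs of simple zeros. I expect most of this casework to be inherited directly from the proof of Lemma \ref{H21111TopDichot}; the genuinely new ingredient is only the final normalization converting ``short $\sigma$'' into ``collapsing $\sigma_1$,'' after which the uniform bound of Definition \ref{UnifCP} holds, \cite{SmillieWeissCharLattice} yields that $(X,\omega)$ is a Veech surface, and \cite{MollerShimuraTeich} supplies the contradiction that completes the proof.
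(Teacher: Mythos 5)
Your proposal is correct and follows essentially the same route as the paper: contradiction via uniform complete periodicity and \cite{SmillieWeissCharLattice}/\cite{MollerShimuraTeich}, a short saddle connection $\sigma$ forced between simple zeros, a transverse connection to the double zero lying in a second periodic direction whose total cylinder height is bounded by $O(\varepsilon)$, and renormalization by $G_t$ to collapse that connection into an order-three zero, contradicting Lemma \ref{NoDisksSomeGen4Strata} (with the $(M_4,\omega_{M_4})$ two-cylinder structure handling the remaining configuration). The casework you defer to Lemma \ref{H21111TopDichot} and Lemma \ref{NoTDInH2211D41} is exactly the casework the paper carries out explicitly, so nothing essential is missing.
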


\begin{proof}
As in the proof of Lemma \ref{NoTDInH2211D41}, we show that if $(X,\omega) \in \mathcal{H}(1^4,2)$ generates a Teichm\"uller disc in $\mathcal{D}_4(1)$, then $(X,\omega)$ is uniformly completely periodic.  By \cite{SmillieWeissCharLattice}, $(X,\omega)$ is a Veech surface and by \cite{MollerShimuraTeich}, there are no Veech surfaces in $\mathcal{H}(1^4,2)$ that generate a Teichm\"uller disc in $\mathcal{D}_4(1)$.

By contradiction, assume that there exists a surface $(X,\omega) \in \mathcal{H}(1^4,2)$ generating a Teichm\"uller disc in $\mathcal{D}_4(1)$.  By \cite{MollerShimuraTeich}, $(X,\omega)$ is not a Veech surface and by Lemma \ref{H21111TopDichot}, $(X,\omega)$ satisfies topological dichotomy.  As in the proof of the previous lemma, only simple zeros can converge.  Let $z_1, \ldots, z_4$ denote the simple zeros, and let $z_5$ denote the double zero.  Without loss of generality, let $\sigma$ be a saddle connection between two such zeros $z_1$ and $z_3$ of length $\varepsilon > 0$.  By Lemma \ref{H21111TopDichot}, consider a cylinder decomposition $\mathcal{C}$ of $(X,\omega)$ such that $z_1$ and $z_3$ lie on the bottom of $C_1 \in \mathcal{C}$.  It is possible to choose $C_1$ so that the double zero $z_5$ lies on its top because $z_5$ must lie at the top of some cylinder, and the cylinder will have simple zeros on its bottom of distance $\varepsilon$.  It was noted in the previous proof that there must always be two pairs of simple zeros, say $z_1$, $z_3$ and $z_2$, $z_4$, such that each zero in the pair has distance $\varepsilon$ from the other zero in the pair because the pairs of simple zeros must converge to double zeros simultaneously.  Consider the saddle connection $\sigma'$ from $z_1$ to $z_5$.  We can take $\sigma'$ to have length less than two because the total height of all the cylinders is one and $\sigma'$ connects the top and bottom of a single cylinder.  Then $\sigma'$ lies on the top of a cylinder $C_1'$ in a different cylinder decomposition $\mathcal{C}'$ of $(X,\omega)$.  The top of $C_1'$ must contain exactly one of $z_2$, $z_3$ or $z_4$ because the total order of the zeros on the top of $C_1$ is even.  If it contains $z_3$, then the total height of the cylinders in $\mathcal{C}'$ is less than $\varepsilon$.  We claim that if it contains either $z_2$ or $z_4$, then the total height of the cylinders in $\mathcal{C}'$ is at most $2\varepsilon$.  To see this, note that $z_1$, $z_5$, and say $z_2$, without loss of generality, lie on the top of $C_1'$.  Then $z_3$ and $z_4$ must lie on the bottom of $C_1'$.  Hence, $C_1'$ has height at most $\varepsilon$.  If the height of $C_2'$ is bounded away from zero by a constant $C > 0$, for all $\varepsilon > 0$, then as $\varepsilon$ tends to zero, we get a sequence converging to a surface that must be $(M_4, \omega_{M_4})$, but with a cylinder decomposition consisting of exactly one cylinder.  This contradicts the fact that every cylinder decomposition of $(M_4, \omega_{M_4})$ has two cylinders, so $C_2'$ must have height $\varepsilon'$.

We abuse notation and set $\varepsilon = \max(\varepsilon, \varepsilon')$.  Furthermore, $\mathcal{C}'$ consists of at most two cylinders because the total order of the zeros along the top of one of the cylinders is four.  The total area of the cylinders is one, so the waist length of the cylinders in $\mathcal{C}'$ must be at least $1/(2\varepsilon)$.  Act by the Teichm\"uller geodesic flow so that the circumference of the cylinders in $\mathcal{C}'$ is reduced to one and the total height of the cylinders is expanded to one.  In the process of the expansion and contraction, the saddle connection $\sigma'$ of length at most two is contracted to length at most $4\varepsilon$.  Since this argument holds for all $\varepsilon > 0$, $\sigma'$ can be contracted to a point resulting in a zero of order three.  By Lemma \ref{Gen4CannotDeg}, the surface will not degenerate and we get a surface generating a Teichm\"uller disc in a stratum that does not contain a Teichm\"uller disc.  This shows $(X,\omega)$ must be uniformly completely periodic and yields the desired contradiction.
\end{proof}

Let 
$$H_s = \left[ \begin{array}{cc}
1 & s \\
0 & 1 \end{array} \right]$$
denote the horocycle flow.

\begin{lemma}
\label{PrinStratTopDichot}
If $(X,\omega)$ generates a Teichm\"uller disc $D$ in $\mathcal{H}(1^6) \cap \mathcal{D}_4(1)$, then $(X,\omega)$ satisfies topological dichotomy.
\end{lemma}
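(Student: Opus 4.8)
The plan is to follow the template of Lemmas \ref{H2211TopDichot} and \ref{H21111TopDichot}, now exploiting that the shallower genus-four strata carrying a double zero have already been excluded. First I would record the two structural facts that make the argument run: by Theorem \ref{RankOneImpCP} the surface $(X,\omega)$ is completely periodic, and by M\"oller's classification (Theorem \ref{MollerClass}) $(X,\omega)$ is not a Veech surface, since the only Veech surface generating a disc in $\mathcal{D}_4(1)$ is $(M_4,\omega_{M_4}) \in \mathcal{H}(2^3)$. Consequently Lemma \ref{Gen4CannotDeg} is available to forbid degeneration to lower genus.

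Next, suppose toward a contradiction that topological dichotomy fails, so there is a saddle connection $\sigma$ lying in a non-periodic direction $\theta$; its endpoints are two of the six simple zeros. Applying Lemma \ref{CPZeroConv}, I would contract the direction $\theta$ under $G_t$ to obtain a sequence in $D$ converging to $(X',\omega')$ with one part, $\omega'$ holomorphic, and $\sigma$ collapsed to a point. By Lemma \ref{Gen4CannotDeg}, together with Theorem \ref{CPImpVeech} to exhibit a same-genus Veech surface in the closure, the genus is preserved, so $(X',\omega')$ is a non-degenerate genus-four surface in a deeper boundary stratum of $\mathcal{H}(1^6)$; being non-degenerate, its disc lies in $\mathcal{D}_4(1)$ by Lemma \ref{TDAlsoInDg1}. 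Because all zeros of $\omega$ are simple, the collapse merges the simple zeros lying in each connected chain of $\theta$-saddle connections: merging exactly two yields a double zero, while merging three or more yields a zero of order at least three, which is excluded by Lemma \ref{NoDisksSomeGen4Strata}. Thus the only surviving possibilities are that disjoint single edges merge pairs of zeros, landing in $\mathcal{H}(1^4,2)$, $\mathcal{H}(1^2,2^2)$, or $\mathcal{H}(2^3)$. The first two are excluded outright by Lemmas \ref{NoTDInH21111D41} and \ref{NoTDInH2211D41}.

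The main obstacle is the remaining case in which the $\theta$-saddle connections form three disjoint edges pairing all six zeros, forcing the limit into $\mathcal{H}(2^3)$, where the only disc is $(M_4,\omega_{M_4})$'s. To eliminate this I would argue exactly as in Lemma \ref{H21111TopDichot}: fix a periodic direction of $(X,\omega)$, use Corollary \ref{RankOneCylConfig} and the even-order condition at cylinder junctions coming from Lemma \ref{TDDerPerRank1Lem} to pin down the cylinder decomposition, which by Lemma \ref{MaxCyls} has at most three cylinders, and then track heights and circumferences through the contraction. The contraction producing $(M_4,\omega_{M_4})$ would force a cylinder decomposition with a single cylinder in some direction, contradicting the fact, used repeatedly in this section, that every cylinder decomposition of $(M_4,\omega_{M_4})$ has exactly two cylinders. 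I expect this quantitative incompatibility between the forced cylinder geometry and the collapse-to-$(M_4,\omega_{M_4})$ scenario to be the delicate part, the rest being a direct transcription of the two preceding topological-dichotomy lemmas. Once $\mathcal{H}(2^3)$ is ruled out, every case yields a contradiction, so no $\theta$-saddle connection can be non-periodic and $(X,\omega)$ satisfies topological dichotomy.
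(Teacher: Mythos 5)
Your setup is sound and matches the paper's strategy: assume a non-periodic saddle connection, contract it, use Lemma \ref{Gen4CannotDeg} to keep the genus at four, and observe that Lemmas \ref{NoTDInH2211D41} and \ref{NoTDInH21111D41} force \emph{all} six simple zeros to merge in pairs, so the limit must be $(M_4,\omega_{M_4})\in\mathcal{H}(2^3)$. The gap is in the case you yourself flag as delicate, and your proposed mechanism for it does not work. You claim the collapse "would force a cylinder decomposition with a single cylinder in some direction," contradicting the fact that $(M_4,\omega_{M_4})$ always has exactly two cylinders. That argument is exactly how the paper dispatches the one- and two-cylinder cases: if $\mathcal{C}_n$ has one cylinder, the non-periodic saddle connection crossing it bounds its height by $1/n$ and violates the area normalization; if $\mathcal{C}_n$ has two cylinders, one of them has height at most $1/n$, so the limit has a single cylinder, contradicting the two-cylinder decompositions of $(M_4,\omega_{M_4})$. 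But by Lemma \ref{MaxCyls} there is a third case, $\mathcal{C}_n$ consisting of three cylinders, and there your contradiction never materializes: one of the three cylinders can collapse, leaving a perfectly legitimate two-cylinder limit compatible with $(M_4,\omega_{M_4})$. Tracking heights and circumferences alone cannot rule this out.

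The paper's resolution of the three-cylinder case is a genuinely new ingredient, not a "direct transcription" of Lemmas \ref{H2211TopDichot} and \ref{H21111TopDichot}: it first pins down the configuration (the non-periodic $\sigma_1$ crosses a cylinder, so $z_1,z_2$ sit on opposite sides of it, while some other short saddle connection $\sigma_3$ must lie \emph{along} a cylinder boundary, else the total height is at most $3/n$ and the area normalization fails), and then applies the horocycle flow $H_{s_n}$. The shear preserves cylinder heights and boundaries, hence preserves the boundary-parallel short saddle connection $\sigma_3$, but it moves $z_1$ relative to $z_2$ so that their distance is bounded away from zero. The limit of $H_{s_n}\cdot(X_n,\omega_n)$ is then a non-degenerate surface on which one pair of simple zeros has merged to a double zero while another pair has not, i.e.\ a surface generating a disc in $\mathcal{H}(1^2,2^2)\cap\mathcal{D}_4(1)$ or $\mathcal{H}(1^4,2)\cap\mathcal{D}_4(1)$, contradicting Lemmas \ref{NoTDInH2211D41} and \ref{NoTDInH21111D41}. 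Without this horocycle argument (or a substitute for it), the $\mathcal{H}(2^3)$ case, and hence the lemma, remains unproved.
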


\begin{proof}
Let $z_i$, for $1 \leq i \leq 6$, denote the simple zeros of $\omega$.  Assume by contradiction that $(X,\omega)$ does not satisfy topological dichotomy.  Let $\sigma_1$ be a saddle connection from $z_1$ to $z_2$, without loss of generality, that does not lie in a periodic foliation.  If $\sigma_1$ converges to a point, then the other zeros must also converge to each other in pairs because there are no Teichm\"uller discs in $\mathcal{D}_4(1)$ in any lower stratum other than $\mathcal{H}(2^3)$ by Lemmas \ref{NoTDInH2211D41} and \ref{NoTDInH21111D41}.  Setting notation, let $z_3$ and $z_5$ converge to $z_4$ and $z_6$, respectively.  Let $d_{\omega}(\cdot, \cdot)$ denote flat length with respect to $\omega$.  We assume that 
$$\varepsilon = \max_i\{d_{\omega}(z_i, z_{i+1})\},$$
and the length of $\sigma_1$ is at most $\varepsilon$, such that $(X,\omega)$ admits a cylinder decomposition into cylinders of unit circumference.  We define a sequence of surfaces $(X_n, \omega_n)$ converging to $(X', \omega') = (M_4, \omega_{M_4})$ letting $\varepsilon = 1/n$.  For each $(X_n, \omega_n)$ fix a cylinder decomposition $\mathcal{C}_n$ such that the cylinders have unit circumference.  Pass to a subsequence, such that $\mathcal{C}_n$ has the same number of cylinders as $\mathcal{C}_m$, for all $n,m \geq 0$.

First we claim that the cylinder decompositions $\mathcal{C}_n$ do not consist of exactly one cylinder.  Assume by contradiction that it does consist of exactly one cylinder.  Since $\sigma_1$ does not lie in a periodic foliation, $\sigma_1$ must traverse the height of the cylinder.  However, this would imply that the height of the cylinder is at most $1/n$ while the circumference is one, which contradicts the fact that the area of each surface in the sequence is one.

Secondly, we claim that the cylinder decompositions $\mathcal{C}_n$ do not consist of exactly two cylinders.  To see this, we use the same argument as above to see that if there are two cylinders, then one of them must have height at most $1/n$.  As we let $n$ tend to infinity, the surface converges to a surface $(X', \omega')$, which must have a single cylinder because the height of one of the two cylinders in $(X_n, \omega_n)$ converged to zero.  However, $(M_4, \omega_{M_4})$ decomposes into two cylinders in every periodic direction so we have a contradiction that implies that there cannot be two cylinders.

Finally, we assume that for all $n$, $\mathcal{C}_n$ consists of exactly three cylinders, the maximum possible by Lemma \ref{MaxCyls}.  The saddle connection $\sigma_1$ cannot lie in the foliation of the cylinder of $\mathcal{C}_n$ because it does not lie in a periodic foliation.  Therefore, $z_1$ and $z_2$ lie on the top and bottom of a cylinder, say $C_3$.  By the assumption that there are three cylinders, there must be another pair of zeros between the top and bottom of $C_3$.  If not, the total height of the three cylinders would be at most $3/n$, which would contradict the assumption that the surface has area one, for large $n$.  Therefore, we have that the saddle connection $\sigma_3$ of length at most $\varepsilon$ lies on the top of the cylinder $C_1$.  This arrangement of the zeros must hold for all $n$ in the sequence $\{(X_n, \omega_n)\}_n$ because this argument did not depend on the value of $n$.

Now we make an elementary observation.  If we consider the action of $H_s$ on $(X_n, \omega_n)$, then the heights and boundaries of the three cylinders in $\mathcal{C}_n$ are preserved, though the cylinders themselves are twisted (in the sense of Dehn twists).  This implies that the saddle connection $\sigma_3$ is preserved under the action of $H_s$, while the distance between $z_1$ and $z_2$ can be increased to some constant bounded away from zero.  Therefore, for each $n$, there exists a number $s_n$, where $0 < s_n < 1$ such that the sequence $\{H_{s_n} \cdot (X_n, \omega_n)\}_n$ converges to a surface which does not degenerate because the cylinders have circumference one.  However, at least one pair of simple zeros remain simple zeros in the limit, while at least one pair of simple zeros converge to a double zero.  This contradicts either Lemma \ref{NoTDInH2211D41} or \ref{NoTDInH21111D41}, and implies that the arrangement of the cylinders and the zeros described above for the case where $\mathcal{C}_n$ consists of three cylinders cannot occur.  However, since this was the only remaining potentially admissible arrangement of the zeros in such a cylinder decomposition, we have a contradiction which implies $(X,\omega)$ satisfies topological dichotomy.
\end{proof}

\begin{lemma}
\label{NoTDPrinStratGen4}
There are no Teichm\"uller discs contained in $\mathcal{H}(1^6) \cap \mathcal{D}_4(1)$.
\end{lemma}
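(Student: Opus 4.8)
The plan is to follow the template of Lemmas \ref{NoTDInH2211D41} and \ref{NoTDInH21111D41}: assume for contradiction that $(X,\omega) \in \mathcal{H}(1^6)$ generates a Teichm\"uller disc $D \subset \mathcal{D}_4(1)$, prove that $(X,\omega)$ must be \emph{uniformly} completely periodic, and then invoke \cite{SmillieWeissCharLattice}[Theorem 1.3] to conclude that $(X,\omega)$ is a Veech surface. This contradicts \cite{MollerShimuraTeich} (Theorem \ref{MollerClass}), since $(M_4,\omega_{M_4}) \in \mathcal{H}(2^3)$ is the only Veech surface generating a disc in $\mathcal{D}_4(1)$. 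By Theorem \ref{RankOneImpCP} the surface is completely periodic, and by Lemma \ref{PrinStratTopDichot} it satisfies topological dichotomy, so the only remaining point is the uniform bound on the ratio of saddle connection lengths required by Definition \ref{UnifCP}.

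To establish uniform complete periodicity I would argue by contradiction. If the ratio of the longest to the shortest saddle connection were unbounded over the set of periodic directions, then, normalizing each periodic direction so that its cylinders have unit circumference (legitimate by Corollary \ref{RankOneCylConfig}, which forces all cylinders in a given direction to share a common circumference), there would be a direction carrying a saddle connection $\sigma$ of length $\varepsilon$ with $\varepsilon \to 0$. Since every saddle connection lies on the boundary of a cylinder, its length is at most the circumference, so the longest saddle connection has length at most $1$ and only the shortest can degenerate. Using that all six zeros are simple and that $(X,\omega)$ decomposes into at most three cylinders by Lemma \ref{MaxCyls}, I would locate, exactly as in the proofs of Lemmas \ref{NoTDInH2211D41} and \ref{NoTDInH21111D41}, a second saddle connection $\sigma'$ of bounded length (comparable to the total height, which is bounded because the area is one) that is transverse to $\sigma$ and lies on the boundary of a cylinder in a different periodic decomposition. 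The transversality forces the cylinders of that second decomposition to have total height comparable to $\varepsilon$, so after acting by the Teichm\"uller geodesic flow to renormalize its circumference to one, $\sigma'$ is contracted to length $O(\varepsilon)$.

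Letting $\varepsilon \to 0$ then produces a sequence in $D$ converging to a holomorphic degenerate surface on which $\sigma'$ has collapsed to a point. By Lemma \ref{Gen4CannotDeg} this limit is again genus four, so the collapse merges a pair of simple zeros into a zero of higher order and lands the limit surface in one of $\mathcal{H}(1^4,2)$, $\mathcal{H}(1^2,2^2)$, or $\mathcal{H}(2^3)$; its Teichm\"uller disc lies in $\overline{\mathcal{D}_4(1)}$ by Lemma \ref{TDAlsoInDg1}. The first two strata are excluded by Lemmas \ref{NoTDInH21111D41} and \ref{NoTDInH2211D41}, and the only way to reach $\mathcal{H}(2^3)$ is a \emph{simultaneous} collapse of several pairs of zeros, whose limiting surface would have to be $(M_4,\omega_{M_4})$. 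This is ruled out exactly as in Lemma \ref{PrinStratTopDichot} by the fact that such a collapse forces a cylinder decomposition with too few cylinders, contradicting that every periodic decomposition of $(M_4,\omega_{M_4})$ consists of exactly two cylinders. Where necessary I would first apply a horocycle flow $H_s$, as in Lemma \ref{PrinStratTopDichot}, to separate a pair of zeros that would otherwise be forced to collapse, thereby converting the situation into one already handled above.

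The main obstacle is the combinatorial bookkeeping of the cylinder decompositions of a surface in $\mathcal{H}(1^6)$: with six simple zeros and up to three cylinders there are many ways the zeros can be distributed on the tops and bottoms of the cylinders, and for each configuration one must produce the bounded transverse saddle connection $\sigma'$ and verify that its contraction genuinely lands the surface in an already-excluded stratum rather than degenerating the genus or stalling. Fortunately most of this analysis has already been carried out in Lemma \ref{PrinStratTopDichot}, whose parity constraints (the total order of zeros on each side of a cylinder is even) and whose two-cylinder obstruction for $(M_4,\omega_{M_4})$ I would reuse verbatim; the genuinely new work is to upgrade that direction-by-direction topological-dichotomy argument into the uniform length bound of Definition \ref{UnifCP}.
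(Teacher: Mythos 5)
Your overall template (reduce to uniform complete periodicity, then \cite{SmillieWeissCharLattice} plus M\"oller's classification) is the same one the paper follows, and your treatment of the one-cylinder and three-cylinder configurations --- horocycle trick included --- matches the paper's. The gap is in your dismissal of the $\mathcal{H}(2^3)$ case. You assert that a simultaneous collapse of the three pairs of simple zeros onto $(M_4,\omega_{M_4})$ ``forces a cylinder decomposition with too few cylinders,'' but this fails in exactly the configuration that matters: when the unit-circumference decomposition $\mathcal{C}_n$ consists of \emph{two} cylinders whose heights stay bounded away from zero and the three short saddle connections lie along the cylinder boundaries (hence are parallel to the core curves), the limit is $(M_4,\omega_{M_4})$ equipped with a genuine two-cylinder decomposition --- perfectly consistent with the fact that every periodic direction of $(M_4,\omega_{M_4})$ has exactly two cylinders. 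No cylinder-counting contradiction arises there, and the horocycle trick is also unavailable, since $H_s$ preserves horizontal lengths and the short saddle connections in this configuration are horizontal, so shearing cannot separate the zeros that are about to collide.

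This two-cylinder case is where the paper introduces the one genuinely new idea that your proposal lacks: take a straight trajectory $\gamma$ of length less than two from $z_1$ across both cylinders back to itself; by topological dichotomy (Lemma \ref{PrinStratTopDichot}) $\gamma$ lies in a periodic direction, and a case check on the number of cylinders in that new decomposition $\mathcal{C}_n'$ (using that each side of a cylinder carries an even total order of zeros, and that none of the three short saddle connections lies in this new direction) shows its total height is at most $3/n$. Renormalizing that direction to unit circumference contracts $\gamma$ --- a \emph{closed} curve through $z_1$, not merely a saddle connection between distinct zeros --- to length at most $6/n$. Letting $n \to \infty$ pinches a closed curve, so the limit cannot keep $z_1$ as a zero and the genus must drop, contradicting Lemma \ref{Gen4CannotDeg}. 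Without this step (or a substitute for it), your induction on excluded strata cannot close: collapsing your transverse saddle connection $\sigma'$ in the two-cylinder configuration lands the limit in $\mathcal{H}(2^3)$, which is not an excluded stratum, and none of the contradictions you list applies.
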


\begin{proof}
The idea of this proof is identical to Lemmas \ref{NoTDInH2211D41} and \ref{NoTDInH21111D41}.  Assume by contradiction that such a surface $(X,\omega)$ exists.  By Lemma \ref{PrinStratTopDichot}, $(X,\omega)$ satisfies topological dichotomy, but \cite{MollerShimuraTeich} implies that $(X,\omega)$ is not uniformly completely periodic.  We show that there is a sequence of surfaces in the Teichm\"uller disc $D$ generated by $(X,\omega)$ converging to a surface in a stratum other than $\mathcal{H}(2^3)$.

As in the proof of the previous lemma, let $z_i$ denote the simple zeros of $\omega$, for $1 \leq i \leq 6$.  By Theorem \ref{CPImpVeech}, there is a sequence of surfaces $\{(X_n, \omega_n)\}_n$ in $D$ converging to $(M_4, \omega_{M_4})$.  For $i = 1,3,5$, we can assume that $z_i$ converges to $z_{i+1}$ in this sequence because there are no Teichm\"uller discs in $\mathcal{D}_4(1)$ in any lower stratum other than $\mathcal{H}(2^3)$ by Lemmas \ref{NoTDInH2211D41} and \ref{NoTDInH21111D41}.  Let $d_{\omega}(\cdot, \cdot)$ denote flat length with respect to $\omega$.  As before, assume that
$$\varepsilon = \max_i\{d_{\omega}(z_i, z_{i+1})\},$$
and $(X,\omega)$ admits a cylinder decomposition into cylinders of unit circumference.  We define a sequence of surfaces $\{(X_n, \omega_n)\}_n$ converging to $(X', \omega') = (M_4, \omega_{M_4})$ by letting $\varepsilon = 1/n$.  For each $(X_n, \omega_n)$ fix a cylinder decomposition $\mathcal{C}_n$ such that the cylinders have unit circumference and pass to a subsequence, such that $\mathcal{C}_n$ has the same number of cylinders as $\mathcal{C}_m$, for all $n,m \geq 0$.

First note that $\mathcal{C}_n$ cannot contain exactly one cylinder, for all $n$, because the sequence converges to a surface that decomposes into two cylinders in every periodic direction.  If we assume that $\mathcal{C}_n$ splits into three cylinders, then there are two possible arrangements of the zeros.  Either one or more of the saddle connections of length at most $\varepsilon$ lies between the top and bottom of a cylinders, or, after renaming the zeros, $\sigma_i$ lies on the top of $C_i$, for $1 \leq i \leq 3$.  If one or more of the saddle connections lies across a cylinder, then we have the same arrangement as in the proof of Lemma \ref{PrinStratTopDichot}: $C_1$ has the saddle connection $\sigma_3$ along its top, and $C_3$ has $z_1$ and $z_3$ on its bottom and $z_2$ and $z_4$ on its top.  In fact, to exclude the possibility of this case from occurring, it suffices to use the ``horocycle trick'' from the previous lemma to get a contradiction.  Therefore, we are left with the other case where $\sigma_i$ lies on the top of $C_i$, for all $i$.  Since we know that the limit $(M_4, \omega_{M_4})$ decomposes into two cylinders, one of the three cylinders, say $C_3$ must have height $h_n$ converging to zero.  Let $\sigma_2$ lie on the bottom of $C_3$, and $\sigma_3$ lie on the top of $C_3$.  Let $(X_n', \omega_n') := H_{s_n} \cdot (X_n, \omega_n)$.  For each $n$, there is a number $s_n$ satisfying $0 \leq s_n \leq 1$ such that the 
$$d_{\omega_n'}(z_3, z_5) = h_n.$$
As $n$ tends to infinity, the limit must lie in the stratum $\mathcal{H}(2,4)$, which does not contain a Teichm\"uller disc in $\mathcal{D}_4(1)$ by Lemma \ref{NoDisksSomeGen4Strata}.  This contradiction implies that the the cylinder decomposition $\mathcal{C}_n$ must consist of exactly two cylinders for all $n$.

Finally, we assume that $\mathcal{C}_n$ contains exactly two cylinders, for all $n$.  Consider sufficiently large $n$ so that $1/n << 1$.  The heights of the cylinders must be bounded away from zero so that the sequence converges to a surface with two cylinders.  This implies that the three saddle connections $\sigma_i$, for $1 \leq i \leq 3$, of length less than $1/n$ lie on the boundaries of the cylinders, $C_1$ and $C_2$, i.e. the short saddle connections are parallel.  Consider a straight trajectory $\gamma$ of length less than two from $z_1$ on the bottom of $C_1$ to itself on the top of $C_2$.  We permit $\gamma$ to pass through another zero.  By Lemma \ref{PrinStratTopDichot}, this saddle connection determines a periodic foliation, thus, a cylinder decomposition $\mathcal{C}_n'$.  We claim that the total height of the cylinders in $\mathcal{C}_n'$ is at most $3/n$.  We consider three cases.  If $\mathcal{C}_n'$ consists of exactly one cylinder, then this is clear because there is a saddle connection of length less than $1/n$ transverse to the foliation.  If $\mathcal{C}_n'$ consists of exactly two cylinders, then there is at least one cylinder of height at most $1/n$.  In fact, both cylinders must have height at most $1/n$ because each cylinder has four zeros on one side and two zeros on the other, which implies that one of the saddle connections of length at most $1/n$ must traverse the heights of both cylinders.  Finally, if $\mathcal{C}_n'$ consists of exactly three cylinders, then each cylinder has two zeros on each side.  Since $\sigma_i$ does not lie in the foliation of $\mathcal{C}_n'$ for all $i$, every cylinder has height at most $1/n$.

If we form a new sequence of surfaces in $D$ by acting on the foliation $\mathcal{C}_n'$ by the Teichm\"uller geodesic flow so that the cylinders have unit circumference, then the curve $\gamma$ must have length at most $6/n$ in this new sequence.  However, as $n$ tends to infinity, this would imply that a curve from $z_1$ to itself contracts to a point.  This forces the surface to degenerate because $z_1$ can no longer be a zero in the limit.  This directly contradicts Lemma \ref{Gen4CannotDeg} and implies that there is no surface generating a Teichm\"uller disc in $\mathcal{D}_4(1)$ in the principal stratum.
\end{proof}

We summarize Lemmas \ref{NoDisksSomeGen4Strata}, \ref{NoTDInH2211D41}, \ref{NoTDInH21111D41}, and \ref{NoTDPrinStratGen4} in the following theorem.

\begin{theorem}
\label{Gen4Class}
The Ornithorynque $(M_4, \omega_{M_4})$ generates the only Teichm\"uller disc in $\mathcal{D}_4(1)$.
\end{theorem}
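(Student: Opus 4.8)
The plan is to decompose $\mathcal{M}_4$ into its strata $\mathcal{H}(\kappa)$, indexed by partitions $\kappa$ of $2g-2 = 6$ (there are eleven of them), and to clear each stratum in turn: every $(X,\omega)$ generating a Teichm\"uller disc in $\mathcal{D}_4(1)$ lies in exactly one stratum, so it suffices to show that the only such disc arises in $\mathcal{H}(2^3)$ and is generated by $(M_4,\omega_{M_4})$. The whole argument is a bookkeeping assembly of the four lemmas preceding the theorem, organized around which strata can degenerate, by colliding zeros, onto the unique Veech surface.

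First I would invoke Lemma \ref{NoDisksSomeGen4Strata}. By Theorem \ref{CPImpVeech} any disc in $\mathcal{D}_4(1)$ accumulates on a Veech surface in $\overline{\mathcal{D}_4(1)}$, which by Lemma \ref{Gen4CannotDeg} has genus four and, by M\"oller's classification (Theorem \ref{MollerClass}), must be $(M_4,\omega_{M_4}) \in \mathcal{H}(2^3)$. Since the degeneration furnished by Theorem \ref{CPImpVeech} only collides zeros, one cannot reach the all-double-zero stratum $\mathcal{H}(2^3)$ starting from any stratum carrying a zero of order $\geq 3$; this eliminates the seven strata $\mathcal{H}(6)$, $\mathcal{H}(5,1)$, $\mathcal{H}(4,2)$, $\mathcal{H}(4,1,1)$, $\mathcal{H}(3,3)$, $\mathcal{H}(3,2,1)$, $\mathcal{H}(3,1,1,1)$. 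The same lemma settles $\mathcal{H}(2^3)$ itself, where the only disc is the known one. This reduces the problem to the three strata that refine $(2,2,2)$ by splitting double zeros into pairs of simple zeros, namely $\mathcal{H}(1^2,2^2)$, $\mathcal{H}(1^4,2)$, and $\mathcal{H}(1^6)$.

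These three strata are where the real work lies, and I would dispose of them via Lemmas \ref{NoTDInH2211D41}, \ref{NoTDInH21111D41}, and \ref{NoTDPrinStratGen4} respectively. The strategy in each is uniform: assuming a non-Veech generator $(X,\omega)$ exists, one first establishes topological dichotomy (Lemmas \ref{H2211TopDichot}, \ref{H21111TopDichot}, \ref{PrinStratTopDichot}) by showing that any saddle connection lying in a non-periodic direction could be contracted under the Teichm\"uller geodesic flow to force a zero of order $>2$, contradicting the already-cleared higher strata through Lemma \ref{Gen4CannotDeg}; then one upgrades topological dichotomy to uniform complete periodicity by a bounded-distortion analysis of the admissible cylinder decompositions, whence by Smillie--Weiss \cite{SmillieWeissCharLattice} the surface is Veech, contradicting M\"oller.

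The hard part --- the only genuinely nontrivial content beyond the stratum count --- is the passage from topological dichotomy to uniform complete periodicity in the principal stratum $\mathcal{H}(1^6)$. There a cylinder decomposition may contain up to three cylinders (Lemma \ref{MaxCyls}), and ruling out the configurations in which the short saddle connections sit on cylinder boundaries requires the ``horocycle trick'': acting by $H_s$ to preserve a short transverse saddle connection while separating a colliding pair of simple zeros, thereby producing a limit in a stratum already known to contain no disc (for instance $\mathcal{H}(2,4)$, or one of $\mathcal{H}(1^2,2^2)$ and $\mathcal{H}(1^4,2)$). Once all four remaining strata are cleared in this way, combining their exclusion with the statement of Lemma \ref{NoDisksSomeGen4Strata} yields the theorem.
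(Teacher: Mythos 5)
Your proposal is correct and follows the paper's own route exactly: the paper proves this theorem by citing precisely the four lemmas you assemble (Lemma \ref{NoDisksSomeGen4Strata} to reduce to the strata $\mathcal{H}(2^3)$, $\mathcal{H}(1^2,2^2)$, $\mathcal{H}(1^4,2)$, $\mathcal{H}(1^6)$, then Lemmas \ref{NoTDInH2211D41}, \ref{NoTDInH21111D41}, and \ref{NoTDPrinStratGen4} to clear the last three), and your descriptions of the supporting machinery --- topological dichotomy via contraction of saddle connections, upgrade to uniform complete periodicity, Smillie--Weiss, M\"oller's classification, and the horocycle trick in the principal stratum --- accurately reflect how those lemmas are proved in the paper.
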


\bibliography{fullbibliotex}{}

\end{document}